\documentclass[10pt]{amsart}
\usepackage{lmodern}
\usepackage{amsmath}
\usepackage{amssymb}
\usepackage{amsfonts}
\usepackage{amsthm}
\usepackage{enumerate}
\usepackage[all]{xy}
\usepackage[latin1]{inputenc}  
\usepackage{graphicx}
\usepackage{mathdots}
\usepackage{mathrsfs}
\usepackage{color}
\input xy
\usepackage{tikz}
\usetikzlibrary{arrows}
\usetikzlibrary{decorations,arrows}
\usetikzlibrary{decorations.markings}
\usetikzlibrary{arrows.meta, bending}
\usetikzlibrary{positioning}

\makeatletter

\newtheoremstyle{definition}% name of the style to be used
        {5pt}% measure of space to leave above the theorem. E.g.: 3pt
        {3pt}% measure of space to leave below the theorem. E.g.: 3pt
        {}% name of font to use in the body of the theorem
        {0pt}% measure of space to indent
        {\scshape}% name of head font
        {.}% punctuation between head and body
        {5pt}% space after theorem head; " " = normal interword space
        {\thmname{#1} \thmnumber{#2} \thmnote{[#3]}} % Manually specify head

\newtheoremstyle{theorems}% name of the style to be used
        {5pt}% measure of space to leave above the theorem. E.g.: 3pt
        {3pt}% measure of space to leave below the theorem. E.g.: 3pt
        {\itshape}% name of font to use in the body of the theorem
        {0pt}% measure of space to indent
        {\scshape}% name of head font
        {.}% punctuation between head and body
        {5pt}% space after theorem head; " " = normal interword space
        {\thmname{#1} \thmnumber{#2}\thmnote{[#3]}} % Manually specify head

\swapnumbers \theoremstyle{theorems}

\newtheorem{Theo}{Theorem}[subsection]
\newtheorem{Prop}[Theo]{Proposition}
\newtheorem{Cor}[Theo]{Corollary}
\newtheorem{Lemma}[Theo]{Lemma}
\newtheorem{Prop(BG)}[Theo]{Proposition (Bongartz-Gabriel)}
\newtheorem{Lemma(Asashiba)}[Theo]{Lemma(Asashiba)}
\newtheorem{Lemma(Gab)}[Theo]{Lemma(Gabriel)}
\newtheorem{Theo(Mil)}[Theo]{Theorem (Milicic)}

\theoremstyle{definition}
\newtheorem{Defn}[Theo]{Definition}

\newtheorem{Defn(Asashiba)}[Theo]{Definition (Asashiba)}
\newtheorem{Remark}[Theo]{Remark}

\newcommand{\Hom}{{\rm Hom}}

\newcommand{\mmod}{{\rm mod}}
\newcommand{\rad}{{\rm rad}\hspace{.5pt}}
\newcommand{\soc}{{\rm soc}\hspace{.5pt}}
\newcommand{\ntop}{{\rm top}\hspace{.5pt}}

\newcommand{\Ker}{{\rm Ker}}

\newcommand{\hp}{\hspace{.5pt}}
\newcommand{\vp}{\vspace{.5pt}}

\def\Ga{\hbox{$\mathit\Gamma\hspace{-2pt}$}}

\begin{document}

\title[Representation bound]{\sc Artin algebras of small representation bound}

\vspace{5pt}

\author[Y. Yin]{Youqi Yin}

\author[S. Liu]{Shiping Liu \vspace{-10pt}}

\address{Youqi Yin \\ Departement of mathematics, Shaoxing University, Shaoxing, China.}
\email{yinyouqi@usx.edu.cn}

\address{{\it Corresponding author}: Shiping Liu \\ D\'epartement de math\'ematiques, Universit\'e de Sherbrooke, Sherbrooke, Qu\'ebec, 
Canada.}
\email{shiping.liu@usherbrooke.ca}

\subjclass[2020]{16D90, 16G20, 16G70, 16E35}

\keywords{Artin algebras; Nakayama algebras; biserial algebras; uniserial mo\-dules; biserial modules; almost split sequences; Auslander-Reiten quiver.}

\thanks{The first named author is supported by NSF China 12371017.}

%\date{July 2024}

\begin{abstract}

\vspace{15pt}

This paper introduces a novel classification approach for represen\-tation-finite artin algebras in terms of the maximal length of their indecomposable modules of finite length, which we call the \emph{representation bound}. To this end, we develop methods to compute almost split sequences and establish lower bounds for the lengths of the Auslander-Reiten translates of certain mo\-dules over artin algebras. We apply these techniques and results to explicitly classify artin algebras of representation bound $n$ for each positive integer~$n \le 4.$

\vspace{-15pt}

\end{abstract}

\maketitle

\section*{Introduction}

The ultimate objective of the representation theory of artin algebras is to describe all indecomposable modules of finite length and the maps between them. In this paper, we shall concentrate on representation-finite algebras, namely, those having only finitely many non-isomorphic indecomposable modules of finite length. While representation-finite hereditary algebras can be classified by the type of their Ext-quiver, see \cite{DR, Gab, LiT, PAu}, the general case is significantly more complex. A celebrated result of Auslander states that an artin algebra is representation-finite if and only if there exists a bound for the lengths of its indecomposable modules of finite length, see \cite{Aus}, which is originally due to Roiter in the finite-dimensional setting; \cite{Roi}. In this context, we call the supremum of the lengths of these indecomposable modules the {\it representation bound} of the algebra. It is evident that an artin algebra has representation bound one if and only if it is semisimple. Moreover, the representation bound of a Nakayama algebra clearly coincides with its Loewy length, see \cite{Nak}, whereas the bound for a hereditary algebra of Dynkin type equals the order of its Coxeter transformation minus one; see \cite[(4.9)]{Zac}. Motivated by these classical examples, we propose to classify the representation-finite algebras in terms of their representation bound. 

A classification for an arbitrary bound is highly intricate. The main objective of this paper is to explicitly classify the artin algebras of representation bound $n$ for each integer $n \le 4$. While the classification  is trivial for the bound $n=1$ and straightforward for the bound $n=2,$ see (\ref{rb2}), increasing the bound introduces rapidly growing complexity. To overcome this, we develop methods to establish lower bounds for the lengths of the Auslander-Reiten translates of certain modules and to compute almost split sequences for certain uniserial modules over artin algebras. Then, we use the first technique to show that the artin algebras of representation bound at most $3$ are wedged-string algebras with radical cubed zero, see (\ref{3sa_def}), while those of representation bound at most $4$ are quadri-biserial algebras; see (\ref{4str_alg_def}). These algebras belong to the class of biserial algebras introduced by Fuller in \cite{Ful}. For the converses, we first reduce the representation theory of quadri-biserial algebras to that of quadri-string algebras with radical cubed zero; see (\ref{4ba-4sa}). Then, we apply the second technique to compute all almost split sequences over quadri-string algebras with radical cubed zero in Subsection 4.4. This enables us to show that wedged-string algebras with radical cubed zero have representation bound at most $3$, see (\ref{3sa_rb}), and quadri-biserial algebras have representation bound at most $4$; see (\ref{rb_4bsa}). This, in turn, yields an explicit description of all indecomposable modules and almost split sequences for these algebras; see (\ref{CL_3}) and (\ref{l4-4bsa}). Ultimately, these results complete the classification for the bounds $n=3$ and $n=4;$ see (\ref{rb3}) and (\ref{main_rb4}).

Note that the indecomposable modules and almost split sequences for string algebras defined by a quiver with relations have been explicitly described and computed by Butler and Ringel; see \cite{BuR}, and also \cite{SKW}. We should point out that their description and computation rely on the combinatorial structure of the bound quiver, which is not available for string artin algebras. In contrast, our technique only utilizes the definition of the dual of the transpose functor and some basic properties of almost split sequences. Moreover, our technique will be applicable in future research, especially for describing indecomposable modules and computing almost split sequences for artin algebras which are biserial or multiserial; see \cite{VHW}.

\section{Preliminaries} 

This section lays the foundation for this paper. We fix the notation and the terminology, which will be used throughout, and collect, as well as prove, some preliminary results on modules. Some of these results are not explicitly stated in the existing literature and may be applied in future study on modules.

\subsection{\sc Notation and terminology} Throughout this paper, $A$ denotes an artin $R$-algebra, where $R$ is an artinian commutative ring, and $J$ denotes the Jacobson radical of $A$. We write $\mmod A$ for the category of finitely generated left $A$-modules, with morphisms composed from right to left. A morphism in $\mmod A$ is called {\it radical} if it lies in the Jacobson radical of $\mmod A.$ Moreover, $A^\circ$ stands for the opposite algebra of $A$ and $J^\circ$ for the radical of $A^\circ$. For convenience, we shall identify $\mmod A^\circ$ with the category of finitely generated right $A$-modules.

Let $M$ be a module in $\mmod A$ with a submodule $N$. We call $N$ a {\it proper submodule} if $N\ne M,$ and $M/N$ a {\it proper quotient} if $N\ne 0$. 
When writing $L/N$ to denote a submodule of $M/N,$ we always assume that $L$ is a submodule of $M$ containing $N$. Two submodules of $M$ are called {\it comparable} if one is contained in the other. 
For simplicity, we shall denote by $P_M$ and $I_M$ a projective cover and an injective envelope of $M$ respectively. And we shall write $\rad M,$ $\soc M,$ and $\ntop M$ for the radical, the socle, and the top of $M$, respectively. Recall that $\rad(_AA)=\rad(A_A)=J$ and $\rad^i\hspace{-1pt}M=J^i\hspace{-1pt}M$ for $i\ge 0$. Furthermore, we shall denote by $\ell(M)$ the composition length (or simply, the length) of $M$, and by $\ell\ell(M)$ the {\it Loewy length} of $M.$ %which is the smallest integer $s\ge 0$ for which $\rad^s\hspace{-1pt}M=0.$ 
Note that $\ell\ell(_AA)=\ell\ell(A_A)$, which is called the {\it Loewy length} of $A$ and denoted by $\ell\ell(A)$. 

Finally we refer to \cite[Chapter V]{ARS}, and also to \cite{AuR1,AuR2,Bau}, for the Auslander-Reiten theory of irreducible maps and almost split sequences, which is an essential tool for our investigation. Note that we shall define the {\it Auslander-Reiten quiver} $\Ga_{\mmod A}$ of $\mmod A$ in such a way that its vertex set is a complete set of representatives of the indecomposable modules in $\mmod A.$ 

\subsection{\sc Top series} Let $M$ be a module in $\mmod A.$ Recall that 
$\rad^0\hspace{-1pt}M=M$ and $\rad^i\hspace{-1pt}M=0$ for $i\ge \ell\ell(M).$ Now for each integer $i\ge 0$, we put $\ntop^iM\hspace{-2pt}:=M/\rad^i\hspace{-1pt}M.$ In particular $\ntop^0\hspace{-1pt}M=0,$ $\ntop^1\hspace{-1pt}M=\ntop M,$ and $\ntop^i\hspace{-1pt}M=M$ for all $i\ge \ell\ell(M).$ The following statement is evident.

\begin{Lemma}\label{topi_LL}

Let $A$ be an artin algebra. Consider a module $M$ in $\mmod A$ of Loewy length $n.$ Then $\ell\ell(\rad^i\hspace{-1.2pt}M)=n-i,$ and $\ell\ell(\ntop^i\hspace{-1.2pt}M)=i$ for all $0\le i\le n$.

\end{Lemma}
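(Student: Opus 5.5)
The plan is to use only the identity $\rad^j\hspace{-1pt}M=J^j\hspace{-1pt}M$ and the definition of Loewy length as the least $s\ge 0$ with $\rad^s\hspace{-1pt}M=0$. Since $\ell\ell(M)=n$, we have $J^nM=0$ and $J^{n-1}M\ne 0$ when $n\ge 1$. Throughout fix $0\le i\le n$.

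For the radical, note that $\rad^j(\rad^i\hspace{-1pt}M)=J^j J^iM=J^{i+j}M=\rad^{i+j}\hspace{-1pt}M$. This vanishes precisely when $i+j\ge n$, because the radical series of $M$ is strictly decreasing until it reaches $0$. Indeed, if $\rad^{k}\hspace{-1pt}M=\rad^{k+1}\hspace{-1pt}M$, then Nakayama's lemma forces $\rad^k\hspace{-1pt}M=0$; hence $\rad^{k}\hspace{-1pt}M\ne 0$ for all $k<n$. So the least $j$ with $\rad^j(\rad^i\hspace{-1pt}M)=0$ is $n-i$, which gives $\ell\ell(\rad^i\hspace{-1pt}M)=n-i$.

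For the top, $\rad^j(M/J^iM)=(J^jM+J^iM)/J^iM$. This equals $J^jM/J^iM$ if $j\le i$, and equals $0$ if $j\ge i$. For $j<i\le n$ we have $J^jM\supsetneq J^iM$ by the strict decrease just noted, so the quotient is nonzero. Thus the least $j$ killing $\ntop^i\hspace{-1pt}M$ is $i$, which gives $\ell\ell(\ntop^i\hspace{-1pt}M)=i$. The edge cases $i=0$ and $i=n$ are consistent with this: $\ntop^0\hspace{-1pt}M=0$ has Loewy length $0$, and $\rad^n\hspace{-1pt}M=0$.

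The only step needing any care is the strict decrease of the radical series before it reaches zero, and this follows immediately from Nakayama's lemma for finitely generated modules. Otherwise the statement is evident.
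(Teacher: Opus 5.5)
Your proof is correct and is essentially the paper's: the paper gives no argument beyond calling the statement evident, and your direct computation via $\rad^j = J^j$ is the routine verification it leaves to the reader. One small refinement is that in the radical part, $\rad^k M\neq 0$ for $k<n$ already follows from the minimality of $n$ and the monotonicity of the radical series, so Nakayama's lemma is needed only for the strict inclusion $J^jM\supsetneq J^iM$ ($j<i\le n$) in the top part.
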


\subsection{\sc Socle series} Let $M$ be a module in $\mmod A.$ Write $\soc^0\hspace{-1.2pt} M=0;$ and for each integer $i\ge 1$, write $\soc^i\hspace{-1.2pt}M$ for the submodule of $M$ containing $\soc^{i-1}\hspace{-1.2pt}M$ such that $\soc^i\hspace{-1.2pt}M/\soc^{i-1}\hspace{-1.2pt}M=\soc(M/\soc^{i-1}\hspace{-1.2pt}M).$ In particular, $\soc^1\hspace{-1.2pt}M=\soc M$. The following well-known characterization of $\soc^i\hspace{-1.2pt}M$ is useful; see, for example, \cite[p.346]{AnF}.

\begin{Lemma}\label{isoc_char}

Let $A$ be an artin algebra. Consider a module $M$ in $\mmod A$. Then
$\soc^i\hspace{-1.5pt}M=\{m\in M \mid J^im=0\},$ for $i\ge 0.$ In particular, $\soc^i\hspace{-1.5pt}M=M$ for $i\ge \ell\ell(A).$
    
\end{Lemma}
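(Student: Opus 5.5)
We prove the equality $\soc^i\hspace{-1.5pt}M=\{m\in M \mid J^im=0\}$ by induction on $i$; write $S_i$ for the right-hand side. The last assertion then follows at once.

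\emph{Base cases.} For $i=0$ we have $J^0=A$, so $S_0=0=\soc^0\hspace{-1.2pt}M$. For $i=1$ we use the standard fact that over an artin algebra the socle of a module is the largest semisimple submodule. This is exactly the annihilator of $J$ in $M$, because a finitely generated $A$-module $N$ is semisimple if and only if $JN=0$: since $A/J$ is semisimple, $JN=0$ makes $N$ an $A/J$-module, and conversely a semisimple module has zero radical. Hence $\soc M=\{m\in M\mid Jm=0\}$. This holds for every finitely generated module, so we may apply it to quotients as well.

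\emph{Inductive step.} Assume $\soc^{i-1}\hspace{-1.2pt}M=S_{i-1}$ with $i\ge 1$. By definition, $\soc^i\hspace{-1.2pt}M$ is the preimage in $M$ of $\soc(M/\soc^{i-1}\hspace{-1.2pt}M)$. By the case $i=1$ applied to $M/S_{i-1}$, an element $m$ lies in $\soc^i\hspace{-1.2pt}M$ if and only if $Jm\subseteq S_{i-1}$, that is, if and only if $J^{i-1}(Jm)=0$. Since $J^{i-1}J=J^i$, this condition says $J^im=0$, so $\soc^i\hspace{-1.2pt}M=S_i$. The only point to check here is the equality $J^{i-1}Jm=J^im$ of submodules, which is clear because both sides are generated by the products $a_1\cdots a_i m$ with $a_j\in J$.

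\emph{The last assertion.} If $i\ge \ell\ell(A)$, then $J^i=0$, so $S_i=M$. I expect no real obstacle in this argument. The only step needing care is the characterization of the socle as the annihilator of $J$, which rests on the semisimplicity of $A/J$ for an artin algebra. This is exactly where the artin hypothesis enters: it guarantees that $A/J$ is semisimple, and that $J$ is nilpotent, so that $\ell\ell(A)$ is finite.
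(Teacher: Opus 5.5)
Your proof is correct. The paper gives no argument of its own and only cites Anderson--Fuller; the standard proof there is essentially yours: an induction from $\soc M=\{m\in M\mid Jm=0\}$, which rests on $A/J$ being semisimple, together with $J^{i-1}J=J^i$.
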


\smallskip

As an immediate consequence of Lemma \ref{isoc_char}, we obtain the following result.

\begin{Cor}\label{soci_LL}

Let $A$ be an artin algebra. Consider a module $M$ in $\mmod A$ of Loewy length $n$. Then $\ell\ell(\soc^i\hspace{-1.2pt}M)=i,$ and $\ell\ell(M/\soc^i\hspace{-1.2pt}M)=n-i$ for all $0\le i\le n$.

\end{Cor}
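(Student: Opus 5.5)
The plan is to reduce both claims to the description $\soc^i\hspace{-1.2pt}M=\{m\in M\mid J^im=0\}$ of Lemma \ref{isoc_char}. We use throughout that for a module $N$, $\ell\ell(N)$ is the least $s\ge 0$ with $J^sN=0$, and that $\rad^j\hspace{-1pt}N=J^jN$. Fix $0\le i\le n$. Since $J^nM=0$, Lemma \ref{isoc_char} gives $\soc^n\hspace{-1.2pt}M=M$. For $i=0$ both claims are trivial, because $\soc^0\hspace{-1.2pt}M=0$. For $i=n$ they hold because $\soc^n\hspace{-1.2pt}M=M$.

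First we compute $\ell\ell(\soc^i\hspace{-1.2pt}M)$. By Lemma \ref{isoc_char}, $J^i\soc^i\hspace{-1.2pt}M=0$, so $\ell\ell(\soc^i\hspace{-1.2pt}M)\le i$. For the reverse inequality, note that $\rad^{n-i}\hspace{-1pt}M=J^{n-i}M$ satisfies $J^iJ^{n-i}M=J^nM=0$. Hence $J^{n-i}M\subseteq \soc^i\hspace{-1.2pt}M$.

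Next we show that $J^{i-1}\soc^i\hspace{-1.2pt}M\ne 0$ when $i\ge 1$. Since $J^{n-1}M\ne 0$ and $J^{n-1}M=J^{i-1}J^{n-i}M\subseteq J^{i-1}\soc^i\hspace{-1.2pt}M$, this product is nonzero. Therefore $\ell\ell(\soc^i\hspace{-1.2pt}M)=i$.

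Now we compute $\ell\ell(M/\soc^i\hspace{-1.2pt}M)$. For the upper bound, $J^{n-i}(M/\soc^i\hspace{-1.2pt}M)=(J^{n-i}M+\soc^i\hspace{-1.2pt}M)/\soc^i\hspace{-1.2pt}M$, and this is zero because $J^{n-i}M\subseteq\soc^i\hspace{-1.2pt}M$ as shown above. For the lower bound with $i<n$, we show $J^{n-i-1}M\not\subseteq\soc^i\hspace{-1.2pt}M$. If the inclusion held, then $J^iJ^{n-i-1}M=J^{n-1}M$ would be $0$ by Lemma \ref{isoc_char}, which contradicts $\ell\ell(M)=n$. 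Thus $J^{n-i-1}(M/\soc^i\hspace{-1.2pt}M)\ne 0$, and so $\ell\ell(M/\soc^i\hspace{-1.2pt}M)=n-i$.

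There is no real obstacle here. The only points needing care are the identity $J^a(M/N)=(J^aM+N)/N$ and the inclusion $J^{n-i}M\subseteq\soc^i\hspace{-1.2pt}M$, which is the key step used in both halves.
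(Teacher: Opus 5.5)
Your proof is correct and takes the same approach as the paper, which states this corollary as an immediate consequence of Lemma \ref{isoc_char} without writing out a proof. Your argument, built on the inclusion $J^{n-i}M\subseteq \soc^i M$ together with $J^{n-1}M\neq 0$, is exactly the routine unpacking of that lemma.
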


\smallskip

Applying Lemma \ref{isoc_char}, we obtain the following result. 

\begin{Lemma}\label{socs_sum}

Let $A$ be an artin algebra. Consider a module $M$ in $\mmod A$ such that $M=L+N$. If $\ell\ell(L)=s$, then $\soc^s\hspace{-1.2pt}M=L+\soc^s\hspace{-1.2pt}N.$

\end{Lemma}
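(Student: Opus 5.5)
We prove the two inclusions separately, using the description $\soc^s\hspace{-1.2pt}M=\{m\in M \mid J^sm=0\}$ from Lemma \ref{isoc_char}, applied both to $M$ and to its submodules $L$ and $N$.

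For $L+\soc^s\hspace{-1.2pt}N\subseteq \soc^s\hspace{-1.2pt}M$, first take $l\in L$. Since $\ell\ell(L)=s$, we have $J^sL=\rad^s\hspace{-1pt}L=0$, so $J^sl=0$ and therefore $l\in\soc^s\hspace{-1.2pt}M$. Next take $x\in\soc^s\hspace{-1.2pt}N$. By Lemma \ref{isoc_char} applied to $N$, we get $J^sx=0$, so again $x\in\soc^s\hspace{-1.2pt}M$. As $\soc^s\hspace{-1.2pt}M$ is a submodule, it contains $L+\soc^s\hspace{-1.2pt}N$.

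For the reverse inclusion, let $m\in\soc^s\hspace{-1.2pt}M$. Because $M=L+N$, we can write $m=l+x$ with $l\in L$ and $x\in N$. Then $x=m-l$, and both $m$ and $l$ are annihilated by $J^s$ (the latter by the first step). Hence $J^sx=0$. Since $x\in N$, Lemma \ref{isoc_char} for $N$ gives $x\in\soc^s\hspace{-1.2pt}N$, and so $m\in L+\soc^s\hspace{-1.2pt}N$.

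There is no real obstacle here. The only point that needs care is that the elementwise characterization of Lemma \ref{isoc_char} is intrinsic. It applies equally to $N$ viewed as a module on its own, so $\soc^s\hspace{-1.2pt}N=N\cap\soc^s\hspace{-1.2pt}M$. It also uses the equality $\rad^s\hspace{-1pt}L=J^sL$, which vanishes exactly because $\ell\ell(L)=s$. In fact the argument only needs $\ell\ell(L)\le s$.
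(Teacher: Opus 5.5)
Your proof is correct and follows the paper's argument essentially step for step. Both inclusions come from the elementwise description $\soc^s M=\{m\in M\mid J^sm=0\}$ of Lemma \ref{isoc_char}, and the reverse inclusion writes $m=l+x$ and uses $J^sx=J^s(m-l)=0$; your remark that only $\ell\ell(L)\le s$ is needed is also right.
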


\vspace{-1.5pt}

\noindent{\it Proof.} Assume that $\ell\ell(L)=s\ge 1$. By Lemma \ref{isoc_char}, $L=\soc^s \hspace{-1.2pt} L\hp ;$ consequently, $L+\soc^s\hspace{-1.2pt}N=\soc^s\hspace{-1pt}L+\soc^s\hspace{-1.2pt}N\subseteq \soc^s\hspace{-1.2pt}M.$ On the other hand consider $x=y+z\in \soc^s\hspace{-1.2pt}M,$ where $y\in L$ and $z\in N.$ Given $u\in J^s,$ by Lemma \ref{isoc_char}, $u z=ux - uy=0;$ and hence, $z\in \soc^s\hspace{-1.2pt}N.$ So, $\soc^s\hspace{-1.2pt}M\subseteq L+\soc^s\hspace{-1.2pt}N.$ The proof of the lemma is completed.

\smallskip

The following easy result will be frequently used in our later investigation. \vspace{-1pt}

\begin{Lemma}\label{soc_rad} Let $A$ be an artin algebra. Consider an indecomposable module $M$ in $\mmod A$. If $M$ is not simple, then $\soc M=\soc(\rad M);$ in particular, $\soc M\subseteq \rad M.$ 
%$\soc M=\soc(\rad M)$, and $\soc M=\rad M$ if and only if $\ell\ell(M)=2.$

%\begin{enumerate}[$(1)$]\vspace{-2.5pt}
%\item $\soc M=\soc(\rad M)\hp\hp;$ %In particular, $\soc M\subseteq \rad M.$
%\item $\soc M=\rad M$ if and only if $\ell\ell(M)\le 2.$ 
%\end{enumerate}

\end{Lemma}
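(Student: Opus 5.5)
The plan is to show first that $\soc M\subseteq \rad M$. The equality $\soc M=\soc(\rad M)$ then follows at once. Indeed, $\rad M$ is a submodule of $M$, so $\soc(\rad M)=\soc M\cap \rad M$. Once $\soc M\subseteq \rad M$ is known, this intersection is just $\soc M$.

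To prove $\soc M\subseteq\rad M$, I will argue by contradiction. Assume there is a simple submodule $S$ of $M$ with $S\not\subseteq \rad M$. Since $S$ is simple, $S\cap\rad M=0$. Hence the composite $S\hookrightarrow M\to M/\rad M=\ntop M$ is a monomorphism, and its image is a simple submodule $\bar S$ of the semisimple module $\ntop M$.

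Because $\ntop M$ is semisimple, $\bar S$ has a complement: $\ntop M=\bar S\oplus \bar T$ for some submodule $\bar T$. Write $\pi: M\to \ntop M$ for the canonical map, $q:\ntop M\to \bar S$ for the projection along $\bar T$, and $\iota:S\to M$ for the inclusion. Let $\sigma:\bar S\to S$ be the inverse of the isomorphism $\pi\iota:S\to\bar S$. Then $p:=\sigma q\pi: M\to S$ satisfies $p\iota=\mathrm{id}_S$. So $S$ is a direct summand of $M$.

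Since $M$ is indecomposable and $S\ne 0$, this forces $M=S$, so $M$ is simple, contradicting the hypothesis. Therefore $\soc M\subseteq\rad M$, and the equality $\soc M=\soc(\rad M)$ follows as explained in the first paragraph.

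No step looks like a serious obstacle; the argument is elementary. The only point needing care is that the splitting is $A$-linear, which holds because semisimplicity of $\ntop M$ is semisimplicity as an $A$-module.
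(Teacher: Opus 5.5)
Your proof is correct and follows essentially the same route as the paper: a simple submodule $S\not\subseteq \rad M$ splits off as a direct summand, which contradicts indecomposability, and the equality $\soc M=\soc(\rad M)$ then follows immediately. The paper gets the splitting more directly by choosing a maximal submodule $L$ with $S\not\subseteq L$, so that $M=S\oplus L$, whereas you build a retraction through a complement of the image of $S$ in $\ntop M$.
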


\vspace{-1.5pt}

\noindent{\it Proof.} Assume that $M$ is not simple. Then, the zero submodule of $M$ is not maximal. Suppose that $\soc M\not\subseteq \rad M.$ Then $M$ has a simple sumodule $S$, which is not contained in a maximal submodule $L.$ Consequently $M=S\oplus L,$ absurd. Thus, $\soc M \subseteq \rad M.$ This yields $\soc M\subseteq \soc(\rad M) \subseteq \soc M.$ So, $\soc M=\soc(\rad M).$ The proof of the lemma is completed.

\subsection{\sc Standard duality} Our investigation relies crucially on the standard duality $D=\Hom_R(-, I_R): \mmod A \to \mmod A^\circ\hspace{-1pt},$ where $I_R$ is a minimal injective cogenerator for $\mmod A.$ Let $M$ be a module in $\mmod A.$ In order to explicitly compute the radical series and the socle series of $DM$, we consider $N^\perp\!: \, =\{f\in DM \mid f(N)=0\},$ for every submodule $N$ of $M$. The following result is well known.

\begin{Lemma}\label{dual_perp}

Let $A$ be an artin algebra. Consider a module $M$ in $\mmod A$. If $N, L$ are submodules of $M,$ then 
%. The following statement hold.
%If $\xymatrixcolsep{20pt}\xymatrix{0\ar[r] & N\ar[r]^q & M \ar[r]^p  & M/N \ar[r] & 0}$ is the canonical short exact sequence, then $N^\perp={\rm Im}(Dp),$ and consequently,

\begin{enumerate}[$(1)$]

\vspace{-.5pt}

\item $N^\perp \cong D(M/N)$ and $DM/N^{\perp}\cong DN;$ 

\vp\vp

\item $(L+N)^\perp=L^\perp \cap N^\perp$ and $(L\cap N)^\perp=L^\perp + N^\perp\hspace{-1pt}.$ 

\end{enumerate}\end{Lemma}

\smallskip

The following result will be used frequently in our later investigation.
%which allows us to compute the radical series and the socle series of the dual of a module in $\mmod A.$

\begin{Prop}\label{duality_radi_soci}

Let $A$ be an artin algebra. Given a module $M$ in $\mmod A$ and an integer $i\ge 0$, we have 

\begin{enumerate}[$(1)$]

\vspace{-1pt}

\item  $\rad^i\hspace{-.5pt}(DM)=(\soc^i\hspace{-1.5pt}M)^\perp$  and $\soc^i\hspace{-.5pt}(DM)=(\rad^i\hspace{-1.2pt}M)^\perp;$

\vp

\item $\rad^i(DM)\cong D(M /\soc^i \hspace{-1.2pt} M)$ and $\ntop^i(DM)\cong D(\soc^i \hspace{-1.2pt} M)\hp;$

\vp
\item $\soc^i(DM)\cong D(\ntop^i\hspace{-1.2pt}M)$ and $DM/\soc^i(DM)\cong D(\rad^i\hspace{-1.2pt}M).$

\end{enumerate} \end{Prop}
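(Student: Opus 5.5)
The plan is to prove (1) directly from the module structure of $DM$ and then read off (2) and (3) from Lemma \ref{dual_perp}(1) together with the definitions $\ntop^i(DM)=DM/\rad^i(DM)$ and $\rad^i\hspace{-1.2pt}M$, $\ntop^i\hspace{-1.2pt}M=M/\rad^i\hspace{-1.2pt}M$. Throughout, recall that $DM$ is a right $A$-module via $(fa)(m)=f(am)$ for $f\in DM$, $a\in A$, $m\in M$. Hence $\rad^i(DM)=(DM)J^i$, and by the right-module version of Lemma \ref{isoc_char}, $\soc^i(DM)=\{f\in DM\mid fJ^i=0\}$.

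For the second equality of (1), I would compute that $f\in\soc^i(DM)$ if and only if $f(J^i m)=0$ for all $m\in M$. This happens if and only if $f$ vanishes on $J^iM=\rad^i\hspace{-1.2pt}M$, that is, $f\in(\rad^i\hspace{-1.2pt}M)^\perp$. This step is purely formal.

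For the first equality of (1), the inclusion $(DM)J^i\subseteq(\soc^i\hspace{-1.2pt}M)^\perp$ is immediate. Indeed, if $f\in DM$ and $a\in J^i$, then for $m\in\soc^i\hspace{-1.2pt}M$ we have $(fa)(m)=f(am)=0$ by Lemma \ref{isoc_char}. The reverse inclusion is the main obstacle, and I would avoid an element-wise argument by using duality twice. Apply the already proved socle statement to the right module $DM$, and use the canonical identification $D(DM)\cong M$, under which annihilators correspond. This gives
\[
\soc^i\hspace{-1.2pt}M=\soc^i(D(DM))=(\rad^i(DM))^{\perp},
\]
where the last $\perp$ is computed inside $D(DM)=M$, i.e. it equals $\{m\in M\mid f(m)=0 \text{ for all } f\in \rad^i(DM)\}$. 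Taking perp again, and using that $N^{\perp\perp}=N$ for submodules under the standard duality (a length count via Lemma \ref{dual_perp}(1) suffices: $\ell(N^\perp)=\ell(M)-\ell(N)$, and $N\subseteq N^{\perp\perp}$), we obtain $(\soc^i\hspace{-1.2pt}M)^\perp=\rad^i(DM)$. Alternatively, the length count alone would finish the argument, provided one has $\ell(\rad^i(DM))=\ell(M)-\ell(\soc^i\hspace{-1.2pt}M)$; the double-perp route gives this.

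Statements (2) and (3) then follow from Lemma \ref{dual_perp}(1). We have $\rad^i(DM)=(\soc^i\hspace{-1.2pt}M)^\perp\cong D(M/\soc^i\hspace{-1.2pt}M)$, and $\ntop^i(DM)=DM/(\soc^i\hspace{-1.2pt}M)^\perp\cong D(\soc^i\hspace{-1.2pt}M)$. Similarly, $\soc^i(DM)=(\rad^i\hspace{-1.2pt}M)^\perp\cong D(M/\rad^i\hspace{-1.2pt}M)=D(\ntop^i\hspace{-1.2pt}M)$, and $DM/\soc^i(DM)\cong D(\rad^i\hspace{-1.2pt}M)$.
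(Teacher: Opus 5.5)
Your proof is correct and has the same structure as the paper's: the paper simply cites (1) as well known (Webb, Chapter 6, Exercise 7) and then derives (2) and (3) from (1) and Lemma \ref{dual_perp}(1), exactly as you do. Your proof of (1) is sound and supplies what the paper leaves to that reference. You obtain $\soc^i(DM)=(\rad^i M)^\perp$ formally from $fJ^i=0$. You then get the radical equality by applying this to $DM$, identifying $D(DM)$ with $M$, and closing with the double-perp length count.
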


\noindent{\it Proof.} Statement (1) is well known; see, for example, \cite[Chapter 6, Exercise 7]{Webb}. And
Statements (2) and (3) follow easily from Statement (1) and Lemma \ref{dual_perp}(1). The proof of the proposition is completed.

%\smallskip

%As a consequence of Proposition \ref{duality_radi_soci}(2) and (3), we obtain the following result.

%\begin{Cor}\label{dual_ssf_tr}Let $A$ be an artin algebra. Consider a module $M$ in $\mmod A$. Then $\ntop(\rad(DM))\cong D(\soc(M/\soc M)).$\end{Cor}

%\noindent{\it Proof.} By Proposition \ref{duality_radi_soci}(2) and (3), $\rad(DM)\cong D(M/\soc M),$ and hence, $\ntop(\rad(DM))\cong \ntop(D(M/\soc M))\cong D(\soc(M/\soc M)).$ The proof of the corollary is completed.

\smallskip

We can compute the radical series of submodules of dual modules as follows.

\begin{Lemma}\label{rad_perp}

Let $A$ be an artin algebra. Consider a module $M$ in $\mmod A$ with a submodule $N$. If $\soc^i(M/N)=L/N$ for some $i\ge 0$, %where $N\subseteq L,$ 
then $\rad^i(N^\perp)=L^\perp.$ 

\end{Lemma}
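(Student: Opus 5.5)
The plan is to reduce the statement to Proposition \ref{duality_radi_soci}(1), applied to the quotient module $M/N$. We transport it back to $DM$ through the natural identification of $N^\perp$ with $D(M/N)$ from Lemma \ref{dual_perp}(1).

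First, let $p: M\to M/N$ be the canonical projection. The map $Dp: D(M/N)\to DM$, $g\mapsto g\circ p$, is an injective $A^\circ$-module homomorphism, and its image is exactly $N^\perp$. Indeed, a functional $f\in DM$ vanishes on $N$ precisely when it factors through $p$. So $Dp$ restricts to an isomorphism $\phi: D(M/N)\to N^\perp$. Since an isomorphism of modules preserves radical series, we get $\rad^i(N^\perp)=\phi(\rad^i(D(M/N)))$.

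Next, apply Proposition \ref{duality_radi_soci}(1) to the module $M/N$. It gives $\rad^i(D(M/N))=(\soc^i(M/N))^{\perp}=(L/N)^{\perp}$, where this last annihilator is taken inside $D(M/N)$. It remains to check that $\phi$ sends $(L/N)^\perp$ onto $L^\perp$. For $g\in D(M/N)$, the composite $g\circ p$ vanishes on $L$ if and only if $g$ vanishes on $p(L)=L/N$. Moreover, every element of $L^\perp$ lies in $N^\perp$ because $N\subseteq L$, so it lies in the image of $\phi$. Hence $\phi((L/N)^\perp)=L^\perp$, and therefore $\rad^i(N^\perp)=L^\perp$.

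There is no serious obstacle. The only point needing care is that the radical series is intrinsic, so it is transported by the isomorphism $\phi$, whereas the annihilators are computed inside different ambient duals. This is why the check that $\phi$ carries $(L/N)^\perp$ onto $L^\perp$ is done explicitly, using $N\subseteq L$. The case $i=0$ is trivial: then $L=N$, and both sides equal $N^\perp$.
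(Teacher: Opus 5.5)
Your argument is correct, but it takes a slightly different route from the paper.

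The paper first proves the inclusion $\rad^i(N^\perp)=N^\perp J^i\subseteq L^\perp$ directly, using $J^iL\subseteq N$ (Lemma \ref{isoc_char}). It then upgrades this to an equality by comparing lengths, via the chain of abstract isomorphisms $\rad^i(N^\perp)\cong\rad^i(D(M/N))\cong D((M/N)/\soc^i(M/N))\cong D(M/L)\cong L^\perp$. This chain uses only Lemma \ref{dual_perp}(1) and part (2) of Proposition \ref{duality_radi_soci}.

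You instead apply the sharper part (1) of that proposition to $M/N$, which gives an equality of submodules inside $D(M/N)$. You then push this equality forward along the concrete embedding $Dp$, checking that $Dp$ carries the annihilator of $L/N$ onto $L^\perp$ because $N\subseteq L$.

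Your version needs no separate inclusion argument and no comparison of lengths. It also makes clear that the lemma is just Proposition \ref{duality_radi_soci}(1) for $M/N$, read through the identification $N^\perp={\rm Im}(Dp)$. The paper's version relies only on the abstract isomorphisms $N^\perp\cong D(M/N)$ and $D(M/L)\cong L^\perp$, so it never has to track how these identifications act on submodules. The price is the extra inclusion-plus-length step.
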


\vspace{-1.5pt}

\noindent{\it Proof.} Assume that $\soc^i(M/N)=L/N$ with $N\subseteq L\subseteq M,$ for some $i\ge 0.$ Then, $L^\perp \subseteq N^\perp$. Since $J^iL\subseteq N$ by Lemma \ref{isoc_char}, we see that $\rad^i(N^\perp)= N^\perp J^i \subseteq L^\perp.$ Now, we deduce from Lemma \ref{dual_perp}(1) and Proposition \ref{duality_radi_soci}(2) that
\vspace{-2pt}
$$\rad^i(N^\perp) \cong \rad^i(D(M/N))
\cong D((M/N)/\soc^i(M/N))\cong D(M/L)\cong L^\perp.\vspace{-3pt}$$  

Therefore, $\rad^i(N^\perp)=L^\perp$. The proof of the lemma is completed.

%\vspace{-1pt}

\subsection{\sc Top-basis and soc-basis} Let $M$ be a module in $\mmod A$ and $e$ a primitive idempotent in $A.$ An element $m$ in $eM$ is called a {\it top-element} if $m\not\in \rad M$ and a {\it soc-element} if $0\ne m\in \soc M.$ Moreover, we put ${\rm ann}_{Ae}(m):=\{u\in Ae \mid u m=0\}.$ The following statement is evident. 
%
%\begin{Lemma}\label{smod} Let $A$ be an artin algebra with a primitive idempotent $e$. If $M$ is a module in $\mmod A$, then $M\cong Ae/Je$ if and only if $M=Am$, where $m\in eM$ such, for any $u\in Ae$, that $um=0$ if and only if $u\in Je.$\end{Lemma}

\begin{Lemma}\label{top-element} 

Let $A$ be an artin algebra with a primitive idempotent $e$. Consider a module $M$ in $\mmod A$. If $m$ is an element in $eM,$ then 

\begin{enumerate}[$(1)$]

\vspace{-1.5pt}

\item $m$ is a soc-element if and only if $Am\cong Ae/Je\hp;$ in this case, ${\rm ann}_{Ae}(m)=Je;$

\item $m$ is a top-element if and only if the element $m+\rad M$ in $\ntop M$ is such that $A(m+\rad M)\cong Ae/Je\hp;$ in this case, ${\rm ann}_{Ae}\hp (m+\rad M)=Je.$ 

\end{enumerate} \end{Lemma}

%\noindent{\it Proof.} (1) The sufficiency is evident. Assume that $0\ne m+\rad M\in \ntop M.$ Since $m\in eM$, we have a nonzero epimorphism $f: Ae\to A(m+\rad M),$ sending $e$ to $m+\rad M.$ Since $Je$ is the largest proper submodule of $Ae$, we have $\Ker f=Je.$ Therefore, $A(m+\rad M)\cong Ae/Je$ and ${\rm ann}_{Ae}(m+\rad M)=Je.$ 
%
%(2) The sufficiency is evident. Assume that $0\ne m \in \soc M.$ Since $\rad(\soc M)=0$, we see that $m$ is a top-element of $\soc M,$ and by Statement (1), $Am\cong Ae/Je$ and ${\rm ann}_{Ae}(m)=Je.$ The proof of the lemma is completed.

\smallskip

More generally, we introduce the following definition.

\begin{Defn}\label{top-basis} 

Let $A$ be an artin algebra. Consider a module $M$ in $\mmod A$.

\begin{enumerate}[$(1)$]

\vspace{-2pt}

\item  A set $\{m_1, \ldots, m_s\}$ \vp of top-elements in $M$ is called {\it top-free} if $\hspace{1pt} \sum_{i=1}^s \hspace{-2pt} A(m_i+\rad M)$ is a direct sum; and  a {\it top-basis} for $M$ if $\ntop M=\oplus_{i=1}^s A(m_i + \rad M)$. 

%\vp

\item A set $\{m_1, \ldots, m_s\}$ of soc-elements in $M$ is called {\it soc-free} if $\textstyle\sum_{i=1}^s Am_i$ is a direct sum, and a {\it soc-basis} for $M$ if $\soc M=\oplus_{i=1}^s Am_i$. 

\end{enumerate}\end{Defn}

\smallskip

The following result contains an alternative interpretation of a top-basis.

\begin{Lemma}\label{tb_gen}

Let $A$ be an artin algebra, and let $M$ be a module in $\mmod A$ with top-elements $m_i\in e_iM,$ where $e_i$ is a primitive idempotent in $A$, for $i=1, \ldots, s$. 

\begin{enumerate}[$(1)$]

\vspace{-2pt}

\item If $Ae_1, \ldots, Ae_s$ are pairwise non-isomorphic, then $\{m_1, \ldots, m_s\}$ is top-free.

\item The set $\{m_1, \ldots, m_s\}$ is a top-basis for $M$ if and only if it is a minimal gene\-rating set for $M.$

\end{enumerate} \end{Lemma}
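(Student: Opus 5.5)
For (1), I would work entirely inside $\ntop M$, which is a semisimple $A$-module, using Lemma~\ref{top-element}(2). That lemma gives, for each $i$, $S_i:=A(m_i+\rad M)\cong Ae_i/Je_i$, a simple module. To show $\sum_{i=1}^s S_i$ is direct, I argue by induction on $s$. Suppose $S_1+\cdots+S_{s-1}$ is direct. It is then a semisimple module whose simple summands are isomorphic to $Ae_1/Je_1,\ldots,Ae_{s-1}/Je_{s-1}$. If $S_s\cap(S_1\oplus\cdots\oplus S_{s-1})\neq 0$, simplicity forces $S_s\subseteq S_1\oplus\cdots\oplus S_{s-1}$. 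By Jordan--H\"older (or Krull--Schmidt for semisimple modules), this gives $Ae_s/Je_s\cong Ae_j/Je_j$ for some $j<s$. Since $Ae/Je\cong Ae'/Je'$ iff $Ae\cong Ae'$ for primitive idempotents (projective covers), this contradicts the hypothesis. Hence the sum is direct.

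For (2), I use that $\ntop M=M/\rad M$, together with Nakayama's lemma. The key identity is that for any elements $m_1,\dots,m_s$ one has
$$\textstyle\sum_i A(m_i+\rad M)=\bigl(\sum_i Am_i+\rad M\bigr)/\rad M .$$
Nakayama ($\rad M=JM$) then gives $\sum_i Am_i=M$ iff $\sum_i A(m_i+\rad M)=\ntop M$.

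\emph{Necessity.} Suppose $\{m_1,\ldots,m_s\}$ is a top-basis. Then $\{m_i\}$ generates $M$. For minimality, suppose some $m_j$ is removed and the rest still generate $M$. Then $\ntop M=\sum_{i\ne j}A(m_i+\rad M)$. Comparing lengths with $\ell(\ntop M)=\sum_{i=1}^s\ell(S_i)=s$ gives a contradiction, since the smaller sum has length at most $s-1$.

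\emph{Sufficiency.} Suppose $\{m_i\}$ is a minimal generating set, where minimality means no proper subset generates. Then $\ntop M=\sum_i S_i$. In a semisimple module, a sum of simple submodules contains a direct subsum equal to the whole sum. Choose a subset $I$ with $\ntop M=\oplus_{i\in I}S_i$. By Nakayama, $\{m_i\}_{i\in I}$ generates $M$, so minimality forces $I=\{1,\ldots,s\}$. Thus the sum is direct and $\{m_i\}$ is a top-basis.

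The only delicate point is the interpretation of ``minimal generating set.'' I read it as inclusion-minimal among generating sets of top-elements (equivalently, among all generating subsets of this set); with that reading the argument above goes through. The rest is routine.
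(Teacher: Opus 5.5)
Your proof is correct. In part (2) you follow the paper's route: reduce to $\ntop M$ using that $\rad M=JM$ is superfluous, then use that each $A(m_i+\rad M)$ is simple. The paper finishes with a single chain of equivalences: the sum is direct iff $A\overline{m}_i\cap\sum_{j\ne i}A\overline{m}_j=0$ for every $i$, iff (by simplicity) $A\overline{m}_i\not\subseteq\sum_{j\ne i}A\overline{m}_j$ for every $i$, which is irredundancy. You instead split it into a length count for necessity and the extraction of a direct subsum for sufficiency, which is only a cosmetic difference.

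Part (1), however, is argued differently. The paper works element-wise. Given $\sum_i u_im_i\in\rad M$ with $u_i\in Ae_i$, it multiplies by each member $f_p$ of a complete set of orthogonal primitive idempotents. Since $f_pAe_j\subseteq J$ whenever $Af_p\not\cong Ae_j$, all but at most one term vanish modulo $\rad M$, and Lemma~\ref{top-element}(2) then gives $f_pu_i\in Je_i$, hence $u_i\in Je_i$. You instead argue inside the semisimple module $\ntop M$: a simple submodule of $S_1\oplus\cdots\oplus S_{s-1}$ must be isomorphic to some $S_j$, and you then use the bijection between isoclasses of simple modules and of indecomposable projectives.

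Your route is shorter and in fact proves the general fact that pairwise non-isomorphic simple submodules of any module form a direct sum. The paper's route stays within the idempotent calculus used throughout Section~1 and yields directly the element-level statement that every coefficient $u_i$ lies in $Je_i$.
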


\vspace{-1.5pt}

\noindent{\it Proof.} (1) Suppose that $Ae_i\not\cong Ae_j$ for $i\ne j$. Let $u_1m_1+\cdots +u_sm_s\in \rad M,$ where $u_i\in Ae_i$. Consider a complete set $\{f_1, \ldots, f_n\}$ of orthogonal primitive idempotents in $A$. Fix $1\le i\le s$. Given $1\le p\le n,$
we claim that $f_pu_i\in Je_i.$ If $Af_p \not\cong Ae_i$, then it is well known that $f_pu_i=f_pu_ie_i\in Je_i.$ Otherwise $Af_p \not\cong Ae_j,$ and hence, \vp \vp $f_pu_jm_j=(f_pu_je_j)m_j\in J M,$ for every $j\ne i.$ As a consequence, we have \vp $f_pu_im_i=f_p(\sum_{j=1}^su_jm_j) - \sum_{j\ne i} f_pu_jm_j \in \rad M.$ Then by Lemma \ref{top-element}(2), $f_pu_i\in Je_i.$ Our claim is established. Therefore, $u_i=\sum_{p=1}^nf_pu_i\in Je_i.$ \vspace{-.5pt} This shows that $\sum_{i=1}^s A(m_i+\rad M)$ is a direct sum, namely, $\{m_1, \ldots, m_s\}$ is top-free.

\vp \vp

(2) Since $\rad M$ is superfluous in $M,$ 
we see that $M=\sum_{i=1}^s A m_i $ if and only if $\ntop M=\sum_{i=1}^s A \overline m_i,$ where $\overline{m}=m+\rad M.$ And in this case, \vp $\ntop M=\oplus_{i=1}^s A \overline m_i$ if and only if $A \overline m_i \hp \cap \hp \sum_{j\ne i} A \overline m_j=0,$ for all $1\le i\le s$.
Since $A\overline m_i$ is simple by Lemma \ref{top-element}(2), 
the latter is equivalent to
$A\overline m_i \not\subseteq \sum_{j\ne i} A \overline m_j$ for all $1\le i\le s$, namely
$\{\overline m_1, \ldots, \overline m_s\}$ is a minimal generating set for $\ntop M$, or equivalently, $\{m_1, \ldots, m_s\}$ is a minimal generating set for $M$. The proof of the lemma is completed.

\smallskip

The following result allows us to construct top-bases and soc-bases for modules.

\begin{Lemma}\label{find_top-basis} Let $A$ be an artin algebra. Consider a module $M$ in $\mmod A$ and primitive idempotents $e_1, \ldots, e_s$ in $A$. Then 

\begin{enumerate}[$(1)$]

\vspace{-2.5pt}

\item $\ntop M \hspace{-1pt} \cong \hspace{-1pt} Ae_1  / \hspace{-1.5pt} Je_1 \oplus \cdots \oplus  Ae_r  / \hspace{-1.5pt} Je_r$ if and only if $M$ admits a top-basis $\{m_1, \ldots, m_r\},$ where $m_i\in e_iM,$ for $i=1, \ldots, r.$

\item $\soc M \hspace{-1pt} \cong \hspace{-1pt} Ae_1  / \hspace{-1.5pt} Je_1 \oplus \cdots \oplus  Ae_r  / \hspace{-1.5pt} Je_r$ if and only if $M$ admits a soc-basis $\{m_1, \ldots, m_r\},$ where $m_i\in e_iM,$ for $i=1, \ldots, r.$

\end{enumerate} \end{Lemma}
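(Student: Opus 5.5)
For both statements the plan is to translate between a decomposition of a semisimple module into simples and a choice of generators, using Lemma \ref{top-element}. I treat (2) first, because it is the more direct of the two. Note that the $e_i$ in the statement should be read as indexed $1,\dots,r$.

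For the sufficiency in (2), let $\{m_1,\ldots,m_r\}$ be a soc-basis with $m_i\in e_iM$. By Lemma \ref{top-element}(1), $Am_i\cong Ae_i/Je_i$. Hence $\soc M=\oplus_{i=1}^r Am_i\cong \oplus_{i=1}^r Ae_i/Je_i$.

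For the necessity in (2), fix an isomorphism $\phi:\oplus_{i=1}^r Ae_i/Je_i\to \soc M$. Put $m_i=\phi(\bar e_i)$, where $\bar e_i$ is the class of $e_i$ in the $i$-th summand. Since $\bar e_i=e_i\bar e_i$, we get $m_i=e_im_i\in e_iM$. Since $\phi$ is injective, $m_i\ne 0$, and $m_i$ lies in $\soc M$, so $m_i$ is a soc-element. Moreover $Am_i=\phi(Ae_i/Je_i)$, and $\phi$ is an isomorphism, so $\soc M=\oplus_i Am_i$. Thus $\{m_1,\ldots,m_r\}$ is a soc-basis.

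For (1) I run the same argument on $\ntop M$. For the necessity, take an isomorphism $\psi:\oplus_i Ae_i/Je_i\to \ntop M$ and let $\psi(\bar e_i)=x_i+\rad M$. The element $\psi(\bar e_i)$ lies in $e_i(\ntop M)$, so it can be lifted to $e_iM$. Concretely, set $m_i:=e_ix_i$. Then
\[
m_i+\rad M=e_i(x_i+\rad M)=e_i\psi(\bar e_i)=\psi(\bar e_i),
\]
so $m_i\in e_iM$ and $m_i\notin\rad M$; that is, $m_i$ is a top-element. As in (2), $\ntop M=\oplus_i A(m_i+\rad M)$, so $\{m_1,\ldots,m_r\}$ is a top-basis.

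For the sufficiency in (1), suppose $\{m_1,\ldots,m_r\}$ is a top-basis with $m_i\in e_iM$. By Lemma \ref{top-element}(2), $A(m_i+\rad M)\cong Ae_i/Je_i$. Summing over $i$ gives $\ntop M\cong\oplus_i Ae_i/Je_i$.

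The only point needing care is this lift in (1): the chosen representative must lie in $e_iM$, not merely have its image in $e_i(\ntop M)$. Multiplying an arbitrary representative by $e_i$, as above, settles it. Everything else is bookkeeping with Lemma \ref{top-element}.
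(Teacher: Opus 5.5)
Your proposal is correct and follows essentially the same route as the paper. Sufficiency comes from Lemma \ref{top-element}, and necessity takes generators of the simple summands $S_i\cong Ae_i/Je_i$ of $\ntop M$ lying in $e_i(\ntop M)$ and lifts them into $e_iM$ by multiplying a representative by $e_i$; the paper writes these elements as $e_im_i$. The paper proves only (1) and leaves (2) as analogous; you have written (2) out explicitly.
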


\noindent{\it Proof.} We shall only prove Statement (1). The sufficiency follows from Lemma  \ref{top-element}(2). Assume that $\ntop M = S_1 \oplus \cdots \oplus S_r$, where $S_i\cong Ae_i/Je_i.$ Note that 
$S_i=A(m_i+\rad M),$ for some $m_i+\rad M=e_i(m_i+\rad M)$, for $i=1, \ldots, r.$ Thus, $\{e_1m_1, \ldots, e_rm_r\}$ is a top-basis for $M.$
%(2) The sufficiency follows from Lemma \ref{top-element}(2). Let $\soc M = S_1 \oplus \cdots \oplus S_t$, where $S_i\cong Ae_i/Je_i,$ for $i=1, \ldots, t$. In particular, we have non-zero epimorphisms $f_i: Ae_i\to S_i$, for $i=1, \ldots, t.$ Write $f_i(e_i)=m_i$. Then $m_i=e_im_i\in e_iM$ and $S_i\cong Am_i$, for $i=1, \ldots, t.$ By definition, $\{m_1, \ldots, m_t\}$ is a soc-basis for $M$. 
The proof of the lemma is completed.

\smallskip

The following result, adapted from the finite-dimensional case; see \cite[(1.1)]{LiM}, tells us how to construct projective cover of modules. 

\vspace{-1pt}

%For each simple module $S$ in $\mmod A$, we fix a projective cover $\pi\hspace{-1pt}_{_S}: P_S\to S$ and an injective envelope $\iota\hspace{-1pt}_{_S}: S\to I_S.$

%To conclude this subsection, we show how to construct a protective cover and injective envelope for modules in $\mmod A$.

%\begin{Lemma}\label{maps_to_inj} Let $A$ be an artin algebra with a primitive element  $e$. If $M$ is a module in $\mmod A$ with a soc-element $m\in eM$, then there exists a map $g: M \to D(eA),$ sending $m$ to $e^*.$\end{Lemma} \noindent{\it Proof.} By Corollary \ref{soc_indec_inj}, $Ae^*=\soc D(eA).$ Let $M\in \mmod A$ with a soc-element $m\in eM$. If $am=bm$ with $a,b\in A$, then $(a-b)m=0$, and by Lemma \ref{top-element}, $a-b\in J.$ Then, by Lemma \ref{soc_indec_inj}, $(a-b)e^*=0.$ So $ae^*=be^*$. Therefore, we have a well-defined map $h: Am\to D(eA): am \mapsto a e^*,$ which is clearly $A$-linear. Since $D(eA)$ is injective, we have a commutative diagram  $$\xymatrixrowsep{30pt}\xymatrixcolsep{30pt}\xymatrix{ Am \ar@{>}[r]^q \ar[dr]_h& M \ar@{.>}[d]^g \\ & D(eA)}$$ in $\mmod A$. So $g(m)=gq(m)=h(m)=e^*.$ The proof of the lemma is completed.

\begin{Prop}\label{tb-projc-ienv}

Let $A$ be an artin algebra. Consider a module $M$ in $\mmod A$ with $m_i\in e_iM,$ where $e_i$ is a primitive idempotent in $A,$ for $i=1, \ldots, s.$ Then, $M$ admits a projective cover $f\hspace{-.5pt}: Ae_1\oplus \cdots \oplus Ae_s \to M,$ sending $e_i$ to $m_i,$ if and only if $\{m_1, \ldots, m_s\}$ is a top-basis for $M.$

\end{Prop}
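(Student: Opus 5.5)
The plan is to reduce both directions to statements about tops, using Lemma \ref{tb_gen}(2) and the fact that $\rad M$ is superfluous in $M$. Put $P=Ae_1\oplus\cdots\oplus Ae_s$. Since $m_i=e_im_i$, there is a unique $A$-linear map $f: P\to M$ with $f(e_i)=m_i$, namely $f(u_1,\ldots,u_s)=\sum_i u_im_i$ for $u_i\in Ae_i$. Its image is $\sum_i Am_i$. Recall that an epimorphism $f: P\to M$ with $P$ projective is a projective cover exactly when $\Ker f\subseteq \rad P$, or equivalently when $f$ induces an isomorphism $\ntop P\to \ntop M$. Here $\rad P=Je_1\oplus\cdots\oplus Je_s$ and $\ntop P=\oplus_i Ae_i/Je_i$.

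\emph{Sufficiency.} Assume $\{m_1,\ldots,m_s\}$ is a top-basis for $M$. By Lemma \ref{tb_gen}(2) it generates $M$, so $f$ is surjective. To get $\Ker f\subseteq \rad P$, take $(u_1,\ldots,u_s)\in\Ker f$. Then $\sum_i u_i(m_i+\rad M)=0$ in $\ntop M$. The sum $\sum_i A(m_i+\rad M)$ is direct, so $u_i(m_i+\rad M)=0$ for every $i$. Lemma \ref{top-element}(2) identifies ${\rm ann}_{Ae_i}(m_i+\rad M)=Je_i$, hence $u_i\in Je_i$. Therefore $\Ker f\subseteq\rad P$, and $f$ is a projective cover.

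\emph{Necessity.} Assume $f$ is a projective cover. Then $f$ is surjective, so $\{m_1,\ldots,m_s\}$ generates $M$. I first check that each $m_i$ is a top-element. If some $m_i\in\rad M$, then $M=\sum_{j\ne i}Am_j+\rad M$, so $M=\sum_{j\ne i}Am_j$ because $\rad M$ is superfluous. The restriction of $f$ to $\oplus_{j\ne i}Ae_j$ would then already be surjective, contradicting the minimality of the projective cover: indeed $Ae_i\not\subseteq\rad P$, while $P=\oplus_{j\ne i}Ae_j+\Ker f$ would force $Ae_i\subseteq \oplus_{j\ne i}Ae_j+\rad P$, which is impossible. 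For the same reason no proper subset of $\{m_1,\ldots,m_s\}$ generates $M$, so the set is a minimal generating set. Lemma \ref{tb_gen}(2) then gives that it is a top-basis.

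A cleaner route for necessity is to compare tops directly. Since $\ntop P\cong\ntop M$ via $\bar f$, a dimension count (or a count of lengths) shows that the images $m_i+\rad M$ of the simple summands $Ae_i/Je_i$ form a direct sum equal to $\ntop M$. The only point needing care is the superfluity argument establishing minimality, and that is routine.
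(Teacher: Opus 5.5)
Your proof is correct. Your sufficiency argument is the paper's; for necessity your main route is genuinely different from the paper's.

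The paper does not use Lemma \ref{tb_gen}(2) for necessity. It starts from a relation $\sum_i u_i(m_i+\rad M)=0$ with $u_i\in Ae_i$. It rewrites $\sum_i u_im_i\in\rad M=\sum_i Jm_i$ as $\sum_i v_im_i$ with $v_i\in Je_i$. It then observes that $(u_1-v_1,\ldots,u_s-v_s)\in\Ker f\subseteq\oplus_i Je_i$, so every $u_i\in Je_i$. This gives the directness of $\sum_i A(m_i+\rad M)$ and, implicitly (take $u_i=e_i$), that each $m_i$ is a top-element, all in one computation. That is exactly your ``cleaner route'' made explicit: it checks by hand that $\bar f:\ntop P\to\ntop M$ is injective.

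Your main route instead uses the superfluity of $\rad P$ to show that no proper subset of $\{m_1,\ldots,m_s\}$ generates $M$, and then invokes Lemma \ref{tb_gen}(2). This is valid. It makes the link between minimality of the cover and minimality of the generating set transparent. The cost is that you must separately verify $m_i\notin\rad M$, since Lemma \ref{tb_gen}(2) assumes top-elements, and you do supply this correctly.

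In your cleaner route, no length count is actually needed. The map $\bar f$ sends the summand $Ae_i/Je_i$ onto $A(m_i+\rad M)$, so the isomorphism carries the decomposition $\ntop P=\oplus_i Ae_i/Je_i$ directly onto $\ntop M=\oplus_i A(m_i+\rad M)$.
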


\vspace{-1.5pt}

\noindent{\it Proof.} Suppose that $\ntop M= \oplus_{i=1}^s A(m_i+\rad M).$ Since $M=\sum_{i=1}^s Am_i;$ see (\ref{tb_gen}), we have an epimorphism $f: Ae_1 \oplus \cdots \oplus A e_s \hspace{-.5pt}\to \hspace{-.5pt} M,$ sending $e_i$ to $m_i.$ Assume that  $\sum_{i=1}^s u_i m_i=0,$ where $u_i\in Ae_i.$ In particular, $\sum_{i=1}^s (u_im_i + \rad M)=0.$ Thus, $u_im_i\in \rad M,$ and by Lemma \ref{top-element}(2), $u_i\in Je_i$, for $i=1, \ldots, s$. Thus $\Ker f\in Je_1 \oplus \cdots \oplus Je_s,$ that is, $f$ is a projective cover for $M$.

Let $f\hspace{-.5pt}: \hspace{-.5pt} \oplus_{i=1}^s Ae_i \to M$ be a projective cover, sending $e_i$ to $m_i$. In particular, $\ntop M=\sum_{i=1}^s A(m_i+\rad M)$. Assume that \vp $\sum_{i=1}^s u_i(m_i+\rad M)=0,$ where $u_i\in Ae_i.$ Then $\sum_{i=1}^s u_im_i\in \sum_{j=1}^s J m_i,$ say, $\sum_{i=1}^s u_i m_i=\sum_{i=1}^s v_im_i$, where $v_i\in Je_i.$ Then, $(u_1-v_1, \ldots, u_s-v_s)\in \Ker f \subseteq \oplus_{i=1}^s Je_i.$ Then, $u_i\in Je_i,$ and 
$u_i(m_i+\rad M)=0,$ for $1\le i\le s$. Therefore, $\ntop M= \oplus_{i=1}^s A(m_i+\rad M),$ 
that is, $\{m_1,\dots,m_s\}$ is a top-basis for $M$. 
The proof of the proposition is completed.

%(2) Suppose that $\{m_1, \ldots, m_s\}$ is a soc-basis for $M$. Then $\soc M=\oplus_{i=1}^sAm_i.$ Since $Am_i$ is simple, by Lemma \ref{maps_to_inj}, we have monomorphisms $h_i: Am_i \to D(e_iA),$ sending $m_i$ to $e_i^*,$ for each $1\le i\le s$. This yields a monomorphism$$h=\oplus_{i=1}^s h_i: \oplus_{i=1}^sAm_i \to \oplus_{i=1}^sD(e_iA).$$ Consider the inclusion map $q: \soc M \to M$. Since $\oplus_{i=1}^sD(e_iA)$ is injective, we have the following commutative diagram: $$\xymatrixrowsep{30pt}\xymatrixcolsep{30pt}\xymatrix{\oplus_{i=1}^sAm_i \ar@{>}[r]^q \ar[dr]_h& M \ar@{.>}[d]_g \\& \oplus_{i=1}^sD(e_iA).}$$Since $q$ is an essential monomorphism and $h$ is a monomorphism, we see that $g$ is a monomorphism. By Lemmas \ref{soc_indec_inj} and \ref{top-element}(2), we see that$$\soc(\oplus_{i=1}^sD(e_iA))=\oplus_{i=1}^s\soc(D(e_iA))=\oplus_{i=1}^s Ae_i^*.$$ Since $e_i^*=h_i(m_i)=h(m_i)=g(m_i)$, for $i=1, \ldots, s,$ $\soc(\oplus_{i=1}^sD(e_iA))\subseteq \nim(g)$. Therefore, $g$ is an essential monomorphism, that is, an injective envelope of $M.$ Conversely, assume that there exists an injective envelope $g: M\to \oplus_{i=1}^s D(e_iA)$, sending $m_i$ to $e_i^*.$ In view of Lemma \ref{soc_indec_inj}, we see that $$\textstyle g({\oplus}_{i=1}^s Am_i)=\oplus_{i=1}^s Ae_i^*=\oplus_{i=1}^s \soc(D(e_iA))=\soc (\oplus_{i=1}^s D(e_iA)).$$By Lemma \ref{ess_mono}, $g(\oplus_{i=1} Am_i)=g(\soc M)$. Since $g$ is a monomorphism, we see that $\soc M=\oplus_{i=1}^s Am_i.$ That is, $\{m_1, \ldots, m_s\}$ is a soc-basis for $M$. The proof of the proposition is completed.

\subsection{\sc Local and colocal modules} A module $M$ in $\mmod A$ is called {\it local} if $\ntop M$ is simple, or equivalently, if $\rad M$ is the largest proper submodule of $M.$ Dually, $M$ is called {\it colocal} if $\soc M$ is simple, or equivalently, if $\soc M$ is the smallest nonzero submodule of $M.$ 
Clearly, $M$ is local or colocal if and only if $DM$ is colocal or local respectively. 
The following statement is evident. 
%follows from Lemmas \ref{top-element}, \ref{tb_gen} and \ref{find_top-basis}.
\vspace{-1pt}

\begin{Lemma}\label{loc_gen}

Let $A$ be an artin algebra with a primitive idempotent $e.$ 

\begin{enumerate}[$(1)$]

\vspace{-2pt}

\item A module $M$ in $\mmod A$ is local with $\ntop M \cong Ae/Je$ if and only if $M=Am$ with $0\ne m\in eM.$ In this case, every nonzero quotient module of $M$ is local.

\item A module $M$ in $\mmod A$ is colocal with $\soc M\cong Ae/Je$ if and only if $\soc M=Am$ with $0\ne m\in eM.$ In this case, every nonzero submodule of $M$ is colocal.

\end{enumerate} \end{Lemma}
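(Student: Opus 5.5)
Statement (1) follows from Lemma \ref{top-element}(2) and Lemma \ref{find_top-basis}(1), and Statement (2) is then obtained by duality.

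For the forward direction of (1), suppose $M$ is local with $\ntop M\cong Ae/Je$. By Lemma \ref{find_top-basis}(1) with $r=1$, $M$ admits a top-basis $\{m\}$ with $m\in eM$. By Lemma \ref{tb_gen}(2), a top-basis generates $M$, so $M=Am$, and $m\ne 0$ because $m\notin\rad M$.

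For the converse, let $M=Am$ with $0\ne m\in eM$. Since $M\ne 0$, the submodule $\rad M$ is proper, so $m\notin\rad M$ and $m$ is a top-element. Because $Am=M$, we get $\ntop M=A(m+\rad M)$. By Lemma \ref{top-element}(2) this is isomorphic to $Ae/Je$, which is simple, so $M$ is local with the stated top.

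For the last claim of (1), let $M/N$ be a nonzero quotient. Then $M/N=A(m+N)$ with $m+N=e(m+N)\ne 0$. Applying the criterion just proved to $M/N$ shows that $M/N$ is local with top $Ae/Je$.

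For (2), first let $M$ be colocal with $\soc M\cong Ae/Je$. Lemma \ref{find_top-basis}(2) with $r=1$ gives a soc-basis $\{m\}$ with $m\in eM$, so $\soc M=Am$ and $m\ne0$.

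Conversely, suppose $\soc M=Am$ with $0\ne m\in eM$. Then $m$ is a soc-element, so by Lemma \ref{top-element}(1) $Am\cong Ae/Je$, which is simple. Hence $M$ is colocal with the stated socle.

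For the last claim of (2), let $N$ be a nonzero submodule of $M$. Then $\soc N\ne 0$, and $\soc N\subseteq \soc M$, which is simple. So $\soc N=\soc M$, and $N$ is colocal with socle $Ae/Je$. Alternatively, this claim follows from the quotient statement in (1) applied to $DM$ over $A^\circ$, since $M$ is colocal if and only if $DM$ is local.

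The only point needing care is the converse in (1), namely that the generator $m$ is a top-element. This reduces to noting that $\rad M$ is proper when $M\ne0$, since it is superfluous in $M$. Everything else is direct bookkeeping from the cited lemmas.
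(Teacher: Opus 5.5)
Your proof is correct and is essentially the argument the paper intends: the paper states this lemma as evident without writing out a proof, and the natural justification is the one you give, via Lemmas \ref{top-element}, \ref{tb_gen} and \ref{find_top-basis}. Your direct treatment of (2), including the observation that $\soc N=\soc M$ for every nonzero submodule $N$, is complete, so the opening remark that (2) is ``obtained by duality'' is unnecessary.
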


%\smallskip
%The following statement is evident.
%\begin{Lemma}\label{sim_soc}
%Let $A$ be an artin algebra. Consider a module $M$ in $\mmod A$.
%\begin{enumerate}[$(1)$]\vspace{-1.5pt}
%\item If $M$ is local, then 
%$\ntop(M/N)\cong \ntop M$, for any proper submodule $N$ of $M$. 
%\item If $M$ is colocal, then $\soc N=\soc M$, for any nonzero submodule $N$ of $M$. 
%\end{enumerate}\end{Lemma}

\smallskip

 The following easy result will be needed in our later investigation.

 \vspace{-1pt}

\begin{Lemma}\label{maxi_local}

Let $A$ be an artin algebra.  Consider a module $M$ in $\mmod A$.

\begin{enumerate}[$(1)$]

\vspace{-2pt}

\item If $M$ is local such that $\ell(M)\ge \ell(P)$ for every indecomposable projective module $P$ in $\mmod A$, then $M$ is projective.

\item If $M$ is colocal such that $\ell(M)\ge \ell(I)$ for every indecomposable injective module $I$ in $\mmod A$, then $M$ is injective.
    
\end{enumerate} \end{Lemma}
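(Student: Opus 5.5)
For statement (1), I will use the projective cover of $M$. Since $M$ is local, $\ntop M\cong Ae/Je$ for some primitive idempotent $e$. By Lemma \ref{loc_gen}(1) we can write $M=Am$ with $0\ne m\in eM$, and $m$ is a top-element of $M$. Thus $\{m\}$ is a top-basis for $M$, and Proposition \ref{tb-projc-ienv} yields a projective cover $f: Ae\to M$ sending $e$ to $m$. In particular $f$ is an epimorphism, so $\ell(M)\le \ell(Ae)$. Since $Ae$ is an indecomposable projective module, the hypothesis gives $\ell(M)\ge \ell(Ae)$. Hence $\ell(\Ker f)=\ell(Ae)-\ell(M)=0$, so $f$ is an isomorphism and $M\cong Ae$ is projective.

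Statement (2) should follow from (1) by the standard duality $D$. If $M$ is colocal, then $DM$ is a local module in $\mmod A^\circ$, and $\ell(DM)=\ell(M)$. Every indecomposable projective module in $\mmod A^\circ$ has the form $DI$ with $I$ an indecomposable injective module in $\mmod A$, and $\ell(DI)=\ell(I)$. The hypothesis therefore says that $\ell(DM)\ge \ell(Q)$ for every indecomposable projective $A^\circ$-module $Q$. Applying (1) to the artin algebra $A^\circ$ shows that $DM$ is projective, so $M\cong D(DM)$ is injective.

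There is no real obstacle here. The only point needing care is the identification of indecomposable projective $A^\circ$-modules with the duals of indecomposable injective $A$-modules, which is standard. Alternatively, one can argue directly with an injective envelope $M\hookrightarrow I_M$: since $\soc M$ is simple, $I_M$ is indecomposable, and the length comparison again forces equality.
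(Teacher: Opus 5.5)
Your proposal is correct and follows essentially the same route as the paper. The paper also writes $M=Am$ with $0\ne m\in eM$ via Lemma~\ref{loc_gen}(1), takes the projective cover $Ae\to M$, and concludes by comparing lengths; it leaves (2) to duality, and your reduction of (2) to (1) over $A^\circ$ via $D$ is a correct way to carry that out.
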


\vspace{-2.5pt}

\noindent{\it Proof.} We only prove Statement (1). Assume that $\ntop M\cong Ae/Je,$ where $e$ is a primitive idempotent in $A$. By Lemma \ref{loc_gen}, $M=Am$ for some $0\ne m\in eM$. Then we have a projective cover $f: Ae\to M$, where $Ae$ is indecompo\-sable. If $\ell(M)\ge \ell(Ae)$, then $f$ is an isomorphism. The proof of the lemma is completed.

%\begin{Lemma}\label{coloc_ll} Let $A$ be an artin algebra. Consider a module $M$ of Lowey length $n$ in $\mmod A$. If $M$ is colocal, then $\soc M=\rad^{n-1}M.$ \end{Lemma}
%\noindent{\it Proof.} By definition, $0\ne \rad^{n-1}M\subseteq \soc M$. If $\soc M$ is simple, then $\rad^{n-1}M=\soc M.$ The proof of the lemma is completed.

\vspace{-2pt}

\subsection{\sc Uniserial modules} A module $M$ in $\mmod A$ is called {\it uniserial} if any two of its submodules are comparable. This is the case if and only if $M$ has an unique composition series, or equivalently, $\ell(M)=\ell\ell(M)$. An artin algebra $A$ is called a {\it Nakayama algebra} if all indecomposable projective and injective modules in $\mmod A$ are uniserial. We will need the following easy result.

\begin{Prop}\label{uni_pi}

Let $A$ be an artin algebra. Consider a module $M$ in $\mmod A$ of length $s$ with projective cover $P$ and injective envelope $I$. 

\begin{enumerate}[$(1)$]

\vspace{-2pt}

\item If $\ntop^s\hspace{-1.2pt}P$ is uniserial, then $M\cong \ntop^s\hspace{-1pt}P$. In particular, $M$ is uniserial.

\item If $\soc^s\hspace{-1.2pt}I$ is uniserial, then $M\cong \soc^s\hspace{-1pt}I$. In particular, $M$ is uniserial.

\end{enumerate}\end{Prop}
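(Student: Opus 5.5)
We prove only Statement (1); Statement (2) follows by applying (1) to $DM$ over $A^\circ$. Indeed, $DI$ is a projective cover of $DM$, and Proposition \ref{duality_radi_soci}(2) gives $\ntop^s(DI)\cong D(\soc^s I)$. A module is uniserial if and only if its dual is, since $D$ reverses the submodule lattice.

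So assume that $\ntop^s P$ is uniserial, and let $f\colon P\to M$ be a projective cover. The plan is to show that $f$ factors through the canonical projection $P\to \ntop^s P$, and then to compare lengths.

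First, since $\ell(M)=s$, we have $\ell\ell(M)\le s$, so $\rad^s M=J^sM=0$. As $f$ is surjective, $f(\rad^s P)=f(J^sP)=J^s f(P)=J^sM=0$, hence $\rad^sP\subseteq \Ker f$. Therefore $f$ induces an epimorphism $\bar f\colon \ntop^sP\to M$.

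Second, we record the structure of $\ntop^sP$. It is uniserial, so it is local, and thus $\ntop P\cong \ntop(\ntop^sP)$ is simple. Consequently $P$ is indecomposable and $M$ is local. Since $\ntop^sP$ is uniserial, its length equals its Loewy length, which by Lemma \ref{topi_LL} is $\min(s,\ell\ell(P))$.

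Third, we compare lengths. If $\ell\ell(P)\ge s$, then $\ell(\ntop^sP)=s=\ell(M)$, so the epimorphism $\bar f$ is an isomorphism and $M\cong \ntop^sP$. If instead $\ell\ell(P)<s$, then $\ntop^sP=P$ has length less than $s$, yet it maps onto $M$ of length $s$, which is impossible.

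The only delicate point is this case distinction, that is, making sure $\ntop^sP$ genuinely has length $s$; the surjection onto $M$ handles it, as shown above. In either case $M\cong\ntop^sP$ is uniserial, which proves the proposition.
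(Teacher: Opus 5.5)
Your proof is correct and follows essentially the same route as the paper. In both, the projective cover $P\to M$ kills $\rad^s P$ because $\rad^s M=0$, which gives an epimorphism $\ntop^s P\to M$, and comparing lengths finishes the argument. You also make explicit the case $\ell\ell(P)<s$, which the paper's appeal to Lemma \ref{topi_LL} passes over, and you write out the duality argument for (2), which the paper leaves to the reader.
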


\vspace{-1.5pt}

\noindent{\it Proof.} We only prove Statement (1). Let $f: P\to M$ be a projective cover, which induces an epimorhism $\bar{f}: \ntop^s\hspace{-1pt}P\to \ntop^s\hspace{-1pt}M.$ 
Assume that $\ntop^s\hspace{-1.5pt}P$ is uniserial. Then, so is $\ntop^s\hspace{-1pt}M.$ If $\rad^s\hspace{-1pt}M\ne 0$, then $s<\ell\ell(M)\le \ell(M)=s,$ absurd. So, we have an epimorphism $\bar{f}: \ntop^s\hspace{-1pt}P\to M$. Then, $s=\ell(M)\le \ell(\ntop^s\hspace{-1pt}P)=s\hp ;$ see (\ref{topi_LL}). Thus $\ell(M)=\ell(\ntop^s\hspace{-1pt}P).$ Hence, $\ntop^s\hspace{-1pt}P\cong M$. 
%
%(2) Note that $DM$ is of length $s$ with a projective cover $DI$. By Proposition \ref{duality_radi_soci}(2), $\ntop^s\hspace{-.5pt}(DI)\cong D(\soc^s\hspace{-1pt}I).$ If $\soc^s\hspace{-1pt}I$ is uniserial, then so is $\ntop^s\hspace{-.5pt}(DI);$ by Statement (1), $DM\cong \ntop^s\hspace{-.5pt}(DI)\cong D(\soc^s\hspace{-1pt}I)$. 
The proof of the proposition is completed.

\subsection{\sc Projective-injective modules} A module in $\mmod A$ is called {\it projective-injective} if it is projective and injective. We shall need the following easy result.

\begin{Lemma}\label{pi_corresp}

Let $A$ be an artin algebra with primitive idempotents $e_1, e_2$. 

\begin{enumerate}[$(1)$]

\vspace{-2.2pt}

\item If $Ae_1$ is injective such that $\soc(Ae_1)\cong Ae_2/Je_2$, then
$e_2A$ is injective such that $\soc(e_2A)=e_2 \hspace{1pt}\soc(Ae_1) A\cong e_1A/e_1J.$

%\vp

\item If $e_1A$ is injective such that $\soc(e_1A)\cong e_2A/e_2J,$ then $Ae_2$ is injective such that $\soc(Ae_2)=A \hspace{1pt} \soc(e_1A) e_2 \cong Ae_1/Je_1.$

\end{enumerate}\end{Lemma}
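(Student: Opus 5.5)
The plan is to identify $Ae_1$ with the injective envelope of its socle and then dualize. Since $Ae_1$ is indecomposable injective with $\soc(Ae_1)\cong Ae_2/Je_2$, it is an injective envelope of $Ae_2/Je_2$. So $Ae_1\cong D(e_2A)$, which gives $e_2A\cong D(Ae_1)$. As $Ae_1$ is projective, $D(Ae_1)$ is injective in $\mmod A^\circ$, and hence $e_2A$ is injective. Moreover $D(Ae_1)$ is colocal, because $Ae_1$ is local. By Proposition \ref{duality_radi_soci}(3), $\soc(e_2A)\cong D(\ntop(Ae_1))\cong D(Ae_1/Je_1)\cong e_1A/e_1J$. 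It remains to establish the explicit equality $\soc(e_2A)=e_2\,\soc(Ae_1)A$.

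By Lemma \ref{loc_gen}(2), write $\soc(Ae_1)=Am$ with $0\ne m\in e_2Ae_1$. The key step is to show that $mJ=0$. Take $x\in e_1Je$ with $e$ a primitive idempotent, and consider right multiplication $\rho_x\colon Ae_1\to Ae$, which is a radical map between indecomposable projectives. If $\rho_x$ were a monomorphism, it would split because $Ae_1$ is injective. Since $Ae$ is indecomposable, $\rho_x$ would then be an isomorphism, contradicting $x\in J$. Hence $\rho_x$ is not injective. Because $Ae_1$ is colocal, its kernel is a nonzero submodule and therefore contains $\soc(Ae_1)$; thus $mx=0$. Summing over a complete set of primitive idempotents gives $mJ=m\,e_1J=0$.

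Consequently $mA=m\,e_1A$ is a nonzero quotient of $e_1A/e_1J$, so it is a simple right submodule of $e_2A$. Since $e_2A$ is colocal, $mA=\soc(e_2A)$. Now $\soc(Ae_1)=Am$ is simple and isomorphic to $Ae_2/Je_2$, so $e_2\,\soc(Ae_1)=e_2Am=e_2Ae_2\,m$. For each $d\in e_2Ae_2$, the right module $dmA$ is an epimorphic image of the simple module $mA$, so it is zero or simple. It lies in $e_2A$, so it is contained in $\soc(e_2A)=mA$. Therefore $e_2\,\soc(Ae_1)A=mA=\soc(e_2A)$, as claimed. Statement (2) follows by the symmetric argument, applied over $A^\circ$.

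The main obstacle is the vanishing $mJ=0$. The argument above handles it through the splitting of monomorphisms out of an injective module together with colocality. The remaining steps are routine identifications.
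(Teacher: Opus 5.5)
Your proof is correct. Its first half, identifying $Ae_1\cong D(e_2A)$ and reading off $\soc(e_2A)\cong e_1A/e_1J$ from Proposition~\ref{duality_radi_soci}, coincides with the paper's. For the equality $\soc(e_2A)=e_2\,\soc(Ae_1)A$ you take a different route.

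The paper argues with Loewy length. Since $e_2A\cong D(Ae_1)$, both modules have the same Loewy length $n$. Since both are colocal, $\soc(Ae_1)=J^{n-1}e_1$ and $\soc(e_2A)=e_2J^{n-1}$. Hence $0\ne e_2\,\soc(Ae_1)A=e_2J^{n-1}e_1A\subseteq e_2J^{n-1}=\soc(e_2A)$, and simplicity gives equality. In that argument the vanishing you single out as the main obstacle comes for free, since $mJ\subseteq e_2J^{n}=0$.

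You instead obtain $mJ=0$ by a splitting argument. A radical map $Ae_1\to Ae$ cannot be a monomorphism, since it would split off the injective $Ae_1$ from the indecomposable $Ae$, so its kernel contains the socle. This uses the injectivity of $Ae_1$ a second time and takes a few more lines. In exchange, it is independent of the Loewy-length bookkeeping, and it shows directly that the socle of an indecomposable projective-injective module is also annihilated by $J$ from the right, i.e.\ $\soc(Ae_1)\subseteq\soc(A_A)$. The paper's version is shorter because it extracts everything from the duality $e_2A\cong D(Ae_1)$ it already has in hand.

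Two small points in your last step:
\begin{enumerate}[$(1)$]
\item You should record the inclusion $mA\subseteq e_2\,\soc(Ae_1)A$; it is immediate from $m=e_2m$.
\item The argument over $d\in e_2Ae_2$ can be replaced by noting that $\bigl(e_2\,\soc(Ae_1)A\bigr)J=e_2AmAJ=e_2AmJ=0$. So this right submodule of $e_2A$ lies in $\soc(e_2A)$ by Lemma~\ref{isoc_char} applied over $A^\circ$.
\end{enumerate}
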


\vspace{-1.5pt}

\noindent{\it Proof.} We only prove Statement (1). Let $Ae_1$ be injective and $\soc(Ae_1)\cong Ae_2/Je_2$. Then, $Ae_1$ is an injective envelope of $Ae_2/Je_2,$ and consequently, $e_2A \cong D(Ae_1).$ By Proposition \ref{duality_radi_soci}, $\soc(e_2A) 
%\cong D(\ntop (Ae_1))
\cong e_1A/e_1J.$ Put $n:=\ell\ell(Ae_1)=\ell\ell(e_2A).$ Since $Ae_1$ and $e_2A$ are colocal, we see that $\soc(Ae_1)=J^{n-1}e_1$ and $\soc(e_2A)=e_2J^{n-1}.$ Thus, $0\ne e_2 \hspace{1pt}\soc(Ae_1) A = e_2 \hspace{1pt} J^{n-1}e_1 A = \soc(e_2A) \,e_1A \subseteq \soc(e_2A).$ Therefore, $\soc(e_2A)=e_2 \hspace{1pt}\soc(Ae_1) \, e_1 A\cong e_1A/e_1J.$ The proof of the lemma is completed. 

\vspace{-1pt}

\subsection{\sc Short exact sequences} For convenience of reference, we state several short exact sequences related to submodules of modules and leave their proofs to the reader. Most almost split sequences appearing in this paper are of these forms. 

\begin{Lemma}\label{ses_sum_itsec}

Let $A$ be an artin algebra. Consider a module $M$ in $\mmod A$ with two submodules $M_1$ and $M_2.$ Then we have two short exact sequences as follows$\,:$

\begin{enumerate}[$(1)$]

\vspace{-2.5pt}
    
\item $\xymatrix{0\ar[r]& M_1\cap M_2 \ar[r]^-{(q_1, \, q_2)^T}  & M_1 \oplus M_2 \ar[r]^-{(-q_3, \, q_4)} & M_1+M_2 \ar[r] & 0,}$

\vspace{1.5pt}

\item[] \hspace{2pt} where the $q_i$ are inclusion maps$\,;$ 

\vspace{-1pt}

\item $\xymatrixcolsep{26pt} \xymatrix{0\ar[r]& M/(M_1\cap M_2)\ar[r]^-{(p_1, \, p_2)^T}  & M/M_1 \!\oplus\! M/M_2 \ar[r]^-{(-p_3, \, p_4)} & M/(M_1+M_2) \ar[r] & 0,}$ 

\item[] where the $p_j$ are canonical projections. 

\end{enumerate}\end{Lemma}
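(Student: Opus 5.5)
For Statement (1), I will define the two maps explicitly and then check exactness at each of the three spots. Write $q_1\colon M_1\cap M_2\to M_1$, $q_2\colon M_1\cap M_2\to M_2$, $q_3\colon M_1\to M_1+M_2$ and $q_4\colon M_2\to M_1+M_2$ for the inclusions. The first map is $x\mapsto (x,x)$ and the second is $(y,z)\mapsto -y+z$.

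Exactness at $M_1\cap M_2$ holds because $q_1$ alone is injective. Exactness at $M_1+M_2$ is surjectivity: given $y+z$ with $y\in M_1$ and $z\in M_2$, it is the image of $(-y,z)$. For the middle term, the composite sends $x$ to $-x+x=0$. Conversely, if $-y+z=0$, then $y=z$ lies in $M_1\cap M_2$, so $(y,z)=(x,x)$ with $x=y$. All the maps are $A$-linear since they are built from inclusions.

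For Statement (2), I will again use explicit maps: the first is $m+(M_1\cap M_2)\mapsto (m+M_1,\,m+M_2)$ and the second is $(a+M_1,\,b+M_2)\mapsto -a+b+(M_1+M_2)$. Both are well defined because $M_1\cap M_2\subseteq M_i\subseteq M_1+M_2$.

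The checks are as follows.
\begin{enumerate}[$(1)$]
\item Injectivity: if $m\in M_1$ and $m\in M_2$, then $m\in M_1\cap M_2$.
\item Surjectivity: the map $p_4$ alone is already onto.
\item The composite is zero, since $-m+m=0$.
\item Middle exactness: suppose $-a+b\in M_1+M_2$, say $-a+b=u+v$ with $u\in M_1$ and $v\in M_2$. Set $m:=a+u=b-v$. Then $m+M_1=a+M_1$ and $m+M_2=b+M_2$, so the pair $(a+M_1,\,b+M_2)$ is the image of $m+(M_1\cap M_2)$.
\end{enumerate}

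Alternatively, (2) follows by applying the duality $D$ to (1) for the submodules $M_1^\perp$ and $M_2^\perp$ of $DM$, using Lemma \ref{dual_perp}. I prefer the direct check, which is equally short.

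The only step needing any care is the middle exactness in (2), namely choosing the lift $m=a+u$. Everything else is immediate from the definitions.
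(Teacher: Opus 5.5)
Your proof is correct: the element-wise exactness checks, including the lift $m=a+u=b-v$ for middle exactness in (2), are the routine verification intended here. The paper states this lemma without proof and leaves it to the reader, and your argument is the natural way to fill that in.
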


%\smallskip

\vspace{1pt}

We also need the following straightforward result.

\vspace{-2pt}

\begin{Lemma} \label{ses_mix}

Let $A$ be an artin algebra. If $M$ is a module in $\mmod A$ with sub\-modules $N_0\subseteq N\subseteq L,$ then we have a short exact sequence \vspace{-6pt}  
$$\xymatrixcolsep{26pt} \xymatrix{0\ar[r]& L/N_0\ar[r]^-{(q_0, \, p_0)^T}  & M/N_0\oplus L/N  \ar[r]^-{(p, \, -q)} & M/N \ar[r] & 0,}\vspace{-5pt} $$  where $q_0, q$ are inclusion maps, and $p_0, p$ are canonical projections.

\end{Lemma}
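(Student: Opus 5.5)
We will check directly that the displayed sequence is exact at each of its three places. The maps are $(q_0,p_0)^T: L/N_0\to M/N_0\oplus L/N$, sending $x+N_0$ to $(x+N_0,\,x+N)$, and $(p,-q): M/N_0\oplus L/N\to M/N$, sending $(y+N_0,\,z+N)$ to $(y-z)+N$. Here $q_0: L/N_0\to M/N_0$ and $q: L/N\to M/N$ are the inclusions, and $p_0: L/N_0\to L/N$ and $p: M/N_0\to M/N$ are the canonical projections. All four maps are well defined and $A$-linear, because $N_0\subseteq N\subseteq L$.

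First, we show that $(q_0,p_0)^T$ is a monomorphism. This is immediate since its first component $q_0$ is already injective.

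Second, we show that $(p,-q)$ is an epimorphism. This holds since $p$ is already surjective: $(y+N_0,0)\mapsto y+N$.

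Third, we establish exactness in the middle. The composite $(p,-q)\circ(q_0,p_0)^T$ sends $x+N_0$ to $(x-x)+N=0$, so the image lies in the kernel. Conversely, suppose $(y+N_0,\,z+N)$ lies in the kernel, with $y\in M$ and $z\in L$. Then $y-z\in N\subseteq L$, so $y=z+(y-z)\in L$. Hence $x:=y$ is an element of $L$ with $q_0(x+N_0)=y+N_0$. Moreover, $p_0(x+N_0)=y+N=z+N$, because $y-z\in N$. This proves exactness in the middle.

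Alternatively, one could derive the sequence from Lemma \ref{ses_sum_itsec}(2), applied to the module $M/N_0$ with the submodules $L/N_0$ and $N/N_0$. Since $(L/N_0)\cap (N/N_0)=N/N_0$, however, that route does not directly give the stated sequence, so the direct check above is cleaner. The only point requiring any care is the middle-exactness step, namely that $y-z\in N$ forces $y\in L$. This follows from the inclusion $N\subseteq L$, and there is no genuine obstacle.
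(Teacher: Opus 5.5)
Your proof is correct and follows the route the paper intends. The paper gives no argument for this lemma: it calls it straightforward and leaves it to the reader. Your element-wise check of injectivity, surjectivity and middle exactness (using $y-z\in N\subseteq L$ to conclude $y\in L$) is exactly that routine verification.
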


%\begin{Lemma}\label{ses_mix} Let $A$ be an artin algebra with $M$ a module in $\mmod A$. If $S, N$ are submodules of $M$ with $S\subseteq N,$ then there exists a short exact sequence $$\xymatrixcolsep{32pt} \xymatrix{0\ar[r]& N\ar[r]^-{(q_1, \, p_1)^T}  & M\oplus N/S  \ar[r]^-{(-p_2, \, q_2)} & M/S \ar[r] & 0,}$$ where $q_1, q_2$ are inclusion maps, and $p_1, p_2$ are canonical projections.\end{Lemma}

\section{\sc Computing almost split sequences} 

Almost split sequences are explicitly computed for Nakayama algebras; see, for example, \cite[p.~ 197]{ARS}. Most prominently, for any string algebra given by a quiver with relations, Butler and Ringel described all almost split sequences by using the combinatorial structure of the bound quiver; see \cite{BuR}. In this section, we compute almost split sequences for certain uniserial modules over a general artin algebra, using only the definition of the dual of the transpose functor and some basic properties of almost split sequences. These ideas and results will be applied in Section 4 to quadri-biserial algebras, see (\ref{4str_alg_def}), and may be applied in the future to a broader class of biserial algebras and even to multiserial algebras studied in \cite{VHW}.

\subsection{\sc General criteria} Almost split sequences admit various characterizations, such as those stated in \cite[Chapter V]{ARS}. Applying Proposition 2.2 therein, we derive two sufficient conditions for a short exact sequence to be almost split.

\begin{Lemma}\label{ass-sim}

Let $A$ be an artin algebra. Consider a nonsplit short exact sequence \vspace{-9.5pt}
$$\xymatrix{\delta: \hspace{-20pt} & 0\ar[r] & X\ar[r]^-f & Y \ar[r]^-g & Z\ar[r] & 0}\vspace{0pt}$$ in $\mmod A$ such that $X\cong D{\rm Tr}Z$ or $Z\cong {\rm Tr} D X$. 

\begin{enumerate}[$(1)$]

\vspace{-2pt}

\item If $X$ or $Z$ is simple, then $\delta$ is an almost split sequence.

\item If $f$ or $g$ is irreducible, then $\delta$ is an almost split sequence.

\end{enumerate}\end{Lemma}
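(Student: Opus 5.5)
The plan is to reduce everything to \cite[Chapter V, Proposition 2.2]{ARS}. For a nonsplit short exact sequence $\delta$ as above, the following are equivalent: $\delta$ is almost split; $X$ is indecomposable (equivalently, $X\cong D{\rm Tr}Z$ with $Z$ indecomposable non-projective) and $g$ is right almost split; $Z$ is indecomposable and $f$ is left almost split. It also contains the criterion we really want: if $X\cong D{\rm Tr}Z$ (or $Z\cong {\rm Tr}DX$), then $\delta$ is almost split if and only if every nonzero endomorphism of $Z$ (respectively, of $X$) that is not an automorphism factors through $g$ (respectively, through $f$). This last form is the one to use. The hypothesis $X\cong D{\rm Tr}Z$ or $Z\cong {\rm Tr}DX$ forces $X$ and $Z$ to be indecomposable, $Z$ non-projective and $X$ non-injective, since $\delta$ is nonsplit and so $X, Z\neq 0$. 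Also, $X\cong D{\rm Tr}Z$ holds if and only if $Z\cong {\rm Tr}DX$. So we may freely work on whichever end is convenient.

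For (1), suppose $Z$ is simple. Then ${\rm End}(Z)$ is a division ring, so every nonzero endomorphism of $Z$ is an automorphism, and the factorization condition is vacuous. Hence $\delta$ is almost split. If instead $X$ is simple, use the dual form of the criterion with endomorphisms of $X$ and the map $f$; again the condition is vacuous. If one prefers to avoid the endomorphism version, one can argue directly instead. A radical map $h\colon W\to Z$ with $Z$ simple and $W$ indecomposable is zero, since a nonzero map onto a simple module is a split epimorphism when $W$ is indecomposable \ldots{} more cleanly, a nonzero $h$ onto simple $Z$ is surjective, and not split only if it fails to be an isomorphism. This direct route gets messier, so I will stick with the endomorphism criterion.

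For (2), suppose $g$ is irreducible. Let $\phi\in{\rm End}(Z)$ be a non-automorphism. Then $\phi$ is in $\rad\,{\rm End}(Z)$, since $Z$ is indecomposable and ${\rm End}(Z)$ is local. We need $\phi$ to factor through $g$. Take the almost split sequence $0\to X\to E\xrightarrow{g'} Z\to 0$, which exists since $Z$ is indecomposable non-projective and has left end $D{\rm Tr}Z\cong X$. Because $g$ is not a split epimorphism, $g=g'u$ for some $u\colon Y\to E$. Irreducibility of $g$ forces $u$ to be a split monomorphism, since $g'$ is not a split epimorphism. Comparing lengths, $\ell(Y)=\ell(X)+\ell(Z)=\ell(E)$, so $u$ is an isomorphism. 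Hence $\delta$ is isomorphic to the almost split sequence; that is, $g$ is right almost split and $\delta$ is almost split. The case $f$ irreducible is dual, using the almost split sequence starting at $X$, which exists because $X$ is indecomposable non-injective with ${\rm Tr}DX\cong Z$.

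The only delicate point is the standard fact that $X\cong D{\rm Tr}Z$ makes $X$ and $Z$ indecomposable and existence of the almost split sequence. I expect this to be the main thing to cite carefully; it follows from \cite[Chapter V, Theorem 1.15 and Proposition 2.2]{ARS}.
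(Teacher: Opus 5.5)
The step you single out as ``the only delicate point'' is false, and both parts of your argument rest on it. The relation $X\cong D{\rm Tr}Z$ (or $Z\cong {\rm Tr}DX$) does not force $X$ or $Z$ to be indecomposable. For example, if $\delta_1$ and $\delta_2$ are almost split sequences, then $\delta_1\oplus\delta_2$ is nonsplit and satisfies both relations, yet it is not almost split. The functor $D{\rm Tr}$ gives a bijection between indecomposable non-projective and indecomposable non-injective modules. So it can transport indecomposability from one end of $\delta$ to the other, but it cannot produce it. For the same reason, $X\cong D{\rm Tr}Z$ and $Z\cong {\rm Tr}DX$ are interchangeable only once you know that $Z$ has no nonzero projective summand and $X$ no nonzero injective summand; this is tacit in the lemma as the paper uses it. \cite[Chapter V, Proposition 2.2]{ARS} requires indecomposable end terms, and your proof of (2) explicitly uses that ${\rm End}(Z)$ is local and that an almost split sequence ends at $Z$. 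Hence neither part is justified as written.

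What is missing is to extract indecomposability from the additional hypothesis of each part, which is exactly what the paper does. In (1), the simple end term is indecomposable, and it is non-projective (respectively, non-injective) because $\delta$ does not split. The $D{\rm Tr}$-correspondence then makes the other end indecomposable, and Schur's lemma makes the endomorphism criterion of \cite[Chapter V, Proposition 2.2]{ARS} vacuous, as you say. In (2), use \cite[(V.5.6)]{ARS}: the kernel of an irreducible epimorphism, and dually the cokernel of an irreducible monomorphism, is indecomposable. Thus if $g$ is irreducible, $X$ is indecomposable and non-injective, so $Z\cong {\rm Tr}DX$ is indecomposable non-projective with $D{\rm Tr}Z\cong X$. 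From there your factorization $g=g'u$, the deduction that $u$ is a split monomorphism, and the length count are correct. With this repair your proof of (2) is a legitimate dual variant of the paper's. The paper treats the case where $f$ is irreducible, realizes $f$ as a component of the minimal left almost split map $(f,f')^T\colon X\to Y\oplus Y'$, and uses lengths to force $Y'=0$. You instead lift $g$ through the right almost split map $g'$ and let irreducibility make the lift split. Both arguments end with the same length comparison.
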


\vspace{-1.5pt}

\noindent{\it Proof.} If $X$ or $Z$ is simple, then both $X$ and $Z$ are indecomposable. In view of 
Proposition 2.2 in \cite[Chapter V]{ARS}, we obtain immediately Statement (1) from Schur's Lemma. For Statement (2), we consider only the case where $f$ is irreducible. Then, $Z$ is indecomposable and not projective; see \cite[(V.5.6)]{ARS}. Hence, $X$ is indecomposable and not injective. So we have a minimal left almost split monomorphism $(f, \, f')^T: X\to Y\oplus Y'.$ Since $Z\cong {\rm Tr}D X$, we obtain from Proposition 1.14 in \cite[Chapter V]{ARS} an almost split sequence \vspace{-5pt}
$$\hspace{15pt}\xymatrix{0\ar[r] & X\ar[r]^-{(f, \, f')^T} & Y\oplus Y' \ar[r]^-{(g', \, g'')} & Z\ar[r] & 0.}\vspace{-3pt}$$ 
Then, $\ell(Y)+\ell(Y')=\ell(Z)+\ell(X)=\ell(Y)$. Thus $Y'=0.$ So $f$ is minimal left almost split, and $\delta$ is an almost split sequence. The proof of the proposition is completed.

\subsection{\sc Uniserial end-terms} We begin by computing almost split sequences which start with uniserial quotients of submodules of indecomposable projective modules.

\begin{Theo}\label{ass_uni_topi}

Let $A$ be an artin algebra. Consider an indecomposable projective $P$ in $\mmod A$ with a proper submodule $M$ such that $\ntop^s\hspace{-1pt}M$ is uniserial and $\rad^{s-1}\hspace{-1pt}M\not\subseteq \rad^{s+1}\hspace{-1pt}P,$ for some $s\ge 1$. If $\soc^{s+1}\hspace{-.5pt}(I_{\hspace{.5pt}\soc(\ntop^s\hspace{-1pt}M)})$ is uniserial, then we have an almost split sequence \vspace{-5pt}
$$\xymatrixcolsep{28pt}\xymatrix{
0 \ar[r] & \ntop^s\hspace{-1pt}M \ar[r]^-{(q_1, \,p_1)^T\hspace{-2pt}}& (P/\rad^s\hspace{-1pt}M) \oplus 
\ntop^{s-1}\hspace{-1pt}M \ar[r]^-{(-p_2, \,q_2)} & P/\rad^{s-1}\hspace{-1pt}M \ar[r] & 0,
} \vspace{-4pt} $$ where $q_1, q_2$ are inclusion maps, and $p_1, p_2$ are canonical projections. 

\end{Theo}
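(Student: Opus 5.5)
The sequence in question is the instance of Lemma \ref{ses_mix} with $N_0=\rad^s\hspace{-1pt}M\subseteq N=\rad^{s-1}\hspace{-1pt}M\subseteq L=M$, all inside $P$. Indeed, this gives $L/N_0=\ntop^s\hspace{-1pt}M$, $M$-term $P/\rad^s\hspace{-1pt}M$, $L/N=\ntop^{s-1}\hspace{-1pt}M$ and $P/N=P/\rad^{s-1}\hspace{-1pt}M$. So exactness is free. Write $X=\ntop^s\hspace{-1pt}M$ and $Z=P/\rad^{s-1}\hspace{-1pt}M$. The plan is to verify the hypotheses of Lemma \ref{ass-sim}: non-splitness, $X\cong D{\rm Tr}Z$, and one of conditions (1) or (2).

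\emph{Step 1: compute $D{\rm Tr}Z$.} The hypothesis $\rad^{s-1}\hspace{-1pt}M\not\subseteq\rad^{s+1}\hspace{-1pt}P$ gives $\rad^{s-1}\hspace{-1pt}M\neq 0$, and $\ell(X)=s$ since $X$ is uniserial of Loewy length $s$. Moreover, $\rad^{s-1}\hspace{-1pt}M/\rad^s\hspace{-1pt}M=\soc X$ is simple, say $\cong S\cong Ae/Je$, so $\rad^{s-1}\hspace{-1pt}M$ is local. Hence a minimal projective presentation of $Z$ is $Ae\stackrel{u}{\to}P\to Z\to 0$, where $u$ sends $e$ to a top-element of $\rad^{s-1}\hspace{-1pt}M$ (Proposition \ref{tb-projc-ienv}). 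Applying the Nakayama functor, $D{\rm Tr}Z=\Ker(\nu u:\ I_S\to \nu P)$. This is a nonzero submodule $K$ of $I_S$, so $K$ is colocal with socle $S$.

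\emph{Step 2: identify $K$.} I will show that $K\supseteq\soc^s\hspace{-1pt}I_S$ but $K\not\supseteq\soc^{s+1}\hspace{-1pt}I_S$. Translating $\nu u$ via $\Hom_A(P,-)$ and $\Hom_A(Ae,-)$, an element of $\soc^i I_S$ dies under $\nu u$ exactly when the corresponding functional vanishes on the part of $Ae$ mapped into $\rad^{i}P$. The facts $u(Ae)=\rad^{s-1}\hspace{-1pt}M\subseteq\rad^{s-1}P$ and $u(Ae)\not\subseteq\rad^{s+1}P$ should give the two containments respectively. This is the computational core, and I would carry it out with Proposition \ref{duality_radi_soci} and Lemma \ref{rad_perp}.

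Since $\soc^{s+1}I_S$ is uniserial, its submodules are the $\soc^i I_S$. Hence $\soc^{s+1}K=K\cap\soc^{s+1}I_S=\soc^sI_S$, which has Loewy length $s$, and this forces $K=\soc^sI_S$. Finally, Proposition \ref{uni_pi}(2), applied to $X$ (length $s$, injective envelope $I_S$), gives $X\cong\soc^sI_S\cong D{\rm Tr}Z$.

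\emph{Step 3: non-splitness.} $P/\rad^s\hspace{-1pt}M$ is local, hence indecomposable, and $\ntop^{s-1}\hspace{-1pt}M$ is uniserial. If the sequence split, Krull--Schmidt would force $Z\cong P/\rad^s\hspace{-1pt}M$, which is impossible since their lengths differ by one.

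\emph{Step 4: almost-splitness.} I expect this step to be the main obstacle, since $X$ and $Z$ need not be simple. I would aim for Lemma \ref{ass-sim}(2) by showing that $p_1:X\to\ntop^{s-1}\hspace{-1pt}M$ is irreducible. It is an epimorphism with simple kernel $\soc X$ between indecomposables. Suppose $p_1=hg$ with $g:X\to W$ not a split mono. Since $X$ is uniserial, $g$ is either injective or kills $\soc X$. In the latter case it factors through $p_1$, which should make $h$ split epi. In the former case, I would use the injective envelope $I_S$ and the uniserial $\soc^{s+1}I_S$ to extend $g$ and show it is split. If this route stalls, the fallback is to take the almost split sequence ending in $Z$ and compare it with ours. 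Both start at $X\cong D{\rm Tr}Z$, and the almost split sequence maps to ours. Because $Z$ is local and $\Ext^1(Z,X)$ has simple socle over ${\rm End}(Z)$, the induced endomorphism of $X$ must be an automorphism, hence the two sequences are isomorphic.
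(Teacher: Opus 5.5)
\textbf{The gap is in Step 4.} Neither of your two routes proves that the sequence is almost split.

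For the first route, Lemma \ref{ass-sim}(2) requires the whole map $(q_1,p_1)^T$ (or $(-p_2,q_2)$) to be irreducible. Irreducibility of the single component $p_1$ only tells you that $\ntop^{s-1}M$ is a summand of the middle term of the almost split sequence starting at $X$; it does not tell you that your sequence is that one. Moreover, in the case where $g$ is injective, what must be shown is that $h$ is a split epimorphism. You cannot show that $g$ splits: take $g=(q_1,p_1)^T$ and $h$ the projection onto $\ntop^{s-1}M$. You give no argument for this case.

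The fallback also fails as stated. Simplicity of the socle of ${\rm Ext}^1(Z,X)$ does not force the comparison endomorphism to be an automorphism, because a nonzero non-split extension need not lie in that socle. Lying in the socle is exactly what has to be proved.

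The missing idea is the one the paper uses, and you already have its ingredients. By the criterion of \cite[Chapter V]{ARS} behind Lemma \ref{ass-sim}, a non-split sequence with $X\cong D{\rm Tr}Z$ is almost split once every radical endomorphism $\alpha$ of $X$ factors through $(q_1,p_1)^T$. Such an $\alpha$ is nilpotent, hence not injective. Since $X$ is uniserial, $\Ker\alpha\supseteq\soc X=\Ker p_1$, so $\alpha$ factors through $p_1$. Equivalently, $\alpha_*\delta=0$ for all $\alpha\in{\rm rad}\,{\rm End}(X)$, which is precisely what your fallback needs.

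\textbf{Two smaller points.} In Step 2 the containment you quote is off by one. Identify $u$ with $u(e)\in e\,\rad^{s-1}M$. Then $K=(uA)^\perp\subseteq D(eA)$ and $\soc^iD(eA)=(eJ^i)^\perp$. Hence $\soc^sI_S\subseteq K$ if and only if $u\in eJ^s$, that is, $\rad^{s-1}M\subseteq\rad^sP$. This holds because $M$ is proper, so $M\subseteq\rad P$. The containment $\rad^{s-1}M\subseteq\rad^{s-1}P$ that you cite only yields $\soc^{s-1}I_S\subseteq K$. Properness cannot be dropped: for $M=P$ the sequence splits. With this correction, Steps 1--2 are a valid dual of the paper's computation, which instead computes ${\rm Tr}DX$ from a minimal presentation of $DX\cong e_1A/e_1J^s$ and obtains $P/\rad^{s-1}M$ directly.

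In Step 3, Krull--Schmidt alone does not force $Z\cong P/\rad^sM$. The matching $X\cong P/\rad^sM$, $Z\cong\ntop^{s-1}M$ is consistent with the length count, and excluding it needs $\ell(Z)\ge s$. It is simpler to note that both components of $(q_1,p_1)^T$ are non-isomorphisms between indecomposables, hence radical, so $(q_1,p_1)^T$ cannot be a split monomorphism.
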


\vspace{-1pt}

\noindent{\it Proof.} Considering the submodules $\rad^s\hspace{-1pt}M \subseteq \rad^{s-1}\hspace{-1pt}M \subseteq M$ of $P$, we obtain from Lemma \ref{ses_mix} the short exact sequence stated in the theorem, which clearly does not split. Note that %$\ntop^s\hspace{-1pt}M$ is indecomposable with 
$\soc(\ntop^s\hspace{-1pt}M)=\rad^{s-1}\hspace{-1.5pt}M/\rad^s\hspace{-1pt}M,
$ which is simple. Thus, every radical map $f: \ntop^s\hspace{-1pt}M\to \ntop^s\hspace{-1pt}M$ factors through $p_1: \ntop^s\hspace{-1pt}M\to \ntop^{s-1}\hspace{-1pt}M.$ Assume that $\soc^{s+1}\hspace{-.5pt}(I_{\hspace{.5pt}\soc(\ntop^s\hspace{-1pt}M)})$ is uniserial.  By Proposition 2.2 in \cite[Chapter V]{ARS}, it suffices to show that ${\rm Tr}D(\ntop^s\hspace{-1pt}M)\cong P/\rad^{s-1}\hspace{-1pt}M.$

We may assume that $P=Ae$ and $\soc(\ntop^s\hspace{-1pt}M)\cong Ae_1/Je_1,$ where $e, e_1$ are primitive idempotents in $A$. Observing that $\ntop(\rad^{s-1}\hspace{-1pt}M)\cong Ae_1/Je_1$, we infer from Lemma \ref{loc_gen}(1) that $\rad^{s-1}\hspace{-1pt}M=Au,$ for some $u\in e_1 (\rad^{s-1}M)e.$ By the hypothesis stated in the theorem, $u \in e_1J^se \backslash e_1J^{s+1} e$. Since $I_{\soc(\ntop^s\hspace{-1pt}M)}\cong D(e_1A),$ by Proposition \ref{duality_radi_soci}(2), $e_1A/e_1J^{s+1}\cong D(\soc^{s+1}\hspace{-.5pt}(I_{\soc(\ntop^s\hspace{-1pt}M)})),$ which is uniserial. Thus $e_1A/e_1J^s$ is uniserial, and $e_1J^s/e_1J^{s+1}$ is the simple top of $e_1J^s.$ Then $\{u\}$ is a top-basis for $e_1J^s.$ By Proposition \ref{tb-projc-ienv}, we have a minimal projective presentation \vspace{-10pt} $$\xymatrix{eA \ar[r]^{L_u} & e_1A \ar[r] & e_1A/e_1J^s\ar[r] & 0,}\vspace{-2.5pt}$$ where $L_u$ is the left multiplication by $u$. \vspace{0.5pt} Note that $D(\ntop^s\hspace{-1.5pt}M)$ is of length $s$ with $\ntop(D(\ntop^s\hspace{-1pt}M))\cong D(\soc(\ntop^s\hspace{-1pt}M))= D(\rad^{s-1}\hspace{-2pt}M/\rad^s\hspace{-1pt}M)\cong e_1A/e_1J.$ \vp Thus, $e_1A$ is the projective cover of $D(\ntop^s\hspace{-1.5pt}M)$. Since $e_1A/e_1J^s$ is uniserial, by Lemma \ref{uni_pi}(1), $e_1A/e_1J^s\cong D(\ntop^s\hspace{-1pt}M).$ This yields a minimal projective presentation \vspace{-5pt}
$$\xymatrix{Ae_1 \ar[r]^{R_u} & Ae \ar[r] \ar[r] & {\rm Tr}D(\ntop^s\hspace{-1pt}M) \ar[r] & 0,}\vspace{-5pt}$$ where $R_u$ is right multiplication by $u$. Since ${\rm Im}(R_u)=Au=\rad^{s-1}\hspace{-1pt}M$, we conclude that ${\rm Tr}D(\ntop^s\hspace{-1pt}M)\cong Ae/Au=P/\rad^{s-1}\hspace{-1pt}M.$ The proof of the theorem is completed.

\vspace{5pt}

\noindent{\sc Example.} Let $A=kQ/I,$ where $k$ is a field, $Q$ is the quiver \vspace{-5pt}
$$\xymatrix@!=8pt{a\ar[dr] && 2 \ar[r]^\beta & 3 \ar[dr]^\gamma && d \\ b \ar[r] & 1 \ar[ur]^\alpha \ar[r]^\delta &4\ar[rr]^\varepsilon && 5\ar[ur]^\eta \ar[r]^\zeta & c,}\vspace{-5pt}$$ and $I=\langle \gamma \beta\alpha-\varepsilon \delta\rangle.$ 
Note that $B=kQ'/I'$ is a special biserial algebra as defined in \cite{SKW}, where $Q'$ is the convex hull of $1$ and $5$ in $Q,$ and $I'=k(\gamma \beta\alpha-\varepsilon \delta).$ Given a string $\omega$ in $(Q', I')$, we denote by $M(\omega)$ the associated string $B$-module; see \cite{HuL}, which extends to an $A$-module. Let $M=k(\alpha, \beta \alpha, \gamma \beta \alpha, \eta\gamma \beta \alpha, \zeta \gamma \beta \alpha),$ a proper submodule of $P_1$ such that  $\rad M\not\subseteq \rad^3\hspace{-1pt}P_1$ and $\ntop^2\hspace{-1pt}M\cong M(\beta),$ which is uniserial. Moreover $\ntop M\cong S_2,$ while $P_1/\rad M\cong M(\alpha\delta^{-1})$ and $P_1/\rad^2\hspace{-1pt}M\cong M(\beta\alpha \delta^{-1}).$ Since $\soc(\ntop^2\hspace{-1pt}M)\cong S_3$ and $\soc^3I_3\cong M(\beta \alpha),$ which is uniserial, \vspace{-1.5pt} it follows from Theorem \ref{ass_uni_topi} that $\xymatrixcolsep{20pt}\xymatrix{
0 \ar[r] & M(\beta) \ar[r]& M(\beta\alpha \delta^{-1}) \oplus S_2 \ar[r] & M(\alpha\delta^{-1}) \ar[r] & 0}$ is an almost split sequence in $\mmod A.$

\smallskip

As a special case of Theorem \ref{ass_uni_topi}, we obtain the following useful result.

\begin{Theo}\label{ass_st_mod}

Let $A$ be an artin algebra. Consider an indecomposable projective module $P$ in $\mmod A$ with $\rad P=M\oplus N,$ where $M$ is uniserial of length $s >0$. If $\soc^{s+1}(I_{\soc M})$ is uniserial, then there exists an almost split sequence \vspace{-5.5pt}
$$\xymatrixcolsep{30pt}\xymatrix{
0 \ar[r] & M \ar[r]^-{(q_1, \,p_1)^T}& P \oplus (M/\soc M) \ar[r]^-{(-p_2, \,q_2)} & P/\soc M \ar[r] & 0
}\vspace{-5pt}$$ in $\mmod A$, where $q_1, q_2$ are inclusion maps,and $p_1, p_2$ are canonical projections. 

\end{Theo}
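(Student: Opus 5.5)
We will deduce the statement directly from Theorem \ref{ass_uni_topi}, applied to the indecomposable projective $P$ and its proper submodule $M\subseteq \rad P$, with the same integer $s=\ell(M)$. The plan is to check the two structural hypotheses of that theorem, and then to identify each term of the sequence it produces with the corresponding term in the statement.

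First, since $M$ is uniserial of length $s$, we have $\ell\ell(M)=s$. Hence $\rad^s\hspace{-1pt}M=0$ and $\ntop^s\hspace{-1pt}M=M$, which is uniserial. Moreover $\rad^{s-1}\hspace{-1pt}M$ is simple, so $\rad^{s-1}\hspace{-1pt}M=\soc M$, and therefore $\ntop^{s-1}\hspace{-1pt}M=M/\soc M$ and $\soc(\ntop^s\hspace{-1pt}M)=\soc M$. The uniseriality hypothesis on $\soc^{s+1}(I_{\soc M})$ is then exactly the one required in Theorem \ref{ass_uni_topi}.

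The main step is to verify $\rad^{s-1}\hspace{-1pt}M\not\subseteq \rad^{s+1}\hspace{-1pt}P$. Because $\rad P=M\oplus N$, we get $\rad^{i}P=J^{i-1}(M\oplus N)=\rad^{i-1}\hspace{-1pt}M\oplus \rad^{i-1}\hspace{-1pt}N$ for $i\ge 1$. In particular $\rad^{s+1}\hspace{-1pt}P=\rad^s\hspace{-1pt}M\oplus\rad^s\hspace{-1pt}N=\rad^s\hspace{-1pt}N$. Suppose $\rad^{s-1}\hspace{-1pt}M\subseteq \rad^{s+1}\hspace{-1pt}P$. Then $\rad^{s-1}\hspace{-1pt}M\subseteq M\cap \rad^s\hspace{-1pt}N\subseteq M\cap N=0$, which contradicts $\rad^{s-1}\hspace{-1pt}M\ne 0$. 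This directness argument is the only point needing care; everything else is bookkeeping.

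Finally, since $\rad^s\hspace{-1pt}M=0$, we have $P/\rad^s\hspace{-1pt}M=P$, and $P/\rad^{s-1}\hspace{-1pt}M=P/\soc M$. Substituting these identifications, together with $\ntop^s\hspace{-1pt}M=M$ and $\ntop^{s-1}\hspace{-1pt}M=M/\soc M$, into the almost split sequence of Theorem \ref{ass_uni_topi} yields precisely the stated sequence, with the same inclusions and canonical projections.
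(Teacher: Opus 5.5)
Your proposal is correct and follows essentially the same route as the paper: reduce to Theorem \ref{ass_uni_topi} with the same $s$, identify $\ntop^s M=M$, $\ntop^{s-1}M=M/\soc M$, $\soc(\ntop^s M)=\soc M$, and use $\rad^{s+1}P=\rad^sM\oplus\rad^sN$ together with $M\cap N=0$ to get $\rad^{s-1}M\not\subseteq\rad^{s+1}P$. Your containment $\rad^{s-1}M\subseteq M\cap\rad^sN\subseteq M\cap N=0$ is a slightly streamlined version of the paper's modular-law computation, reaching the same contradiction.
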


\noindent{\it Proof.} Observe that $\rad^s\hspace{-1pt}M=0$ and $\rad^{s-1}\hspace{-1.5pt}M=\soc M.$ Hence $\ntop^s\hspace{-1pt}M=M$ and $\ntop^{s-1}\hspace{-1.5pt}M=M/\soc M$. By the hypothesis, $\rad^{s+1}\hspace{-1pt}P=\rad^{s}\hspace{-1pt}M \oplus \rad^{s}\hspace{-1pt}N.$ Suppose that $\rad^{s-1}\hspace{-1.5pt}M\subseteq \rad^{s+1}\hspace{-1.5pt}P.$ Since $M\cap N=0$, in view of the modular law, we see that \vspace{-3pt} $$\rad^{s-1}\hspace{-1.5pt}M = \rad^{s-1}\hspace{-1pt}M\cap (\rad^{s}\hspace{-1pt}M+ \rad^{s}\hspace{-1pt}N)=\rad^{s}\hspace{-1.5pt}M + \rad^{s-1}\hspace{-1.5pt} M \cap \rad^{s}\hspace{-1.5pt}N=\rad^{s}\hspace{-1.5pt}M=0, \vspace{-3.5pt}$$ absurd. Therefore, $\rad^{s-1}\hspace{-1.5pt}M\not\subseteq \rad^{s+1}\hspace{-1.5pt}P.$ By Theorem \ref{ass_uni_topi}, we obtain an almost split sequence as stated in the theorem. The proof of the theorem is completed.

\smallskip

The following result is another application of Theorem \ref{ass_uni_topi}, which covers all almost split sequences over a Nakayama algebra; see \cite[p.197]{ARS}.

\begin{Theo}\label{ass_N_alg}

Let $A$ be an artin algebra. Consider a uniserial module $M$ in $\mmod A$ of length $s >1$ such that $\ntop^s\hspace{-.5pt}(P_{\hspace{.5pt}\ntop M})$ and $\soc^s\hspace{-.5pt}(I_{\hspace{.5pt}\soc M})$ are uniserial. Then we have an almost split sequence \vspace{-6pt}
$$\xymatrixcolsep{30pt}\xymatrix{
0\ar[r] & \rad M \ar[r]^-{(q_1, p_1)^T} & M \oplus (\rad M/\soc M) \ar[r]^-{(-p_2, q_2)} & M/\soc M \ar[r] & 0,}
\vspace{-6.5pt}$$ where $q_1, q_2$ are inclusion maps, and $p_1, p_2$ are canonical projections. 

\end{Theo}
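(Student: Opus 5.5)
Our plan is to deduce the statement from Theorem \ref{ass_uni_topi}, applied with $s-1$ in place of $s$, to a suitable submodule of $P=P_{\ntop M}$. Write $P=Ae$ with $\ntop M\cong Ae/Je$. Since $\ntop^s P$ is uniserial, Proposition \ref{uni_pi}(1) gives $M\cong \ntop^s P=P/\rad^s P$, and we identify the two. The role of the module $M$ in Theorem \ref{ass_uni_topi} is then played by $M':=\rad P$, which is a proper submodule of $P$, and the integer is $t:=s-1\ge 1$.

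The first step is to check the hypotheses of Theorem \ref{ass_uni_topi} for $M'$ and $t$. We have $\ntop^t M'=\rad P/\rad^s P=\rad M$, which is uniserial because $M$ is. Next, $\rad^{t-1}M'=\rad^{s-1}P$, and $\rad^{t+1}P=\rad^s P$. Since $\ell\ell(\ntop^s P)=s$ by Lemma \ref{topi_LL}, we get $\rad^{s-1}P\ne\rad^s P$, so $\rad^{t-1}M'\not\subseteq\rad^{t+1}P$. Finally, $\soc(\ntop^t M')=\soc(\rad M)=\soc M$ by Lemma \ref{soc_rad}, since $M$ is indecomposable (being uniserial) and not simple. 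Hence $\soc^{t+1}(I_{\soc(\ntop^t M')})=\soc^s(I_{\soc M})$, which is uniserial by hypothesis.

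Theorem \ref{ass_uni_topi} then yields an almost split sequence
$$0\to \ntop^t M'\to (P/\rad^t M')\oplus \ntop^{t-1}M'\to P/\rad^{t-1}M'\to 0.$$
It remains to identify the terms:
\begin{enumerate}[$(1)$]
\item $\ntop^t M'=\rad M$;
\item $P/\rad^t M'=P/\rad^s P=M$;
\item $\ntop^{t-1}M'=\rad P/\rad^{s-1}P$, which corresponds to $\rad M/\rad^{s-1}M=\rad M/\soc M$, because $\rad^{s-1}M=\soc M$ for a uniserial module of length $s$;
\item $P/\rad^{t-1}M'=P/\rad^{s-1}P$, which corresponds to $M/\soc M$.
\end{enumerate}
Under these identifications, the inclusion and projection maps of Theorem \ref{ass_uni_topi} become the inclusions and canonical projections $q_i,p_i$ in the statement.

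The only delicate point is the degenerate case $t=1$, that is, $s=2$. There $\ntop^{0}M'=0$ and $\rad^{0}M'=M'$, so one should confirm that Theorem \ref{ass_uni_topi} still applies. Its proof works for $s=1$ without change: the condition $\rad^{0}M'\not\subseteq\rad^2P$ holds because $\rad P\ne\rad^2P$. The middle term then reduces to $M\oplus 0$, which agrees with $\rad M/\soc M=0$ when $s=2$. Apart from this, the argument is a direct check of hypotheses, and we do not expect a substantive obstacle.
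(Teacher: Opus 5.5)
Your proposal is correct and is essentially the paper's proof. The paper also sets $N=\rad P$ and identifies $M$ with $\ntop^s P=P/\rad^{s-1}N$. It then verifies the same three hypotheses of Theorem \ref{ass_uni_topi} for $N$ with $s-1$ in place of $s$, and carries the resulting sequence over to the stated one by a commutative diagram built from the canonical isomorphisms $M/\soc M\cong P/\rad^{s-2}N$ and $\rad M/\soc M\cong \ntop^{s-2}N$.

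Your separate check of the case $s=2$ is not needed, since Theorem \ref{ass_uni_topi} is already stated for every $s\ge 1$.
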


\vspace{-1.5pt}

\noindent{\it Proof.} Write $P=P_{\hspace{.8pt}\ntop M}$ and $N=\rad P.$ By Proposition \ref{uni_pi}(1), we may assume that $M=\ntop^s\hspace{-1.5pt}P=P/\rad^{s-1}\hspace{-1.5pt}N$. Then $\soc M=\rad^{s-1}\hspace{-1.5pt}M=\rad^{s-2}\hspace{-1.5pt}N/\rad^{s-1}\hspace{-1.5pt}N.$ Therefore, there exists a canonical isomorphism $\varphi: M/\soc M \to P/\rad^{s-2}\hspace{-1pt}N.$ Observing that $\rad P/\rad^{s-2}\hspace{-1.5pt}N=\ntop^{s-2} \hspace{-1.5pt} N$, we see that $\varphi$ restricts to an isomorphism $\psi: \rad M/\soc M\to \ntop^{s-2} \hspace{-1.5pt} N.$ Thus, we have a commutative diagram \vspace{-6pt}
$$\xymatrixcolsep{30pt}\xymatrix{ 0\ar[r] & \rad M \ar[r]^-{(q_1, p_1)^T} \ar@{=}[d] & M \oplus (\rad M/\soc M) \ar[r]^-{(-p_2, q_2)} \ar[d]^{{\rm id} \oplus \psi}& M/\soc M \ar[r] \ar[d]^\varphi & 0\\ 
0 \ar[r] & \ntop^{s-1}\hspace{-1.5pt}N \ar[r]^-{(\tilde{q}_1, \,\tilde{p}_1)^T}& (P/\rad^{s-1}\hspace{-1.2pt}N) \oplus 
\ntop^{s-2}\hspace{-1.5pt}N \ar[r]^-{(-\tilde{p}_2, \,\tilde{q}_2)} & P/\rad^{s-2}\hspace{-1.5pt}N \ar[r] & 0} \vspace{-5pt} $$ in $\mmod A,$ where $\tilde{q}_1, \tilde{q}_2$ are inclusion maps, and $\tilde{p}_1, \tilde{p}_2$ are canonical projections. 

Now $\ntop^{s-1}\hspace{-1pt}N$ is uniserial of length $s-1$ by hypothesis, and $\rad^{s-2}\hspace{-1pt}N \not\subseteq \rad^s\hspace{-1pt}P$. Since $\soc(\ntop^{s-1}\hspace{-1.5pt}N)=\soc(\rad M)=\soc M;$ see (\ref{soc_rad}), $\soc^s(I_{\soc(\ntop^{s-1}\hspace{-1.5pt}N)})$ is uniserial by hypothesis. In view of Theorem \ref{ass_uni_topi}, we conclude that the lower sequence in the above diagram is an almost split sequence, and so is the upper sequence. The proof of the theorem is completed. 

\smallskip

Dually, we compute almost split sequences which end with uniserial submodules of quotients of indecomposable injective modules. \vspace{-2pt}

\begin{Theo}\label{ass_uni_soci}
    
Let $A$ be an artin algebra. Consider an indecomposable injective module $I$ in $\mmod A$ with a proper quotient $I/N$ such that $\soc^s\hspace{-.5pt}(I/N)$ is uniserial and $(\soc^{s+1}\hspace{-1.5pt}I+N)/N \not\subseteq \soc^{s-1}\hspace{-.5pt}(I/N),$ for some $s\ge 1$. Write $\soc^{s-1}\hspace{-.5pt}(I/N)=L/N$ and $\soc^s\hspace{-.5pt}(I/N)=M/N$ with $N\subset L \subseteq M\subseteq I$. If $\ntop^{s+1}\hspace{-1pt}(P{\hspace{.5pt}_{\ntop(\soc^s\hspace{-.5pt}(I/N))}})$ is uniserial, then there exists an almost split sequence \vspace{-6.5pt}
$$\xymatrixcolsep{30pt}\xymatrix{
0 \ar[r] & L \ar[r]^-{(q_1, p_1)^T}& M \oplus \soc^{s-1}\hspace{-.8pt}(I/N) \ar[r]^-{(p_2, -q_2)} & \soc^s\hspace{-.5pt}(I/N) \ar[r] & 0
} \vspace{-5pt} $$ in $\mmod A$, where $q_1, q_2$ are inclusion maps and $p_1, p_2$ are canonical projections.

\end{Theo}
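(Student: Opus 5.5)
The plan is to deduce the statement from Theorem \ref{ass_uni_topi} applied over $A^\circ$, using the standard duality $D$. First, the short exact sequence in the statement is the one given by Lemma \ref{ses_mix}, applied to the chain of submodules $0\subseteq N\subseteq L$ of $M$; recall that $L/N=\soc^{s-1}(I/N)$ and $M/N=\soc^s(I/N)$. Since $D$ sends almost split sequences in $\mmod A$ to almost split sequences in $\mmod A^\circ$ and conversely, it suffices to show that the dual sequence
$$0\to D(M/N)\to DM\oplus D(L/N)\to DL\to 0$$
is almost split in $\mmod A^\circ$.

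To identify the dual sequence, put $P:=DI$, which is an indecomposable projective right $A$-module. Since $N\ne 0$, the module $M':=N^\perp$ is a proper submodule of $P$. By Lemma \ref{dual_perp}(1) we have
$$DM\cong P/M^\perp,\quad DL\cong P/L^\perp,\quad D(M/N)\cong N^\perp/M^\perp,\quad D(L/N)\cong N^\perp/L^\perp.$$
Lemma \ref{rad_perp} applied to $\soc^s(I/N)=M/N$ and $\soc^{s-1}(I/N)=L/N$ gives $\rad^s M'=M^\perp$ and $\rad^{s-1}M'=L^\perp$. Hence the dual sequence is, up to isomorphism of sequences, the one appearing in Theorem \ref{ass_uni_topi} for $P$ and $M'$, namely
$$0\to \ntop^s M'\to P/\rad^s M'\oplus \ntop^{s-1}M'\to P/\rad^{s-1}M'\to 0.$$
Under the identification $D(\text{inclusion})=\text{projection}$ and $D(\text{projection})=\text{inclusion}$, the maps should match up to sign; this routine bookkeeping needs checking.

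It remains to verify the three hypotheses of Theorem \ref{ass_uni_topi} for $P$ and $M'$.
\begin{enumerate}[$(1)$]
\item The module $\ntop^s M'=N^\perp/M^\perp\cong D(\soc^s(I/N))$ is uniserial, because duality preserves uniseriality.
\item By Proposition \ref{duality_radi_soci}(1), $\rad^{s+1}P=(\soc^{s+1}I)^\perp$. Since $X\mapsto X^\perp$ is an inclusion-reversing bijection on submodules, $L^\perp\not\subseteq(\soc^{s+1}I)^\perp$ holds if and only if $\soc^{s+1}I\not\subseteq L$. Because $N\subseteq L$, this is equivalent to $(\soc^{s+1}I+N)/N\not\subseteq L/N=\soc^{s-1}(I/N)$, which is exactly the hypothesis.
\item We have $\soc(\ntop^s M')\cong D(\ntop(\soc^s(I/N)))$. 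Its injective envelope in $\mmod A^\circ$ is $D(P_{\ntop(\soc^s(I/N))})$. By Proposition \ref{duality_radi_soci}(3), the $(s+1)$-st socle of this envelope is isomorphic to $D(\ntop^{s+1}P_{\ntop(\soc^s(I/N))})$, which is uniserial by hypothesis.
\end{enumerate}
Theorem \ref{ass_uni_topi} then shows that the dual sequence is almost split, and applying $D$ yields the statement.

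The step most likely to need care is the second one: the precise matching of the perp lattice with radical and socle layers. That is, one must check $\rad^{s-1}(N^\perp)=L^\perp$ and the translation of the non-containment condition through $\perp$. I would rely on Lemma \ref{rad_perp} together with Lemma \ref{dual_perp}(2) for both.
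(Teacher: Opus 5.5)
Your proposal is correct and follows essentially the paper's route: pass to the projective $DI$ in $\mmod A^\circ$ with its proper submodule $N^\perp$, use Lemma \ref{rad_perp} to identify $\rad^{s-1}(N^\perp)=L^\perp$, and verify the three hypotheses of Theorem \ref{ass_uni_topi} exactly as you do. The only difference is that the paper does not dualize the whole sequence. Instead, it checks directly that every radical endomorphism of $\soc^s(I/N)$ factors through $q_2$, and uses Theorem \ref{ass_uni_topi} only to obtain $D{\rm Tr}(\soc^s(I/N))\cong L$. This avoids the sign bookkeeping you flagged, which does work out via ${\rm id}\oplus(-{\rm id})$ on the middle term and $-{\rm id}$ on the right end term.
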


\vspace{-1pt}

\noindent{\it Proof.} Considering the submodules $0\subset N \subset L$ of $M$, we obtain from Lemma \ref{ses_mix} a short exact sequence as stated in the theorem, which clearly does not split. Since $\soc^s\hspace{-.6pt}(I/N)$ is uniserial, $\soc^{s-1}\hspace{-1pt}(I/N)$ is the largest proper submodule of $\soc^s\hspace{-.8pt}(I/N)$. Thus, every radical map $f: \soc^s\hspace{-.5pt}(I/N) \to \soc^s\hspace{-.5pt}(I/N)$ factors through $q_2: \soc^{s-1}\hspace{-.5pt}(I/N) \to \soc^s\hspace{-.5pt}(I/N).$ Assume that $\ntop^{s+1}\hspace{-.5pt}(P{\hspace{.5pt}_{\ntop(\soc^s\hspace{-.5pt}(I/N))}})$ is uniserial. \vspace{.5pt} By Proposition 2.2 in \cite[Chapter V]{ARS}, it suffices to show that $D{\rm Tr}\hp(\soc^s\hspace{-.5pt}(I/N))\cong L.$ 

Consider the indecomposable projective module $DI$ in $\mmod A^\circ$ and its proper submodule $N^\perp$. Since $\soc^{s+1}\hspace{-1pt} I\not\subseteq L$ by the hypothesis, 
we see from Lemma \ref{rad_perp} and Proposition \ref{duality_radi_soci}(1) that $\rad^{s-1}(N^\perp)=L^\perp\not\subseteq (\soc^{s+1}\hspace{-1pt} I)^\perp=\rad^{s+1}\hspace{-1pt}(DI).$ Since $N^\perp\cong D(I/N);$ see (\ref{dual_perp}), we have $\ntop^s\hspace{-.5pt}(N^\perp) %\cong \ntop^s\hspace{-.5pt}(D(I/N)) 
\cong D(\soc^s\hspace{-.5pt}(I/N));$ see (\ref{duality_radi_soci}),
which is uniserial by the hypothesis. Since $\soc(\ntop^s\hspace{-.5pt}(N^\perp)) %\cong \soc(D(\soc^s\hspace{-.6pt}(I/N)))
\cong D(\ntop(\soc^s\hspace{-.6pt}(I/N)))$ by Proposition \ref{duality_radi_soci}(3), we have 
$I_{\soc(\ntop^s\hspace{-.5pt}(N^\perp))}\cong D(P_{\ntop(\soc^s\hspace{-.5pt}(I/N))})$. Applying Proposition \ref{duality_radi_soci}(3) again, we obtain \vspace{0.5pt}
$\soc^{s+1}\hspace{-.5pt}(I_{\soc(\ntop^s\hspace{-.5pt}(N^\perp))}) 
%\cong \soc^{s+1}\hspace{-.5pt} (D(P_{\ntop(\soc^s\hspace{-.5pt}(I/N))}))  
\cong D(\ntop^{s+1}\hspace{-.5pt}(P_{\ntop(\soc^s\hspace{-.5pt}(I/N))})),\vspace{0.5pt}$
which is uniserial by the assumption. Now by Theorem \ref{ass_uni_topi} and Lemma \ref{dual_perp}(2), ${\rm Tr}\hp D(\ntop^s\hspace{-.5pt}(N^\perp)) \cong  DI/\rad^{s-1}\hspace{-.6pt}(N^\perp)=DI/L^\perp \cong DL.$ As a consequence, we obtain $L\cong D{\rm Tr}\hp D(\ntop^s\hspace{-.5pt}(N^\perp))  \cong D{\rm Tr}\hp (\soc^s\hspace{-.5pt}(I/N)).$ The proof of the theorem is completed.

\smallskip

As a special case of Theorem \ref{ass_uni_soci}, we obtain the following result. %\vspace{-2.5pt}

\begin{Theo}\label{ass_st_mod_1}

Let $A$ be an artin algebra, and let $I$ be an indecomposable injective module in $\mmod A$ with $I/\soc I=(M/\soc I) \oplus (N/\soc I),$ where $M/\soc I$ is uniserial of length $s>0$ such that $\ntop^{s+1}\hspace{-.5pt}(P_{\,\ntop (M/\soc I)})$ is uniserial. Then we have an almost split sequence \vspace{-9.5pt}
$$\xymatrixcolsep{28pt}\xymatrix{
0 \ar[r] & \rad M + N \ar[r]^-{(q_1, \,p_1)^T\hspace{-5pt}}& I \oplus \rad (M/\soc I) \ar[r]^-{(p_2, -q_2)} & M/\soc I \ar[r] & 0,
}\vspace{-5pt}$$ where $q_1, q_2$ are inclusion maps, and $p_1, p_2$ are epimorphisms, sending $m+n$ to $m+\soc I$ for all $m\in M$ and $n\in N$.

\end{Theo}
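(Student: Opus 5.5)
This follows from Theorem \ref{ass_uni_soci}, in the same way that Theorem \ref{ass_st_mod} followed from Theorem \ref{ass_uni_topi}. Take the submodule $N' := \rad M + N$ of $I$ as the module ``$N$'' in Theorem \ref{ass_uni_soci}, with the same integer $s$.

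First we compute the socle series of $I/N'$. Since $I/\soc I=(M/\soc I)\oplus(N/\soc I)$, we have $M\cap N=\soc I$ and $M+N=I$. Because $M/\soc I$ is uniserial of length $s$, $\rad M$ is the unique maximal submodule of $M$ (note $\rad M\supseteq \soc I$ by Lemma \ref{soc_rad} when $s>0$). Hence
$$I/N'=(M+N)/(\rad M+N)\cong M/(\rad M+M\cap N)=M/\rad M,$$
which is simple. So $s$ must be re-indexed. I now think Theorem \ref{ass_uni_soci} is to be applied with the parameter $1$ rather than $s$, and with the modules $L=N'$, $\soc(I/N')=I/N'$, so that $M_{\rm thm}=I$. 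The sequence of Theorem \ref{ass_uni_soci} with $s=1$ reads
$$0\to L\to M_{\rm thm}\oplus \soc^0(I/N)\to \soc(I/N)\to 0,$$
and the middle term $\soc^0=0$ does not match $I\oplus\rad(M/\soc I)$. So this choice is wrong.

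Instead I take ``$N$'' to be $N$ itself and keep $s$. Then
$$I/N\cong M/(M\cap N)=M/\soc I,$$
which is uniserial of length $s$. Therefore $\soc^s(I/N)=I/N$, so $M_{\rm thm}=I$, and $\soc^{s-1}(I/N)$ corresponds to $\rad(M/\soc I)$, with $L=\rad M+N$. Under the isomorphism $I/N\cong M/\soc I$, which sends $m+n+N$ to $m+\soc I$, the sequence of Theorem \ref{ass_uni_soci} becomes exactly the stated one, with the stated maps $p_1,p_2$. Also $\ntop(\soc^s(I/N))\cong\ntop(M/\soc I)$, so the hypothesis on $\ntop^{s+1}(P_{\ntop(M/\soc I)})$ is precisely the hypothesis of Theorem \ref{ass_uni_soci}. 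Properness of the quotient holds because $N\supseteq\soc I\neq 0$.

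The remaining hypothesis, and the main step, is
$$(\soc^{s+1}I+N)/N\not\subseteq \soc^{s-1}(I/N)=L/N,$$
that is, $\soc^{s+1}I\not\subseteq \rad M+N$. This is dual to the modular-law argument in the proof of Theorem \ref{ass_st_mod}. Suppose $\soc^{s+1}I\subseteq \rad M+N$. Intersect with $M$ and use the modular law:
$$\soc^{s+1}M=\soc^{s+1}I\cap M\subseteq \rad M+(M\cap N)=\rad M.$$
Here $M\cap N=\soc I\subseteq\rad M$. But $M$ has Loewy length $s+1$: indeed $\soc^s M\supseteq\soc I$, and $M/\soc I$ has Loewy length $s$, while $\soc I$ is essential and simple. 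By Corollary \ref{soci_LL}, $\soc^{s+1}M=M$. This would give $M\subseteq\rad M$, which is absurd since $M\neq0$.

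The care needed here is in justifying $\ell\ell(M)=s+1$. From Lemma \ref{isoc_char}, $J^s M\subseteq \soc I$, so $J^{s+1}M=0$, and this already gives $\soc^{s+1}M=M$, which is all the argument uses. Hence Theorem \ref{ass_uni_soci} applies and yields the almost split sequence.
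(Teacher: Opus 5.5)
Your final argument is correct and is the paper's proof: apply Theorem~\ref{ass_uni_soci} to the same $N$ and $s$, transport the resulting sequence along the isomorphism $I/N\cong M/\soc I$, and check that $\soc^{s+1}I\not\subseteq \rad M+N$. You should simply delete the abandoned first attempt with $\rad M+N$ in the role of $N$. The only difference is in that last check: you intersect with $M$ and use the modular law together with $J^{s+1}M=0$, whereas the paper writes $\soc^{s+1}I=M+\soc^{s+1}N$ by Lemma~\ref{socs_sum} and applies $J^{s-1}$ to force $\soc^2M\subseteq M\cap N=\soc I$. Both arguments are equally short and valid.
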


\vspace{-1.5pt}

\noindent{\it Proof.} Since $M+N=I$ and $M\cap N=\soc I=\soc M,$ we have an isomorphism $\varphi: I/N\to M/\soc I,$ sending $(m+n)+N$ to $m+\soc I.$ In particular, $I/N$ is uniserial of length $s$. Thus, $\soc^s\hspace{-.5pt}(I/N)=I/N$. And since $\soc^{s-1}\hspace{-.8pt}(M/\soc I)=\rad M/\soc I\hspace{-1pt},$ we see that $\hspace{.5pt}\soc^{s-1}\hspace{-.5pt}(I/N)=(\rad M + N)/N,$ and  $\varphi$ restricts to an isomorphism $\varphi': \soc^{s-1}\hspace{-.8pt}(I/N)\to \rad M/\soc I$. Therefore, $\varphi^{-1}$ induces an isomorphism from the sequence stated in the theorem to the following sequence \vspace{-6.5pt} $$\xymatrixcolsep{25pt}\xymatrix{(*) \hspace{15pt} 0\ar[r] & \rad M +N \ar[r]^-{(q_1\hspace{-.5pt}, \hspace{1pt} p')^T\hspace{-3pt}} & I \oplus \soc^{s-1}\hspace{-.8pt}(I/N) \ar[r]^-{(-p, \hspace{1pt} j)} & \soc^{s}\hspace{-.8pt}(I/N) \ar[r] & 0,}\vspace{-5.5pt}$$ where $j$ is the inclusion map, and $p', p$ are canonical projections. 

Since $M$ is uniserial of length $s+1$, we see that $J^{s-1}\hspace{-1.5pt} M=\soc^2\hspace{-1pt} M.$ By Lemma \ref{socs_sum}, $\soc^{s+1}\hspace{-1pt}I = M +\soc^{s+1}\hspace{-1pt}N$. Thus, $J^{s-1}\hspace{-.5pt}(\soc^{s+1}\hspace{-1pt}I)=\soc^2\hspace{-1pt}M+J^{s-1}\hspace{-.5pt}(\soc^{s+1}\hspace{-1pt}N).$ If $J^{s-1}\hspace{-.5pt}(\soc^{s+1}\hspace{-1pt}I)\subseteq N$, then $\soc^2\hspace{-1pt}M \! \subseteq N,$ and so, $\soc^2\hspace{-1pt}M\subseteq M\cap N=\soc I=\soc M$, absurd. Thus by Proposition \ref{isoc_char}, $(\soc^{s+1}\hspace{-1pt}I+N)/N\not\subseteq \soc^{s-1}(I/N)$. Finally since $\ntop(\soc^{s}\hspace{-.5pt}(I/N))=\ntop(I/N)\cong \ntop(M/\soc I),$ by Theorem \ref{ass_uni_soci}, the sequence $(*)$ is an almost split sequence. The proof of the theorem is completed. 

\vspace{-4pt}

\section{\sc Biserial modules and biserial algebras}

Tachikawa classified finite-dimensional algebras over which every indecomposable module is local or colocal; see \cite{Tac1}. Based on this work, Fuller introduced biserial modules and biserial algebras; see \cite{Ful}. In this section, we slightly modify Fuller's definition of biserial modules to partition them into three disjoint classes. For later applications, we describe all indecomposable modules of length three, and certain indecomposable modules of length four which are biserial or sums of a uniserial module and a biserial module. These ideas may be generalized in the future to study more complex indecomposable modules over biserial algebras. Finally, for biserial algebras, we compute almost split sequences starting with projective modules or ending with injective modules. 

\subsection{\sc Biserial modules} Since biserial modules are sums of uniserial modules, we state the following preparatory result before formally introducing them.

\begin{Lemma}\label{ses_sum_2}

Let $A$ be an artin algebra. Consider a module $M=M_1+M_2$ in $\mmod A,$ where $M_1, M_2$ are proper submodules of $M.$ Then

\begin{enumerate}[$(1)$]

\vspace{-1.2pt}

\item $M/M_1\cap M_2 \cong \left(M_1/M_1\cap M_2\right)\oplus \left(M_2/M_1\cap M_2\right).$ 

\vp

\item If $M_1\cap M_2 \subseteq \rad M_1 \cap \rad M_2,$ then $\ntop M \cong \ntop M_1\oplus \ntop M_2.$

\item If $M_1\cap M_2=\soc M,$ which is simple, then $\ntop M\cong \ntop M_1\oplus \ntop M_2.$ 

\end{enumerate} \end{Lemma}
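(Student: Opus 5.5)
For (1), I will apply the second isomorphism theorem to the submodules $M_1/(M_1\cap M_2)$ and $M_2/(M_1\cap M_2)$ of $M/(M_1\cap M_2)$. They sum to $M/(M_1\cap M_2)$ because $M=M_1+M_2$. Their intersection is $(M_1\cap M_2)/(M_1\cap M_2)=0$. So the sum is direct, which is exactly statement (1).

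For (2), I first note $\rad M=\rad M_1+\rad M_2$, since $JM=JM_1+JM_2$. Write $K=M_1\cap M_2$ and suppose $K\subseteq \rad M_1\cap\rad M_2$. Then $K\subseteq \rad M$, and also $\rad(M/K)=\rad M/K$. Hence $\ntop M\cong\ntop(M/K)$. By (1), $M/K\cong (M_1/K)\oplus(M_2/K)$, so $\ntop(M/K)\cong \ntop(M_1/K)\oplus\ntop(M_2/K)$. Since $K\subseteq\rad M_i$, we likewise have $\ntop(M_i/K)\cong\ntop M_i$ for $i=1,2$. Combining these gives $\ntop M\cong\ntop M_1\oplus\ntop M_2$.

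For (3), I reduce to (2). The task is to show that $\soc M=M_1\cap M_2$ lies in $\rad M_1\cap\rad M_2$, that is, $\soc M\subseteq \rad M_i$ for $i=1,2$. This is the step needing an idea. Since $M_i$ is a proper submodule and $M=M_1+M_2$, neither $M_i$ contains the other, so $M_i\not\subseteq M_1\cap M_2$. In particular $M_i\ne \soc M$, and $M_i\ne 0$.

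Next I show $M_i$ is colocal with $\soc M_i=\soc M$. Because $\soc M$ is simple, $M$ is colocal. By Lemma \ref{loc_gen}(2), every nonzero submodule of $M$ is colocal, and its socle is the simple $\soc M$, since $\soc M_i\subseteq\soc M$ and both are simple.

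Now suppose $M_i$ were simple. Then $M_i=\soc M_i=\soc M=M_1\cap M_2$, which contradicts $M_i\not\subseteq M_1\cap M_2$. So $M_i$ is not simple. It is indecomposable, being colocal. Lemma \ref{soc_rad} then gives $\soc M=\soc M_i\subseteq\rad M_i$ for $i=1,2$. Hence $M_1\cap M_2\subseteq\rad M_1\cap\rad M_2$, and (2) yields (3).

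I expect the only mildly delicate point to be verifying that each $M_i$ is non-simple and indecomposable, so that Lemma \ref{soc_rad} applies.
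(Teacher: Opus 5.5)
Your proposal is correct and follows essentially the paper's proof. Part (3) is reduced to (2) via Lemma \ref{soc_rad} exactly as in the paper, and you spell out the colocality and non-simplicity of each $M_i$, which the paper leaves implicit; part (1) is the paper's argument written as an internal direct sum rather than via Lemma \ref{ses_sum_itsec}(2). The only variation is in (2). You deduce it from (1) by passing to $M/(M_1\cap M_2)$ with $M_1\cap M_2\subseteq \rad M$. The paper instead writes down the epimorphism $M\to \ntop M_1\oplus \ntop M_2$, $m_1+m_2\mapsto (m_1+\rad M_1,\, m_2+\rad M_2)$, which is well defined by the hypothesis and has kernel $\rad M_1+\rad M_2=\rad M$. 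Both arguments are equally short.
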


\vspace{-1.5pt}

\noindent{\it Proof.} (1) Since $M/(M_1+M_2)=0,$ in view of Lemma \ref{ses_sum_itsec}(2), we conclude that $M/M_1\cap M_2\cong M/M_1 \oplus M/M_2 \cong (M_2/M_1\cap M_2)  \oplus (M_1/M_1\cap M_2).$

(2) We have a well-defined epimorphism $f: M \to (M_1/\rad M_1) \oplus (M_2/\rad M_2),$ sending $m_1+m_2$ to $(m_1 + \rad M_1, m_2+\rad M_2),$ where $m_i\in M_i$ for $i=1,2.$ Since $\Ker f=\rad M_1+\rad M_2=\rad M,$ we have $M/\rad M\cong M_1/\rad M_1 \oplus M_2/\rad M_2.$ 

(3) Assume that $M_1\cap M_2=\soc M,$ which is simple.
Since $M_i\subsetneq M$ for $i=1, 2,$ we have $\ell(M_i)>1,$ for $i=1, 2$. By Lemma \ref{soc_rad}, $\soc M=\soc M_i\subseteq \rad M_i,$ for $i=1, 2$. Hence, $M_1\cap M_2\subseteq \rad M_1\cap \rad M_2.$ By Statement (2), $\ntop M\cong \ntop M_1\oplus \ntop M_2.$ 
The proof of the lemma is completed.

\smallskip

As promised, we slightly modify Fuller's definition of biserial modules as follows. 

\begin{Defn}\label{bsm_def}

Let $A$ be an artin algebra. An indecomposable module $M$ in $\mmod A$ is called {\it biserial} if it has two non-comparable uniserial submodules $M_1, M_2$ satisfying the following conditions$\hp :$ 

\begin{enumerate}[$(1)$]

\vspace{-1.5pt}

\item $M_1+M_2$ is $M$ or the largest proper submodule of $M;$

\item $M_1\cap M_2$ is zero or the smallest non-zero submodule of $M.$

\end{enumerate}\end{Defn}

%\smallskip

\begin{Remark}\label{rem_bsm} (1) A module $M$ is biserial if and only if $DM$ is biserial$;$
see \cite{Ful}. 

\noindent (2) A uniserial module is not biserial in our sense 
since its submodules form a chain.

\noindent (3) In contrast, Fuller's definition does not require that the two uniserial submodules be non-comparable; see \cite{Ful}. Technically, uniserial modules are biserial in his sense.

\end{Remark}

It follows easily from the definition that a biserial module is local or colocal$;$
see \cite{Ful}. Conversely, we have the following result.

\begin{Prop}\label{loc_cloc_bsm}

Let $A$ be an artin algebra. Consider a module $M$ in $\mmod A.$

\begin{enumerate}[$(1)$]

\vspace{-1.5pt}

\item If $M$ is local, then it is biserial if and only if $\rad M\!=\!M_1+M_2$, where $M_1, M_2$ are nonzero proper uniserial submodule of $\rad M$ such that $M_1\cap M_2$ is zero or the simple socle of $M;$ in this case, $\ntop(\rad M)\cong \ntop M_1\oplus \ntop M_2$. 

\vspace{.5pt}

\item If $M$ is colocal, then it is biserial if and only if $\soc M=M_1\cap M_2$, where $M_1, M_2$ are nonsimple uniserial such that $M_1+M_2$ is $M$ or the largest proper submodule of $M;$ in this case, $\soc(\hspace{-.8pt}M/\soc M\hspace{-.8pt}) \hspace{-.5pt} \cong \hspace{-.5pt} \soc(\hspace{-.8pt}M_1/\soc M_1\hspace{-.8pt})\oplus \soc(\hspace{-.8pt} M_2/\soc M_2\hspace{-.8pt}).$ 
    
\end{enumerate} \end{Prop}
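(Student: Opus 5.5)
I will prove Statement (1) directly and obtain Statement (2) by duality. For (2), note that $M$ is colocal if and only if $DM$ is local, and that $M$ is biserial if and only if $DM$ is biserial; see Remark \ref{rem_bsm}(1). Lemma \ref{dual_perp}(2) lets us pass between sums and intersections: for submodules $N_1,N_2$ of $M$, one has $(N_1+N_2)^\perp=N_1^\perp\cap N_2^\perp$. Also, uniserial submodules of $DM$ correspond to uniserial quotients $M/N$ of $M$. The socle isomorphism in (2) then comes from the top isomorphism in (1) by applying $D$ together with Proposition \ref{duality_radi_soci}.

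For the sufficiency in (1), assume $M$ is local and $\rad M=M_1+M_2$ with $M_1,M_2$ as stated. First I show they are non-comparable. If $M_1\subseteq M_2$, then $\rad M=M_2$ would be proper in itself, which is absurd since $M_2$ is proper in $\rad M$. Next, $M_1+M_2=\rad M$ is the largest proper submodule of $M$ because $M$ is local. The intersection $M_1\cap M_2$ is zero or $\soc M$, and in the latter case $\soc M$ is simple, hence the smallest nonzero submodule. It remains to show $M$ is indecomposable, and this is immediate since a local module is indecomposable. So $M$ is biserial.

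For the necessity, let $M$ be local and biserial with uniserial submodules $M_1,M_2$ as in Definition \ref{bsm_def}. The key step is to rule out $M_1+M_2=M$. Suppose it holds. Then $M$, being local, equals $M_1$ or $M_2$, since otherwise $M_1,M_2\subseteq\rad M$ and their sum would lie in $\rad M$. That contradicts non-comparability. Hence $M_1+M_2=\rad M$, and non-comparability forces both $M_i$ to be nonzero proper submodules of $\rad M$. For the intersection, suppose $M_1\cap M_2$ is the smallest nonzero submodule of $M$. Then $\soc M$ is simple and equals $M_1\cap M_2$, which gives the claimed form.

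The top statement is the step needing care. If $M_1\cap M_2=0$, then $\rad M=M_1\oplus M_2$ and the claim is clear. Otherwise $M_1\cap M_2=\soc M$, which is simple. Since the $M_i$ are non-comparable, each $M_i$ is non-simple, because a simple $M_i$ would equal $\soc M\subseteq M_j$. By Lemma \ref{soc_rad}, $\soc M=\soc M_i\subseteq\rad M_i$ for both $i$. Lemma \ref{ses_sum_2}(2) then applies to $\rad M=M_1+M_2$, which is possible because the $M_i$ are proper in $\rad M$, and it yields $\ntop(\rad M)\cong\ntop M_1\oplus\ntop M_2$. I expect the main obstacle to be the bookkeeping in the dual statement. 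Specifically, one must check that "nonsimple uniserial $M_i$ with $M_1\cap M_2=\soc M$" corresponds exactly, under $\perp$, to "nonzero proper uniserial submodules of $\rad(DM)$ whose intersection is zero or the socle". This is handled by Lemma \ref{rad_perp} and Proposition \ref{duality_radi_soci}(1), using $(\soc M)^\perp=\rad(DM)$.
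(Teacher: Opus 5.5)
Your proof of (1) is correct and is essentially the paper's argument. You verify the hypothesis of Lemma~\ref{ses_sum_2}(2) directly, which is exactly how part (3) of that lemma, used by the paper, is proved.

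The gap is in (2), at the step you call bookkeeping. Since $N^\perp\cong D(M/N)$, applying (1) to $DM$ characterizes colocal biserial $M$ by submodules $N_1,N_2$ with $N_1\cap N_2=\soc M$, $N_1+N_2\in\{M,\rad M\}$ and \emph{uniserial quotients} $M/N_i$. The dual of the top formula then reads $\soc(M/\soc M)\cong\soc(M/N_1)\oplus\soc(M/N_2)$. Statement (2), however, is about \emph{uniserial submodules} $M_i$ and the terms $\soc(M_i/\soc M_i)$. Whether $M_i^\perp$ is uniserial depends on $M/M_i$, not on $M_i$. So the correspondence you assert under $\perp$ is not formal, and Lemma~\ref{rad_perp} and Proposition~\ref{duality_radi_soci}(1) do not supply it; they only compute radical and socle layers of $DM$ and of $N^\perp$.

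The missing idea is an index swap via the second isomorphism theorem:
\begin{itemize}
\item If $M_1+M_2=M$, then $M/M_2\cong M_1/(M_1\cap M_2)=M_1/\soc M_1$.
\item If $M_1+M_2=\rad M$, then $M/M_2$ is local with $\rad(M/M_2)=\rad M/M_2\cong M_1/\soc M_1$. Hence $M/M_2$ is uniserial, and $\soc(M/M_2)=\soc(\rad M/M_2)$ by Lemma~\ref{soc_rad}.
\end{itemize}
Only with these isomorphisms, and their symmetric versions for $M/M_1$, do the $M_i^\perp$ become uniserial. Only then does $\soc(M/M_2)\oplus\soc(M/M_1)$ turn into the stated $\soc(M_1/\soc M_1)\oplus\soc(M_2/\soc M_2)$.

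Once you write these isomorphisms down, the detour through $D$ buys nothing; the paper proves (2) directly.
\begin{itemize}
\item The equivalence needs no duality. If $M$ is colocal biserial, non-comparability gives $\soc M\subsetneq M_i$, so the nonzero $M_1\cap M_2$ must be $\soc M$. Conversely, $M_1\subseteq M_2$ would force $M_1=M_1\cap M_2=\soc M$, which is simple.
\item If $M_1+M_2=M$, the socle formula follows from Lemma~\ref{ses_sum_2}(1) applied to $M/\soc M$.
\item If $M_1+M_2=\rad M$, apply Lemma~\ref{ses_sum_2}(1) to $\rad M$, which gives the decomposition of $\rad(M/\soc M)=\rad M/\soc M$, and combine it with Lemma~\ref{soc_rad}.
\end{itemize}
Going through $D$ also silently uses that every submodule of $DM$ has the form $N^\perp$, which the paper never states.
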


\vspace{-1.5pt}

\noindent{\it Proof.} (1) Assume that $M$ is local. The sufficiency is evident. Let $M$ be biserial with uniserial submodules $M_1, M_2$ as in Definition \ref{bsm_def}. Then $M_i\subsetneq M_1+M_2;$ and hence, $M_i\subseteq \rad M,$ for $i=1,2$. By Definition \ref{bsm_def}(1), $\rad M=M_1+M_2$. If $M_1\cap M_2=0$, then $\rad M=M_1\oplus M_2$ and $\ntop(\rad M)=\ntop(M_1)\oplus \ntop(M_2).$ 
Otherwise, by Definition \ref{bsm_def}(2) and Lemma \ref{soc_rad},
$M_1\cap M_2=\soc M=\soc (\rad M),$ which is simple. By Lemma \ref{ses_sum_2}(3), $\ntop(\rad M) \hspace{-1pt} = \hspace{-1pt} \ntop(M_1)\oplus \ntop(M_2)$. 

(2) Assume that $M$ is colocal. The sufficiency is evident. Let $M$ be colocal biserial with uniserial submodules $M_1, M_2$ as in Definition \ref{bsm_def}. Then, $\soc M\subsetneq M_i$ for $i=1, 2$. By Definition \ref{bsm_def}(2), $M_1\cap M_2=\soc M=\soc M_1=\soc M_2.$ 

If $M_1+M_2\!=\! M$ then, by Lemma \ref{ses_sum_2}(1), $M/\soc M %= M/M_1\cap M_2 
\!\cong\! (M_1/\soc M_1) \oplus (M_2 /\soc M_2)$; and consequently, $\soc(M/\soc M) \hspace{-1pt} \cong \hspace{-1pt} \soc(\hspace{-1pt} M_1/\soc M_1) \hspace{1pt} \oplus \hspace{1pt} \soc(\hspace{-1pt}M_2/\soc M_2).$ 

Otherwise, $M_1+M_2 = \rad M\supseteq \soc M.$ Then, $M$ is local, and so is $M/\soc M.$ By Lemma \ref{ses_sum_2}(1), $\rad(M/\soc M) %=(\rad M)/M_1\cap M_2 
\cong (M_1/\soc M_1) \oplus (M_2 /\soc M_2);$ and by Lemma \ref{soc_rad}, $\soc(M/\soc M)=\soc(\rad (M/\soc M))\cong  \soc(M_1/\soc M_1)\oplus \soc( M_2/\soc M_2).$ 
The proof of the proposition is completed.

\smallskip

Now, we partition biserial modules into three disjoint classes as follows.

\begin{Defn}\label{as_bsm_def}

Let $A$ be an artin algebra. A biserial module in $\mmod A$ is called 

\begin{enumerate}[$(1)$]

\vspace{-2pt}

\item {\em astride biserial} if it is local and not colocal; 

\item {\em co-astride biserial} if it is colocal and not local; 

\item {\em diamond biserial} if it is local and colocal.
    
\end{enumerate}

\end{Defn}

These terms are motivated by the following two structural descriptions of biserial modules, the first of which %description %of astride biserial and co-astride biserial modules 
follows easily from Proposition \ref{loc_cloc_bsm} and Lemma \ref{ses_sum_2}(1).

\begin{Cor}\label{astride_bsm}

Let $A$ be an artin algebra. 

\begin{enumerate}[$(1)$]

\vspace{-2.5pt}
    
\item A module $M$ in $\mmod A$ is astride biserial if and only if $M$ is local such that $\rad M=M_1\oplus M_2,$ where $M_1, M_2$ are nonzero uniserial$\hp ;$ 

\item A module $M$ in $\mmod A$ is co-astride biserial if and only if $M$ is colocal such that
$M/\soc M=M_1/\soc M \oplus M_2/\soc M,$ where $M_1, M_2$ are nonsimple uniserial.

\end{enumerate} \end{Cor}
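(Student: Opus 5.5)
We prove statement (1) directly from Proposition \ref{loc_cloc_bsm}(1) and Definition \ref{as_bsm_def}; statement (2) then follows by duality, or by the dual argument using Proposition \ref{loc_cloc_bsm}(2) and Lemma \ref{ses_sum_2}(1).

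For the necessity in (1), let $M$ be astride biserial. Since $M$ is local, Proposition \ref{loc_cloc_bsm}(1) gives $\rad M=M_1+M_2$. Here $M_1, M_2$ are nonzero uniserial, and $M_1\cap M_2$ is either zero or the simple socle of $M$. In the second case $\soc M=M_1\cap M_2$ is simple, so $M$ is colocal, which contradicts the astride hypothesis. Hence $M_1\cap M_2=0$ and $\rad M=M_1\oplus M_2$.

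For the sufficiency in (1), let $M$ be local with $\rad M=M_1\oplus M_2$, where each $M_i$ is nonzero uniserial. We first check that $M$ is indecomposable: a local module has simple top, and a module with simple top is indecomposable. Next, $M_1$ and $M_2$ are proper submodules of $\rad M$ and are non-comparable, since each is nonzero and their intersection is zero. Their intersection is zero, so Proposition \ref{loc_cloc_bsm}(1) shows that $M$ is biserial.

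It remains to see that $M$ is not colocal. Since $M$ is not simple, Lemma \ref{soc_rad} gives $\soc M=\soc(\rad M)=\soc M_1\oplus \soc M_2$. Each $\soc M_i$ is nonzero, so $\soc M$ has length at least two and $M$ is not colocal. Therefore $M$ is astride biserial.

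For (2), one could apply $D$, using Remark \ref{rem_bsm}(1), the fact that local and colocal are interchanged by $D$, and Proposition \ref{duality_radi_soci} to translate $\rad(DM)=(\soc M)^\perp$ together with Lemma \ref{dual_perp}. The translation of the decomposition is somewhat fiddly, so the direct argument is simpler.

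For the necessity in (2), let $M$ be co-astride biserial. Proposition \ref{loc_cloc_bsm}(2) gives $\soc M=M_1\cap M_2$ with $M_1, M_2$ nonsimple uniserial and $M_1+M_2$ equal to $M$ or $\rad M$. If $M_1+M_2=\rad M$, then $\rad M$ is the largest proper submodule of $M$, so $M$ is local, a contradiction. Hence $M=M_1+M_2$, and Lemma \ref{ses_sum_2}(1) gives the required direct sum decomposition of $M/\soc M$.

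For the sufficiency in (2), the decomposition $M/\soc M=M_1/\soc M\oplus M_2/\soc M$ gives $M_1+M_2=M$ and $M_1\cap M_2=\soc M$. The module $M$ is colocal, hence indecomposable. Then $M/\soc M$ has top $\ntop(M_1/\soc M)\oplus \ntop(M_2/\soc M)$, which has length at least two because each $M_i$ is nonsimple. Hence $M$ is not local. Finally, $M_1$ and $M_2$ are non-comparable, since otherwise one summand of $M/\soc M$ would be zero. By Proposition \ref{loc_cloc_bsm}(2), $M$ is biserial, and so it is co-astride biserial.

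The main point needing care is the non-colocality step in (1), which relies on $\soc M=\soc(\rad M)$ from Lemma \ref{soc_rad}, together with its counterpart in (2).
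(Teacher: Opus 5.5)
Your proof is correct and takes the same route as the paper, which obtains the corollary directly from Proposition \ref{loc_cloc_bsm} together with Lemma \ref{ses_sum_2}(1). Your version also writes out the checks the paper leaves implicit: that $M$ is not colocal in (1), via $\soc M=\soc(\rad M)$ from Lemma \ref{soc_rad}, and that $M$ is not local in (2), via the top of $M/\soc M$.
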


%\noindent{\it Proof.} Note that Statement (1) follows immediately from Proposition \ref{loc_cloc_bsm}(1), whereas  Statement (2) follows from Proposition \ref{loc_cloc_bsm}(2) and Lemma \ref{ses_sum_2}(1). The proof of the corollary is completed.

\vspace{2pt}

The second description characterizes the diamond biserial modules as follows. 
\vspace{-1pt}

\begin{Prop}\label{biloc_bis} 

Let $A$ be an artin algebra. Consider a local colocal module $M$ in $\mmod A.$ The following statements are equivalent$\,:$

\begin{enumerate}[$(1)$]

\vspace{-2.5pt}
    
\item $M$ is diamond biserial$\,;$

\item $\rad M$ is co-astride biserial$\,;$

\item $M/\soc M$ is astride biserial$\,;$

\vspace{-.5pt}

\item $\rad M\hspace{-1pt} = \hspace{-1pt}  M_1+M_2$ and $M_1\cap M_2\hspace{-1pt} = \hspace{-1pt}  \soc M,$ where $M_1, M_2$ are nonsimple uniserial. %submodules of $\rad M.$

\end{enumerate}\end{Prop}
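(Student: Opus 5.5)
We prove the chain of implications (1)$\Rightarrow$(4)$\Rightarrow$(2)$\Rightarrow$(1), and deduce the equivalence of (3) with (4) by duality. Throughout, $M$ is local and colocal. Since $M$ is not simple in any case of interest, Lemma \ref{soc_rad} gives $\soc M=\soc(\rad M)\subseteq \rad M$.

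\emph{(1)$\Rightarrow$(4).} We apply Proposition \ref{loc_cloc_bsm}(1). It gives $\rad M=M_1+M_2$ with $M_1,M_2$ nonzero proper uniserial submodules of $\rad M$, and $M_1\cap M_2$ zero or equal to $\soc M$. Suppose $M_1\cap M_2=0$. Then $\rad M=M_1\oplus M_2$, so $\soc M$ contains $\soc M_1\oplus\soc M_2$, which is not simple; this contradicts colocality. Hence $M_1\cap M_2=\soc M$. It remains to show each $M_i$ is nonsimple. If $M_1$ were simple, then $M_1=M_1\cap M_2\subseteq M_2$, contradicting that $M_1$ is a proper submodule of $\rad M=M_1+M_2$.

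\emph{(4)$\Rightarrow$(1).} The $M_i$ are non-comparable: if $M_1\subseteq M_2$, then $M_1=M_1\cap M_2=\soc M$ is simple, which is excluded. Conditions (1) and (2) of Definition \ref{bsm_def} hold directly, because $\rad M$ is the largest proper submodule and $\soc M$ is the smallest nonzero submodule. So $M$ is biserial, hence diamond biserial.

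\emph{(4)$\Rightarrow$(2).} Put $R=\rad M$. It is colocal, since $\soc R=\soc M$ is simple. Its submodules $M_1,M_2$ are nonsimple uniserial with $M_1\cap M_2=\soc R$ and $M_1+M_2=R$. By Proposition \ref{loc_cloc_bsm}(2), $R$ is biserial; it is indecomposable because it is colocal. Lemma \ref{ses_sum_2}(3) then gives $\ntop R\cong\ntop M_1\oplus\ntop M_2$, so $R$ is not local, and therefore $R$ is co-astride biserial.

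\emph{(2)$\Rightarrow$(4).} By Corollary \ref{astride_bsm}(2), $R/\soc R=M_1/\soc R\oplus M_2/\soc R$ with $M_1,M_2$ nonsimple uniserial. Hence $M_1+M_2=R$ and $M_1\cap M_2=\soc R=\soc M$, which is (4).

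\emph{(3)$\Leftrightarrow$(4).} We pass to $DM$, which is again local and colocal, and use Remark \ref{rem_bsm}(1) together with Proposition \ref{duality_radi_soci}. These give $D(M/\soc M)\cong \rad(DM)$, and show that $D$ interchanges astride and co-astride biserial modules. Thus (3) for $M$ is equivalent to (2) for $DM$, hence to (1) for $DM$, hence to (1) for $M$.

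The main obstacle is this last duality step. One must check that $D$ swaps local and colocal, which is immediate, and that the identification $D(M/\soc M)\cong\rad(DM)$ is compatible with the biserial structure. Remark \ref{rem_bsm}(1) handles the latter. If a direct argument is preferred instead, one can apply Lemma \ref{ses_sum_2}(1) to $M_1,M_2$ inside $\rad M$: this gives $\rad M/\soc M\cong M_1/\soc M\oplus M_2/\soc M$, and then $M/\soc M$ is local with radical a direct sum of two nonzero uniserials, so Corollary \ref{astride_bsm}(1) applies. The converse goes back through the preimages of the summands in $M$.
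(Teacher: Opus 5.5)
Your proof is correct. For (1)$\Leftrightarrow$(4) and (2)$\Leftrightarrow$(4) it is essentially the paper's argument. You are also more careful than the paper here: you explicitly exclude $M_1\cap M_2=0$ by colocality, whereas the paper passes over this.

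The genuine difference is in (3). You reduce it by duality to (2) for $DM$, using $\rad(DM)\cong D(M/\soc M)$ and Remark~\ref{rem_bsm}(1). The paper instead extracts (2) and (3) at once from the single decomposition $\rad M/\soc M=(M_1/\soc M)\oplus(M_2/\soc M)$ given by Lemma~\ref{ses_sum_2}(1). It reads this once as $\rad M/\soc(\rad M)$ and once as $\rad(M/\soc M)$, and applies the two parts of Corollary~\ref{astride_bsm}. Conversely, it pulls back the two summands of $\rad M/\soc M$ starting from either (2) or (3).

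That is exactly your ``direct'' fallback, and it is the more economical route, since it needs neither $A^\circ$ nor the self-duality of biserial modules. Your duality route is also valid. The ``obstacle'' you flag is not really one, since being biserial is invariant under isomorphism.

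Two small slips:
\begin{enumerate}
\item In (1)$\Rightarrow$(4), $M_1\subseteq M_2$ forces $\rad M=M_2$, which contradicts properness of $M_2$, not of $M_1$.
\item In (4)$\Rightarrow$(2), Lemma~\ref{ses_sum_2}(3) requires $M_1,M_2$ to be proper submodules of $\rad M$. This follows from the non-comparability you established in (4)$\Rightarrow$(1), but you should say so.
\end{enumerate}
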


\vspace{-2pt}

\noindent{\it Proof.} We may assume that $M$ is not simple. Then, $\soc M=\soc(\rad M)$ by Lemma \ref{soc_rad}. In particular, $\rad M$ is colocal and $M/\soc M$ is local. % such that $\rad M/\soc(\rad M)=\rad M/\soc M=\rad(M/\soc M).$ 
First, by definition, Statement (4) implies Statement (1). 

Suppose that Statement (1) holds. Since $\soc M$ is simple, by Proposition \ref{loc_cloc_bsm}(1), $\rad M=M_1+M_2,$ where $M_1, M_2$ are non-zero proper uniserial submodules of $\rad M$ such that $M_1\cap M_2=\soc M.$ So, Statement (4) holds. Further by Lemma \ref{ses_sum_2}(1), $\rad(M/\soc M)=\rad M/\soc M= (M_1/\soc M)\oplus (M_2/\soc M),$ where $M_1/\soc M$ and $M_2/\soc M$ are nonzero uniserial. By Corollary \ref{astride_bsm}(2), $\rad M$ is co-astride biserial. And since $\rad M/\soc(\rad M)=\rad M/\soc M,$ we see from  Corollary \ref{astride_bsm}(1) that $M/\soc M$ is astride biserial. Thus, Statement (2) and (3) also hold.

Finally, suppose that $\rad M$ is co-astride biserial or $M/\soc M$ is astride biserial. By Corollary \ref{astride_bsm}, 
$\rad M / \soc M=N_1\oplus N_2,$ where $N_1, N_2$ are nonzero uniserial mo\-dules. 
Write $N_i=M_i/\soc M,$ where $\soc M\subsetneq M_i\subsetneq \rad M$, for $i=1, 2$. 
Then,  $\rad M=M_1+M_2$ and $M_1\cap M_2=\soc M,$ where $M_1, M_2$ are non-comparable and uniserial. Hence, $M$ is biserial. The proof of the proposition is completed.

\vspace{-2pt}

\subsection{\sc Indecomposable modules of length three} It is evident that indecomposable modules of length one or two are uniserial. As shown below, indecomposable modules of length three are uniserial or biserial. 

\begin{Prop}\label{CL_3}

Let $A$ be an artin algebra. Then a module $M$ in $\mmod A$ is indecomposable of length three if and only if one of the following cases occurs$\,:$

\begin{enumerate}[$(1)$]

\vspace{-2pt}

\item $M$ is uniserial of length three$\hp\hp;$

\item $M$ is local with $\rad M=\soc M=S_1\oplus S_2$, where $S_1, S_2$ are simple$\hp;$ or equivalently, $M$ is astride biserial of length three$\hp\hp;$

\item $M$ is colocal with $\rad M=\soc M$ and $M/\soc M\cong S_1\oplus S_2,$ where $S_1, S_2$ are simple$\hp;$ or equivalently, $M$ is co-astride biserial of length three.

\end{enumerate} \end{Prop}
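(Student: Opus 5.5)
The plan is to argue by Loewy length. Let $M$ be indecomposable of length three, so $1\le \ell\ell(M)\le 3$. If $\ell\ell(M)=1$ then $M$ is semisimple of length three, hence decomposable; so this case cannot occur. If $\ell\ell(M)=3=\ell(M)$, then $M$ is uniserial, which is case (1). The remaining case is $\ell\ell(M)=2$, so $\rad M\ne 0$ and $\rad^2 M=0$. Then $\rad M\subseteq \soc M$, and by Lemma \ref{soc_rad} also $\soc M\subseteq \rad M$. Hence $\rad M=\soc M$, and this module has length one or two.

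First suppose $\ell(\rad M)=2$. Then $\ntop M$ is simple, so $M$ is local, and $\rad M=\soc M=S_1\oplus S_2$. The summands cannot be isomorphic-merged into a simple socle here because $\soc M$ has length two. So $M$ is local and not colocal, and $\rad M$ is a direct sum of two nonzero uniserial modules. By Corollary \ref{astride_bsm}(1), $M$ is astride biserial, which gives case (2).

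Next suppose $\ell(\rad M)=1$. Then $\soc M=\rad M$ is simple, so $M$ is colocal, and $M/\soc M$ is semisimple of length two, say $S_1\oplus S_2$. This is case (3). To check that $M$ is co-astride biserial, write $S_i=M_i/\soc M$. Each $M_i$ has length two and a simple socle (Lemma \ref{loc_gen}(2)), so it is uniserial and nonsimple. Corollary \ref{astride_bsm}(2) then applies. Also $M$ is not local, since $\ntop M$ has length two.

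For the equivalences inside (2) and (3), the converse directions are direct. An astride biserial module $M$ of length three has $\rad M=M_1\oplus M_2$ with both $M_i$ nonzero, so each $M_i$ is simple and $\rad M$ is semisimple of length two. Hence $\rad^2M=0$, which gives $\soc M\supseteq \rad M$, and Lemma \ref{soc_rad} gives the reverse inclusion. Case (3) is dual; one can use $D$ together with Remark \ref{rem_bsm}(1), or run the analogous length count.

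For sufficiency, uniserial modules are indecomposable because their submodules form a chain, and biserial modules are indecomposable by definition. So it only remains to verify that the modules described in (2) and (3) are indecomposable. A local module is indecomposable, which settles (2). A colocal module is indecomposable, which settles (3).

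The only delicate point is the first step for Loewy length two, namely that $\soc M=\rad M$. It needs both the inclusion coming from $\rad^2M=0$ and Lemma \ref{soc_rad}; the latter uses indecomposability and the fact that $M$ is not simple. After that, everything reduces to counting lengths.
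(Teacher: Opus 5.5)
Your proposal is correct and follows the paper's proof essentially step for step: split by Loewy length, use Lemma \ref{soc_rad} to get $\rad M=\soc M$ when $\ell\ell(M)=2$, then split by $\ell(\rad M)$ and invoke Corollary \ref{astride_bsm}. You also make explicit, correctly, several points the paper leaves implicit: ruling out $\ell\ell(M)=1$, the converse directions of the inner equivalences, and the sufficiency. The length-three check in the sufficiency is immediate from the stated structure, though you should record it.
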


\vspace{-2pt}

\noindent{\it Proof.} The sufficiency is evident. Let $M$ be an indecomposable module in $\mmod A$ of length three. Then, $2\le \ell\ell(M) \le 3$ and $1\le \ell(\rad M)\le 2$. If $\ell\ell(M)=3,$ then $M$ is uniserial. Suppose now that $\ell\ell(M)=2$. In view of Lemma \ref{soc_rad}, $\rad M=\soc M.$  

If $\ell(\rad M)=2,$ then $M$ is local and $\rad M=S_1\oplus S_2,$ where $S_1, S_2$ are simple. By Corollary \ref{astride_bsm}(1), this is equivalent to $M$ being astride biserial. If $\ell(\rad M)=1,$ then $M$ is colocal and $M/\soc M=\ntop M=S_1\oplus S_2,$ where $S_1, S_2$ are simple. By Corollary \ref{astride_bsm}(2), this is equivalent to $M$ being co-astride biserial. The proof of the proposition is completed.

\smallskip

The following result says that we can always obtain an astride or co-astride biserial module of length three from a biserial module.

\begin{Lemma}\label{bsm_top2}

Let $A$ be an artin algebra. Consider a biserial module $M$ in $\mmod A$. 

\begin{enumerate}[$(1)$]

\vspace{-1.5pt}

\item If $M$ is local, then $\ntop^2\hspace{-1pt}M$ is astride biserial of length three.

\vspace{.5pt}

\item If $M$ is colocal, then $\soc^2\hspace{-1pt}M$ is co-astride biserial of length three.

\end{enumerate}\end{Lemma}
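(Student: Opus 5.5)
We prove Statement (1) directly and obtain Statement (2) by duality.

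For Statement (1), let $M$ be local biserial. By Proposition \ref{loc_cloc_bsm}(1), $\rad M=M_1+M_2$, where $M_1,M_2$ are nonzero proper uniserial submodules of $\rad M$, and $\ntop(\rad M)\cong \ntop M_1\oplus \ntop M_2$. Hence $\rad M/\rad^2\hspace{-1pt}M$ is a direct sum of two simple modules.

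Since $M$ is local, $\ntop^2\hspace{-1pt}M=M/\rad^2\hspace{-1pt}M$ is local by Lemma \ref{loc_gen}(1). Its radical is $\rad M/\rad^2\hspace{-1pt}M$, which is semisimple of length two. So $\ntop^2\hspace{-1pt}M$ has length three and Loewy length two, and
$$\rad(\ntop^2\hspace{-1pt}M)=\rad M/\rad^2\hspace{-1pt}M\cong S_1\oplus S_2.$$
We also need $\rad(\ntop^2\hspace{-1pt}M)=\soc(\ntop^2\hspace{-1pt}M)$. The inclusion $\rad\subseteq \soc$ holds because the radical is annihilated by $J$. For the reverse inclusion: the module is local, hence indecomposable and not simple, so Lemma \ref{soc_rad} gives $\soc\subseteq\rad$.

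Thus $\ntop^2\hspace{-1pt}M$ satisfies case (2) of Proposition \ref{CL_3}, so it is astride biserial of length three. Alternatively, Corollary \ref{astride_bsm}(1) applies directly, with the two uniserial summands being the simple modules $S_1, S_2$.

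For Statement (2), let $M$ be colocal biserial. Then $DM$ is local and biserial by Remark \ref{rem_bsm}(1). By Statement (1) applied over $A^\circ$, $\ntop^2(DM)$ is astride biserial of length three. By Proposition \ref{duality_radi_soci}(2), $\ntop^2(DM)\cong D(\soc^2\hspace{-1pt}M)$. Hence $\soc^2\hspace{-1pt}M$ is biserial (Remark \ref{rem_bsm}(1) again), colocal, not local, and of length three; that is, it is co-astride biserial of length three. Alternatively, one can argue dually using Proposition \ref{loc_cloc_bsm}(2) and Proposition \ref{CL_3}(3).

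The only delicate point is the identification $\ntop(\rad M)\cong\rad M/\rad^2\hspace{-1pt}M$ having length exactly two, and this is provided by Proposition \ref{loc_cloc_bsm}(1).
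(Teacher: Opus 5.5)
Your proof is correct and follows essentially the same route as the paper: Proposition \ref{loc_cloc_bsm}(1) gives $\ell(\ntop(\rad M))=2$, so $\ntop^2 M$ is a local, non-uniserial module of length three, and Proposition \ref{CL_3}(2) finishes. The paper leaves implicit the verification that $\rad(\ntop^2 M)=\soc(\ntop^2 M)$ and the duality argument for (2); you simply spell these out.
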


\vspace{-1.5pt}

\noindent{\it Proof.} We only prove Statement (1). Assume that $M$ is local. In view of Proposition \ref{loc_cloc_bsm}(1), $\ell(\rad(\ntop^2\hspace{-1pt}M))=\ell(\ntop(\rad M))=2$. Thus $\ntop^2\hspace{-1pt}M$ is local non-uniserial of length three. By Proposition \ref{CL_3}(2), $\ntop^2\hspace{-1pt}M$ is astride biserial of length three. The proof of the corollary is completed.

\smallskip

We also need to consider some decomposable modules of length three.

\vp

\begin{Lemma}\label{dec_mod_l3} Let $A$ be an artin algebra. Consider a module $M=S\oplus N$ in $\mmod A,$ where $S$ is simple and $N$ is uniserial of length two. 

\begin{enumerate}[$(1)$]

\vspace{-3pt}

\item If $T$ is a simple submodule of $M$, then $M=T\oplus N$ or $T=\rad N=\rad M.$ 

\item If $L$ is an indecomposable submodule of length two of $M$, then $M=S\oplus L$.

\end{enumerate}\end{Lemma}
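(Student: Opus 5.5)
We begin by fixing the structure of $M$. Since $N$ is uniserial of length two, $\rad N=\soc N$ is simple and $N/\rad N$ is simple. Moreover $\rad M=\rad S\oplus\rad N=\rad N$ and $\soc M=S\oplus\rad N$. Both statements will be proved by comparing submodules with $N$ and with $\rad N$, and then counting lengths.

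For Statement (1), let $T$ be a simple submodule of $M$. Then $T\subseteq\soc M=S\oplus\rad N$, and since $T$ is simple, $T\cap N$ is either $0$ or $T$.
\begin{enumerate}[$(1)$]
\item If $T\cap N=0$, then $T+N=T\oplus N$ has length three, which equals $\ell(M)$. Hence $M=T\oplus N$.
\item If $T\subseteq N$, then $T$ is a simple submodule of the uniserial module $N$. Its only simple submodule is $\soc N=\rad N$, so $T=\rad N=\rad M$.
\end{enumerate}

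For Statement (2), let $L$ be indecomposable of length two. Then $L$ is not semisimple: otherwise $L$ would be a direct sum of two simples and hence decomposable. So $L$ is uniserial with $0\ne\rad L\subseteq\rad M=\rad N$. Since $\rad N$ is simple, $\rad L=\rad N$. It then suffices to show $S\cap L=0$: once we have this, $S+L=S\oplus L$ has length three, so $M=S\oplus L$.

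To see that $S\cap L=0$, suppose otherwise. Since $S$ is simple, $S\subseteq L$. But $L$ is uniserial, so its unique simple submodule is $\rad L=\rad N$, which forces $S=\rad N\subseteq N$. This contradicts $S\cap N=0$.

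The only step needing care is the identification $\rad M=\rad N$, which follows because $\rad$ commutes with direct sums and $S$ is simple. With that in hand, I expect no genuine obstacle; the argument is length counting together with the uniqueness of the simple submodule of a uniserial module of length two.
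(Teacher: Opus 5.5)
Your proof is correct and follows essentially the same route as the paper. Both split Statement (1) according to whether $T\subseteq N$. In Statement (2), both rule out $S\subseteq L$ using $S=\rad L\subseteq \rad M=\rad N$, and then conclude by length counting.
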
 

\vspace{-1.5pt}

\noindent{\it Proof.} First of all, $\rad N=\rad M.$ Let $T$ be a simple submodule of $M$. If $T\subseteq N,$ then $T=\rad N.$ Otherwise, $T\cap N=0;$ and consequently, $M=T\oplus N.$ Next, let $L$ be an indecomposable submodule of length two of $M$. Then, $L$ is a uniserial maximal submodule of $M$. If $S\subseteq L$, then $S=\rad L \subseteq \rad M=\rad N$, absurd. Thus $S\cap L=0;$ consequently, $M=S\oplus L.$ The proof of the lemma is completed.

%\vspace{-7pt}

\subsection{\sc Indecomposable modules of length four} It is not difficult to describe all indecomposable modules of length four. However, %To avoid an overly lengthy exposition, 
we focus only on those directly relevant to our main results. First, applying Corollary \ref{astride_bsm} immediately yields the following characterization of astride or co-astride biserial modules of length four.

\begin{Prop}\label{loc_mod_l4}

Let $A$ be an artin algebra. If $M$ is a module in $\mmod A,$ then

\begin{enumerate}[$(1)$]

\vspace{-2pt}

\item $M$ is astride biserial of length four if and only if $\rad M=S\oplus N,$ \vp where $S$ is simple and $N$ is uniserial of length two$\,;$

\vp

\item $M$ is co-astride biserial of length four if and only if $M/\soc M=S\oplus N,$ where $S$ is simple and $N$ is uniserial of length two.

\end{enumerate}\end{Prop}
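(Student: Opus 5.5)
The plan is to derive both statements directly from Corollary \ref{astride_bsm}; it suffices to treat (1), since (2) is its dual. Alternatively, (2) can be obtained from (1) by applying the standard duality $D$ together with Remark \ref{rem_bsm}(1) and Proposition \ref{duality_radi_soci}, since $D(M/\soc M)\cong \rad(DM)$. To make the duality argument precise, note that $M$ is colocal exactly when $DM$ is local.

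For the necessity in (1), suppose $M$ is astride biserial of length four. By Corollary \ref{astride_bsm}(1), $M$ is local and $\rad M=M_1\oplus M_2$ with $M_1,M_2$ nonzero uniserial. Since $M$ is local, $\ell(\ntop M)=1$, so $\ell(\rad M)=3$. Thus $\ell(M_1)+\ell(M_2)=3$ with both lengths positive, which forces one summand to be simple and the other to have length two. Relabelling gives $\rad M=S\oplus N$ with $S$ simple and $N$ uniserial of length two; a module of length two with a simple radical is indeed uniserial.

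For the sufficiency, suppose $\rad M=S\oplus N$ as stated. Then $\ell(\rad M)=3$, and the statement implicitly requires $M$ to be local; I would read it as assuming $M$ local, and this is the one point to check carefully. Given locality, $\ell(M)=4$, and Corollary \ref{astride_bsm}(1) applies because $S$ and $N$ are nonzero uniserial, so $M$ is astride biserial of length four.

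The only delicate issue is that a non-local $M$ with $\rad M=S\oplus N$ has length greater than four, so the hypothesis on length or locality must be part of the intended reading. Under the natural reading, with $M$ local in (1) and colocal in (2) as in Corollary \ref{astride_bsm}, the argument is merely length bookkeeping. The dual case (2) runs the same way: $M/\soc M$ has length three and splits as $M_1/\soc M\oplus M_2/\soc M$ with lengths $1$ and $2$.
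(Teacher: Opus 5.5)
Your argument is correct and is the paper's own: the proposition comes straight from Corollary \ref{astride_bsm} by counting lengths ($\ell(\rad M)=3$ for a local $M$ of length four, and dually $\ell(M/\soc M)=3$ for a colocal one), with (2) obtained as the dual of (1). Your caveat is also justified. Read literally, the sufficiency fails: for instance $M'\oplus T$, with $M'$ astride biserial of length four and $T$ simple, has the same radical as $M'$. So one must read ``$M$ local'' in (1) and ``$M$ colocal'' in (2), or equivalently $\ell(M)=4$, and the paper only ever applies the result under those hypotheses.
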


%\noindent{\it Proof.} We shall only prove Statement (1). Suppose $M$ is astride biserial. Since $M$ is local, $\ell(\rad M)=3$, and by Corollary \ref{astride_bsm}(1), $\rad M=S\oplus N,$ where $S$ is simple and $N$ is uniserial of length one two. The proof of the proposition is completed.

\smallskip

Next, we describe diamond biserial modules of length four. \vspace{-1pt}

\begin{Prop}\label{lozenge_char}

Let $A$ be an artin algebra. Then a module $M$ in $\mmod A$ is diamond biserial of length four if and only if it satisfies
the following conditions$\,:$

\begin{enumerate}[$(1)$]

\vspace{-3pt}
    
\item $\ntop M$ is simple$\hspace{.5pt};$

\item $\rad^2\hspace{-1pt}M=\soc M,$ which is simple$\hspace{.5pt};$

\item $\ntop(\rad M)=S_1\oplus S_2,$ where $S_1, S_2$ are simple. 

\end{enumerate}\end{Prop}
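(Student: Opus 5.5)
We prove both directions using Proposition \ref{biloc_bis}, which applies once $M$ is known to be local and colocal.

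\emph{Necessity.} Let $M$ be diamond biserial of length four. Then $M$ is local and colocal, so (1) holds. By Proposition \ref{biloc_bis}(4), $\rad M=M_1+M_2$ with $M_1\cap M_2=\soc M$, where $M_1,M_2$ are nonsimple uniserial. Since $\ell(\rad M)=3$, the exact sequence of Lemma \ref{ses_sum_itsec}(1) gives $\ell(M_1)+\ell(M_2)=3+1=4$. Both lengths are at least two, so $\ell(M_1)=\ell(M_2)=2$. Lemma \ref{ses_sum_2}(3) then yields $\ntop(\rad M)\cong \ntop M_1\oplus\ntop M_2$, a sum of two simples, which is (3). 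Consequently $\rad^2 M$ has length $3-2=1$. Since $M$ is not simple, Lemma \ref{soc_rad} gives $\soc M=\soc(\rad M)$. Also $0\ne\rad^2M\subseteq\soc(\rad M)$, and $\soc M$ is simple, so $\rad^2M=\soc M$. This is (2).

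\emph{Sufficiency.} Assume (1)--(3). By (2), $\soc M$ is simple, so $M$ is local and colocal. By (3), $\ell(\rad M/\rad^2M)=2$, and by (2), $\ell(\rad^2 M)=1$. Hence $\ell(M)=1+2+1=4$ and $\ell(\rad M)=3$. Since $M$ is not simple, Lemma \ref{soc_rad} gives $\soc M=\soc(\rad M)$, so $\rad M$ is colocal with $\rad M/\soc M=\rad M/\rad^2 M\cong S_1\oplus S_2$.

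Write this decomposition as $\rad M/\soc M=(M_1/\soc M)\oplus(M_2/\soc M)$ with $\soc M\subsetneq M_i\subsetneq\rad M$. Then $\rad M=M_1+M_2$ and $M_1\cap M_2=\soc M$. Each $M_i$ has length two. It is indecomposable because it is a nonzero submodule of the colocal module $\rad M$ (Lemma \ref{loc_gen}(2)). An indecomposable module of length two is uniserial, and $M_i$ is nonsimple. Proposition \ref{biloc_bis}, direction (4)$\Rightarrow$(1), now shows that $M$ is diamond biserial, of length four as computed.

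The argument is routine. The only step needing care is the length count in the necessity direction, which forces both uniserial submodules to have length exactly two, together with the identification $\rad^2M=\soc M$. That identification uses the inclusion $\rad^2M=\rad(\rad M)\subseteq\soc(\rad M)$, which holds because $\rad M$ has Loewy length at most two once $\ntop(\rad M)$ has length two and $\ell(\rad M)=3$.
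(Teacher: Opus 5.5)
Your proof is correct and follows essentially the same route as the paper: both directions reduce to Proposition~\ref{biloc_bis} applied to the local colocal module $M$. The only difference is cosmetic. The paper passes through item (2) of that proposition, namely that $\rad M$ is co-astride biserial of length three, via Proposition~\ref{loc_cloc_bsm}(1), Proposition~\ref{CL_3}(3) and Corollary~\ref{astride_bsm}(2); you instead use item (4) directly with a length count, and you also verify explicitly that the $M_i$ are uniserial, a step the paper leaves implicit.
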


 \vspace{-1.5pt}
 
\noindent{\it Proof.} We may assume that $M$ is local and colocal. Suppose first that $M$ is biserial of length four. By Proposition \ref{loc_cloc_bsm}(1), $\ntop(\rad M)=S_1\oplus S_2,$ where $S_1, S_2$ are simple. And by Proposition \ref{biloc_bis}, $\rad M$ is co-astride biserial of length three. Thus, by Proposition \ref{CL_3}(3) and Lemma \ref{soc_rad}, $\rad^2\hspace{-1pt} M=\soc(\rad M)=\soc M.$ 

Suppose now that $\rad^2\hspace{-1pt}M=\soc M$ and $\ntop(\rad M)=S_1\oplus S_2,$ where $S_1, S_2$ are simple. Then 
$\rad M/\soc(\rad M)=\rad M/\soc M=\ntop(\rad M).$ In view of Corollary \ref{astride_bsm}(2), $\rad M$ is co-astride biserial of length three. In view of  Proposition \ref{biloc_bis}, $M$ is diamond biserial of length four. The proof of the proposition is completed.

\smallskip

In what follows, for the sake of brevity, diamond biserial modules of length four will be called {\it lozenge modules}.

\begin{Cor}\label{rb4_lozenge} 

Let $A$ be an artin algebra, and let $M$ be a local and colocal module in $\mmod A.$ If $\ell(M)\le 4,$ then $M$ is a uniserial or lozenge module. 

\end{Cor}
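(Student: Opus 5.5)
We argue by the length and the Loewy length of $M$, and aim to use Proposition \ref{lozenge_char} in the one case that is not uniserial. If $\ell(M)\le 3$, then $M$, being local, is indecomposable. By Proposition \ref{CL_3}, an indecomposable module of length at most three is uniserial, astride biserial or co-astride biserial. The astride and co-astride cases of length three are excluded: in the first $\soc M=S_1\oplus S_2$ is not simple, and in the second $\ntop M\cong S_1\oplus S_2$ is not simple. So $M$ is uniserial. Hence we may assume that $\ell(M)=4$, and we put $n=\ell\ell(M)$. If $n=4=\ell(M)$, then $M$ is uniserial and we are done. Since $M$ is local and colocal of length four, it is not simple, so $n\ge 2$. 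We treat the cases $n=2$ and $n=3$.

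For $n=2$ we show there is no such module. By Lemma \ref{soc_rad} and $\rad^2 M=0$, we get $\rad M\subseteq \soc M\subseteq \rad M$, so $\rad M=\soc M$. Locality gives $\ell(\rad M)=3$, whereas colocality gives $\ell(\soc M)=1$. This is a contradiction.

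The case $n=3$ is the main one. We verify conditions (1)--(3) of Proposition \ref{lozenge_char}. Condition (1) holds by hypothesis. Since $M$ is local, $\ell(\rad M)=3$, and $\ell\ell(\rad M)=2$ by Lemma \ref{topi_LL}. Also $\rad^2 M\neq 0$ and $\rad^2 M\subseteq\soc M$, because $J\rad^2M=\rad^3 M=0$ and Lemma \ref{isoc_char} applies. As $\soc M$ is simple, this forces $\rad^2 M=\soc M$, which is simple, so condition (2) holds. Consequently $\ntop(\rad M)=\rad M/\rad^2M$ has length $3-1=2$, and it is semisimple. Hence $\ntop(\rad M)=S_1\oplus S_2$ with $S_1,S_2$ simple, which is condition (3). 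Proposition \ref{lozenge_char} now shows that $M$ is a lozenge module.

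There is no real obstacle. The only point needing care is the inclusion $\rad^{n-1}M\subseteq\soc M$ together with the use of colocality to obtain equality, and this is handled by Lemma \ref{isoc_char}.
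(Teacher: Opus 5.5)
Your proof is correct, but it is organized differently from the paper's. The paper does not split by Loewy length. For $\ell(M)=4$ it observes that $\rad M$ is colocal of length three, since $\soc(\rad M)=\soc M$ by Lemma~\ref{soc_rad}. Proposition~\ref{CL_3} then makes $\rad M$ either uniserial or co-astride biserial, and Proposition~\ref{biloc_bis} gives that $M$ is uniserial or diamond biserial of length four.

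The two case analyses match up as follows:
\begin{itemize}
\item Your case $\ell\ell(M)=2$ never arises in the paper's route, because a colocal module of length three cannot be semisimple.
\item Your case $\ell\ell(M)=3$ corresponds to $\rad M$ being co-astride biserial.
\item Your case $\ell\ell(M)=4$ corresponds to $\rad M$ being uniserial.
\end{itemize}

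Instead of appealing to Proposition~\ref{CL_3} for $\rad M$, you check the layer conditions of Proposition~\ref{lozenge_char} directly from $\rad^3M=0$ and the simplicity of $\soc M$. The sufficiency part of Proposition~\ref{lozenge_char} is itself proved by showing that $\rad M$ is co-astride biserial and then invoking Proposition~\ref{biloc_bis}. So the two arguments rest on the same facts.

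The paper's version is shorter and reuses the length-three classification. Yours works directly with the radical and socle layers. It also makes the uniserial case explicit via $\ell(M)=\ell\ell(M)$, whereas the paper leaves implicit that a local module with uniserial radical is uniserial.

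One small point of citation: Proposition~\ref{CL_3} covers only length exactly three. Lengths one and two are the evident case noted just before it.
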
 

%\vspace{-1.5pt}

\noindent{\it Proof.} If $\ell(M)\le 3$, then $M$ is uniserial; see (\ref{CL_3}). Assume that $\ell(M)=4$. Then, $\rad M$ is colocal of length three. By Proposition \ref{CL_3},  $\rad M$ is uniserial or co-astride biserial of length three. Thus $M$ is uniserial or a lozenge module; see (\ref{biloc_bis}). The proof of the corollary is completed. 

\smallskip

Finally, we study a class of indecomposable modules of length four, which are neither local nor colocal.

\begin{Defn}\label{N_mod_def}

Let $A$ be an artin algebra. An indecomposable module $M$ in $\mmod A$ is called {\it N-shaped} if $M=M_1+M_2,$ where $M_1$ is astride biserial of length three and $M_2$ is uniserial of length two.

\end{Defn}

\noindent{\sc Example.} Let $A=kQ$, where $k$ is a field and $Q$ is the quiver $\hspace{-3pt}\xymatrixcolsep{22pt}\xymatrix{1 \ar@<0.4ex>[r] \ar@<-0.4ex>[r] & 2.}$ We write $\alpha, \beta$ for the two arrows and $e_1, e_2$ for the two trivial paths. Consider the left $A$-module $M$ with $e_1M=k\langle u_1, u_2\rangle$ and $e_2M=k\langle v_1, v_2\rangle,$ whose $A$-multiplication is given by $(\alpha u_1, \alpha u_2)=(v_1, 0)$ and $(\beta u_1, \beta u_2)=(0, v_2)$. Then $M=M_1+M_2$, where $M_1=A(u_1+u_2),$ which is astride biserial of length three;
and $M_2=Au_2$, which is uniserial of length two. However, $M$ is not N-shaped since it is decomposable.

\smallskip

The following two statements collect some basic properties of N-shaped modules.

\begin{Lemma}\label{N_mod_property_0}

Let $A$ be an artin algebra, and let $M=M_1+M_2$ be an N-shaped module in $\mmod A,$ where $M_1$ is astride biserial of length three and $M_2$ is uniserial of length two. Then 
%the following statements hold. 

\begin{enumerate}[$(1)$]

\vspace{-2.5pt}

\item $M_1\cap M_2=\soc M_2$ and $\rad M=\soc M=\soc M_1=\rad M_1\hspace{.5pt};$

\item $M$ is of length four with $\ntop M %= M/\soc M 
\cong \ntop M_1 \oplus \ntop M_2.$ 

\end{enumerate} \end{Lemma}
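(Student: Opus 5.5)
We first pin down $M_1\cap M_2$. By Proposition \ref{CL_3}(2), $M_1$ is local with $\rad M_1=\soc M_1=S_1\oplus S_2$, where $S_1,S_2$ are simple. Since $M_2$ is uniserial of length two, its submodules are $0$, $\soc M_2$ and $M_2$. So $M_1\cap M_2$ is one of these three.

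\begin{itemize}
\item If $M_1\cap M_2=M_2$, then $M=M_1$ has length three and is not a sum of the required form in any meaningful sense. More precisely, $M_2\subseteq M_1$ would be a uniserial submodule of length two of $M_1$. But every proper submodule of the local module $M_1$ lies in $\rad M_1=\soc M_1$, which is semisimple. This is absurd.
\item If $M_1\cap M_2=0$, then $M=M_1\oplus M_2$ with both summands nonzero, contradicting the indecomposability of $M$.
\end{itemize}

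Hence $M_1\cap M_2=\soc M_2$, which is simple. From Lemma \ref{ses_sum_itsec}(1) we get $\ell(M)=3+2-1=4$, which is the length claim in (2).

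Next we treat the radical and socle in (1). Since $\soc M_2\subseteq M_1$ and $M_1\cap M_2\subseteq\soc M_1$, we have $\rad M_2=\soc M_2\subseteq \soc M_1=\rad M_1$. Therefore $\rad M=\rad M_1+\rad M_2=\rad M_1=\soc M_1$.

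It remains to show $\soc M=\soc M_1$.
\begin{itemize}
\item Clearly $\soc M_1\subseteq\soc M$.
\item For the reverse inclusion, observe that $\ell\ell(M)=2$, because $\rad^2 M=\rad(\soc M_1)=0$ and $M$ is not semisimple. Lemma \ref{soc_rad} then gives $\soc M\subseteq\rad M=\soc M_1$, since $M$ is indecomposable and nonsimple.
\end{itemize}
Thus $\rad M=\soc M=\soc M_1=\rad M_1$, completing (1).

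For the top in (2), we would apply Lemma \ref{ses_sum_2}(2). We need $M_1\cap M_2\subseteq\rad M_1\cap\rad M_2$. Here $M_1\cap M_2=\soc M_2=\rad M_2$, and it lies in $\soc M_1=\rad M_1$. So the hypothesis holds, and we obtain $\ntop M\cong\ntop M_1\oplus\ntop M_2$. The properness assumption in that lemma holds because neither $M_i$ equals $M$, by the length count.

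The main obstacle is ruling out $M_1\cap M_2=M_2$. This rests on the observation that $M_1$ has no uniserial submodule of length two, since its proper submodules are semisimple.
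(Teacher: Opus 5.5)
Your proof is correct and follows essentially the same route as the paper's. Like the paper, you rule out $M_2\subseteq M_1$ (it would lie in the semisimple $\rad M_1=\soc M_1$) and $M_1\cap M_2=0$ (indecomposability). You then obtain $\rad M=\soc M_1\subseteq \soc M\subseteq \rad M$ via Lemma \ref{soc_rad}, count lengths, and apply Lemma \ref{ses_sum_2}(2) to get the top.
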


\vspace{-1.5pt}

\noindent{\it Proof.} By Proposition \ref{CL_3}(2), $\rad M_1=\soc M_1$. Note that $M_1\cap M_2\ne 0.$  \vspace{-1pt} If $M_1\cap M_2=M_2,$ then $M_2\subseteq \rad M_1=\soc M_1$, absurd. Thus, $M_1\cap M_2=\soc M_2$. Since $\rad M_2=\soc M_2,$ applying Lemma \ref{soc_rad} yields
\vspace{-4.5pt} $$\rad M= \rad M_1+\rad M_2= \soc M_1+\soc M_2= \soc M_1   \subseteq \soc M\subseteq \rad M. \vspace{-4.5pt}$$ 
Thus $\rad M=\soc M=\soc M_1.$ Now $\ell(M)=\ell(M_2)+\ell(M_1)-\ell(M_1\cap M_2)=4$ by Lemma \ref{ses_sum_2}(1). Finally since $M_1\cap M_2=\soc M_2\subseteq \rad M_1 \cap \hspace{.5pt}  \rad M_2,$ by Lemma \ref{ses_sum_2}(2), $\ntop M \hspace{-.5pt} \cong \hspace{-.5pt}  \ntop M_1 \oplus \ntop M_2.$ The proof of the lemma is completed. 

\vspace{2pt}

%We shall show that the notion of N-shaped modules is self-dual.

\begin{Lemma}\label{N_mod_property}

Let $A$ be an artin algebra, and let $M=M_1+M_2$ be an N-shaped module in $\mmod A,$ where $M_1$ is astride biserial of length three and $M_2$ is uniserial of length two. Then 

\begin{enumerate}[$(1)$]

\vspace{-2pt}

\item $\soc M = S \oplus \soc M_2,$ where $S$ is simple$\hp;$

\item $M/M_2$ is uniserial of length two, $\rad(M/M_2) \cong S$ and $\ntop(M/M_2) \cong \ntop M_1\hp;$

\item $M/S$ is co-astride biserial of length three such that $\soc(M/S)  \cong   \soc M_2$ and $(M/S)/\soc(M/S)  \cong \ntop M\cong \ntop M_1\oplus \ntop M_2.$ 

\end{enumerate} \end{Lemma}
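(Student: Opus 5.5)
We work throughout with the data from Lemma \ref{N_mod_property_0}: $M_1\cap M_2=\soc M_2$, $\rad M=\soc M_1=\rad M_1$, $\ell(M)=4$, and $\ntop M\cong \ntop M_1\oplus\ntop M_2$. Write $\soc M_1=S\oplus T$ with $S,T$ simple, using Proposition \ref{CL_3}(2). Since $\soc M_2=M_1\cap M_2$ is a simple submodule of $\soc M_1$, we may choose the decomposition so that $T=\soc M_2$.

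\textbf{Statement (1).} First, $\rad M=\soc M_1$ has length two. Since $\ell(\ntop M)=2$, it follows that $\ell(\soc M)\le \ell(M)-\ell(\ntop M)+\ell(\rad M)$, and more directly $\rad M\subseteq\soc M$ by Lemma \ref{soc_rad} applied to the indecomposable module $M$. In fact Lemma \ref{N_mod_property_0}(1) already gives $\soc M=\soc M_1=S\oplus \soc M_2$, so (1) holds once $S$ is chosen as a complement of $\soc M_2$ in $\soc M_1$, which is semisimple of length two.

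\textbf{Statement (2).} We have $\ell(M/M_2)=2$. Since $M=M_1+M_2$, we get $M/M_2\cong M_1/(M_1\cap M_2)=M_1/\soc M_2$. This quotient is a quotient of the local module $M_1$, hence local by Lemma \ref{loc_gen}(1), and it has length two, so it is uniserial. Its top is $\ntop M_1$ and its radical is $\soc M_1/\soc M_2\cong S$.

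\textbf{Statement (3).} $M/S$ has length three and radical $(\rad M+S)/S=\soc M_1/S\cong T=\soc M_2$. Moreover $\ntop(M/S)\cong \ntop M$, since $S\subseteq\rad M$, and this has length two. So $M/S$ has Loewy length two with simple radical. It remains to show $M/S$ is indecomposable; this is the main point. The plan is to argue that $M_2\cap S=0$, so that $M_2$ maps isomorphically onto a uniserial submodule $(M_2+S)/S$ of length two. Likewise $M_1/S$ is uniserial of length two, since $M_1$ is local with $\rad(M_1/S)\cong T$. Both of these uniserial submodules contain the socle image $(T+S)/S$, and their sum is $M/S$. A decomposition of $M/S$ would then force a simple direct summand. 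Since $\rad(M/S)$ is simple and lies in $\soc(M/S)$, the only alternative to colocality is $\soc(M/S)$ of length two, i.e.\ $M/S\cong T'\oplus U$ with $U$ uniserial of length two. We then need to derive a contradiction with the indecomposability of $M$.

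To obtain that contradiction, pull back along $M\to M/S$. Suppose $M/S=U\oplus T'/S$ with $T'\supseteq S$ of length two, semisimple or not. Then we compare $M$ with the sum $M_1+M_2$ and try to exhibit a splitting of $M$, in the spirit of the decomposable example preceding Lemma \ref{N_mod_property_0}. This is the step I expect to be hardest.

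A cleaner route is to show directly that $\soc(M/S)$ is simple. Take $x\in M$ with $J x\subseteq S$ and write $x=x_1+x_2$ with $x_i\in M_i$. If $x\notin\rad M+S$, then some $x_i$ is a top-element. Multiplying by suitable elements of $J$ produces nonzero elements of $T$: a top-element of $M_2$ generates $M_2$, so $Jx_2$ hits $T$ nontrivially; a top-element of $M_1$ has $Jx_1=\soc M_1\not\subseteq S$. One must check that these contributions cannot cancel modulo $S$. This uses $\ntop M\cong\ntop M_1\oplus\ntop M_2$, i.e.\ that $\{x_1,x_2\}$-type top-elements are top-free by Lemma \ref{tb_gen}, so each component can be isolated by idempotents as in the proof of Lemma \ref{tb_gen}(1). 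If the tops of $M_1$ and $M_2$ are isomorphic, cancellation is possible exactly when $M$ decomposes; in that case we instead use indecomposability of $M$ via a change of generator. Hence $\soc(M/S)=\rad(M/S)\cong\soc M_2$ is simple, and $(M/S)/\soc(M/S)\cong \ntop M$. Therefore $M/S$ is co-astride biserial of length three by Proposition \ref{CL_3}(3).
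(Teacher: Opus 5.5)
Your arguments for (1) and (2) are correct. The stray inequality in (1) is meaningless, but you do not use it. The gap is in (3).

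You correctly identify that everything hinges on $M/S$ being indecomposable (equivalently, colocal), but neither of your routes proves it. The first route stops exactly at the decisive step (``This is the step I expect to be hardest''). The second disposes of the case $\ntop M_1\cong \ntop M_2$ with two things: the unproved claim that cancellation happens exactly when $M$ decomposes, and an unspecified ``change of generator''.

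That case is the whole difficulty. Take the example preceding Lemma \ref{N_mod_property_0}, with $S=Av_1$. The top-elements $u_1+u_2\in M_1$ and $-u_2\in M_2$ each satisfy $Jx_i\not\subseteq S$, yet their sum $u_1$ has $Ju_1=S$, so $\soc(M/S)$ has length two. Only the indecomposability of $M$ excludes this, and your element-wise argument contains no mechanism for invoking it. For non-isomorphic tops your idempotent argument can be completed, but that is only a special case.

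The missing step is short, and your first route already contains the ingredients. Suppose $M/S=T'/S\oplus U$ with $T'/S$ simple and $U$ uniserial of length two. Since $S\cap M_2=S\cap \soc M_2=0$, the submodule $(M_2+S)/S\cong M_2$ is indecomposable of length two. Lemma \ref{dec_mod_l3}(2) then lets you replace $U$ by it: $M/S=T'/S\oplus (M_2+S)/S$.

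Pulling back gives $M=T'+M_2$ and $T'\cap (M_2+S)=S$. Hence $T'\cap M_2=T'\cap (M_2+S)\cap M_2=S\cap M_2=0$. So $M=T'\oplus M_2$ with both summands nonzero, contradicting the indecomposability of $M$.

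Thus $M/S$ is indecomposable. It is non-simple with simple radical, so Lemma \ref{soc_rad} gives $\soc(M/S)=\rad(M/S)\cong \soc M_2$. Proposition \ref{CL_3}(3) then yields (3). This is exactly the paper's argument.
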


\vspace{-1.5pt}

\noindent{\it Proof.} (1) By Lemma \ref{N_mod_property_0}, $M_1\cap M_2=\soc M_2\subseteq \soc M=\soc M_1.$ By Proposition \ref{CL_3}(2), $\ell(\soc M)=2;$ and hence, $\soc M=S  \oplus  \soc M_2,$ where $S$ is simple. 

(2) Since $M/M_2\cong M_1/\soc M_2,$ we have $\ntop(M/M_2) \cong \ntop M_1;$ consequently, $M/M_2$ is uniserial of length two. Moreover since $(S+\soc M_2)/\soc M_2\cong S$, we infer that $\soc(M/M_2)\cong \soc(M_1/\soc M_2) \cong S.$

(3) Since $\rad M=\soc M,$ we have $\rad (M/S)=(\soc M)/S\cong \soc M_2,$ and by Lemma \ref{N_mod_property_0}(2), $\ntop(M/S)\cong \ntop M\cong \ntop M_1\oplus \ntop M_2$.  
In particular, $M/S$ is neither local nor semisimple. 

Suppose that $M/S$ is decomposable. Then, $M/S=T\oplus L$, where $T$ is simple and $L$ is uniserial of length two. Note that $S\cap M_2=S\cap \soc M_2=0$. Thus, $M/S$ has a submodule $(M_2+S)/S \cong M_2,$ which is uniserial of length two. By Lemma \ref{dec_mod_l3}(1), $M/S=T\oplus (M_2+S)/S$. Write $T=N/S$ with $S\subsetneq N.$ Then $M=N+(M_2+S)=N+M_2$ and $N\cap (M_2+S)=S.$ As a consequence, we have $N\cap M_2=N\cap (M_2+S)\cap M_2=S\cap M_2=0,$ and hence, $M=N\oplus M_2,$ absurd. 

Hence, $M/S$ is indecomposable. Since $\ell\ell(M/S)\le \ell\ell(M)=2,$ we infer that $\soc(M/S)=\rad(M/S)\cong \soc M_2$. So, $M/S$ is colocal of length three; consequently, $M/S$ is co-astride biserial such that $(M/S)/\soc(M/S)=\ntop(M/S)\cong \ntop M;$ see (\ref{CL_3}). The proof of the lemma is completed. 

\smallskip

We conclude this subsection by characterizing N-shaped modules in terms of short exact sequences as follows.
%, which implies, in particular, that this notion is self-dual.

\begin{Prop}\label{N_mod_ses}

Let $A$ be an artin algebra. An indecomposable module $M$ in $\mmod A$ is N-shaped if and only if one of the following short exact sequences exists$\hp\hp :$

\begin{enumerate}[$(1)$]

\vspace{-3pt}

\item $\xymatrixcolsep{22pt}\xymatrix{0\ar[r] & S \ar[r]^-{\hspace{6pt} (f_3, f_4)^{\hp T}} & M_1\oplus M_2\ar[r]^-{(f_1, f_2)} & M \ar[r] \ar[r] & 0,}$ \hspace{-6pt} \vspace{-.5pt} where $S$ is simple, $M_1$ is astride biserial of length three and $M_2$ is uniserial of length two. 

%\vp

\item $\xymatrixcolsep{22pt}\xymatrix{0\ar[r] & M \ar[r]^-{\hspace{6pt} (g_1, g_2)^{\hp T}} & N_1\oplus N_2\ar[r]^-{(g_3, g_4)} & T \ar[r] \ar[r] & 0,}$ \hspace{-6pt} \vspace{-0pt}  where $T$ is simple, $N_1$ is co-astride biserial of length three and $N_2$ is uniserial of length two. 

\end{enumerate}

\end{Prop}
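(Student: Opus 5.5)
We prove necessity by building the sequences from the structure lemmas, and sufficiency by reading off the structure of $M$ from each sequence.

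\emph{Necessity.} Let $M=M_1+M_2$ be N-shaped. For (1), Lemma \ref{ses_sum_itsec}(1) gives $0\to M_1\cap M_2\to M_1\oplus M_2\to M\to 0$. By Lemma \ref{N_mod_property_0}(1), $M_1\cap M_2=\soc M_2$, which is simple. For (2), we use the submodules $M_2$ and $S$ of Lemma \ref{N_mod_property}(1). Since $\soc M=S\oplus\soc M_2$ and $\soc M_2\subseteq M_2$, we have $S\cap M_2=0$. Lemma \ref{ses_sum_itsec}(2) then yields
$$0\to M\to M/S\oplus M/M_2\to M/(S+M_2)\to 0.$$
By Lemma \ref{N_mod_property}(3), $M/S$ is co-astride biserial of length three. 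By Lemma \ref{N_mod_property}(2), $M/M_2$ is uniserial of length two. The cokernel $M/(S+M_2)$ has length $4-3=1$, so it is simple.

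\emph{Sufficiency for (1).} The images $f_1(M_1)$ and $f_2(M_2)$ sum to $M$. We first show $f_1$ and $f_2$ are injective. Exactness gives $\Ker f_1\cong\{(x,0)\}\cap{\rm Im}(f_3,f_4)^T$, which has length at most one. Suppose $f_3=0$. Then $f_4$ embeds $S$ into $M_2$, the sequence is the direct sum of $0\to M_1\to M_1\to 0$ and $0\to S\to M_2\to M_2/S\to 0$, and $M\cong M_1\oplus M_2/S$ is decomposable, a contradiction. Hence $f_3\ne0$, and symmetrically $f_4\ne0$. Since $S$ is simple, both $f_3$ and $f_4$ are monomorphisms. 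Therefore an element $(x,0)$ lies in ${\rm Im}(f_3,f_4)^T$ only if $x=f_3(s)$ with $f_4(s)=0$, forcing $s=0$. So $f_1$ is injective, and likewise $f_2$. Thus $M=f_1(M_1)+f_2(M_2)$ is a sum of an astride biserial submodule of length three and a uniserial submodule of length two. Since $M$ is indecomposable by hypothesis, $M$ is N-shaped by Definition \ref{N_mod_def}.

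\emph{Sufficiency for (2).} Apply $D$. The dual sequence has the form of (1) over $A^\circ$: $DT$ is simple, $DN_1$ is astride biserial of length three by Remark \ref{rem_bsm}(1) and Definition \ref{as_bsm_def}, and $DN_2$ is uniserial of length two. So $DM$ is N-shaped by the argument above. It then suffices to show that N-shaped modules are closed under $D$. This is the main point to check.

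To check it, let $DM=L_1+L_2$ be N-shaped and apply necessity (2) to $DM$. This gives a sequence $0\to DM\to N_1'\oplus N_2'\to T'\to 0$, with $N_1'$ co-astride biserial of length three, $N_2'$ uniserial of length two and $T'$ simple. Dualizing gives $0\to DT'\to DN_1'\oplus DN_2'\to M\to 0$, which is a sequence of type (1) for $M$. By sufficiency (1), $M$ is N-shaped.
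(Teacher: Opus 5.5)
Your proposal is correct and follows essentially the same route as the paper: necessity via Lemma~\ref{ses_sum_itsec} applied to $M_1, M_2$ and to $S, M_2$; sufficiency of (1) by showing that $f_1, f_2$ are monomorphisms; and sufficiency of (2) by dualizing, applying necessity to $DM$, and dualizing back. Two details differ from the paper. Your use of indecomposability to rule out $f_3=0$ or $f_4=0$ fills a small gap, since the paper simply asserts that $f_3, f_4$ are monomorphisms because $S$ is simple. Your length count showing $M/(S+M_2)$ is simple replaces the paper's identification of this cokernel with $\ntop M_1$.
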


\vspace{-1.5pt}

\noindent{\it Proof.} Let $M$ be an indecomposable module in $\mmod A.$ Suppose first that the short exact sequence (1) stated in the proposition exists. Since $S$ is simple, $f_3, f_4$ are monomorphisms, and so are $f_1, f_2$. Thus $M=N_1+N_2,$ where $N_i={\rm Im} f_i\cong M_i$ for $i=1, 2.$ By definition, $M$ is an N-shaped module. 

Suppose now that $M=M_1+M_2,$ where $M_1$ is astride biserial of length three and $M_2$ is uniserial of length two. By Lemma \ref{N_mod_property_0}, $M_1\cap M_2=\soc M_2.$ By Lemma \ref{ses_sum_itsec}(1), we obtain the short exact sequence (1) stated in the proposition. Further write $\soc M_1=S_1\oplus \soc M_2,$ where $S_1$ is simple. By Lemma \ref{N_mod_property}, $M/S_1$ is co-astride biserial of length three and $M/M_2$ is uniserial of length two. Note that  $S_1\cap M_2=S_1\cap \soc M_2=0$ and $M_1\cap (S_1+M_2)=S_1+M_1\cap M_2=\soc M_1=\rad M_1.$ Thus, $M/(S+M_2)=(M_1+(S_1+M_2))/(S+M_2)\cong \ntop M_1.$ In view of Lemma \ref{ses_sum_itsec}(2), we obtain a short exact sequence \vspace{-7pt}
$$\xymatrixcolsep{25pt}\xymatrix{0\ar[r] & M \ar[r]^-{\hspace{6pt} (p_1, p_2)^{\hp T}} & M/S_1\oplus M/M_2\ar[r]^-{(-p_3, p_4)} & \ntop M_1\ar[r] \ar[r] & 0,}\vspace{-6pt}$$ where the $p_i$ are epimorphisms, the short exact sequence (2) in the proposition.

Finally, note that a module $L$ in $\mmod A$ is astride or co-astride biserial if and only if $DL$ is co-astride or astride biserial, respectively. Suppose that $\eta$ is a short exact sequence of form (2). Then, $D\eta$ is a short exact sequence of form (1), ending with $DM$. As shown at the beginning of the proof, $DM$ is N-shaped. As seen above, there exists also an short exact sequence $\delta$ of form (2), starting with $DM$. Then, $D\delta$ is a short exact sequence of form (1), ending with $M$. As has been shown, $M$ is N-shaped. The proof of the proposition is completed. 

\begin{Remark}\label{N_mod_dual} As seen in the proof of Proposition \ref{N_mod_ses}, a module $M$ in $\mmod A$ is N-shaped if and only if $DM$ is N-shaped. \end{Remark}

\subsection{\sc Biserial algebras} We begin by recalling the definition of biserial algebras from \cite{Ful} and reformulating that of string algebras from \cite{LiY}. 

\begin{Defn}\label{bsa_def} An artin algebra $A$ is called a {\it biserial} (respectively, {\it string}) {\it algebra} if every indecomposable non-uniserial projective module in $\mmod A$ or $\mmod A^{\rm o}$ is biserial (respectively, astride biserial).
    
\end{Defn}

\begin{Remark} 

Since the notion of biserial modules is self-dual, an artin algebra $A$ is biserial (respectively, string) if and only if every indecomposable non-uniserial injective module in $\mmod A$ or $\mmod A^\circ$ is biserial (respectively, co-astride biserial).
    
\end{Remark}

%\begin{Lemma}%\label{bs_rtb} Let $A$ be a biserial algebra. Consider a module $M$ in $\mmod A$. 
%\begin{enumerate}[$(1)$]\item If $M$ is local, then $\ell(\ntop(\rad M))\le 2,$ \vspace{-.5pt} where the equality occurs only if $P_{\hspace{.5pt}\ntop M}$ is biserial. \item If $M$ is colocal, then $\ell(\soc(M/\soc M))\le 2,$ \vspace{-1pt} where the equality occurs only if $I_{\hspace{.5pt}\soc M}$ is biserial.\end{enumerate}\end{Lemma}

%\noindent{\it Proof.} We only prove Statement (1). We may assume that $M$ is local non-simple. Then $M$ admits a top-basis $\{m_0\}$ and $\rad M$ admits a top-basis $\{m_1, \ldots, m_r\},$ where $m_i\in e_iM$ with $e_i$ a primitive idempotent in $A,$ for $i=0, 1, \ldots, r$. Write $m_i=u_im_0$, where $u_i\in e_iJe_0,$ for $i=1, \ldots, r.$ We have a projective cover $f: Ae_0\to Am_0$, sending $e_0$ to $m_0$. Note that $f$ restricts to a map $g: Je_0\to \rad M,$ sending $u_i$ to $m_i$ for $i=1, \ldots, r$. By Lemma \ref{map_tfs}(1), $\{u_1, \ldots, u_r\}$ is top-free. Since $Ae_0$ is uniserial or local biserial, by Proposition \ref{loc_cloc_bsm}(1), $\ell(\ntop(Je_0))\le 2$, and hence, $r\le 2$. If $r=2$, then $Je_0$ is not uniserial, and consequently, $Ae_0$ is biserial. The proof of the lemma is completed.

%\smallskip

As shown below, astride or co-astride biserial modules of length three over a biserial algebra are uniquely determined by their top or socle, respectively.

\begin{Prop}\label{bsa_ml3}

Let $A$ be a biserial algebra. Consider a module $M$ in $\mmod A$ with projective cover $P$ and injective envelope $I$. Then

\begin{enumerate}[$(1)$]

\vspace{-1.5pt}

\item $M$ is astride biserial of length three if and only if $P$ is biserial and $M \!\cong \! \ntop^2\hspace{-1pt}P\hspace{-.5pt};$

\vspace{1pt}

\item $M$ is coastride biserial of length three if and only if $I$ is biserial and $M \!\cong \!\soc^2\hspace{-1pt}I.$

\end{enumerate}\end{Prop}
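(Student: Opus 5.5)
We prove Statement (1); Statement (2) follows by duality. The standard duality $D$ exchanges astride and co-astride biserial modules, exchanges projective covers and injective envelopes, and by Proposition \ref{duality_radi_soci} we have $D(\soc^2 I)\cong \ntop^2(DI)$. The algebra $A^\circ$ is again biserial, since Definition \ref{bsa_def} is symmetric in $A$ and $A^\circ$.

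\emph{Sufficiency.} Assume $P$ is biserial and $M\cong \ntop^2\hspace{-1pt}P$. Since $P$ is indecomposable projective, it is local, so Lemma \ref{bsm_top2}(1) shows directly that $\ntop^2\hspace{-1pt}P$ is astride biserial of length three.

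\emph{Necessity.} Let $M$ be astride biserial of length three. Then $M$ is local, so $P=Ae$ is indecomposable, and we take a projective cover $f:P\to M$. By Proposition \ref{CL_3}(2), $\rad M=\soc M=\rad^{1}M$ and $\rad^2\hspace{-1pt}M=0$, so $\ell\ell(M)=2$. Hence $f$ factors through an epimorphism $\bar f:\ntop^2\hspace{-1pt}P\to M$.

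\begin{enumerate}[$(1)$]
\item First we show $P$ is biserial. The epimorphism $f$ restricts to an epimorphism $\rad P\to\rad M$, which induces an epimorphism $\ntop(\rad P)\to \ntop(\rad M)=\rad M$. The latter has length two, so $\ell(\ntop(\rad P))\ge 2$ and $\rad P$ is not uniserial. Therefore $P$ is not uniserial. As $A$ is biserial, $P$ is biserial by Definition \ref{bsa_def}.
\item Next we identify $M$ with $\ntop^2\hspace{-1pt}P$. Since $P$ is local and biserial, Proposition \ref{loc_cloc_bsm}(1) gives $\ell(\ntop(\rad P))\le 2$, hence equality. Moreover $\rad(\ntop^2\hspace{-1pt}P)=\rad P/\rad^2\hspace{-1pt}P=\ntop(\rad P)$, so $\ell(\ntop^2\hspace{-1pt}P)=1+2=3=\ell(M)$. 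The epimorphism $\bar f$ is therefore an isomorphism, and $M\cong\ntop^2\hspace{-1pt}P$.
\end{enumerate}

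The only delicate point is the length bound $\ell(\ntop(\rad P))\le 2$ for a local biserial $P$. This follows from Proposition \ref{loc_cloc_bsm}(1), which gives $\ntop(\rad P)\cong\ntop P_1\oplus\ntop P_2$ with $P_1,P_2$ uniserial, so each summand is simple.
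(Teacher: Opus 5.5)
Your proof is correct and follows the paper's argument: you factor the projective cover through $\ntop^2\hspace{-1pt}P$ using $\rad^2\hspace{-1pt}M=0$, show that $P$ is non-uniserial and hence biserial, and conclude by comparing lengths. The two small differences are only in how much is spelled out: you compute $\ell(\ntop^2\hspace{-1pt}P)=3$ directly from Proposition \ref{loc_cloc_bsm}(1), which is exactly how Lemma \ref{bsm_top2}(1), cited by the paper, is proved, and your epimorphism $\ntop(\rad P)\to\rad M$ just makes explicit the paper's remark that a uniserial $P$ could not have the non-uniserial quotient $M$.
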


\vspace{-1.5pt}

\noindent{\it Proof.} We only prove Statement (1). The sufficiency follows from Lemma \ref{bsm_top2}(1). Let $M$ be astride biserial of length three with a projective cover $f: P\to M.$ Since $\rad^2\hspace{-1pt}M=0;$ see (\ref{CL_3}), $f$ induces an epimorphism $\bar f: P / \rad^2\hspace{-1pt}P \to M.$ Since $M$ is not uniserial, $P$ biserial. By Lemma \ref{bsm_top2}(1), 
$\ell(P/\rad^2\hspace{-1pt}P)=3;$ consequently, $\bar f$ is an isomorphism. 
%(2) Suppose that $M$ is colocal. Then $DM$ is local with $\ntop(DM)\cong D(\soc M).$ Thus, $P_{\ntop(DM)}\cong D(I_{\soc M}),$ and hence, \vspace{.5pt} $\ntop^2(P_{\ntop(DM)})\cong D(\soc^2(I_{\soc M}))$ by Proposition \ref{duality_radi_soci}(2). Now, $M$ is biserial of length $3$ if and only if $DM$ is biserial of length $3$ if and only if $P_{\ntop(DM)}$ is biserial and $DM\cong \ntop^2\hspace{-.5pt}(P_{\ntop(DM)})$, that is, $I_{\hspace{.5pt}\soc M}$ is biserial and $M\cong \soc^2\hspace{-.5pt}(I_{\hspace{.5pt}\soc M}).$ 
The proof of the proposition is completed.  

\smallskip

Now, we turn to compute almost split sequences starting with biserial projective mo\-dules over biserial algebras.

\begin{Prop}\label{ass_p_astride}

Let $A$ be a biserial algebra, and let $P$ be a biserial projective module in $\mmod A$ with $\rad P=M_1\oplus M_2,$ where $M_i$ is uniserial of length $s_i$ such that $\soc^{s_i+1}(I_{\hspace{.5pt}\soc M_i})$ is uniserial, for $i=1, 2$. Then we have an almost split sequence \vspace{-6pt}
$$\xymatrixcolsep{30pt}\xymatrix{0\ar[r] & P \ar[r]^-{(p_1, p_2)^{\hp T}\hspace{-2pt}}& P /\soc M_1 \oplus P/\soc M_2  \ar[r]^-{(-p_3, p_4)} &  P/\soc P \ar[r] & 0,}\vspace{-4.5pt}$$ where the $p_i$ are canonical projections.

\end{Prop}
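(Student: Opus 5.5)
The plan is to first build the displayed sequence from Lemma~\ref{ses_sum_itsec}(2) and then show that it is almost split.

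\emph{Constructing the sequence.} Take the submodules $\soc M_1$ and $\soc M_2$ of $P$. Their intersection is $0$, since $M_1\cap M_2=0$. By Lemma~\ref{soc_rad}, $\soc P=\soc(\rad P)=\soc M_1\oplus\soc M_2$, so their sum is $\soc P$. Lemma~\ref{ses_sum_itsec}(2) then gives exactly the displayed short exact sequence $\delta$. It does not split: $P$ is indecomposable, and $P$ is not isomorphic to a summand of $P/\soc M_1\oplus P/\soc M_2$ for length reasons, since each of these modules is local of length $\ell(P)-1$.

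\emph{Reduction to computing ${\rm Tr}D P$.} Theorem~\ref{ass_st_mod} applies to $\rad P=M_1\oplus M_2$ with $M=M_1$, and the hypothesis $\soc^{s_1+1}(I_{\soc M_1})$ uniserial is exactly what it needs. It yields an almost split sequence whose right-hand map restricts to the canonical projection $p_1\colon P\to P/\soc M_1$. Hence $p_1$ is irreducible, and so the component $p_1$ of the left-hand map of $\delta$ is irreducible. I would then invoke Lemma~\ref{ass-sim}(2) in a version adapted to a component map: either apply it after showing that the whole map $(p_1,p_2)^T$ is minimal left almost split, or, more directly, prove $P/\soc P\cong{\rm Tr}DP$ and combine it with the irreducibility of $p_1$ as in the proof of Lemma~\ref{ass-sim}(2). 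Indeed, if $P/\soc P\cong{\rm Tr}DP$, the almost split sequence starting at $P$ has middle term of length $2\ell(P)-2$. Its middle term already contains $P/\soc M_1$ and $P/\soc M_2$ (by irreducibility of $p_1$ and, symmetrically, of $p_2$), which together have that length. So the middle terms coincide and $\delta$ is almost split.

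\emph{Computing ${\rm Tr}(P/\soc P)$, the main obstacle.} It is equivalent, and cleaner, to show ${\rm Tr}(P/\soc P)\cong DP$. Write $P=Ae$ and $\soc M_i=Au_i$ with $u_i\in e_iJe$ and $\soc M_i\cong Ae_i/Je_i$.
\begin{enumerate}[$(1)$]
\item The modular-law argument from the proof of Theorem~\ref{ass_st_mod} gives $u_i\in e_iJ^{s_i}e\setminus e_iJ^{s_i+1}e$.
\item By Proposition~\ref{tb-projc-ienv}, a minimal projective presentation of $P/\soc P$ is $Ae_1\oplus Ae_2\xrightarrow{(R_{u_1},R_{u_2})}Ae\to P/\soc P\to0$.
\item Applying $\Hom_A(-,A)$, the transpose is ${\rm Tr}(P/\soc P)=(e_1A\oplus e_2A)/\{(u_1a,u_2a)\mid a\in eA\}$.
\end{enumerate}
Next, by Proposition~\ref{duality_radi_soci}(2), the uniseriality hypothesis on $\soc^{s_i+1}(I_{\soc M_i})\cong D(e_1A/e_1J^{s_i+1})$-type modules says that $e_iA/e_iJ^{s_i+1}$ is uniserial. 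Since $u_i$ lies in $e_iJ^{s_i}$ but not in $e_iJ^{s_i+1}$, the element $u_i$ generates $e_iJ^{s_i}$ modulo $e_iJ^{s_i+1}$. From this I expect:
\begin{enumerate}[$(1)$]
\item Each $e_iA$ contributes a uniserial piece $e_iA/e_iJ^{s_i+1}$ of length $s_i+1$ to the transpose.
\item The relation $(u_1a,u_2a)$ identifies the two simple socles $u_1A$ and $u_2A$ diagonally and kills everything below them.
\end{enumerate}
The outcome should be a co-astride biserial module of length $s_1+s_2+1=\ell(P)$. Its socle should be $\cong eA/eJ$, and its quotient by the socle should be $D M_1\oplus D M_2$.

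The hardest part is making this identification rigorous: showing that the image of $eA$ contains $e_1J^{s_1+1}\oplus e_2J^{s_2+1}$, and that the quotient is isomorphic to $DP$ rather than merely having the same composition factors. The first thing I would try is to compare lengths, using $\ell({\rm Tr}(P/\soc P))=\ell(e_1A)+\ell(e_2A)-\ell(eA)+\ell((P/\soc P)^*)$, together with the explicit description of $DP$ given by Proposition~\ref{duality_radi_soci} and Corollary~\ref{astride_bsm}(2). Once ${\rm Tr}(P/\soc P)\cong DP$ is established, dualizing gives $D{\rm Tr}(P/\soc P)\cong P$, equivalently ${\rm Tr}DP\cong P/\soc P$. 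The length argument above then finishes the proof.
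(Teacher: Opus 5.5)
Your construction of $\delta$ and the irreducibility of $p_1,p_2$ via Theorem~\ref{ass_st_mod} match the paper's opening steps. The decisive step, ${\rm Tr}(P/\soc P)\cong DP$, is only conjectured, though, and the tool you propose cannot establish it. You would need two things:
\begin{enumerate}[$(a)$]
\item $(u_1,u_2)J=e_1J^{s_1+1}\oplus e_2J^{s_2+1}$. This means that for each $x\in e_1J^{s_1+1}$ there is some $a\in eJ$ with $u_1a=x$ \emph{and simultaneously} $u_2a=0$, and symmetrically.
\item The resulting identification of the two socles is exactly the gluing occurring in $DP$.
\end{enumerate}
Uniseriality of $e_iA/e_iJ^{s_i+1}$ gives $u_iJ=e_iJ^{s_i+1}$ for each $i$ separately, but not the simultaneous vanishing in $(a)$. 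That is a special-biserial-type relation. Compare the proof of Proposition~\ref{4str_ass_smr}, where the paper derives exactly this identity from Lemma~\ref{sbsa}, under far stronger hypotheses (quadri-string, radical cubed zero). In fact, by the computation in Proposition~\ref{SF_ART_length}(1), $(a)$ is equivalent to $\ell({\rm Tr}(P/\soc P))\le\ell(P)$. That is the reverse of the lower bound the elementary computation yields automatically, and such an upper bound needs genuine Auslander--Reiten input. Your length formula cannot supply it: the hypotheses control only the quotients $e_iA/e_iJ^{s_i+1}$. They say nothing about $\ell(e_iA)$, $\ell(eA)$, or $\Hom_A(P/\soc P,A)\cong\{a\in eA\mid u_1a=0=u_2a\}$.

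The concluding step also needs repair.
\begin{itemize}
\item Isomorphic middle terms do not make $\delta$ almost split. You must either show that $(p_1,p_2)^T$ itself is irreducible, so that Lemma~\ref{ass-sim}(2) applies, or factor it as $hg$ through the minimal left almost split map $g$ and check, using irreducibility of $p_1,p_2$, that $h$ is invertible.
\item If $M_1\cong M_2$, then $P/\soc M_1\cong P/\soc M_2$, since $\tau(P/\soc M_i)\cong M_i$. In that case ``the middle term contains both'' yields only one copy. This really happens: take $A=\left(\begin{smallmatrix}\mathbb{C}&0\\ \mathbb{C}&\mathbb{R}\end{smallmatrix}\right)$ over $\mathbb{R}$. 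There $\rad P_1\cong S_2\oplus S_2$, $I_{S_2}$ is uniserial of length two, and all hypotheses hold.
\end{itemize}

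The idea you are missing is that the paper never computes the transpose. Over a biserial algebra, every indecomposable summand of the radical of an indecomposable projective is colocal. Since $P$ is astride, hence not colocal, there is no irreducible map from $P$ to an indecomposable projective. So every irreducible map out of $P$ ends at a nonprojective $X$ with $\tau X$ a summand of the source of the minimal right almost split map $(q_1,q_2)\colon M_1\oplus M_2\to P$. By Theorem~\ref{ass_st_mod}, this gives $X\cong\tau^{-1}M_i=P/\soc M_i$, with multiplicities read off from $M_1\oplus M_2$. Combined with irreducibility of $(p_1,p_2)^T$, this shows $(p_1,p_2)^T$ is minimal left almost split. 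That is precisely the first option you mention but do not carry out.
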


\vspace{-1.5pt}

\noindent{\it Proof.} Since $M_1\cap M_2=0$, by Lemma \ref{ses_sum_itsec}(2), we obtain the short exact sequence stated in the proposition. 
%It amounts to show that $(p_1, p_2)^T$ is minimal left almost split. 
By Theorem \ref{ass_st_mod}, we have almost split sequences \vspace{-6pt}
$$\xymatrixcolsep{26pt}\xymatrix{0\ar[r] & M_i\ar[r]^-{(f_i, q_i)^{\hp T}} & (M_i/\soc M_i) \oplus P  \ar[r]^-{(j_i, p_i)} & P/\soc M_i \ar[r] & 0}\vspace{-5pt}$$ for $i=1, 2,$ where $q_i, j_i$ are inclusion maps, and $p_i, f_i$ are canonical projections. Then $(p_1,p_2)^T: P\to P/\soc M_1 \oplus P/\soc M_2$ is irreducible; see \cite[(1.3)]{LiY}. Not being colocal, $P$ is not a direct summand of the radical of any indecomposable projective module in $\mmod A;$ see (\ref{astride_bsm}) and (\ref{biloc_bis}). \vp Since $(q_1, q_2): M_1\oplus M_2\to P$ is minimal right almost split, we conclude that $(p_1, \,p_2)^T$ is minimal left almost split. The proof of the proposition is completed.

\smallskip

Dually,  we compute almost split sequences ending with biserial injective modules.

\begin{Prop}\label{ass_endi}

Let $A$ be a biserial algebra, and let $I$ be a biserial injective module in $\mmod A$ with $I/\soc I=(M_1/\soc I) \oplus (M_2/\soc I),$ where $M_i/\soc I$ is uniserial of length $s_i>0$ such that $\ntop^{s_i+1}\hspace{-.5pt}(P_{\hp\hp \ntop(M_i/\soc I)})$ is uniserial, for $i=1, 2$. Then we have an almost split sequence \vspace{-5.5pt}
$$\xymatrixcolsep{30pt}\xymatrix{0\ar[r] & \rad I \ar[r]^-{(-q_3, q_4)^T\hspace{-4pt}}& (\rad M_1 + M_2) \oplus (M_1+\rad M_2) \ar[r]^-{(q_1, q_2)} &  I \ar[r] & 0,} \vspace{-3.5pt} $$ where the $q_i$ are inclusion maps.

\end{Prop}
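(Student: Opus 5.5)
We dualize the argument for Proposition \ref{ass_p_astride}. First, since $M_1+M_2=I$, Lemma \ref{ses_sum_itsec}(1) applied to the submodules $M_1':=\rad M_1+M_2$ and $M_2':=M_1+\rad M_2$ of $I$ gives the short exact sequence in the statement, once we check that $M_1'+M_2'=I$ and $M_1'\cap M_2'=\rad I$. The sum is $I$ because $M_1+M_2=I$. For the intersection, note that $I$ is co-astride biserial. Indeed, $I$ is colocal and $I/\soc I$ is decomposable, so $I$ is not local. Moreover, $\rad M_i$ is the unique maximal submodule of $M_i$, because $M_i$ is uniserial (it is colocal, and $M_i/\soc I$ is uniserial). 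Hence $\rad I=\rad M_1+\rad M_2$. Working modulo $\soc I$, where the sum $M_1/\soc I\oplus M_2/\soc I$ is direct and $\soc I\subseteq \rad M_i$, we get
\[ (M_1'\cap M_2')/\soc I=(\rad M_1/\soc I)\oplus(\rad M_2/\soc I)=\rad I/\soc I. \]
Thus $M_1'\cap M_2'=\rad I$.

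Next we apply Theorem \ref{ass_st_mod_1} twice: once to $M_1$ with $N=M_2$, and once to $M_2$ with $N=M_1$. The hypotheses on $\ntop^{s_i+1}(P_{\ntop(M_i/\soc I)})$ are exactly those given. This yields almost split sequences
\[ 0\to \rad M_i+M_j\to I\oplus \rad(M_i/\soc I)\to M_i/\soc I\to 0, \qquad \{i,j\}=\{1,2\}. \]
In each of these, the component $I\to M_i/\soc I$ is irreducible. The left-hand term $\rad M_i+M_j$ is $M_1'$ or $M_2'$, and the inclusion $q_i\colon \rad M_i+M_j\to I$ is irreducible.

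Dually to \cite[(1.3)]{LiY}, the map $(q_1,q_2)\colon M_1'\oplus M_2'\to I$ is then irreducible. We must also check that $M_1'\not\cong M_2'$, or else handle multiplicity; these modules have different tops ($\ntop M_1'$ involves $\ntop M_2$ while $\ntop M_2'$ involves $\ntop M_1$ modulo radicals), but this point needs care. Since $I$ is not local, it is not a direct summand of $J$-quotients of the form $P/\soc P$ for any indecomposable projective $P$. Dually to the argument in (\ref{ass_p_astride}), $I$ is therefore not a direct summand of $Q/\soc Q$ for an indecomposable injective $Q$, by Corollary \ref{astride_bsm} and Proposition \ref{biloc_bis}. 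Consequently the minimal left almost split map starting in $I$ is the canonical projection $I\to I/\soc I=(M_1/\soc I)\oplus(M_2/\soc I)$.

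It remains to show that $(q_1,q_2)$ is minimal right almost split. The cleanest route is to argue that the middle term $E$ of the almost split sequence ending at $I$ satisfies $\ell(E)=\ell(I)+\ell(D\mathrm{Tr}\,I)$, and that $M_1'\oplus M_2'$ is a direct summand of $E$. The irreducible maps into $I$ correspond to the predecessors of $I$ in $\Ga_{\mmod A}$, and these are read off from the sequences above via the $\tau$-translation, since the irreducible maps $I\to M_i/\soc I$ give $\tau(M_i/\soc I)=M_i'$ mapping irreducibly to $I$. A length comparison then finishes the proof, exactly as in Lemma \ref{ass-sim}(2): once the sequence in the statement is nonsplit with $(q_1,q_2)$ irreducible and with left end $\rad I\cong D\mathrm{Tr}\,I$, that lemma applies directly. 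Checking $D\mathrm{Tr}\,I\cong\rad I$ is the main obstacle. Our plan for it is to identify $\rad I$ as the kernel of the minimal right almost split map by showing that any nonretraction $X\to I$ with $X$ indecomposable has image in $\rad I$ plus a proper submodule, and therefore factors through $M_1'$ or $M_2'$, since these are the two maximal submodules of $I$.
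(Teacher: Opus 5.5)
Your first steps match the paper. These are the exact sequence from Lemma~\ref{ses_sum_itsec}(1) with $M_1'\cap M_2'=\rad I$, the two almost split sequences from Theorem~\ref{ass_st_mod_1} giving $\tau(M_i/\soc I)\cong M_i'$, and the irreducibility of $(q_1,q_2)$ via \cite[(1.3)]{LiY}.

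The gap is the final step. You never prove that $(q_1,q_2)$ is minimal right almost split, and you leave $D{\rm Tr}\,I\cong\rad I$ open as the ``main obstacle''. Your plan for closing it would not work, for two reasons.
\begin{enumerate}
\item It says nothing about surjective nonretractions $X\to I$, whose image lies in no proper submodule.
\item It assumes that $M_1'$ and $M_2'$ are the only maximal submodules of $I$, and elsewhere you suggest they have different tops. Both fail when $\ntop(M_1/\soc I)\cong\ntop(M_2/\soc I)$, which the hypotheses allow over an artin algebra.
\end{enumerate}
For an example of the second failure, take the real algebra $\left(\begin{smallmatrix}\mathbb{R}&0\\ \mathbb{C}&\mathbb{C}\end{smallmatrix}\right)$ and $I=I_{S_2}$, where $S_2$ is the simple module at the $\mathbb{C}$-vertex. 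All hypotheses hold, and $I/\soc I\cong S_1\oplus S_1$. Here $M_1'\cong M_2'\cong P_{S_1}$, and $I$ has infinitely many maximal submodules.

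The missing idea is the arrow-counting argument the paper uses. You already proved its key input but used it for the wrong purpose. The minimal left almost split map out of the indecomposable injective $I$ is always $I\to I/\soc I$, so that needs no argument. What the fact that $I$ is not a direct summand of $Q/\soc Q$ for any indecomposable injective $Q$ actually gives is that no injective module maps irreducibly to $I$.

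From there the argument runs as follows.
\begin{enumerate}
\item Every indecomposable summand $X$ of the middle term $E$ of the almost split sequence ending at $I$ is therefore non-injective.
\item The arrow $X\to I$ in $\Ga_{\mmod A}$ then corresponds to an arrow $I\to\tau^{-1}X$ with matching multiplicity. The arrows out of $I$ are given by the summands of $I/\soc I$.
\item Hence $E\cong\tau(M_1/\soc I)\oplus\tau(M_2/\soc I)=M_1'\oplus M_2'$.
\item The irreducible map $(q_1,q_2)$ factors as $gs$ through the minimal right almost split map $g\colon E\to I$. Since $g$ is not a split epimorphism, irreducibility forces $s$ to be a split monomorphism. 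Comparing lengths, $s$ is an isomorphism.
\end{enumerate}
So $(q_1,q_2)$ is minimal right almost split. The isomorphism $\rad I\cong D{\rm Tr}\,I$ then comes out as a consequence rather than having to be computed.
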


\noindent{\it Proof.} First, $M_1+M_2=I$ and $M_1\cap M_2=\soc I=\soc M_i\subseteq \rad M_i,$ for $i=1, 2$. So by the modular law, 
$(\rad M_1 + M_2) \cap (M_1+\rad M_2)=\rad M_1+\rad M_2\!=\!\rad I.$ Then by Lemma \ref{ses_sum_itsec}(1), we have the short exact sequence stated in the proposition. It suffices to show that $(q_1, q_2)$ is minimal right almost split.\vspace{1pt} Indeed, there exists a minimal left almost split morphism $(p_1, p_2)^T: I\to \soc M_1/\soc I \oplus M_2/\soc I,$ where $p_1, p_2$ are epimorphisms, sending $m_1+m_2$ to $m_1+\soc I$ and $m_2+\soc I,$ respectively, for all $m_1\in M_1$ and $m_2\in M_2$. Write $L_1=\rad M_1+M_2$ and $L_2=M_1+\rad M_2$. By Theorem \ref{ass_st_mod_1}, we have almost split sequences \vspace{-4.5pt}
$$\xymatrix{0\ar[r] & L_i \ar[r]^-{(f_i, q_i)^T} & \rad M_i / \soc M_i  \oplus I \ar[r]^-{(-j_i, p_i)} &  M_i / \soc I\ar[r] & 0,} \vspace{-4pt} $$  for $i=1, 2,$ where $q_i, j_i$ are inclusion maps and $f_i$ is an epimorphism. \vp Thus, $(q_1, q_2): (\rad M_1 + M_2) \oplus (M_1+\rad M_2)\to I$ is irreducible; see \cite[(1.3)]{LiY}. Not being local, $I$ is not a direct summand of the socle-factor of any indecomposable injective module in $\mmod A;$ see (\ref{astride_bsm}) and (\ref{biloc_bis}). Since $(p_1, p_2)^T$ is minimal left almost split, $(q_1, q_2)$ is a minimal right almost split morphism. The proof of the proposition is completed.

\section{\sc Quadri-biserial algebras} 

In this section, we introduce quadri-biserial algebras and thoroughly study their representation theory, preparing for our main classification theorem later. First, we reduce the representation theory of quadri-biserial algebras to that of quadri-string algebras, and then apply our previous results and techniques to compute almost split sequences over quadri-string algebras. This enables us to show that the indecomposable modules over a quadri-biserial algebra are uniserial or biserial module of length at most four, or lozenge modules, which are of length four by definition. Combining this with our previous results, we obtain an explicit description of all indecomposable modules and almost split sequences for quadri-biserial algebras.

\subsection{\sc Definition} We begin by introducing the definition of quadri-biserial algebras, which form the central object of study for this section. 

\begin{Defn}\label{4str_alg_def}

A biserial (respectively, string) artin algebra $A$ is called  {\it quadri-biserial} (respectively, {\it quadri-string}) \vp if the indecomposable projective modules $P$ in $\mmod A$ and $\mmod A^\circ$ have the following properties:

\begin{enumerate}[$(1)$]

\vspace{-1pt}

\item The length of $P$ is at most four;

\vp
    
\item If $\ntop(\rad P)=S_1\oplus S_2,$ where $S_1, S_2$ are simple, then $I_{S_1}$ or $I_{S_2}$ is uniserial;

\vp

\item If $\rad P=S\oplus M,$ \vspace{-1pt} where $S$ is simple and $M$ is uniserial of length $2$, then $I_S$ and $I_{\hspace{.8pt}\soc M}$ are uniserial.

\end{enumerate} \end{Defn}

\begin{Remark} \label{4sa_rem} 

In view of Proposition \ref{duality_radi_soci}, an artin algebra $A$ is quadri-biserial if and only if the indecomposable injective modules in $\mmod A$ and $\mmod A^\circ$ have the dual properties of those stated in Definition \ref{4str_alg_def}.

\end{Remark}

\smallskip

We collect some basic properties of indecomposable projective modules over quadri-biserial algebras.

\smallskip

\begin{Lemma}\label{4sa_bproj}

Let $A$ be a quadri-biserial algebra. Consider an astride biserial projective module $Ae$ in $\mmod A$ and a top-basis $\{u_1, u_2\}$ for $Je$ with $u_i\in e_iJe$, where $e, e_1, e_2$ are primitive idempotents in $A$. If $e_2A$ is biserial, then $Au_1$ is simple and $Je=Au_1\oplus Au_2.$ 

\end{Lemma}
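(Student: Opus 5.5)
The plan is to argue by cases on the lengths of $Au_1$ and $Au_2$. Each case uses the length bound in Definition \ref{4str_alg_def}(1) and the uniseriality condition (3) to produce a contradiction with $e_2A$ being biserial.

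\emph{Setup.} Since $Ae$ is astride biserial, Corollary \ref{astride_bsm}(1) gives $Je=N_1\oplus N_2$ with $N_1,N_2$ nonzero uniserial. In particular $\ntop(Je)$ has length two and $Je$ is not local. By Lemma \ref{tb_gen}(2), $Je=Au_1+Au_2$, and each $Au_i$ is a local module with top $Ae_i/Je_i$. Definition \ref{4str_alg_def}(1) gives $\ell(Je)\le 3$, so $Je=S\oplus M$ with $S$ simple and $\ell(M)\le 2$.

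\emph{Key observation.} If one of the $Au_i$ is simple, the sum is direct. Indeed, $Au_1\cap Au_2$ is then $0$ or the simple one. In the latter case one $Au_j$ would equal $Je$, making $Je$ local, which is absurd.

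\emph{Case 1: $Au_1$ simple.} By the key observation, $Je=Au_1\oplus Au_2$, which is exactly the claim.

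\emph{Case 2: $Au_1$ not simple and $Au_2$ simple.} By the key observation, $Je=Au_1\oplus Au_2$. Since $\ell(Je)\le 3$, $Au_1$ is local of length two, hence uniserial. Definition \ref{4str_alg_def}(3), applied with $S=Au_2\cong Ae_2/Je_2$, says that $I_S$ is uniserial. But $I_S\cong D(e_2A)$, so $e_2A$ would be uniserial, contradicting that $e_2A$ is biserial (Remark \ref{rem_bsm}(2)).

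\emph{Case 3: both $Au_i$ not simple.} This is the main obstacle. Here $\ell(Au_i)\ge 2$ and $\ell(Je)\le 3$ force $\ell(Je)=3$, so $Je=S\oplus M$ with $\ell(M)=2$ and $\rad(Je)=\rad M=:T$ simple. Each $Au_i$ is local of length two with $0\ne \rad(Au_i)\subseteq \rad(Je)=T$, so $\rad(Au_1)=\rad(Au_2)=T=\soc M$. Condition (3) of Definition \ref{4str_alg_def} gives that $I_S$ and $I_T$ are uniserial.

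Write $T\cong Ae_T/Je_T$. Then $I_T\cong D(e_TA)$, so $e_TA$ is uniserial and $e_TJ/e_TJ^2$ is simple. Choose $r_i\in e_TJe_i$ with $r_iu_i\ne 0$ in $T$. Since $u_i\in Je$ and $J^3e=0$ (as $\ell\ell(Ae)=3$), neither $r_i$ can lie in $e_TJ^2$. Thus $r_1,r_2$ both give nonzero elements of the simple module $e_TJ/e_TJ^2$, and that module has a single isomorphism type. This forces $Ae_1\cong Ae_2$.

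Consequently $\ntop(Je)\cong S\oplus \ntop M$ has two isomorphic summands, so $S\cong Ae_2/Je_2$. Then $I_S\cong D(e_2A)$ being uniserial again contradicts that $e_2A$ is biserial.

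Only Case 1 survives, so $Au_1$ is simple and $Je=Au_1\oplus Au_2$. The delicate point I expect is justifying in Case 3 that $r_i\notin e_TJ^2$ forces $Ae_1\cong Ae_2$. I would check this via the standard fact that $e_TJ/e_TJ^2\cong \bigoplus_f (e_TJf/e_TJ^2f)$, summed over a complete set of primitive idempotents $f$, together with the simplicity of $e_TJ/e_TJ^2$.
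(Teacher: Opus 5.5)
Your proof is correct, but it is organized differently from the paper's. The paper argues by contradiction in a single step. If $Au_1$ is not simple, then $Au_1\subsetneq Je$ and $\ell(Je)\le 3$ force $\ell(Je)=3$ and $\ell(Au_1)=2$. Lemma \ref{dec_mod_l3}(2) then says that the indecomposable length-two submodule $Au_1$ of $Je=S\oplus M$ is a complement of $S$, so $\ntop(Je)\cong S\oplus Ae_1/Je_1$. Comparing this with $\ntop(Je)\cong Ae_1/Je_1\oplus Ae_2/Je_2$ gives $S\cong Ae_2/Je_2$. Hence $I_S\cong D(e_2A)$ is uniserial by Definition \ref{4str_alg_def}(3), a contradiction.

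This covers your Cases 2 and 3 at once, without asking whether $Au_2$ is simple. Your Case 2 is the instance where the simple complement is visibly $Au_2$. Your Case 3 reaches $S\cong Ae_2/Je_2$ by a detour: you invoke the other half of condition (3), the uniseriality of $I_{\soc M}$, to make $e_TJ/e_TJ^2$ simple and deduce $Ae_1\cong Ae_2$. That argument is valid, since $r_i\in e_TJe_i\setminus e_TJ^2$ does show $(e_TJ/e_TJ^2)e_i\neq 0$ for $i=1,2$. It is, however, unnecessary. Each $Au_i$ is indecomposable of length two, so Lemma \ref{dec_mod_l3}(2) gives $Je=S\oplus Au_1=S\oplus Au_2$, and Krull--Schmidt on tops already yields $S\cong Ae_1/Je_1\cong Ae_2/Je_2$.

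So the paper's route is shorter and uses only the part of condition (3) concerning $I_S$. Yours is more hands-on, but in Case 3 it relies on an avoidable extra hypothesis.
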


\vspace{-1.5pt}

\noindent{\it Proof.} By Lemmas \ref{tb_gen}(2) and \ref{find_top-basis}(1), $Je=Au_1+Au_2,$ where $Au_1, Au_2$ are proper submodules of $Je$, and $\ntop(Je)\cong Ae_1/Je_1\oplus Ae_2/Je_2.$ Assume that $e_2A$ is biserial, but $Au_1$ was not simple. Since $Ae$ is astride biserial of length at most four, $Je=S\oplus N$, where $S$ is simple and $N$ is uniserial of length two. Now since $\ell(Je)=3,$ we have $\ell(Au_1)=2.$ By Lemma \ref{dec_mod_l3}(2), $Je=S\oplus Au_1,$ and hence, $\ntop(Je)\cong S\oplus \ntop(Au_1) = S\oplus Ae_1/Je_1.$ This implies that $S\cong Ae_2/Je_2$. So $I_S\cong D(e_2A)$, which is biserial by the assumption, a contradiction to Definition \ref{4str_alg_def}(3). Thus, $Au_1$ is simple; and consequently, $Je=Au_1\oplus Au_2.$ The proof of the lemma is completed.

\smallskip

As shown below, quadri-biserial algebras with radical cubed zero have the essential defining properties of special biserial algebras; see \cite[Section 1]{SKW}.

\begin{Lemma}\label{sbsa} 

Let $A$ be a quadri-biserial algebra with radical cubed zero. Consider primitive idempotents $e, e_0, e_1, e_2$ in $A$ with $Ae_1\not\cong Ae_2,$ or equivalently, 
$e_1A\not\cong e_2A.$ 
 
\begin{enumerate}[$(1)$]

\vspace{-2pt}

\item If $u\in e_0Je$ and $u_i \in e_iJe_0$ for $i=1, 2$, then $u_1u=0$ or $u_2u=0.$

\item If $v\in e_0Je$ and $v_i\in eJe_i$ for $i=1, 2$, then $vv_1=0$ or $vv_2=0.$
    
\end{enumerate}\end{Lemma}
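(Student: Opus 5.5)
We prove (1) by contradiction; (2) will follow by applying (1) to $A^\circ$. Note that $A^\circ$ is again quadri-biserial with radical cubed zero, and the hypothesis $e_1A\not\cong e_2A$ is symmetric. So suppose $u\in e_0Je$ and $u_i\in e_iJe_0$ with $u_1u\ne 0$ and $u_2u\ne 0$. Since $J^3=0$, both $u_1u$ and $u_2u$ lie in $J^2e\subseteq \soc(Ae)$, and $u\in Je\setminus J^2e$. Also $u_i\in e_iJe_0\setminus e_iJ^2e_0$: otherwise $u_iu\in J^3=0$.

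First, work inside the right module $e_0A$. The elements $u_1,u_2$ act on it by left multiplication, or better, we view them inside the left module $Ae_0$. In $Je_0$ we have $u_i\notin J^2e_0$, so each $u_i$ is a top-element of $Je_0$ lying in $e_iJe_0$. Because $Ae_1\not\cong Ae_2$, Lemma \ref{tb_gen}(1) shows that $\{u_1,u_2\}$ is top-free. Hence $\ntop(Je_0)$ contains $Ae_1/Je_1\oplus Ae_2/Je_2$. Since $A$ is biserial, Proposition \ref{loc_cloc_bsm}(1) gives $\ell(\ntop(Je_0))\le 2$. Therefore $Ae_0$ is biserial with $\ntop(Je_0)\cong S_1\oplus S_2$, where $S_i=Ae_i/Je_i$. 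As $J^3=0$, we have $Je_0=Au_1+Au_2$ with $\ell\ell(Je_0)\le 2$. By Definition \ref{4str_alg_def}(2), at least one of $I_{S_1}=D(e_1A)$ and $I_{S_2}=D(e_2A)$ is uniserial. So at least one of $e_1A$, $e_2A$ is uniserial; say $e_1A$.

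Second, use the right module $e_1A$. The element $u_1\in e_1Je_0$ does not lie in $e_1J^2$, and $u_1u\ne0$ lies in $e_1J^2e$. Since $e_1A$ is uniserial, $e_1J/e_1J^2$ is simple and generated by $u_1$. Hence $e_1J^2=u_1J$, and in particular $u_1u$ spans the simple $e_1J^2$ as a right module. That alone is not a contradiction, so we bring in the left module $Ae$. The element $u$ is a top-element of $Je$ lying in $e_0Je$, and $Au\supseteq Au_1u+Au_2u$. The elements $u_1u\in e_1J^2e$ and $u_2u\in e_2J^2e$ are soc-elements of $Ae$ of non-isomorphic types, so by the dual of Lemma \ref{tb_gen}(1) (non-isomorphic simples sum directly) we get $Au_1u\oplus Au_2u\subseteq Au$. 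Thus $Au$ is local of length at least three with $\rad(Au)\supseteq S_1\oplus S_2$. So $Au$ is astride biserial of length three, a submodule of $Je$, and $Je\supseteq Au$ has $\rad(Je)$ of length at least two. Since $\ell(Ae)\le4$, this forces $Ae$ to be uniserial-free: $Je=Au$, so $Ae$ is local with $J^2e=S_1\oplus S_2$. But then $\ntop(Je)$ is simple and $Je$ is not uniserial, which contradicts the biserial structure of Proposition \ref{loc_cloc_bsm}(1). That structure requires $\rad(Ae)=M_1+M_2$ with $M_1\cap M_2$ zero or simple and the $M_i$ proper submodules of $\rad(Ae)$, whereas here $Je$ is local and $\ell(Je)=3$. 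Concretely, a local $Je=Au$ would itself have to be uniserial, and it is not.

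The main obstacle is making this last step airtight. Two things need checking. First, that $Au_1u$ and $Au_2u$ are both simple, and distinct because their types differ; this uses Lemma \ref{top-element}(1). Second, that a local non-uniserial $Je$ of length three is genuinely incompatible with Definition \ref{bsa_def}. For the second point we first try the following: $Ae$ is local non-uniserial, so it is biserial, so by Proposition \ref{loc_cloc_bsm}(1) $\ntop(Je)$ has length two. This contradicts $Je=Au$ being local, and the argument closes without even needing the uniseriality from the second paragraph. Thus the cleanest route may be the direct one: show $Au_1u\oplus Au_2u\subseteq Au$, hence $Au$ is non-uniserial of Loewy length two. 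Then either $Au=Je$, contradicting biseriality of $Ae$, or $Au\subsetneq Je$, in which case $\ell(Ae)\ge 5$, contradicting Definition \ref{4str_alg_def}(1).
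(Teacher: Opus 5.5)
Your final, streamlined argument is correct and is essentially the paper's proof. The paper assumes $u_1u\ne 0$ and uses the same two ingredients: $Je$ cannot be local when $Ae$ is biserial (Proposition \ref{loc_cloc_bsm}(1)), and $\ell(Ae)\le 4$. Together these force $Au$ to be uniserial, whence $Ju=Au_1u$ and $u_2u\in e_2Je_1u_1u\subseteq J^3=0$. This is just the contrapositive of your observation that $Au_1u\oplus Au_2u\subseteq \rad(Au)$ would make $Au$ non-uniserial of length at least three.

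As you noticed, the detour through $Ae_0$, Definition \ref{4str_alg_def}(2) and the uniseriality of $e_1A$ is unnecessary. Statement (2) does follow by applying (1) to $A^\circ$ with the roles of $e$ and $e_0$ interchanged.
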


\vspace{-1.5pt}

\noindent{\it Proof.} We only prove Statement (1). Let $u\in e_0Je$ and $u_i \in e_iJe_0,$ for $i=1, 2,$ such that $u_1u\ne 0$. By Lemma \ref{loc_gen}(1), $Au$ is local. We claim that $Au$ is uniserial. Indeed, we may assume that $Ae$ is biserial. Since $Je$ is not local by Proposition \ref{loc_cloc_bsm}(1), $Au \subsetneq Je \subsetneq Ae.$ Since $\ell(Ae)\le 4$, we have $\ell(Au)\le 2.$ So, $Au$ is uniserial. This proves our claim. Then $Ju$ is uniserial. Since $J^3=0$, we have $u_1u\in Ju\backslash J^2u$. Thus $Ju=Au_1u.$ In particular, $u_2u=a(u_1u)$ with $a\in e_2Ae_1$. Since $Ae_1\not\cong Ae_2,$ we have $e_2Ae_1 \subseteq e_2Je_1.$ Thus, $u_2u\in J^3=0.$ The proof of the lemma is completed.

\subsection{\sc Indecomposable modules of length four} In this subsection, we classify the indecomposable modules of length four over quadri-biserial algebras. We begin with some properties of the composition factors of astride and co-astride biserial modules of length three.

\begin{Lemma} \label{4bs_loc_l3}

Let $A$ be a quadri-biserial algebra. Consider a module $M$ in $\mmod A.$

\begin{enumerate}[$(1)$]

\vspace{-2pt}

\item  If $M$ is astride biserial of length three with $\rad M=S_1\oplus S_2,$ \vspace{-1pt} where $S_1, S_2$ are simple, then $P_{\hspace{.5pt} \ntop M}$ is biserial, whereas $I_{S_1}$ or $I_{S_2}$ is uniserial.

\vspace{.5pt}

\item If $M$ is co-astride biserial of length three with $M/\soc M=S_1\oplus S_2,$ \vspace{-1pt} where $S_1, S_2$ are simple, then $I_{\soc M}$ is biserial, whereas $P_{S_1}$ or $P_{S_2}$ is uniserial.
    
\end{enumerate}\end{Lemma}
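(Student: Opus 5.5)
We prove only Statement (1); Statement (2) follows by duality. If $M$ is co-astride biserial of length three, then $DM$ is astride biserial of length three over $A^\circ$, by Remark \ref{rem_bsm}(1) and Definition \ref{as_bsm_def}. Moreover, $A^\circ$ is quadri-biserial, since Definition \ref{4str_alg_def} is symmetric in $A$ and $A^\circ$. Proposition \ref{duality_radi_soci} gives $\rad(DM)\cong D(M/\soc M)\cong DS_1\oplus DS_2$, and also $P_{\ntop(DM)}\cong D(I_{\soc M})$ and $I_{DS_i}\cong D(P_{S_i})$. Applying (1) to $DM$ then yields (2).

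For (1), let $M$ be astride biserial of length three with $\rad M=S_1\oplus S_2$. Write $P=P_{\ntop M}$. By Proposition \ref{bsa_ml3}(1), $P$ is biserial and $M\cong \ntop^2 P$, which settles the first claim. For the second claim, $P$ is local biserial, so Proposition \ref{loc_cloc_bsm}(1) gives $\ntop(\rad P)\cong \ntop(\rad(\ntop^2 P))\cong \rad M\cong S_1\oplus S_2$. If $\ell(P)\le 4$ and $\ntop(\rad P)=S_1\oplus S_2$, then Definition \ref{4str_alg_def}(2) says directly that $I_{S_1}$ or $I_{S_2}$ is uniserial.

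The only care needed is with identifications up to isomorphism. Condition (2) of Definition \ref{4str_alg_def} is phrased in terms of the simple summands of $\ntop(\rad P)$, but the isomorphism class of $I_S$ depends only on the isomorphism class of $S$. By Krull--Schmidt, the simple summands of $\ntop(\rad P)$ are $S_1,S_2$ up to isomorphism and order, so the conclusion transfers to the given $S_1,S_2$. Condition (1) of the definition guarantees $\ell(P)\le 4$, so condition (2) applies to $P$.

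I do not expect a genuine obstacle. The step most worth double-checking is the duality transfer in (2): one has to confirm that $A^\circ$ satisfies the same hypotheses, which it does because the definition quantifies over projectives in both $\mmod A$ and $\mmod A^\circ$. One also has to confirm that $D$ carries injective envelopes to projective covers. That follows from the equivalence of $\soc I=S$ essential with $\ntop(DI)=DS$, which is given by Proposition \ref{duality_radi_soci}.
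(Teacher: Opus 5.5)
Your proposal is correct and follows the paper's proof essentially verbatim. Proposition \ref{bsa_ml3}(1) shows that $P=P_{\ntop M}$ is biserial with $M\cong \ntop^2 P$, so $\ntop(\rad P)=\rad P/\rad^2 P=\rad(\ntop^2 P)\cong S_1\oplus S_2$ and Definition \ref{4str_alg_def}(2) applies, with (2) following by duality (your appeal to Proposition \ref{loc_cloc_bsm}(1) is harmless but unnecessary).
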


\noindent{\it Proof.} We only prove Statement (1). Let $M$ be astride biserial with $\rad M=S_1\oplus S_2,$ where $S_1, S_2$ are simple. Then it follows from Lemma \ref{bsa_ml3}(1) that $P_{\hspace{1pt}\ntop M}$ is biserial and $M\cong \ntop^2 \hspace{-.2pt}( \hspace{-.5pt}P_{\hspace{1pt}\ntop M}\hspace{-1pt}).$ Hence, $\ntop(\rad(P_{\hspace{1pt}\ntop M})) =\rad(\ntop^2(P_{\hspace{1pt}\ntop M}))\cong S_1\oplus S_2.$ By Definition \ref{4str_alg_def}(2), $I_{S_1}$ or $I_{S_2}$ is uniserial. The proof of the lemma is completed.

\smallskip

We also need some properties of the composition factors of N-shaped modules.

\begin{Lemma}\label{N_mod_property-2}
Let $A$ be a quadri-biserial algebra. Let $M$ be an N-shaped module in $\mmod A$ with $M=M_1+M_2$, where $M_1$ is astride biserial of length three and $M_2$ is uniserial of length two such that $\soc M_1=S\oplus \soc M_2.$ Then

\begin{enumerate}[$(1)$]

\vspace{-2pt}

\item $P_{\hspace{1pt}\ntop M_1}$ is biserial and $P_{\hspace{1pt}\ntop M_2}$ is uniserial$\hspace{.8pt};$

\vspace{.5pt}

\item $I_S$ is uniserial, and $I_{\hspace{.5pt}\soc M_2}$ is biserial 
such that $\soc^2(I_{\hspace{.5pt}\soc M_2}) \cong M/S$.
 
 \end{enumerate}\end{Lemma}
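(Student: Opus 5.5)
Statement (1) uses the structure of $M_1$ and $M_2$ directly. Part (2) first shows $M/S$ is co-astride biserial, then uses the quadri-biserial restrictions on injectives to pin down $I_S$.

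For (1): since $M_1$ is astride biserial of length three, Lemma \ref{4bs_loc_l3}(1) (or Proposition \ref{bsa_ml3}(1)) gives that $P_{\ntop M_1}$ is biserial. For $M_2$, suppose $P:=P_{\ntop M_2}$ were biserial. We know $\ntop M\cong \ntop M_1\oplus\ntop M_2$ by Lemma \ref{N_mod_property_0}, and $\soc M_2\subseteq\soc M_1$. I would argue via the dual statement: it is cleaner to obtain (1) after (2), using the injective side and the dual Definition (Remark \ref{4sa_rem}). Concretely, once we know $I_{\soc M_2}$ is biserial with $\soc^2(I_{\soc M_2})\cong M/S$, co-astride biserial with $(M/S)/\soc(M/S)\cong\ntop M_1\oplus\ntop M_2$, Lemma \ref{4bs_loc_l3}(2) gives that $P_{\ntop M_1}$ or $P_{\ntop M_2}$ is uniserial. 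Since $P_{\ntop M_1}$ is biserial, $P_{\ntop M_2}$ must be uniserial.

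For (2): by Lemma \ref{N_mod_property}(3), $M/S$ is co-astride biserial of length three with $\soc(M/S)\cong\soc M_2$. By Proposition \ref{bsa_ml3}(2), $I_{\soc M_2}$ is biserial and $M/S\cong\soc^2(I_{\soc M_2})$.

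It remains to show $I_S$ is uniserial. Here $\rad M_1=\soc M_1=S\oplus\soc M_2$, and $M_1\cong\ntop^2 P_{\ntop M_1}$. So $\ntop(\rad P_{\ntop M_1})\cong S\oplus\soc M_2$, and Definition \ref{4str_alg_def}(2) says $I_S$ or $I_{\soc M_2}$ is uniserial. Since $I_{\soc M_2}$ is biserial, $I_S$ is uniserial.

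The main obstacle is bookkeeping: identifying $\ntop(\rad P_{\ntop M_1})$ with $\rad M_1$, which requires $M_1\cong\ntop^2 P_{\ntop M_1}$. This is exactly Proposition \ref{bsa_ml3}(1) combined with Lemma \ref{topi_LL}, as in the proof of Lemma \ref{4bs_loc_l3}. Once that is in place, all steps are direct applications of earlier results.
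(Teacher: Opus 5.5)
Your proposal is correct and follows essentially the paper's argument: apply Lemma \ref{4bs_loc_l3}(1) to $M_1$ (so $P_{\ntop M_1}$ is biserial and $I_S$ or $I_{\soc M_2}$ is uniserial), and apply Lemma \ref{4bs_loc_l3}(2) to the co-astride biserial module $M/S$ from Lemma \ref{N_mod_property}(3) (so $I_{\soc M_2}$ is biserial and $P_{\ntop M_1}$ or $P_{\ntop M_2}$ is uniserial). Then use that biserial modules are not uniserial to resolve both dichotomies. You also make explicit, via Proposition \ref{bsa_ml3}(2), the isomorphism $\soc^2(I_{\soc M_2})\cong M/S$, which the paper leaves implicit.
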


\vspace{-1.5pt}

\noindent{\it Proof.} Since $\rad M_1=\soc M_1$, we see from Lemma \ref{4bs_loc_l3}(1) that $P_{\hspace{1pt}\ntop M_1}$ is biserial, 
%and $M_1\cong \ntop^2 \hspace{-.2pt}(\hspace{-.5pt}P_{\hspace{1pt}\ntop M_1}\hspace{-1pt}).$ Then, $\ntop(\rad(P_{\hspace{1pt}\ntop M_1})) =\rad(\ntop^2(P_{\hspace{1pt}\ntop M_1}))\cong \rad M_1=\soc M_1=S\oplus \soc M_2.$ By Definition \ref{4str_alg_def}(2), 
and either $I_S$ or $I_{\soc M_2}$ is uniserial. On the other hand, by Proposition \ref{N_mod_property}(3), $M/S$ is co-astride biserial of length three with $\soc(M/S)  \cong  \soc M_2$ and $(M/S)/\soc(M/S)  \cong \ntop M_1\oplus \ntop M_2.$ And by Lemma \ref{4bs_loc_l3}(2), $I_{\soc M_2}$ is biserial, and either $P_{\ntop M_1}$ or $P_{\ntop M_2}$ is uniserial. Thus, we conclude that $I_S$ and $P_{\ntop M_2}$ are uniserial. The proof of the lemma is completed.

\smallskip 

We are ready to state the main result of this subsection. 

\begin{Theo}\label{l4-4bsa}

Let $A$ be a quadri-biserial algebra. Consider an indecomposable module $M$ of length four in $\mmod A$. Then exactly one of the following cases occurs$\hp:$

\begin{enumerate}[$(1)$]

\vspace{-1pt}

\item $M$ is an N-shaped module$\hp;$

%\vp

\item $M$ is an astride biberial projective module$\hp;$ % with $\rad M=S\oplus N,$ where $S$ is simple and $N$ is uniserial of length two$\hp;$ 

%\vp

\item $M$ is a co-astride biserial injective module$\hp;$
%with $M/\soc M=S\oplus N,$ where $S$ is simple and $N$ is uniserial of length two. 

%\vp

\item $M$ is a projective-injective module, which is uniserial or lozenge.

\end{enumerate} \end{Theo}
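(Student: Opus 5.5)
The plan is to split according to whether $M$ is local and/or colocal, and use the bound $\ell(P)\le 4$ on indecomposable projectives together with Lemma \ref{maxi_local}. Exclusivity of the four cases is immediate: N-shaped modules are neither local nor colocal (Lemma \ref{N_mod_property_0}(2) and \ref{N_mod_property}(1)), astride biserial modules are local but not colocal, co-astride ones are colocal but not local, and those in (4) are local and colocal. So the work is to show that one of the cases occurs.

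\emph{Local or colocal case.} If $M$ is local, then $\ell(M)=4\ge\ell(P)$ for every indecomposable projective $P$ by Definition \ref{4str_alg_def}(1), so $M$ is projective by Lemma \ref{maxi_local}(1). Dually, using Remark \ref{4sa_rem}, indecomposable injectives also have length at most four, so a colocal $M$ is injective by Lemma \ref{maxi_local}(2). If $M$ is local and colocal, it is projective-injective, and by Corollary \ref{rb4_lozenge} it is uniserial or lozenge, which is case (4). If $M$ is local but not colocal, then $M$ is a non-uniserial projective, hence biserial since $A$ is biserial. Being local and not colocal, it is astride biserial, which is case (2). Dually, colocal but not local gives case (3).

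\emph{Neither local nor colocal.} This is the main obstacle: we must show $M$ is N-shaped. First I would show $\ell\ell(M)=2$. If $\rad^2M\ne 0$, a top-element $m$ with $\rad^2(Am)\ne0$ gives a local submodule of Loewy length at least $3$. Since $M$ is not local, $\ell(Am)=3$, so $Am$ is uniserial. Then $M=Am+Am'$ with $Am'$ simple or of length two, and $Am\cap Am'\ne 0$ by indecomposability. A length count then forces $\soc M$ to be simple, contradicting non-colocality. This needs some care.

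So $\rad M=\soc M$, and since $M$ is neither local nor colocal, $\ell(\ntop M)=\ell(\soc M)=2$. Choose a top-basis $\{m_1,m_2\}$ via Lemma \ref{find_top-basis}. Each $Am_i$ is then simple, uniserial of length two, or astride of length three. Indecomposability rules out $Am_1\cap Am_2=0$. If both $Am_i$ had length two, they would meet in a common simple socle, and $M$ would have length three, which is impossible. Hence one of them, say $Am_1$, is astride of length three with $\rad(Am_1)=\soc M$. If $\ell(Am_2)\ge 2$ we are done by Definition \ref{N_mod_def} (replacing $m_2$ so that $Am_2$ has length two). If every choice of second generator gives a simple $Am_2$, then $M=Am_1\oplus Am_2$, a contradiction. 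To be careful here, I would adjust $m_2$ by elements of $Am_1$ and use indecomposability.
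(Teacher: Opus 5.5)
Your treatment of exclusivity and of the cases where $M$ is local or colocal is correct and coincides with the paper's. Your derivation of $\ell\ell(M)=2$ also works, though $4=\ell(\ntop M)+\ell(\rad M)\ge 2+\ell(\soc M)\ge 4$, using $\soc M\subseteq\rad M$, gives it in one line.

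The gap is in the last case, at the parenthetical ``replacing $m_2$ so that $Am_2$ has length two''. The case you single out, $Am_2$ simple, is harmless: a simple $Am_2$ meeting $Am_1$ nontrivially lies in $Am_1$, which would make $M=Am_1$ local. The real obstacle is $\ell(Am_1)=\ell(Am_2)=3$. Then both are astride with $\rad(Am_1)=\rad(Am_2)=\rad M=\soc M=S_3\oplus S_4$, and nothing in your argument produces a uniserial submodule of length two.

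In fact your treatment of this case never uses that $A$ is quadri-biserial, and without that hypothesis the step fails even for string algebras. Take the real Kronecker algebra $\mathbb{R}(1\rightrightarrows 2)$, and let $M$ have $\mathbb{R}^2$ at both vertices, with one arrow acting as the identity and the other as $\left(\begin{smallmatrix}0&-1\\1&0\end{smallmatrix}\right)$. This $M$ is indecomposable: the identity arrow forces any nontrivial decomposition into two summands of dimension vector $(1,1)$, and each such summand would give a real eigenvector of the rotation. It has length four, $\rad M=\soc M$, and top of length two. Yet every element outside $\rad M$ generates a submodule of length at least three. So $M$ has no uniserial submodule of length two and is not N-shaped. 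Hence no adjustment of $m_2$ combined with indecomposability can close the gap.

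What is missing is Definition~\ref{4str_alg_def}(2). Here is how it enters.
In the bad configuration, Lemma~\ref{4bs_loc_l3}(1) applied to $Am_1$ shows that $I_{S_3}$ or $I_{S_4}$ is uniserial. Now consider $M/S_3$ and $M/S_4$; each has length three, Loewy length two, and top $\cong\ntop M$.

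If both were indecomposable, they would be co-astride biserial with socles $S_4$ and $S_3$ respectively. Proposition~\ref{bsa_ml3}(2) would then make both $I_{S_4}$ and $I_{S_3}$ biserial, a contradiction.

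Hence one of them decomposes, say $M/S_3=N/S_3\oplus L/S_3$ with $N/S_3$ simple and $L/S_3$ uniserial of length two. Indecomposability of $M$ forces $N$ to be uniserial of length two. It also forces $L$ to be indecomposable with $S_3\subseteq\rad L$, hence astride of length three. Then $M=N+L$ is N-shaped.

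This is the paper's argument. The paper continues from here to rule the configuration out altogether, so that every top-basis has lengths three and two. For the theorem itself, the decomposition $M=N+L$ already suffices.
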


\vspace{-2pt}

\noindent{\it Proof.} Suppose that $M$ is local and not colocal. In view of Lemma \ref{maxi_local}(1) and Definition \ref{4str_alg_def}(1), $M$ is a biserial projective module. By Definition \ref{as_bsm_def}, $M$ is astride biserial. Similarly, if $M$ is colocal and not local, then $M$ is a co-satride injective module. In case $M$ is local and colocal, by Lemma \ref{maxi_local}, 
$M$ is projective-injective; and by Corollary \ref{rb4_lozenge}, $M$ is a uniserial or lozenge module. 

Finally, consider the case where $M$ is neither local nor colocal. By Lemma \ref{soc_rad}, we have $4=\ell(\ntop M)+\ell(\rad M)\ge 2 + \ell(\soc M)\ge 4.$
Thus, $\ell(\ntop M)=\ell(\soc M)=2$ and $\soc M=\rad M$. Thus, $\ell\ell(M)=2$. Write $\ntop M= S_1\oplus S_2$ and $\rad M=\soc M= S_3 \oplus S_4,$ where $S_i\cong Ae_i/Je_i$ with $e_i$ a primitive idempotent in $A$, for $i=1, 2, 3, 4.$ Choose a top-basis $\{m_1, m_2\}$ with $m_i\in e_iM$ for $M.$ By Lemma \ref{tb_gen}(2), $M=M_1+M_2,$ where $M_1=Am_1$ and $M_2=Am_2$. %are not comparable. 
Since $M$ is indecomposable, $0\subsetneq M_1\cap M_2 \subsetneq M_i\subsetneq M$, for $i=1, 2$. Thus, $1\le \ell( M_1\cap M_2) \le 2$ and $2\le \ell(M_i)\le 3,$ for $i=1, 2.$ 

We claim that one of $\ell(M_1)$ and $\ell(M_2)$ is $3$, and the other is $2$. Indeed, by Lemma \ref{ses_sum_itsec}(1), $\ell(M_1)+\ell(M_2)=\ell(M)+\ell(M_1\cap M_2)=\ell(M_1\cap M_2)+4$. Thus, $\ell(M_1)=3$ or $\ell(M_2)=3.$ Suppose on the contrary that $\ell(M_1)=\ell(M_2)=3.$ Then, $\ell(M_1\cap M_2)=2=\ell(\rad M)$. By Lemma \ref{loc_gen}(2), $M_i$ is local with $\ntop(M_i)\cong S_i$ and $\ell\ell(M_i)\le \ell\ell(M)=2,$ and by Proposition \ref{CL_3}(2), $M_i$ is astride biserial with $\rad M_i=\soc M_i,$ for $i=1, 2$. Then $M_1\cap M_2\subseteq \rad M_i\subseteq \rad M,$ and therefore, $\rad M_i=\rad M=S_3 \oplus S_4,$ for $i=1, 2$. By Lemma \ref{4bs_loc_l3}(1), we obtain the following 

Statement: Both $Ae_1$ and $Ae_2$ are biserial, whereas either $I_{S_3}$ or $I_{S_4}$ is uniserial.

We shall contradict this statement by considering $M/S_3$ and $M/S_4$, both of which have length three. For $i=3, 4$, since $S_i\subsetneq \rad M$, we see that $\ell\ell(M/S_i)=2$ and $\ntop(M/S_i)\cong \ntop M=S_1\oplus S_2.$ 

Suppose that both $M/S_3$ and $M/S_4$ are indecomposable. By Lemma \ref{soc_rad}, $\soc(M/S_3)=\soc(\rad(M/S_3))=\soc((S_3\oplus S_4)/S_3)\cong S_4,$ and $\soc(M/S_4)\cong S_3$. Being colocal of Loewy length two, by Proposition \ref{CL_3}(3), $M/S_3$ and $M/S_4$ are co-astride biserial of length three. And by Lemma \ref{bsa_ml3}(2), both $I_{S_4}$ and $I_{S_3}$ are biserial, a contradiction to Statement. 

So we may assume that $M/S_3$ is decomposable. Since $\ell(\ntop(M/S_3))=2,$ we may write $M/S_3=N/S_3 \oplus L/S_3$, where $S_3\subseteq N \cap L$ such that $N/S_3$ is simple and $L/S_3$ is uniserial of length two. Then, $M=N+L$ and $N\cap L=S_3.$ Moreover, $\ell(N)=2$ and $\ell(L)=3$. If $N$ is decomposable, then $N=T\oplus S_3,$ where $T$ is simple. This yields $M=T+S_3+L=T+L=T\oplus L,$ absurd. Thus, $N$ is uniserial and $S_3=\rad N.$ Suppose that $L$ is decomposable. Since $\ell(\ntop(L/S_3))=1$, we see that $L=S_3\oplus L_1$. Then, $M=N+L_1$ and $N\cap L_1=N\cap L\cap L_1=S_3\cap L_1=0$. That is, $M=N\oplus L_1$, absurd. Thus, $L$ is indecomposable with $S_3\subseteq \rad L;$ see (\ref{soc_rad}). Hence, $\ntop L\cong \ntop(L/S_3),$ which is simple. Since $\ell\ell(L)\le \ell\ell(M)=2$, by Proposition \ref{CL_3}(2), $L$ is astride biserial. Since $N\cap L= S_3\subseteq \rad N \cap \rad L,$ by Lemma \ref{ses_sum_2}(2), $\ntop N\oplus \ntop L\cong \ntop M\cong S_1\oplus S_2.$ Thus, $\ntop L\cong S_1$ or $S_2$, and by Lemma \ref{bsa_ml3}(1), $Ae_1$ or $Ae_2$ is biserial, a contradiction to Statement. 

Therefore, our claim holds. Then, we may assume that $\ell(M_1)=3$ and $\ell(M_2)=2$. As mentioned above, $M_1$ and $M_2$ are local of Loewy length at most two. Thus, $M_1$ is astride biserial of length three by Proposition \ref{CL_3}, and $M_2$ is uniserial of length two. That is, $M$ is an N-shaped module. The proof of the theorem is completed.

%\vspace{-3pt}

\subsection{\sc Reduction of quadri-biserial algebras} In this subsection, we show that the representation theory of quadri-biserial algebras reduces to that of quadri-string algebras with radical cubed zero. To do so, we determine which projective modules are necessarily injective.

\begin{Lemma}\label{4sa_ninj_prj}

Let $A$ be a quadri-biserial algebra. Consider an indecomposable projective module $P$ in $\mmod A$ or $\mmod A^\circ\hspace{-1pt}.$

\begin{enumerate}[$(1)$]

\vspace{-2pt}
    
\item If $P$ is of Loewy length four, then $P$ is uniserial and projective-injective.

\item If $P$ is biserial, then $P$ is projective-injective if and only if it is colocal, \vspace{-.5pt} or equivalently, it is a lozenge module.
    
\end{enumerate}\end{Lemma}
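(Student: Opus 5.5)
The plan is to play the uniform bound $\ell(P)\le 4$ from Definition \ref{4str_alg_def}(1) against Lemma \ref{maxi_local}(2). Since Definition \ref{4str_alg_def} applies to both $\mmod A$ and $\mmod A^\circ$, every indecomposable injective module in $\mmod A$ is of the form $D(eA)$ with $e$ primitive. Hence it has length $\ell(eA)\le 4$, and the same holds in $\mmod A^\circ$. Consequently, any colocal module of length four (in $\mmod A$ or $\mmod A^\circ$) has length at least that of every indecomposable injective module, and so it is injective by Lemma \ref{maxi_local}(2). I will prove both statements for $P$ in $\mmod A$; the case of $\mmod A^\circ$ is identical, since quadri-biserial is a left-right symmetric notion.

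For Statement (1), suppose $\ell\ell(P)=4$. Then $4=\ell\ell(P)\le \ell(P)\le 4$, so $\ell(P)=\ell\ell(P)$ and $P$ is uniserial. In particular $P$ is colocal of length four, and the observation above shows that $P$ is injective. Hence $P$ is projective-injective.

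For Statement (2), let $P$ be biserial. If $P$ is projective-injective, then, being an indecomposable injective, it is colocal. Conversely, suppose $P$ is colocal. Since $P$ is also local, it is diamond biserial (Definition \ref{as_bsm_def}). By Proposition \ref{biloc_bis}(4), $\rad P=M_1+M_2$ with $M_1\cap M_2=\soc P$, where $M_1,M_2$ are nonsimple uniserial. Lemma \ref{ses_sum_itsec}(1) then gives $\ell(\rad P)=\ell(M_1)+\ell(M_2)-1\ge 3$, so $\ell(P)\ge 4$. Combined with Definition \ref{4str_alg_def}(1), this forces $\ell(P)=4$, so $P$ is a lozenge module. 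The opening observation then shows that $P$ is injective.

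Finally, a lozenge module is by definition diamond biserial, hence colocal. This gives the equivalence of ``colocal'' and ``lozenge'' for biserial projective $P$.

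The only step needing care is the opening observation, namely that every indecomposable injective module has length at most four. This is exactly where the two-sided nature of Definition \ref{4str_alg_def} is used, through the duality $D$ exchanging $\mmod A^\circ$-projectives and $\mmod A$-injectives. Everything else is bookkeeping of lengths.
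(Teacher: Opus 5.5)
Your proof is correct and follows the paper's argument: both rest on Lemma \ref{maxi_local}(2), combined with the length bound of Definition \ref{4str_alg_def}(1) applied on both sides, which caps the length of every indecomposable injective at four. The only minor difference is how you show that a colocal biserial $P$ is lozenge: the paper cites Corollary \ref{rb4_lozenge}, whereas you obtain $\ell(P)\ge 4$ directly from Proposition \ref{biloc_bis}(4) and a length count.
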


\vspace{-1pt}

\noindent{\it Proof.} By Definition \ref{4str_alg_def}(1), $\ell(P)\le 4$ and $\ell(\rad P)\le 3$. If $\ell\ell(P)=4,$ then $P$ is a uniserial of length four, and it is injective by Lemma \ref{maxi_local} and Definition \ref{4str_alg_def}(1). 
Suppose next that $P$ is biserial. If $P$ is injective or lozenge, then it is clearly colocal. Conversely if $P$ is colocal, then  $P$ is lozenge; see (\ref{rb4_lozenge}); and since $\ell(P)=4,$  by Lemma \ref{maxi_local}(2), $P$ is injective. The proof of the lemma is completed.

\smallskip

We are ready to obtain the main result of this subsection.

\begin{Prop}\label{4ba-4sa}

Let $A$ be a connected non-simple quadri-biserial algebra with ${}_A\hspace{-.5pt}A=P\oplus L$, \vspace{-.5pt} where $P$ is a maximal injective direct summand of ${}_A\hspace{-.5pt}A$. Then $\soc P$ is a two-sided ideal in $A$ such that $A/\soc P$ is a quadri-string algebra with radical cubed zero. Moreover, every indecomposable non-projective-injective module in $\mmod A$ is a module over $A/\soc P$.

%\begin{enumerate}[$(1)$]
%\item $\hspace{2pt}\overline{\hspace{-2.8pt}A}= A/\soc P$ is a $4$-string algebra with radical cubed zero$\,;$
%\item the indecomposable not projective-injective modules in $\mmod A$ are $\hspace{2pt}\overline{\hspace{-2.8pt}A}$-modules. \end{enumerate}

\end{Prop}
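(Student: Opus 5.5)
The plan is to prove the three assertions in order: that $\soc P$ is a two-sided ideal, that $\bar A:=A/\soc P$ is a quadri-string algebra with radical cubed zero, and that indecomposable non-projective-injective modules are $\bar A$-modules. Throughout I use one injectivity trick. Let $Ae_1$ be an indecomposable projective-injective module, and let $f\colon Ae_1\to N$ be any morphism with $N$ indecomposable. If $f$ is not a monomorphism, then $\Ker f\ne 0$ contains $\soc(Ae_1)$, because $Ae_1$ is colocal. If $f$ is a monomorphism, then it splits because $Ae_1$ is injective, so $f$ is an isomorphism. Hence either $f(\soc(Ae_1))=0$ or $f$ is an isomorphism.

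\emph{Two-sided ideal.} Fix a complete set of orthogonal primitive idempotents, and write $P=\oplus_{e\in E}Ae$, where $E$ is the set of those $e$ with $Ae$ injective. Maximality of $P$ ensures that every primitive $e'$ with $Ae'$ injective gives $Ae'$ isomorphic to some summand of $P$. Then $\soc P=\sum_{e\in E}\soc(Ae)$, and this is already a left ideal. To see it is a right ideal, take $x\in\soc(Ae_1)$ with $e_1\in E$ and $a\in e_1Ae'$. Right multiplication $R_a\colon Ae_1\to Ae'$ is a left $A$-module map. By the trick, either $xa=0$, or $R_a$ is an isomorphism, so $Ae'$ is injective and $xa\in\soc(Ae')$. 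I would check that $\sum_{e'}\soc(Ae')$ over all primitive $e'$ with $Ae'$ injective coincides with $\soc P$; this uses that $A=\oplus Ae'$ and that each such $Ae'$ is a summand counted in $P$. The same argument shows $\soc P$ is a right ideal. As a cross-check, Lemma~\ref{pi_corresp}(1) identifies $\soc P$ with the socle of the corresponding right projective-injective summand, $e_2\,\soc(Ae_1)A=\soc(e_2A)$. This gives the symmetric description needed for the right-module side of the next step.

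\emph{Structure of $\bar A$.} The indecomposable projective $\bar A$-modules are $Ae/\soc(Ae)$ for $e\in E$, and $Ae$ for $e\notin E$, on both sides by the previous step. By Lemma~\ref{4sa_ninj_prj}(1), a projective of Loewy length four is uniserial and in $E$, so its quotient is uniserial of length three. A biserial non-injective projective is not colocal by Lemma~\ref{4sa_ninj_prj}(2), hence astride of length at most four, hence of Loewy length at most three. An injective biserial projective is a lozenge module, and its quotient by the socle is astride biserial of length three by Proposition~\ref{biloc_bis}. Consequently $\bar J^3=0$, and every non-uniserial indecomposable projective $\bar A$-module is astride biserial. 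The same holds on the right side by the symmetry above, so $\bar A$ is a string algebra.

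Conditions (1)--(3) of Definition~\ref{4str_alg_def} must then be transferred to $\bar A$. Radicals of $\bar A$-projectives are quotients of radicals of $A$-projectives by socle parts, so their tops are unchanged. Injective envelopes over $\bar A$ are the submodules of $A$-injectives annihilated by $\soc P$, and uniserial $A$-injectives give uniserial $\bar A$-injectives. So the uniseriality conditions should pass over. This verification, especially condition (3) when the radical shape changes after factoring out the socle, is the fiddly part.

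\emph{Modules.} Let $M$ be indecomposable and not projective-injective, and suppose $xm\ne 0$ for some $x\in\soc(Ae)$ with $e\in E$ and $m\in M$. Replacing $m$ by $em$, the map $Ae\to M$, $a\mapsto am$, does not kill $\soc(Ae)$. By the trick (its monic case applies, since here $M$ is indecomposable), this map is a split monomorphism, so $M\cong Ae$, a contradiction. Therefore $(\soc P)M=0$ and $M$ is an $\bar A$-module.

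I expect the main obstacle to be the careful identification of the two-sided ideal and its right-side counterpart, together with checking conditions (2)--(3) for $\bar A$.
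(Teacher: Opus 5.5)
Your proposal is correct. Its core is the paper's argument (Statements 1--4 there): the description of the indecomposable projective $\bar A$-modules, the case analysis via Lemma~\ref{4sa_ninj_prj} and Proposition~\ref{biloc_bis} giving $\bar J^3=0$ and astride biserial non-uniserial projectives, the right-hand side via Lemma~\ref{pi_corresp}, and the transfer of Definition~\ref{4str_alg_def} by comparing tops and passing to the quotients $\bar e\bar A$ of $eA$.

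The genuine difference is in the ideal step. The paper cites \cite[(4.2)]{AuR} for the facts that $\soc P$ is a two-sided ideal and that it annihilates every indecomposable non-projective-injective module. You prove both directly with your injectivity trick, which makes the proof self-contained. For the right-ideal check, write $xa=\sum_{e'}x(e_1ae')$ with $e'$ running through the fixed complete set of primitive idempotents. Then an isomorphism $R_{e_1ae'}$ forces $e'\in E$ by the definition of $E$. Merely knowing that $Ae'$ is isomorphic to a summand of $P$ would not, by itself, place $\soc(Ae')$ inside $\soc P$.

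The step you call fiddly closes immediately.
\begin{enumerate}[$(1)$]
\item The claim that tops are unchanged holds only when $\bar J\bar e\ne 0$; a uniserial projective-injective $Ae$ of length two has $\bar J\bar e=0$. When $\bar J\bar e\ne 0$ and $e\in E$, the module $Je$ is colocal and non-simple. So $\soc(Ae)=\soc(Je)\subseteq\rad(Je)$ by Lemma~\ref{soc_rad}, whence $\ntop(\bar J\bar e)\cong\ntop(Je)$. This is the paper's Statement 2, and it yields condition (2).
\item For condition (3), no change of radical shape can occur. Your structure step already shows $\ell(\bar A\bar e)\le 3$ for $e\in E$. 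A radical $\bar J\bar e=S\oplus M$ of length three gives $\ell(\bar A\bar e)=4$, which forces $e\notin E$. Hence $\bar A\bar e=Ae$ and $Je=S\oplus M$, and condition (3) for $A$ applies directly.
\item The hypothesis that $A$ is connected and non-simple is what excludes a simple projective-injective $Ae$, i.e.\ it gives $\soc P\subseteq J$. Your list of indecomposable projective $\bar A$-modules, with tops $Ae/Je$, uses this silently.
\end{enumerate}
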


\vspace{-1.5pt}

\noindent{\it Proof.} If $P=0,$ then $A$ is a quadri-string algebra with radical cubed zero by Lemma \ref{4sa_ninj_prj}. Otherwise, $P=Ae_1\oplus \cdots\oplus Ae_s$ and $L=Ae_{s+1}\oplus \cdots \oplus Ae_t$, where $\{e_1, \ldots, e_s,\ldots, e_t\}$ is a complete orthogonal set of primitive idempotents in $A$. It is well-known that $\soc P$ is a two-sided ideal in $A,$ which annihilates all indecomposable non-projective-injective modules in $\mmod A;$ see \cite[(4.2)]{AuR}. Write 
$\hspace{2pt}\overline{\hspace{-2.8pt}A}:=A/\soc P=\{\bar a \mid a\in A\}$ and $\bar J=\{\bar a \mid a\in J\},$ where $\bar a=a+\soc P.$

\vspace{.5pt}

{\sc Statement 1.} {\it The set \vp $\{\bar e_1, \ldots, \bar e_s, \ldots, \bar e_t\}$ is a complete orthogonal set of primitive idempotents in $\hspace{2pt}\overline{\hspace{-2.8pt}A}$ such that $\hspace{2pt}\overline{\hspace{-2.5pt}A} \bar e_i / \bar J \bar e_i \cong Ae_i/Je_i$ for all $1\le i\le t.$ Moreover, $\hspace{2pt}\overline{\hspace{-2.5pt}A} \bar e_i \cong Ae_i/\soc(Ae_i)$ for $1\le i\le s,$ and $\hspace{2pt}\overline{\hspace{-2.5pt}A} \bar e_j \cong Ae_j$ for $s<j\le t$.}

Since $A$ is connected and not simple, $Ae_i$ is not simple, and by Lemma \ref{soc_rad}, $\soc(Ae_i)\subseteq Je_i$, for $1\le i\le s$. Thus $\soc P\subseteq J$. It is then well-known that $\{\bar e_1, \ldots, \bar e_t\}$ is a complete orthogonal set of primitive idempotents in $\hspace{2pt}\overline{\hspace{-2.8pt}A}$ such that $\hspace{2pt}\overline{\hspace{-2.5pt}A} \bar e_i \cong Ae_i/(\soc P)e_i$ and $\hspace{2pt}\overline{\hspace{-2.5pt}A} \bar e_i / \bar J \bar e_i \cong Ae_i/Je_i,$ for $i=1, \ldots, t$. Moreover since $\soc(Ae_i) =\soc(Ae_i)e_i$ for $1\le i\le s,$ we have $(\soc P)e_i=\soc(Ae_i)$ for $1\le i\le s,$ and $(\soc P)e_j=0$ for $s<j\le n$. This establishes Statement 1.

\vspace{1pt}

{\sc Statement 2.} {\it If $\ell(\hspace{2pt}\overline{\hspace{-2.5pt}A} \bar e_i)\ge 3$ for some $1\le i\le t,$ then $\ntop(\bar J \bar e_i) =\ntop(Je_i).$}

\vspace{1pt}

Suppose that $\ell(\hspace{2pt}\overline{\hspace{-2.5pt}A} \bar e_i)\ge 3,$ for some $1\le i\le t.$ Then $\ell(Je_i)\ge 2$. By Statement 1, we may assume that $1\le i\le s$. Since $\soc(Ae_i)$ is simple, $\soc(Ae_i)\subseteq J^2e_i$. By Statement 1 again, $\ntop(\bar J \bar e_i)\cong \ntop(Je_i/\soc(Ae_i))\cong \ntop(Je_i).$ Statement 2 holds.

\vspace{1pt}

{\sc Statement 3.} {\it If $1\le i\le t,$ then $\hspace{2pt}\overline{\hspace{-2.8pt}A} \hspace{.5pt}\bar e_i$ is uniserial or astride biserial such that $\ell(\hspace{2pt}\overline{\hspace{-2.8pt}A} \bar e_i)\le 4$ and $\ell\ell(\hspace{2pt}\overline{\hspace{-2.8pt}A} \bar e_i)\le 3.$}  

\vspace{1pt}

First of all, $\ell(\hspace{2pt}\overline{\hspace{-2.8pt}A} \bar e_i)\le \ell (Ae_i) \le 4$ for all $1\le i\le t.$ Consider $s<i\le t.$ By Statement 1, $\hspace{2pt}\overline{\hspace{-2.8pt}A} \bar e_i\cong Ae_i,$ which is not injective. By Lemma \ref{4sa_ninj_prj}, $\hspace{2pt}\overline{\hspace{-2.8pt}A} \bar e_i$ is uniserial of length at most three or astride biserial of length at most four. In the first case,  $\ell\ell(\hspace{2pt}\overline{\hspace{-2.8pt}A} \bar e_i) \le \ell(\hspace{2pt}\overline{\hspace{-2.8pt}A} \bar e_i)\le 3.$ In the second case, $\rad (J \bar e_i)$ is a direct sum of two uniserial of length at most two. Thus, 
$\ell\ell(\hspace{2pt}\overline{\hspace{-2.8pt}A} \bar e_i)= \ell\ell(J\bar e_i)+1 \le 3.$
Now, consider $1\le i\le s$. \vp By Statement 1, $\hspace{2pt}\overline{\hspace{-2.8pt}A} \bar e_i\cong Ae_i/\soc(Ae_i).$ %where $\soc (Ae_i)$ is simple. 
If $Ae_i$ is uniserial, then $\hspace{2pt}\overline{\hspace{-2.8pt}A} \hspace{.5pt} \bar e_i$ is uniserial with $\ell\ell(\hspace{2pt}\overline{\hspace{-2.8pt}A} \hspace{.5pt} \bar e_i)\le 3.$ Otherwise by Lemma \ref{4sa_ninj_prj}(2), $Ae_i$ is lozenge. Thus by Proposition \ref{biloc_bis}(3), $\hspace{2pt}\overline{\hspace{-2.8pt}A} \hspace{.5pt} \bar e_i$ is astride biserial of length three, and by Proposition \ref{CL_3}(2),  $\ell\ell(\hspace{2pt}\overline{\hspace{-2.8pt}A} \bar e_i)=2.$ This establishes Statement 3. 

%\vspace{.5pt}

{\sc Statement 4.} {\it There exists a permutation $\sigma$ on $\{1, \ldots, t\}$ such that $e_{\sigma(i)}A$ is injective if and only if $1\le i\le s;$ and in this case, $\soc(e_{\sigma(i)}A)\cong e_iA/e_iJ$ and $\soc(Ae_i)\cong Ae_{\sigma(i)}/Je_{\sigma(i)}$.  Moreover, $\soc P=\oplus_{i=1}^s \hspace{.5pt} \soc(e_{\sigma(i)}A)$. }

Note that $Ae_i\cong Ae_j$ if and only if $e_iA\cong e_jA,$ for $1\le i, j\le t.$ So the first part of Statement 4 follows from Lemma \ref{pi_corresp}. Put $P':=\oplus_{i=1}^s \hspace{.5pt} e_{\sigma(i)}A,$ a two-sided ideal in $A$. By Lemma \ref{pi_corresp}, $\soc(e_{\sigma(i)}A)=e_{\sigma(i)} \hspace{.6pt} \soc(Ae_i) A \subseteq \soc P$ for $1\le i\le s$. Thus, $\soc P'\subseteq \soc P.$ Similarly, $\soc P\subseteq \soc P'.$ This establishes Statement 4.

We see from Statement 4 that Statements (2) and (3) hold for the $\bar e_i \hspace{2pt}\overline{\hspace{-2.8pt}A}$ with $1\le i\le t$. In particular, $\hspace{2pt}\overline{\hspace{-2.8pt}A}$ is a string algebra with radical cubed zero, which satisfies Definition \ref{4str_alg_def}(1). Assume that $\ntop(\bar J \bar e_i)\cong \hspace{2.8pt}\overline{\hspace{-2.8pt}A} \bar e_{i_1}/\bar J \bar e_{i_1} \oplus \hspace{2.8pt}\overline{\hspace{-2.8pt}A} \bar e_{i_2}/\bar J \bar e_{i_2}$, where $1\le i, i_1, i_2\le t$. By Statement 2, $\ntop(Je_i)\cong  A e_{i_1}/ J e_{i_1} \oplus  A e_{i_2}/ J e_{i_2},$   and by Definition \ref{4str_alg_def}(2), $e_{i_1}A$ or $e_{i_2}A$ is uniserial. By Statement 2, so is $\bar e_{i_1} \hspace{2.8pt}\overline{\hspace{-2.8pt}A}$ or $\bar e_{i_2}\hspace{2.8pt}\overline{\hspace{-2.8pt}A}.$ Now, suppose that $\bar J \bar e_i = S \oplus M,$ where $S\cong \hspace{2.8pt}\overline{\hspace{-2.8pt}A} \bar e_{p_i} / \bar J \bar e_{p_i}$ and $M$ is uniserial of length two with $\soc M\cong \hspace{2.8pt}\overline{\hspace{-2.8pt}A} \bar e_{q_i} / \bar J \bar e_{q_i}$, for some $1\le p_i, q_i\le t.$ By Statement 2, $J e_i \cong S \oplus M,$ and by Definition \ref{4str_alg_def}(3), $e_{p_i}A$ and $e_{q_i}A$ are uniserial. By Statement 2, so are $\bar e_{p_i}\hspace{2.8pt}\overline{\hspace{-2.8pt}A}$ and $\bar e_{q_i}\hspace{2.8pt}\overline{\hspace{-2.8pt}A}$. \vspace{-1pt} Thus, the indecomposable projective modules in $\mmod \hspace{2.8pt}\overline{\hspace{-2.8pt}A}$ satisfy the three conditions stated in Definition \ref{4str_alg_def}. Similarly, so do those in $\mmod \hspace{2.8pt}\overline{\hspace{-2.8pt}A}^\circ.$ The proof of the proposition is completed.

\vspace{-2pt}

\subsection{\sc Almost split sequences} In this subsection, we compute more almost split sequences over quadri-biserial algebras, particularly those for N-shaped modules over quadri-string algebras with radical cubed zero. \vspace{-1pt}

\begin{Prop}\label{ass_p4_1} 

Let $A$ be a quadri-biserial algebra. Consider a biserial projective module $P$ in $\mmod A$ with $\rad P=S\oplus M,$ where $S$ is simple and $M$ is uniserial of length two such that $I_{\hp\ntop M}$ is uniserial. Then we have two almost split sequences

\begin{enumerate}[$(1)$]

\vspace{-3pt}

\item $\xymatrixcolsep{22pt}\xymatrix{0\ar[r] & \ntop M\ar[r] & \ntop^2\hspace{-1pt} P \ar[r]  & P/M \ar[r] & 0;}$ 

\vspace{-3pt}

\item $\xymatrixcolsep{22pt}\xymatrix{0\ar[r] & \ntop^2\hspace{-1pt} P \ar[r] & P/\soc P\oplus P/M \ar[r] & \ntop P \ar[r] & 0.}$

\end{enumerate}\end{Prop}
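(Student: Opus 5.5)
The plan is to obtain sequence (1) as a special case of Theorem \ref{ass_uni_topi} with $s=1$. Sequence (2) will then follow from a short exact sequence supplied by Lemma \ref{ses_sum_itsec}(2), together with a computation of ${\rm Tr}D$ or $D{\rm Tr}$ and Lemma \ref{ass-sim}.

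First the structure of $P$. Since $P$ is local with $\rad P=S\oplus M$, it is astride biserial of length four, and $\rad^2\hspace{-1pt}P=\rad M=\soc M$. Hence $\ntop^2\hspace{-1pt}P=P/\soc M$ is astride biserial of length three, with radical $(S\oplus M)/\soc M\cong S\oplus \ntop M$. Moreover $\soc P=S\oplus\soc M$, the quotient $P/M$ is uniserial of length two with socle $S$, and $P/\soc P$ is uniserial of length two with socle $\ntop M$.

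For (1), I regard $M$ as a proper submodule of $P$ and take $s=1$ in Theorem \ref{ass_uni_topi}. The hypotheses hold as follows:
\begin{enumerate}[$(1)$]
\item $\ntop^1\hspace{-1pt}M=\ntop M$ is simple, hence uniserial;
\item $\rad^0\hspace{-1pt}M=M\not\subseteq \rad^2\hspace{-1pt}P=\soc M$;
\item $\soc^2(I_{\soc(\ntop M)})=\soc^2(I_{\ntop M})$ is uniserial, because $I_{\ntop M}$ is uniserial by assumption.
\end{enumerate}
The theorem then yields the almost split sequence $0\to \ntop M\to P/\rad M\oplus \ntop^0\hspace{-1pt}M\to P/M\to 0$. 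Since $\rad M=\rad^2\hspace{-1pt}P$ and $\ntop^0\hspace{-1pt}M=0$, this is exactly (1). As a byproduct, ${\rm Tr}D(\ntop M)\cong P/M$, and the projection $\ntop^2\hspace{-1pt}P\to P/M$ is irreducible.

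For (2), I apply Lemma \ref{ses_sum_itsec}(2) to the submodules $M_1=\soc P$ and $M_2=M$ of $P$. Here $M_1\cap M_2=\soc M=\rad^2\hspace{-1pt}P$ and $M_1+M_2=\rad P$. This gives the exact sequence $0\to \ntop^2\hspace{-1pt}P\to P/\soc P\oplus P/M\to \ntop P\to 0$, which is nonsplit because $\ntop^2\hspace{-1pt}P$ is indecomposable and not a summand of the middle term. Its right end $\ntop P$ is simple, so by Lemma \ref{ass-sim}(1) it remains only to prove $D{\rm Tr}(\ntop P)\cong \ntop^2\hspace{-1pt}P$.

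Write $P=Ae$, $S\cong Ae_2/Je_2$, and $\ntop M\cong Ae_1/Je_1$, and pick a top-basis $\{u_2,u_1\}$ of $Je$ with $u_i\in e_iJe$. By Proposition \ref{tb-projc-ienv} the minimal projective presentation of $\ntop P$ is $Ae_2\oplus Ae_1\to Ae\to \ntop P\to 0$, given by right multiplication by $u_2$ and $u_1$. Applying $\Hom_A(-,A)$ gives
$$
{\rm Tr}(\ntop P)={\rm coker}\bigl(eA\to e_2A\oplus e_1A,\ x\mapsto (u_2x,u_1x)\bigr).
$$
By Definition \ref{4str_alg_def}(3), $I_S\cong D(e_2A)$ and $I_{\soc M}$ are uniserial, and $I_{\ntop M}\cong D(e_1A)$ is uniserial by hypothesis. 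So $e_2A$ and $e_1A$ are uniserial, which makes the cokernel computable. I expect it to be the module obtained by gluing $e_2A$ and $e_1A$ along the images $u_2eA$ and $u_1eA$, and its dual to be $\ntop^2\hspace{-1pt}P$.

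The main obstacle is this identification. I must check that $u_2eA\subseteq e_2A$ and $u_1eA\subseteq e_1A$ sit at the correct depth. I would use that $u_2\notin J^2$, that $u_1\in e_1Je\setminus e_1J^2e$, and that $J^2u_2=0$ in $Ae$ because $S$ is simple; the uniseriality of $e_1A$ and $e_2A$ then pins down $D$ of the cokernel up to isomorphism.

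If this direct computation becomes unwieldy, the fallback is to identify ${\rm Tr}D(\ntop^2\hspace{-1pt}P)$ instead. We already know $\ntop^2\hspace{-1pt}P\to P/M$ is irreducible from (1). For the other component, $\ntop^2\hspace{-1pt}P\to P/\soc P$ should be irreducible via the almost split sequence ending at $P/\soc P$; this sequence can be obtained from Theorem \ref{ass_uni_topi} or Theorem \ref{ass_N_alg} applied to the uniserial module $P/\soc P$. A length count on the middle term of the almost split sequence starting at $\ntop^2\hspace{-1pt}P$ then forces its end term to be $\ntop P$, giving (2).
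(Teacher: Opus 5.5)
Your proof of (1) is correct and somewhat slicker than the paper's. Theorem~\ref{ass_uni_topi} with $s=1$ applies exactly as you say; its proof goes through with $\ntop^0M=0$. The paper instead computes ${\rm Tr}(P/M)$ directly from the minimal projective presentation $P_{\ntop M}\to P\to P/M\to 0$, finds $D{\rm Tr}(P/M)\cong\ntop M$, and invokes Lemma~\ref{ass-sim}(1); this is the same computation seen from the other end.

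Your reduction of (2) to $D{\rm Tr}(\ntop P)\cong\ntop^2P$ via Lemma~\ref{ass-sim}(1) is also fine. However, that identification is the whole content of (2), and you leave it as an expectation. We have ${\rm Tr}(\ntop P)=(e_2A\oplus e_1A)/(u_2,u_1)A$ with $(u_2,u_1)A/(u_2,u_1)J\cong eA/eJ$, and the uniserial modules $e_2A$, $e_1A$ may have length up to four. So ${\rm Tr}(\ntop P)$ has length three exactly when $(u_2,u_1)J=e_2J^2\oplus e_1J^2$. Your listed facts give $u_iA=e_iJ$ and $u_iJ=e_iJ^2$, so $(u_2,u_1)J$ projects onto both $e_2J^2$ and $e_1J^2$. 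Whether it is the full direct sum rather than something diagonal depends on the products $u_2v$, $u_1v$ with $v\in eJ$, i.e.\ on the projectives $Ae_j$ having arrows into $e$. Nothing you cite controls this; in particular $Ju_2=0$ is a statement about left multiplication in $Ae$. For comparison, the analogous claim in the proof of Proposition~\ref{4str_ass_smr} needs Lemma~\ref{sbsa}, which is only available when $J^3=0$.

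The fallback does not close the gap either.
First, the almost split sequence ending at $P/\soc P$ starts at $\tau(P/\soc P)=P$, which is not uniserial. Theorems~\ref{ass_uni_topi} and~\ref{ass_N_alg} only produce sequences starting at uniserial modules. The correct source is Proposition~\ref{ass_p_astride}, which gives $0\to P\to P/S\oplus\ntop^2P\to P/\soc P\to 0$.
Second, and more seriously, two irreducible maps out of $\ntop^2P$ bound the middle term of the sequence starting at $\ntop^2P$ only from below. Since $\tau^-(\ntop^2P)$ is unknown, no length count rules out further summands.

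What is missing is the almost split sequence ending at $\ntop^2P$. Theorem~\ref{ass_st_mod}, applied to the summand $M$ of $\rad P$ (with $I_{\soc M}$ uniserial by Definition~\ref{4str_alg_def}(3)), gives $0\to M\to P\oplus\ntop M\to\ntop^2P\to 0$.
\begin{itemize}
\item The only irreducible maps into $\ntop^2P$ therefore come from $P$ and $\ntop M$.
\item Hence the only non-projective targets of irreducible maps out of $\ntop^2P$ are $\tau^-P=P/\soc P$ and $\tau^-(\ntop M)=P/M$.
\item There are no projective targets: $\ntop^2P$ is not colocal, so over a biserial algebra it is not a summand of the radical of an indecomposable projective.
\end{itemize}
This makes $\ntop^2P\to P/\soc P\oplus P/M$ minimal left almost split, which is how the paper proves (2).
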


\vspace{-3pt}

\noindent{\it Proof.} Note that $\rad^2\hspace{-1pt}P=\rad M=\soc M.$ By Definition \ref{4str_alg_def}(3), $I_S$ and $I_{\soc M}$ are uniserial. And by Propositions \ref{ass_p_astride} and \ref{ass_st_mod}, we have two almost split sequences \vspace{-1pt}

\noindent $(*) \hspace{14pt} \xymatrixcolsep{28pt}\xymatrix{ 0\ar[r]& P \ar[r]^-{(p_1, \,p_2)^T\hspace{-2pt}} & P/S \oplus \ntop^2\hspace{-1pt} P \ar[r]^-{(-p_3, \, p_4)\hspace{-2pt}} & P/\soc P \ar[r] & 0,}$

\noindent where the $p_i$ are canonical projections, and

\noindent $(**) \hspace{10pt}\xymatrixcolsep{28pt}\xymatrix{0 \ar[r]& M \ar[r]^-{(q_0, -p_0)^T\hspace{-3.5pt}} & P \oplus \ntop M \ar[r]^-{(p_2, q)} & \ntop^2\hspace{-1pt} P \ar[r] & 0,}$ 

\noindent where $q_0, q$ are inclusion maps, and $p_0$ is the canonical projection. Consider the canonical short exact sequence 

\noindent $(1) \hspace{14pt} \xymatrixcolsep{28pt}\xymatrix{0\ar[r] & \ntop M\ar[r]^-q & \ntop^2\hspace{-1pt} P \ar[r]^-p & P/M\ar[r] & 0.}$

\vp\vp

To show that this is an almost split sequence, we may assume that $P=Ae_1$ and $\ntop M \cong Ae_2/Je_2,$ where $e_1, e_2$ are primitive idempotents in $A$. Then $M=Au,$ for some $u\in e_2Je_1\backslash e_2 J^2 e_1$. This yields minimal projective presentations \vspace{-5pt}
$$\xymatrixcolsep{20pt}\xymatrix{Ae_2 \ar[r]^{R_u} & Ae_1 \ar[r] & P/M \ar[r] & 0} \mbox{ and } \xymatrixcolsep{20pt}\xymatrix{e_1A \ar[r]^{L_u} & e_2A \ar[r] & {\rm Tr}(P/M) \ar[r] & 0,}\vspace{-5.5pt}$$ where $R_u$ and $L_u$ denote right and left multiplications by $u$, respectively. Therefore, ${\rm Tr}(P/M)\cong e_2A/uA$. \vspace{.5pt} Since $D(e_2A)\cong I_{\hp\ntop M}$, we see that $e_2A$ is uniserial. Hence $e_2J=uA,$ and ${\rm Tr}(P/M)\cong e_2A/e_2J$. So $D{\rm Tr}(P/M)\cong Ae_2/Je_2\cong \ntop M,$ which is simple. By Lemma \ref{ass-sim}(1), the sequence $(1)$ is an almost split sequence. 

Next, since $\soc P\!=\!\soc(\rad P)\!=\!S+\soc M,$ we have $(\soc P) \hp \cap M \!=\! \soc M \!=\! \rad^2\hspace{-1pt}P$ and $\soc P+M \!=\! S+M \!=\! \rad P$. By Lemma \ref{ses_sum_itsec}(2), we have a short exact sequence  

\noindent $(2) \hspace{15pt} \xymatrixcolsep{28pt}\xymatrix{0\ar[r] & \ntop^2\hspace{-1pt} P \ar[r]^-{(p_4, \hp p)^T} & P/\soc P \oplus P/M \ar[r]^-{(p_5, \hp\hp p_6)} & \ntop P\ar[r] & 0,}$ 

\noindent where $p_5, p_6$ are canonical projections. \vp Viewing the almost split sequences $(*)$ and $(1)$, we see that $(p_4,p)^T$ is irreducible; see \cite[(1.3)]{LiY}. Since $A$ is biserial and $\ntop^2\hspace{-1pt}P$ is not colocal; see (\ref{bsm_top2}), by Proposition \ref{loc_cloc_bsm}(1), \vp $\ntop^2\hspace{-1pt}P$ is not a direct summand of any indecomposable projective module in $\mmod A.$ And since $(p_2, q)$ is minimal right almost split, $(p_2,p)^T$ is minimal left almost split. Thus the sequence $(2)$ is also an almost split sequence. The proof of the proposition is completed.  

\smallskip

Dually, we have the following result. \vspace{-1pt}

\begin{Prop}\label{ass_i4_1} 

Let $A$ be a quadri-biserial algebra. Consider a biserial injective module  $I$ in $\mmod A$ with $I/\soc I = (M_1/\soc I) \oplus (M_2/\soc I),$ where $M_1/\soc I$ is simple, and $M_2/\soc I$ is uniserial of length two such that $P_{\soc(M_2/\soc I)}$ is uniserial. Then we have two almost split sequences

\begin{enumerate}[$(1)$]

\vspace{-2pt} 

\item $\xymatrixcolsep{22pt}\xymatrix{0\ar[r]& M_1 \ar[r] & \soc^2\hspace{-.8pt} I \ar[r] & \soc\hp(\hspace{-.5pt}M_2/\soc I) \ar[r] & 0\hp;}$

\vspace{-2.5pt}

\item $\xymatrixcolsep{22pt}\xymatrix{0\ar[r]& \soc I \ar[r] & M_1 \oplus \rad M_2 \ar[r] & \soc^2\hspace{-1.2pt}I \ar[r] & 0.}$

\end{enumerate}\end{Prop}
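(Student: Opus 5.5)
The plan is to dualize Proposition \ref{ass_p4_1}. Apply $D$ to $I$: then $DI$ is a biserial projective module in $\mmod A^\circ$. We need to identify $\rad(DI)$. By Proposition \ref{duality_radi_soci}(1), $\rad(DI)=(\soc I)^\perp\cong D(I/\soc I)$. We have
$$I/\soc I=(M_1/\soc I)\oplus(M_2/\soc I),$$
and Lemma \ref{dual_perp}(2) gives
$$(\soc I)^\perp=M_2^\perp\oplus M_1^\perp.$$
Here $M_2^\perp\cong D(I/M_2)\cong D(M_1/\soc I)$ is simple, and $M_1^\perp\cong D(M_2/\soc I)$ is uniserial of length two. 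So $\rad(DI)=S'\oplus M'$, with $S'=M_2^\perp$ simple and $M'=M_1^\perp$ uniserial of length two.

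Next check the hypothesis of Proposition \ref{ass_p4_1}. We have
$$\ntop M'\cong D(\soc(M_2/\soc I)),$$
so $I_{\ntop M'}\cong D(P_{\soc(M_2/\soc I)})$, which is uniserial by assumption. Also $A^\circ$ is quadri-biserial, since the definition is left-right symmetric. Proposition \ref{ass_p4_1} therefore gives almost split sequences in $\mmod A^\circ$:
$$0\to \ntop M'\to \ntop^2(DI)\to DI/M'\to 0,$$
$$0\to \ntop^2(DI)\to DI/\soc(DI)\oplus DI/M'\to \ntop(DI)\to 0.$$

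Apply $D$ again. Since $D$ sends almost split sequences to almost split sequences with the ends swapped, we then translate each term.
\begin{itemize}
\item $D(\ntop^2(DI))\cong \soc^2 I$, by Proposition \ref{duality_radi_soci}(2).
\item $D(DI/M')=D(DI/M_1^\perp)\cong M_1$, by Lemma \ref{dual_perp}(1).
\item $D(\ntop M')\cong \soc(M_2/\soc I)$.
\item $D(\ntop(DI))\cong\soc I$.
\item $D(DI/\soc(DI))\cong \rad I$, by Proposition \ref{duality_radi_soci}(3). The summand $DI/\soc DI$ of the middle term dualizes to $\rad I$; this is fine, because $\rad I$ is the object in the statement.
\end{itemize}

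The only point needing care is the middle term of (2). It should read $\rad I\oplus M_1$, whereas the statement says $M_1\oplus\rad M_2$. I expect the main obstacle to be reconciling this, possibly by showing $\rad I\cong \rad M_2$ under these hypotheses. Since $I$ is co-astride biserial of length four and $\rad I$ is colocal of length three, I would examine that carefully. If the identification fails, I would instead derive (2) directly, as the dual of Proposition \ref{ass_p4_1}(2). For this, use Lemma \ref{ses_sum_itsec}(1) with submodules $M_1$ and $\rad M_2$ of $\soc^2 I$: their sum is $\soc^2 I$ and their intersection is $\soc I$. Then argue minimal almost splitness via the almost split sequences of Proposition \ref{ass_endi} and Theorem \ref{ass_st_mod_1}, together with \cite[(1.3)]{LiY}.

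Finally, sequence (1) can also be checked directly by Lemma \ref{ass-sim}(1). Its left end $M_1$ is not simple, but its right end $\soc(M_2/\soc I)$ is simple, so it suffices to compute $D{\rm Tr}$ of that simple module via a minimal projective presentation, exactly as in the proof of Proposition \ref{ass_p4_1}.
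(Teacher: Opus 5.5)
Your route is the paper's: apply Proposition \ref{ass_p4_1} to $DI$ with $\rad(DI)=(\soc I)^\perp=M_2^\perp\oplus M_1^\perp$ and dualize back, and each identification you list is the one the paper uses. The step you left open is immediate and needs no fallback: $M_1$ and $M_2$ are uniserial of lengths two and three with $\rad M_1=\soc I=\soc M_2\subseteq \rad M_2$, so $I=M_1+M_2$ gives $\rad I=\rad M_1+\rad M_2=\rad M_2$; this is an equality, and it is exactly how the paper matches $D(DI/\soc(DI))\cong D(\rad I)$ with the summand $\rad M_2$ in (2) (note also that $\rad I$ has length two, not three, since $\ntop I\cong \ntop M_1\oplus\ntop M_2$).
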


\vspace{-.5pt}

\noindent{\it Proof.} Note that $I=M_1+M_2$ and $M_1\cap M_2=\soc I=\soc M_1=\rad M_1,$ where $M_1, M_2$ are uniserial of length two and three respectively. Thus $\rad I=\rad M_2.$ Consider the biserial projective module $DI$ in $\mmod A^\circ$. By Corollary \ref{dual_perp} and Proposition \ref{duality_radi_soci}(3), $\rad(DI)=(\soc I)^\perp=M_2^\perp \oplus M_1^\perp \hspace{-1pt}.$ Since $I/M_2\cong M_1/\soc I$, by Lemma \ref{dual_perp}(1), $M_2^\perp\cong D(M_1/\soc I),$ which is simple. Similarly $M_1^\perp\cong D(M_2/\soc I),$ which is uniserial of length two, \vp and $\ntop(M_1^\perp) \cong D(\soc(M_2/\soc I)).$ Therefore, $I_{\hp\ntop(M_1^\perp)}\cong D(P_{\soc(M_2/\soc I)}),$ \vp which is uniserial by the hypothesis. \vp Now by Proposition \ref{ass_p4_1}, we have two almost split sequences 

\vp\vp

$\xymatrixcolsep{22pt}\xymatrix{0\ar[r] & \ntop(M_1^\perp) \ar[r] & \ntop^2(DI) \ar[r]  & DI/M_1^\perp \ar[r] & 0;}$

\vspace{-2pt}

$\xymatrixcolsep{22pt}\xymatrix{0\ar[r] & \ntop^2(DI) \ar[r] & DI/\soc(DI) \oplus DI/M_1^\perp \ar[r] & \ntop(DI) \ar[r] & 0.}$ 

\vspace{1pt}

Note that $DI/\soc(DI) \cong D(\rad I)=D(\rad M_2)$ and  $\ntop^i(DI)\cong D(\soc^i\hspace{-1pt}I);$ see (\ref{duality_radi_soci}), for $i=1, 2$. Moreover, $DI/M_1^\perp \cong DM_1;$ see (\ref{dual_perp}). Applying the duality $D$ to the above almost split sequences, we obtain two desired almost split sequences stated in the proposition. The proof of the proposition is completed.

%\begin{Lemma}\label{4sa_uni_ideal}Let $A$ be a $4$-biserial algebra with $u\in e_2Je_1\backslash e_2J^2e_1,$ where $e_1, e_2$ are primitive idempotents. Then
%\begin{enumerate}[$(1)$]\vspace{-2pt}
%\item  $Au$ is uniserial, which is a direct summand of $Je_1$ in case $Je_1$ is decomposable$\,:$
%\item $uA$ is uniserial, which is a direct summand of $e_2J$ in case $e_2J$ is decomposable.\end{enumerate} \end{Lemma}
%\noindent{\it Proof.} We shall only prove Statement (1). Note that $Au$ is a local submodule of $Je_1$ and $Au\not\subseteq J^2e_1.$ Since $\ell(Ae_1)\le 4$, we have $\ell(Je_1)\le 3.$ If $Ae_1$ is uniserial, then so is $Je_1$. Hence, $Au=Je_1$. Statement (1) holds in this case. 
%
%Suppose that $Ae_1$ is biserial. By Proposition \ref{loc_cloc_bsm}, $\ell(\ntop(Je_1))=2$. In particular, $Au\ne Je_1,$ and hence, $1\le \ell(Au)\le 2.$ Therefore, $Au$ is uniserial. Suppose that $Je_1$ is decomposable. If $J^2e_1=0,$ then $Au$ is a direct summand of $Je_1$. Otherwise, $\ell(Je_1)=3$ and $\ell(Ae_1)=4$. Since $\ell(\ntop(Je_1))=2$, we have $Je_1=S\oplus M$, where $S$ is simple and $M$ is uniserial of length $2$. Since $Au\not\subseteq J^2e_1= \rad M,$ by Proposition \ref{dec_mod_l3}, $Au$ is a direct summand of $Je_1$. The proof of the lemma is completed.

%\smallskip

\vspace{5pt}

Next, we turn to compute almost split sequences for N-shaped modules over quadri-string algebras with radical cubed zero.

%\vspace{-1pt}

\begin{Prop}\label{ass_Nmod}

Let $A$ be a quadri-string algebra with radical cubed zero. Consider an N-shaped module $M=M_1+M_2$ in $\mmod A$ with $\soc M = S \oplus \soc M_2,$ where $M_1$ is astride biserial of length three, $M_2$ is uniserial of length two, and $S$ is simple. Then we have three almost split sequences as follows$\hp:$

\begin{enumerate}[$(1)$]

\vspace{-1pt}
    
\item $\xymatrixcolsep{20pt}\xymatrix{0\ar[r] & M_1\cap M_2 \ar[r] & M_1\oplus M_2 \ar[r] & M \ar[r] & 0\hp;}$ 

\vspace{-3pt}

\item $\xymatrixcolsep{20pt}\xymatrix{0\ar[r] & M \ar[r] & (M / S) \oplus (M/M_2) \ar[r] & \ntop M_1 \ar[r] & 0;}$

\vspace{-3pt}

\item $\xymatrixcolsep{20pt}\xymatrix{0\ar[r] & M_2 \ar[r] & M \ar[r] & M/M_2 \ar[r] & 0.}$

\end{enumerate}\end{Prop}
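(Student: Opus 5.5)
The plan is to prove (1) and (3) by identifying an Auslander--Reiten translate explicitly and applying Lemma \ref{ass-sim}, and to obtain (2) from (1) by duality. Throughout, write $T=\soc M_2=M_1\cap M_2$; by Lemma \ref{N_mod_property_0} we have $\rad M=\soc M=\soc M_1=S\oplus T$. Lemma \ref{N_mod_property-2} supplies the following facts. $P_{\ntop M_1}=Ae_1$ is biserial, and $P_{\ntop M_2}=Ae_2$ is uniserial. $I_S$ is uniserial, whereas $I_T\cong D(e_tA)$ is biserial with $\soc^2 I_T\cong M/S$, and $(M/S)/\soc(M/S)\cong \ntop M_1\oplus\ntop M_2$. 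By Proposition \ref{bsa_ml3}(1) we may take $M_1=\ntop^2(Ae_1)$, and $M_2\cong Ae_2/J^2e_2$ by Proposition \ref{uni_pi}(1).

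For (1), the sequence exists by Lemma \ref{ses_sum_itsec}(1), and it is nonsplit because $M$ is indecomposable. Since $T$ is simple, Lemma \ref{ass-sim}(1) reduces everything to showing ${\rm Tr}D\,T\cong M$. The right module $DT\cong e_tA/e_tJ$ has projective cover $e_tA$. Because $A$ is a string algebra with $J^3=0$ and $D(e_tA)$ is co-astride biserial, $e_tA$ is astride biserial and $\ntop(e_tJ)\cong e_1A/e_1J\oplus e_2A/e_2J$, using $\soc^2I_T\cong M/S$ and Proposition \ref{duality_radi_soci}. Choosing a top-basis $\{v_1,v_2\}$ of $e_tJ$ with $v_i\in e_tJe_i$, Proposition \ref{tb-projc-ienv} gives a minimal presentation $e_1A\oplus e_2A\to e_tA\to DT\to 0$. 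Transposing yields ${\rm Tr}D\,T\cong (Ae_1\oplus Ae_2)/A(v_1,v_2)$.

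The main obstacle is to identify this quotient with $M=(M_1\oplus M_2)/\{(x,-x)\mid x\in T\}$. My first attempt will be to use Lemma \ref{sbsa} together with $J^3=0$ to show that $Jv_1=0=Jv_2$. I would then argue that the images of $v_1$ and $v_2$ span exactly the copies of $T$ in $\rad(Ae_1)$ and $\rad(Ae_2)$. Finally I would show that the remaining radical-square parts, namely $J^2e_1$ and, if nonzero, $J^2e_2$, must be killed, by comparing lengths: ${\rm Tr}D\,T$ is indecomposable and has the same top as $M$. If this length count is delicate, a fallback is to construct directly an epimorphism $Ae_1\oplus Ae_2\to M$ sending $e_i\mapsto m_i$. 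One then checks that $(v_1,v_2)$ lies in its kernel and compares lengths, using that $\ell({\rm Tr}D\,T)\le 4$.

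For (2), I would apply (1) to the N-shaped $A^\circ$-module $DM$ (Remark \ref{N_mod_dual}); this is legitimate because $A^\circ$ is again a quadri-string algebra with radical cubed zero. The uniserial part of $DM$ is $D(M/M_2)$, whose socle is $D(\ntop M_1)$, and its astride part is $D(M/S)$. This follows from the dual of the construction in the proof of Proposition \ref{N_mod_ses}, together with Lemma \ref{N_mod_property}(2),(3). Dualizing the almost split sequence $0\to D(\ntop M_1)\to D(M/S)\oplus D(M/M_2)\to DM\to 0$ then gives (2).

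For (3), the sequence is canonical, and it is nonsplit because $M$ is indecomposable. The inclusion $M_2\to M$ is a component of the minimal right almost split map in (1), so it is irreducible. By Lemma \ref{ass-sim}(2) it therefore suffices to show $D{\rm Tr}(M/M_2)\cong M_2$. I would compute this as in the proof of Proposition \ref{ass_p4_1}. We have $M/M_2\cong M_1/T\cong Ae_1/(Au_2+J^2e_1)$, where $\{u_1,u_2\}$ is a top-basis of $Je_1$ with $u_2\in e_2Je_1$ generating the $T$-part; here $u_2\ne 0$ since $Ae_1\to M_1\to M/M_2$ kills exactly the $T$-direction. Lemma \ref{4sa_bproj} and $J^3=0$ should show that $Au_2+J^2e_1=Au_2$ up to a simple summand. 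This gives a minimal presentation whose transpose is $e_2A/u_2A$ (possibly with a further factor coming from $J^2e_1$). Using that $Ae_2$ is uniserial and $I_S$ is uniserial, I expect $D(e_2A/u_2A)$ to be uniserial of length two with top $\ntop M_2$ and socle $T$, hence $\cong M_2$. The bookkeeping of $J^2e_1$, in the case $\ell(Ae_1)=4$, is the point to check carefully.
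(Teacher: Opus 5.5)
\textbf{The gap is in (1)}, at the step where you identify ${\rm Tr}D\,T\cong (Ae_1\oplus Ae_2)/A(v_1,v_2)$ with $M$.

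Your first attempt, proving $Jv_1=0=Jv_2$, is false in general. Since $e_tA$ is biserial, Lemma \ref{4sa_bproj} gives $Je_1=Au\oplus Av_1$ with $Au\cong S$ simple, so $Jv_1=J^2e_1$. This is nonzero as soon as $\ell(Ae_1)=4$. Likewise $Je_2=Av_2$ because $Ae_2$ is uniserial, so $Jv_2=J^2e_2\ne 0$ when $\ell(Ae_2)=3$. Both situations are compatible with the hypotheses.

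The length comparison you fall back on does not close the gap either. Knowing that ${\rm Tr}D\,T$ is indecomposable with top $\ntop M_1\oplus\ntop M_2$ gives no bound on its length. The only available source for the bound $\ell({\rm Tr}D\,T)\le 4$ in your fallback is Theorem \ref{rb_4bsa}. Its proof goes through Lemma \ref{4sa_rasm_l4} and Proposition \ref{4str_ass_smr}, both of which use the very proposition you are proving, so this is circular. As written, you only obtain an epimorphism ${\rm Tr}D\,T\to M$, not an isomorphism.

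\textbf{The missing idea} is the correct use of Lemma \ref{sbsa}(2). Since $Ae_1\not\cong Ae_2$, every $a\in e_xJe_t$ satisfies $av_1=0$ or $av_2=0$. Hence $(av_1,0)$ and $(0,av_2)$ always lie in $J(v_1,v_2)$, and so $J(v_1,v_2)=Jv_1\oplus Jv_2=J^2e_1\oplus J^2e_2$. This gives $\ell(A(v_1,v_2))=1+\ell(J^2e_1)+\ell(J^2e_2)$, and therefore $\ell({\rm Tr}D\,T)=\ell(Ae_1/J^2e_1)+\ell(Ae_2/J^2e_2)-1=4$.

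Now choose the top-basis adapted to $M$, i.e.\ with $0\ne v_1m_1=-v_2m_2\in T$. Such $v_i$ lie outside $J^2$ because $J^2M=0$. They are top-free by Lemma \ref{tb_gen}(1), hence a top-basis of $e_tJ$, since $\ntop(e_tJ)$ has length two. The map $e_i\mapsto m_i$ then induces an isomorphism ${\rm Tr}D\,T\cong M$.

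This is exactly the paper's key ingredient, but the paper applies it from the other end. It takes a minimal presentation of $M$, proves $\Ker(Ae_1\oplus Ae_2\to M)=A(v_1,-v_2)$ via Lemma \ref{sbsa}(2), and reads off ${\rm Tr}M\cong e_tA/e_tJ$. That route never has to identify an abstractly built module with $M$.

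\textbf{Parts (2) and (3).} Your (2) is the paper's argument. Your (3), combining irreducibility of $M_2\to M$ from (1), a direct computation of $D{\rm Tr}(M/M_2)$ and Lemma \ref{ass-sim}(2), is a valid alternative to the paper's route, which gets irreducibility of both maps from (1) and (2). It needs these corrections:
\begin{itemize}
\item The generator of the $T$-part lies in $e_tJe_1$, not $e_2Je_1$.
\item Lemma \ref{4sa_bproj} gives $J^2e_1\subseteq Au_2$, so $M/M_2\cong Ae_1/Au_2$ with no extra factor.
\item The transpose is $e_tA/u_2A$, not $e_2A/u_2A$. It is uniserial of length two with socle $e_2A/e_2J$ by the dual of Lemma \ref{4sa_bproj}. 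This uses that $Ae_1$ is biserial, not that $I_S$ is uniserial.
\end{itemize}
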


\vspace{-2.5pt}

\noindent{\it Proof.} By Lemma \ref{N_mod_property_0}, $M_1\cap M_2=\soc M_2$ and $\soc M=\soc M_1=\rad M_1$. Write $M_i=Am_i,$ where $m_i\in e_iM_i$ with $e_i$ a primitive idempotent in $A,$ for $i=1, 2.$ Suppose that $S=Am_3$ and $\soc M_2=Am_4,$ where $m_j\in e_j M_1$ with $e_j$ a primitive idempotent in $A,$ for $j=3, 4$. Since $\soc M_1=\rad M_1$ and $\soc M_2=\rad M_2$, we see that $m_3=u_1m_1$ for some $u_1\in e_4Je_1\backslash e_4J^2e_1,$ and $m_4=v_1m_1=v_2m_2$ for some $v_j\in e_4Je_j\backslash e_4J^2e_j$ for $j=1, 2$. We claim that the following statements hold;

\vspace{1pt}

{\sc Statement 1.} {\it $Au_1$ is simple, $Je_2 = Av_2,$ and $Je_1=Au_1\oplus Av_1.$}

{\sc Statement 2.} {\it $v_2A$ is simple, $e_3J = u_1A,$ and $e_4J=v_1A\oplus v_2A.$}

Indeed by Lemma \ref{N_mod_property-2}, 
$Ae_1$ and $e_4A$ are biserial, while $Ae_2$ and $e_3A$ are uniserial. Thus, $Je_2=Av_2$ and $e_3J=u_1A.$ Since $e_3A\not\cong e_4A,$ we have $Ae_3\not\cong Ae_4.$ Hence by Propositions \ref{tb_gen}(1) and \ref{loc_cloc_bsm}(1), $\{u_1, v_1\}$ is a top-basis for $Je_1$. Since $e_4A$ is biserial, it follows from Lemma \ref{4sa_bproj} that $Au_1$ is simple, and $Je_1=Au_1\oplus Av_1$. Dually since $Ae_1\not\cong Ae_2$ and $Ae_1$ is biserial, we conclude that $v_2A$ is simple, and $e_4J=v_1A\oplus v_2A$. This establishes our claim. Next, we shall verify one by one that the three sequences stated in the proposition are almost split sequences.

(1) By Lemma \ref{ses_sum_itsec}(1), we have a non-splitting short exact sequence \vspace{-6pt} $$\xymatrixcolsep{28pt}\xymatrix{0\ar[r] & M_1\cap M_2 \ar[r]^-{(q_1, q_2)} & M_1\oplus M_2 \ar[r]^-{(-q_3, q_4)} & M \ar[r] & 0,}\vspace{-5.5pt}$$ where the $q_i$ are inclusion maps. Since $M_1\cap M_2$ is simple, by Lemma \ref{ass-sim}(1), it amounts to show that $D{\rm Tr}M\cong M_1\cap M_2$. \vspace{-.5pt} Observing that $\{m_1, m_2\}$ is a top-basis for $M$, we obtain from Proposition \ref{tb-projc-ienv} a projective cover $f: Ae_1\oplus Ae_2\to M$ such that $f(e_1, 0)=m_1$ and $f(0, e_2)=m_2.$ Since $v_1m_1=v_2m_2=m_4$, we see that $(v_1, -v_2)\in e_4 (\Ker f).$ 
We shall show that $\Ker f=A(v_1, -v_2).$ 

Consider $u\in \Ker f.$ We may assume that $u\in e_5Je_1\oplus e_5Je_2,$ for some primitive idempotent $e_5$ in $A$. By Statement 1, we have $e_5Je_1=(e_5Ae_3)u_1+(e_5Ae_4)v_1$ and $e_5Je_2=(e_5Ae_4)v_2.$ Thus, $u=(a u_1 + b_1 v_1, b_2v_2),$ for some $a\in e_5Ae_3$ and $b_1, b_2\in e_5Ae_4.$ Then, $0=f(u)=(au_1+b_1v_1) m_1 + b_2v_2 m_2 = am_3+(b_1+b_2)m_4.$
Since $\{m_3, m_4\}$ is a soc-basis for $M,$ it follows from Lemma \ref{top-element}(2) that $a\in e_5Je_3$ and $b_1+b_2\in e_5Je_4.$ Since $Au_1$ is simple, $au_1=0$. Thus, $u=(b_1v_1, b_2v_2)$ and $(b_1+b_2)m_4=0.$ Set $v:=b_1+b_2\in e_5Je_4.$ Since $Ae_1\not\cong Ae_2$ as previously seen, by Lemma \ref{sbsa}(2), $vv_1=0$ or $vv_2=0$. Thus,
$u=((v-b_2)v_1, b_2v_2)=(-b_2)(v_1, -v_2)$ or $u=(b_1v_1, (v-b_1)v_2)=b_1(v_1, -v_2)$. This shows that $\Ker f=A(v_1, -v_2).$ 
Now, since $(v_1, -v_2)\notin e_4J^2e_1\oplus e_4J^2e_2,$
we have a minimal projective presentation \vspace{-5pt}
$$\xymatrixcolsep{40pt}\xymatrix{Ae_4\ar[r]^-{\hspace{4pt}(R_{v_1}, R_{-v_2})^T} & Ae_1\oplus Ae_2 \ar[r]^-f & M\ar[r] & 0,}\vspace{-3.5pt}$$ where $R_{u}$ is the right multiplication by $u$, and a minimal projective presentation \vspace{-5pt}
$$\xymatrixcolsep{40pt}\xymatrix{e_1A\oplus e_2A\ar[r]^-{(L_{v_1}, \, L_{-v_2})} & e_4A \ar[r]^-g & {\rm Tr} M\ar[r] & 0,}\vspace{-3pt}$$ where $L_{u}$ is the left multiplication by $u$. Then ${\rm Im}(L_{v_1}, \, L_{-v_2})=v_1A+(-v_2)A=e_4J$ by Statement 2. So ${\rm Tr}M \cong e_4A/e_4J,$ and  
$D{\rm Tr}M\cong Ae_4/Je_4\cong \soc M_2=M_1\cap M_2.$ 

\vp

(2) Since $\soc M_1= S \oplus \soc M_2,$ we have $S\cap M_2= (S\cap M_1)\cap M_2=S\cap \soc M_2=0.$ In view of Lemma \ref{ses_sum_itsec}(2), we obtain a short exact sequence \vspace{-5.5pt}
$$\xymatrixcolsep{30pt}\xymatrix{0\ar[r] & M \ar[r]^-{(p_1, \,p_2)^T} & (M / S) \oplus (M/M_2)\ar[r]^-{(-p_3, \,p_4)} & M/ (S + M_2) \ar[r] & 0} \vspace{-4.5pt}$$ in $\mmod A,$ where the $p_i$ are canonical projections. Note that $M=M_1+(S+M_2)$ and $M_1\cap (S+M_2)=S+M_1\cap M_2=S+\soc M_2=\soc M_1=\rad M_1.$ This yields $M/(S+M_2)\cong \ntop M_1.$ By Lemma \ref{ass-sim}(1), it suffices to show that ${\rm Tr}D M\cong \ntop M_1.$ Indeed, since $S\cap M_2=0,$ by Lemma \ref{dual_perp}, $DM=S^\perp+M_2^\perp,$ where $S^\perp \cong D(M/S)$ and $M_2^\perp\cong D(M/M_2).$ By Lemma \ref{N_mod_property}, $S^\perp$ is astride biserial of length three and $M_2^\perp$ is uniserial of length two. By definition, $DM$ is N-shaped. Observe that $DM_1$ is co-astride of length three. By Proposition \ref{CL_3}(3), $\rad(DM_1)= \soc(DM_1).$ Now, 
$S^\perp \cap M_2^\perp=(S+M_2)^\perp=(\soc M_1)^\perp=\rad(DM_1)= \soc(DM_1)\cong D(\ntop M_1).$ As has been shown, $D{\rm Tr}(DM) \cong S^\perp \cap M_2^\perp\cong D(\ntop M_1);$ consequently  ${\rm Tr}D M \cong \ntop M_1.$ 

(3) Consider the canonical short exact sequence 
\vspace{-4.5pt}
$$\xymatrix{0\ar[r] & M_2 \ar[r]^{q_2} & M \ar[r]^{p_2} & M/M_2 \ar[r] & 0.}\vspace{-4pt}$$ In view of the almost split sequences (1) and (2), $q_2, p_2$ are irreducible. It is well known that this is an almost split sequence; see \cite[(V.5.9)]{ARS}. The proof of the proposition is completed.

\smallskip

Finally, we shall find some conditions for the existence of N-modules over quadri-string algebras and compute the almost split sequences around them. 

\begin{Prop}\label{4str_ass_smr} 

Let $A$ be a quadri-string algebra with radical cubed zero. Consider a biserial projective module $P$ in $\mmod A$ with $\rad P=S_1\oplus M,$ where $S_1$ is simple and $M$ is uniserial of length one or two such that $I_{\ntop M}$ is biserial. Then we have four almost split sequences as follows$\hp:$ 

\begin{enumerate}[$(1)$]

\vspace{-1.5pt}

\item $\xymatrixcolsep{20pt}\xymatrix{0\ar[r] & N\ar[r] & \soc^2\hspace{-.5pt}(I_{\ntop M})\oplus P/M \ar[r] & \ntop P \ar[r] & 0\hp ;}$

\vspace{-2.5pt}

\item $\xymatrixcolsep{20pt}\xymatrix{0\ar[r] & \ntop M \ar[r] & \ntop^2\hspace{-1pt}P \oplus L\ar[r] & N\ar[r] & 0\hp ;}$

\vspace{-2.5pt}

\item $\xymatrixcolsep{20pt}\xymatrix{0\ar[r] & L \ar[r] & N\ar[r] & P/M\ar[r] & 0\hp ;}$

\vspace{-3.5pt}

\item $\xymatrixcolsep{20pt}\xymatrix{0\ar[r] & \ntop^2\hspace{-1pt}P \ar[r] & N\oplus P/(S_1\oplus \rad M) \ar[r] & \soc^2\hspace{-.5pt}(I_{\ntop M}) \ar[r] & 0,}$

\end{enumerate}

\vspace{-2pt}

\noindent where $N$ is an N-shaped module and $L$ is a uniserial module of length two.

\end{Prop}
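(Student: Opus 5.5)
The plan is to build $N$ and $L$ explicitly from $P$ and $I_{\ntop M}$. Then three of the four sequences come out of Proposition \ref{ass_Nmod}, and only sequence (4) needs a separate argument.

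\textbf{Step 1: setup.} Write $T=\ntop M$ and $S_0=\ntop P$. Since $\rad^2 P\subseteq \rad M$, the module $\ntop^2\hspace{-1pt}P$ is astride biserial of length three with $\rad(\ntop^2\hspace{-1pt}P)=S_1\oplus T$ (Lemma \ref{bsm_top2}(1)). Since $I_T$ is biserial, Lemma \ref{bsm_top2}(2) shows that $X:=\soc^2\hspace{-.5pt}(I_T)$ is co-astride biserial of length three with $\soc X\cong T$. The inclusion $T\subseteq \ntop^2\hspace{-1pt}P$ extends to a map $\ntop^2\hspace{-1pt}P\to I_T$ whose image has Loewy length two, hence lies in $X$. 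That image is $\ntop^2\hspace{-1pt}P/S_1\cong P/(S_1\oplus\rad M)$, which is uniserial with top $S_0$ and socle $T$. So $X/\soc X\cong S_0\oplus U$ for some simple $U$, and $X$ contains a uniserial submodule $L$ of length two with $\ntop L\cong U$ and $\soc L=\soc X\cong T$.

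\textbf{Step 2: the N-shaped module.} Define $N$ as the pushout of $T\to \ntop^2\hspace{-1pt}P$ and $T\to L$, i.e.\ the cokernel of the diagonal embedding $T\to \ntop^2\hspace{-1pt}P\oplus L$. Then $N=M_1+M_2$ with $M_1\cong\ntop^2\hspace{-1pt}P$, $M_2\cong L$ and $M_1\cap M_2=T$. It remains to show $N$ is indecomposable.
\begin{itemize}
\item Any decomposition would have to be $T'\oplus L''$ with $T'$ simple, by the length and top count in Lemma \ref{N_mod_property_0}.
\item Arguing as in the proof of Lemma \ref{N_mod_property}(3), such a decomposition would force $L\to X$ to factor through a split map.
\item This contradicts the fact that $X$ is colocal.
\end{itemize}
Once $N$ is N-shaped, Lemma \ref{N_mod_property} gives the identifications:
\begin{itemize}
\item $\soc N=S_1\oplus T$ with $S=S_1$;
\item $N/M_2\cong \ntop^2\hspace{-1pt}P/T\cong P/M$ (uniserial, top $S_0$, socle $S_1$);
\item $N/S_1$ is co-astride biserial of length three with socle $T$, so $N/S_1\cong \soc^2\hspace{-.5pt}(I_T)$ by Proposition \ref{bsa_ml3}(2).
\end{itemize}

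\textbf{Step 3: sequences (1)--(3).} Substituting these identifications into Proposition \ref{ass_Nmod} gives the three sequences directly:
\begin{itemize}
\item \ref{ass_Nmod}(1) becomes sequence (2): $0\to T\to \ntop^2\hspace{-1pt}P\oplus L\to N\to 0$.
\item \ref{ass_Nmod}(2) becomes sequence (1), since $\ntop M_1=S_0=\ntop P$.
\item \ref{ass_Nmod}(3) becomes sequence (3).
\end{itemize}

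\textbf{Step 4: sequence (4), the main obstacle.} Apply Lemma \ref{ses_mix} inside $N$ to the chain $0\subseteq S_1\subseteq M_1$. This gives a nonsplit short exact sequence
\[0\to M_1\to N\oplus M_1/S_1\to N/S_1\to 0,\]
which is exactly (4) after the identifications in Step 2. The inclusion $M_1\to N$ is irreducible, by the almost split sequence (2). Lemma \ref{ass-sim}(2) then reduces the claim to $\mathrm{Tr}D(\ntop^2\hspace{-1pt}P)\cong \soc^2\hspace{-.5pt}(I_T)$.

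I would try to avoid a direct computation of $\mathrm{Tr}D$, as follows.
\begin{enumerate}
\item As in Propositions \ref{ass_p_astride} and \ref{ass_p4_1}, use \cite[(1.3)]{LiY} together with the known almost split sequences ending at $\ntop^2\hspace{-1pt}P$: sequence (2) here, and the one from Proposition \ref{ass_p_astride} or Theorem \ref{ass_st_mod} for $P$. These show that $(q,p)^T:\ntop^2\hspace{-1pt}P\to N\oplus P/(S_1\oplus\rad M)$ is irreducible.
\item Then check that no further irreducible map leaves $\ntop^2\hspace{-1pt}P$. The module is not colocal, so it is not a summand of the radical of an indecomposable projective.
\item A length count then forces minimality.
\end{enumerate}
If this fails, the fallback is a direct minimal-presentation computation of $\mathrm{Tr}D(\ntop^2\hspace{-1pt}P)$ in the style of the proof of Proposition \ref{ass_Nmod}(1), using $J^3=0$ and Lemma \ref{sbsa}.

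Throughout, the cases $\ell(M)=1$ and $\ell(M)=2$ should be handled uniformly, since only $\ntop^2\hspace{-1pt}P$ and $P/(S_1\oplus\rad M)$ enter the argument.
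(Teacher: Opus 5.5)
\textbf{Steps 1--3: a different route, with a gap in the indecomposability of $N$.} The paper never builds $N$ by hand. Writing $P=Ae$, it proves $(u_1,u_2)J=e_1J^2\oplus e_2J^2$ using Lemma \ref{sbsa} and reads off ${\rm Tr}(\ntop P)$ from a minimal presentation. Lemma \ref{ass-sim}(1) then gives an almost split sequence in $\mmod A^\circ$ ending at ${\rm Tr}(\ntop P)$, and the paper sets $N=D{\rm Tr}(\ntop P)$, which is N-shaped by Proposition \ref{N_mod_ses}(1). A second such computation for ${\rm Tr}D(\ntop M)$ gives (2); Proposition \ref{ass_Nmod}(2) serves only to identify that end term with $N$, and \ref{ass_Nmod}(3) gives (3).

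You glue $\ntop^2P$ and $L$ along $T$ and let Proposition \ref{ass_Nmod} deliver (1)--(3) at once, so no new ${\rm Tr}$ computation is needed. The price is that you must prove $N$ indecomposable, which is automatic for $D{\rm Tr}(\ntop P)$. Your sketch of this does not work as written: length and tops alone also allow $N\cong V_1\oplus V_2$ with $V_1,V_2$ uniserial of length two (compare the Example after Definition \ref{N_mod_def}).

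The missing input is $S_1\not\cong T$. It holds because $\ntop(\rad P)\cong S_1\oplus T$ and $I_T$ is biserial, so $I_{S_1}$ is uniserial by Definition \ref{4str_alg_def}(2). You also need this fact in Step 1, to know that $\Ker g$ is the submodule $S_1$ and not just some simple complement of $T$. Then argue as follows.
\begin{enumerate}
\item Glue so that $g$ and the inclusion $L\subseteq X$ agree on $T$. They induce an epimorphism $N\to X$ with kernel $S_1$, so $N/S_1\cong X$.
\item Suppose $N=A\oplus B$ nontrivially. Since $S_1\not\cong T$, the submodule $S_1$ is the $S_1$-isotypic part of $\rad N=\rad A\oplus\rad B$, so it lies in, say, $\rad A$.
\item Then $N/S_1\cong A/S_1\oplus B$ with both summands nonzero, contradicting that $X$ is colocal.
\end{enumerate}

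\textbf{Step 4: the end term is never identified.} Lemma \ref{ass-sim}(2) needs the whole map $(q_0,p_0)^T$ to be irreducible, not just $q_0$. Your point (1) supplies this as the paper does: $p_0$ is irreducible by Theorem \ref{ass_st_mod} if $\ell(M)=1$ and by Proposition \ref{ass_p_astride} if $\ell(M)=2$, and $N\not\cong M_1/S_1$. Note that those sequences contain $\ntop^2P$ in their middle terms; they do not end at it.

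What is genuinely missing is the end term. Your point (2) only excludes projective targets, and no length count is possible without knowing $\tau^{-1}(\ntop^2P)$. The paper gets it from sequences you already have.
\begin{enumerate}
\item Since $\ntop^2P$ is not colocal, it is not injective. So the irreducible map $q_0\colon\ntop^2P\to N$ yields an irreducible map $N\to\tau^{-1}(\ntop^2P)$.
\item By (1), $\tau^{-1}(\ntop^2P)$ is therefore $\soc^2(I_T)$ or $P/M$.
\item By (3), $P/M\cong\tau^{-1}L$, and $L\not\cong\ntop^2P$ since their lengths are $2$ and $3$. Hence ${\rm Tr}D(\ntop^2P)\cong\soc^2(I_T)$.
\end{enumerate}
With this in hand, your Lemma \ref{ses_mix} sequence is almost split by Lemma \ref{ass-sim}(2), and the rest of Step 4 goes through.
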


\vspace{-1.5pt}

\noindent{\it Proof.} We may assume that $P= Ae$ with $S_1\cong Ae_1/Je_1$ and $\ntop M\cong Ae_2/Je_2$, where $e, e_1, e_2$ are primitive idempotents in $A$. So $\ntop(Je)\cong Ae_1/Je_1\oplus Ae_2/Je_2$. Since $D(e_2A)$ is biserial by hypothesis, we see from Definition \ref{4str_alg_def}(2) that $D(e_1A)$ is uniserial. Thus, $e_2A$ is biserial and $e_1A$ is uniserial. In particular, $e_1A\not\cong e_2A$. Write $S_1=Au_1$ and $M=Au_2$, where $u_i\in e_iJe\backslash e_iJ^2e$ for $i=1, 2$. Then, $\{u_1, u_2\}$ is a top-basis for $Je.$ Setting $S:=Ae/Je$, we obtain a minimal projective presentation \vspace{-10pt}
$$\xymatrixcolsep{35pt}\xymatrix{Ae_1\oplus Ae_2 \ar[r]^-{(R_{u_1}, R_{u_2})} & Ae \ar[r] & S\ar[r] & 0,}\vspace{-2pt}$$ where $R_{u_i}$ is the right multiplication by $u_i$, and a minimal projective presentation \vspace{-4pt}
$$\xymatrixcolsep{35pt}\xymatrix{ eA \ar[r]^-{(L_{u_1}\hspace{-1.5pt}, \,L_{u_2})^T\hspace{-5pt}} & e_1A\oplus e_2A \ar[r] & {\rm Tr}S\ar[r] & 0,}\vspace{-3.5pt}$$ where $L_{u_i}$ is the left multiplication by $u_i$. Therefore, ${\rm Tr}S\cong (e_1A\oplus e_2A)/(u_1, u_2)A$ and $\ell(\ntop({\rm Tr} S))=\ell(\ntop(e_1A\oplus e_2A))=2.$ We claim that $(u_1, u_2)J=e_1J^2\oplus e_2J^2,$ or equivalently,  $(e_1J^2, 0)\subseteq (u_1, u_2)J$ and $(0, e_2J^2)\subseteq (u_1, u_2)J.$ 

Indeed we may assume that $e_1J^2\ne 0$ and $e_2J^2\ne 0$. Since $e_2A$ is biserial, $\{u_2\}$ extends to  a top-basis $\{u_2, v_2\}$ for $e_2J,$ where $v_2\in e_2Je_3$ with $e_3$ a primitive idempotent in $A$. Since $Ae$ is biserial, by the dual of Lemma \ref{4sa_bproj}, $v_2A$ is simple and $e_2J=u_2A\oplus v_2A.$ So, $e_2J^2=u_2J$.
Since $0< \ell(u_2J) < \ell(u_2A) < \ell(e_2J)\le 3$, we see that 
$u_2J$ is simple. Then $u_2J=u_2uA$, for some $u\in eJe_4,$ where $e_4$ is a primitive idempotent in $A$. Since $e_1A\not\cong e_2A$, by Lemma \ref{sbsa}(2), $u_1u=0$. This yields $(0, e_2J^2)=(0, u_2u)A=(u_1, u_2)uA\subseteq (u_1, u_2)J.$  
Next, since $e_1A$ is uniserial, so are $e_1J$ and $e_1J^2$. Thus, $e_1J=u_1A$ and $e_1J^2=u_1J$. Then $e_1J^2=u_1v A$, where $v\in eJe_5$ with $e_5$ a primitive idempotent in $A$. By Lemma \ref{sbsa}(1), $u_2v=0$. 
Hence, $(e_1J^2, 0)=(u_1v, 0)A=(u_1, u_2)vA\subseteq (u_1, u_2)J.$ This establishes our claim. 

(1) Since $e_1J^2\oplus e_2J^2=(u_1, u_2)J\subset (u_1, u_2)A \subset e_1A\oplus e_2A$, we obtain a non-splitting short exact sequence \vspace{-7pt}
$$\hspace{-13pt}(\dagger) \hspace{5pt} \xymatrixcolsep{22pt}\xymatrix{0\ar[r] & (u_1, u_2)A/(u_1, u_2)J \ar[r]^-{(q_1, q_2)^T\hspace{-5pt}} & e_1A/e_1J^2 \oplus  e_2A/e_2J^2 \ar[r]^-{(q_3, q_4)} & {\rm Tr}S \ar[r] & 0,}\vspace{-3pt}$$ where $(u_1, u_2)A/(u_1, u_2)J\cong eA/eJ\cong DS;$ see (\ref{loc_gen}). Since ${\rm Tr} S \cong {\rm Tr} D(DS),$ by Proposition \ref{ass-sim}(1), $(\dagger)$ is an almost split sequence. Since $e_1A$ is uniserial, $e_1A/e_1J^2$ is uniserial of length two. And since $e_2A$ is biserial, $e_2A/e_2J^2$ is astride biserial of length three; see (\ref{bsm_top2}). By Proposition \ref{N_mod_ses}(1), ${\rm Tr}S$ is N-shaped, and so is $D{\rm Tr} S=:N$. Applying the duality $D$ to $(\dagger)$ yields an almost split sequence \vspace{-3pt}
$$\hspace{-68pt} (*) \hspace{6pt} \xymatrixcolsep{21pt}\xymatrix{0\ar[r] & N\ar[r] & D(e_1A/e_1J^2) \oplus D(e_2A/e_2J^2) \ar[r] & S \ar[r] & 0.} \vspace{-3pt}$$

Now, by Proposition \ref{duality_radi_soci}(3), $D(e_2A/e_2J^2)\cong \soc^2(D(e_2A))\cong \soc^2(I_{{\rm top}M});$ and $D(e_1A/e_1J^2)\cong \soc^2(D(e_1A))=\soc^2 (I_{S_1}),$ which is uniserial of length two. 
On the other hand, since $\rad(P/M)=(S_1\oplus M)/M\cong S_1,$ we see that $P/M$ is uniserial of length two with $\soc(P/M)\cong S_1.$ By Proposition \ref{uni_pi}(2), $P/M\cong D(e_1A/e_1J^2).$ So,  $(*)$ is an almost split sequence as stated in Statement (1) of the proposition. 

(2) As seen above, $e_2J=u_2A\oplus v_2A,$ where $v_2A\cong e_3A/e_3J.$ 
Since $\ell(e_2J)\le 3$, by Lemma \ref{loc_gen}(1), $u_2A$ is uniserial of length at most two with $\ntop(u_2A)\cong eA/eJ$. Thus, $\ntop(e_2J)\cong eA/eJ \oplus e_3A/e_3J.$ Since $Ae$ is biserial, by Definition \ref{4str_alg_def}(2), $Ae_3$ is uniserial. Thus $Ae/J^2e$ is astride biserial of length three, see (\ref{bsm_top2}), and $Ae_3/J^2e_3$ is uniserial of length two. Note that $\ntop(e_2A)\cong D(\ntop M)$. Similar to the almost split sequence $(\dagger)$, we obtain an almost split sequence \vspace{-6pt} 
$$(\dagger\dagger) \hspace{5pt} \xymatrixcolsep{21pt}\xymatrix{0\ar[r] & A(v_2, u_2)/\! J(v_2, u_2) \ar[r]^-{(f_3, f_4)^T\hspace{-4pt}} & Ae/\! J^2e \! \oplus \! Ae_3/J^2e_3 \ar[r]^-{(f_1, f_2)} & {\rm Tr}D(\ntop M) \ar[r] & 0,}\vspace{-5pt}$$ where $Ae/J^2e=\ntop^2\hspace{-1pt}P$ and $A(v_2, u_2)/J(v_2, u_2)\cong Ae_2/Je_2=\ntop M;$ see (\ref{loc_gen}). 

Observing that $f_1, f_2$ are monomorphisms, we see that $N_1\!:\hp ={\rm Im}(f_1)\cong Ae/J^2e$ and $L\!:\hp ={\rm Im}(f_2)\cong Ae_3/J^2e_3$ such that ${\rm Tr}D(\ntop M)=N_1+L$ and $N_1\cap L\cong \ntop M.$ Applying Proposition \ref{ass_Nmod}(2) to the N-shaped module ${\rm Tr}D(\ntop M)$, we see that \vspace{-2.2pt}
$${\rm Tr}D(\ntop M)\cong D{\rm Tr}(\ntop N_1)\cong D{\rm Tr}(\ntop(Ae/J^2e))= D{\rm Tr}S=N.\vspace{-3.5pt}$$ Thus, $(\dagger\dagger)$ is an almost split sequence as stated in Statement (2) of the proposition.

(3) As seen above, we may assume that $N=N_1+L$, where $N_1$ is astride biserial of length three and $L$ is uniserial of length two. \vspace{-1pt} By Proposition \ref{ass_Nmod}(3), the canonical short exact sequence $\xymatrixcolsep{20pt}\xymatrix{0\ar[r] & L\ar[r] & N\ar[r] & N/ L\ar[r] & 0} \vspace{-2pt}$ is almost split. Since $\ell(N/L)=2$ and $\ell(\soc^2\hspace{-.5pt}(I_{\ntop M}))=3$, by viewing the almost split sequence (1) stated in the proposition, we infer that $N/L\cong P/M$. Thus, this canonical sequence is an almost split sequence as stated in Statement (3) of the proposition.

(4) Since $\ntop^2\hspace{-1.2pt}P$ is not colocal, it is not injective. Viewing the almost split sequence (2) stated in the proposition, we have an irreducible map $q: \ntop^2\hspace{-1.2pt} P\to N.$ Since $\ell(\ntop^2P)=3$ and $\ell(L)=2,$ ${\rm Tr}D(\ntop^2P)\not\cong {\rm Tr} L\cong P/M.$ In view of the almost split sequence (1) stated in the proposition, ${\rm Tr}D(\ntop^2P)\cong \soc^2(I_{\ntop M}).$
We claim that there exists an irrducible map $p: \ntop^2\hspace{-1.2pt} P \to P/(S_1\oplus \rad M).$ In case $\ell(M)=1$, we have $\ntop^2\hspace{-1.2pt}P=P$ and $P/(S_1\oplus \rad M)=P/\soc S_1$. Since $I_{S_1}$ is uniserial as seen above, the claim follows from Proposition \ref{ass_st_mod}. Suppose that $\ell(M)=2.$ Then $\soc M = \rad M = \rad^2\hspace{-1.2pt}P.$ Thus $\ntop^2 \! = \! P/\soc M$ and $S_1\oplus \rad M \! = \! \soc(\rad P) \! = \! \soc P.$ 
Since  
$I_{S_1}$ and $I_{\soc M}$ are uniserial by Definition \ref{4str_alg_def}(3), the claim follows from Proposition \ref{ass_p_astride}. In any case, our claim holds. Now since $N\not\cong P/(S_1\oplus \rad M)$, we obtain an irreducible map $(q, p)^T: \ntop^2\hspace{-1.2pt} P \to N\oplus P/(S_1\oplus \rad M);$ see \cite{Bau}. Therefore, there exists an almost split sequence \vspace{-5pt}
$$\xymatrixcolsep{26pt}\xymatrix{0\ar[r] & \ntop^2\hspace{-1.2pt}P \ar[r]^-{(q, p, f)^T\hspace{-6pt}} & N\oplus P/(S_1\oplus \rad M) \oplus V \ar[r]^-{(p_1, q_1, g)} & \soc^2(I_{\ntop M}) \ar[r] & 0.}\vspace{-4pt}$$ Noting that $\ell(P/(S_1\oplus \rad M))=2$ and $\ell(N)=4$, we conclude that $V=0$. Thus, the above sequence is an almost split sequence as stated in Statement (4) of the proposition. The proof of the proposition is completed.

\smallskip

Dually we obtain the following result.

\vspace{-1pt}

\begin{Prop}\label{ass_14_2} 

Let $A$ be a quadri-string algebra with radical cubed zero. Consider a biserial injective module $I$ in $\mmod A$ with $I/\soc I=M_1/\soc I\oplus M_2/\soc I,$ where $M_1/\soc I$ is simple and $M_2/\soc I$ is uniserial of length one or two such that $P_{\soc (M_2/\soc I)}$ is biserial. Then we have four almost split sequences as follows$\hp:$ 

\begin{enumerate}[$(1)$]

\vspace{-2.5pt}

\item $\xymatrixcolsep{20pt}\xymatrix{0\ar[r] & \soc I \ar[r] & 
\ntop^2\hspace{-.5pt}(P_{\soc (M_2/\soc I)})\oplus M_1 \ar[r] & N \ar[r] & 0\hp ;}$ 

\vspace{-2.5pt}

\item $\xymatrixcolsep{20pt}\xymatrix{0\ar[r] & N  \ar[r] & \soc^2\hspace{-1pt}I \oplus L \ar[r] & \soc(M_2/\soc I) \ar[r] & 0\hp ;}$

\vspace{-2.5pt}

\item $\xymatrixcolsep{20pt}\xymatrix{0\ar[r] & M_1  \ar[r] & N \ar[r] & L \ar[r] & 0\hp ;}$

\vspace{-2.5pt}

\item $\xymatrixcolsep{20pt}\xymatrix{0\ar[r] & \ntop^2(P_{\soc(M_2/\soc I)}) \ar[r] & N\oplus \soc^2\hspace{-1pt} M_2 \ar[r] & \soc^2\hspace{-1pt}I \ar[r] & 0\hp ;}$

\end{enumerate}

\vspace{-4pt}

\noindent where $N$ is an N-shaped module and $L$ is a uniserial module of length two.

\end{Prop}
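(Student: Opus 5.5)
The plan is to dualize Proposition \ref{4str_ass_smr}, just as Proposition \ref{ass_i4_1} was obtained from Proposition \ref{ass_p4_1}.

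First we pass to $A^\circ$, which is again a quadri-string algebra with radical cubed zero, since both the definition and the radical condition are left-right symmetric. The module $DI$ is then a biserial projective module in $\mmod A^\circ$.

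Next we compute its radical. We have $I=M_1+M_2$ and $M_1\cap M_2=\soc I$, so Proposition \ref{duality_radi_soci}(1) and Lemma \ref{dual_perp}(2) give
\[\rad(DI)=(\soc I)^\perp=M_1^\perp+M_2^\perp.\]
Since $(M_1+M_2)^\perp=0$, this sum is direct. By Lemma \ref{dual_perp}(1), $M_2^\perp\cong D(I/M_2)\cong D(M_1/\soc I)$, which is simple; call it $S_1'$. Likewise $M_1^\perp\cong D(M_2/\soc I)$, which is uniserial of length one or two; call it $M'$.

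We then check the remaining hypothesis. Since $\ntop M'\cong D(\soc(M_2/\soc I))$, we get $I_{\ntop M'}\cong D(P_{\soc(M_2/\soc I)})$, which is biserial by assumption. So Proposition \ref{4str_ass_smr} applies to $P'=DI$ and yields four almost split sequences in $\mmod A^\circ$, involving an N-shaped module $N'$ and a uniserial module $L'$ of length two.

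Applying $D$ to these sequences gives almost split sequences in $\mmod A$ with the ends reversed. We set $N=DN'$, which is N-shaped by Remark \ref{N_mod_dual}, and $L=DL'$, which is uniserial of length two. The remaining terms are identified with Proposition \ref{duality_radi_soci} and Lemma \ref{dual_perp}:
\begin{itemize}
\item $D(\ntop P')\cong \soc I$ and $D(\ntop^2 P')\cong \soc^2 I$;
\item $D(P'/M')=D(DI/M_1^\perp)\cong M_1$;
\item $D(\ntop M')\cong \soc(M_2/\soc I)$;
\item $D(\soc^2(I_{\ntop M'}))\cong \ntop^2(P_{\soc(M_2/\soc I)})$, by Proposition \ref{duality_radi_soci}(2).
\end{itemize}
With these, sequences (1), (2) and (3) of Proposition \ref{4str_ass_smr} dualize directly to (1), (2) and (3) here.

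The main obstacle is the middle term $P'/(S_1'\oplus\rad M')$ in sequence (4). We must show $D(P'/(S_1'\oplus \rad M'))\cong \soc^2 M_2$, which amounts to showing $S_1'\oplus\rad M'=(\soc^2 M_2)^\perp$.
\begin{itemize}
\item We have $\rad M'=\rad(M_1^\perp)$. By Lemma \ref{rad_perp} this equals $L^\perp$, where $L/M_1=\soc(I/M_1)$.
\item Since $I/M_1\cong M_2/\soc I$ is uniserial, $L=M_1+\soc^2 M_2$.
\item Using Lemma \ref{dual_perp}(2), $S_1'+\rad M'=M_2^\perp+(M_1+\soc^2M_2)^\perp$. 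Because $M_1\cap M_2=\soc I\subseteq\soc^2M_2$, the modular law gives $M_2\cap(M_1+\soc^2M_2)=\soc^2 M_2$. Hence the sum is $(M_2\cap(M_1+\soc^2 M_2))^\perp=(\soc^2 M_2)^\perp$.
\end{itemize}
It follows that $D(P'/(S_1'\oplus\rad M'))\cong \soc^2M_2$.

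When $\ell(M_2/\soc I)=1$, the same argument applies with $\rad M'=0$ and $\soc^2M_2=M_2$. With this identification, dualizing sequence (4) of Proposition \ref{4str_ass_smr} gives sequence (4) of the statement, and the proof is complete.
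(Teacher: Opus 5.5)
Your proposal is correct and follows the paper's proof essentially step for step: you pass to the biserial projective $DI$ in $\mmod A^\circ$ with $\rad(DI)=M_2^\perp\oplus M_1^\perp$, apply Proposition \ref{4str_ass_smr}, dualize, and identify the middle term of (4) by the same modular-law computation $M_2^\perp+\rad(M_1^\perp)=\bigl(M_2\cap(M_1+\soc^2\hspace{-1pt}M_2)\bigr)^\perp=(\soc^2\hspace{-1pt}M_2)^\perp$. The only blemish is cosmetic: you reuse the letter $L$ for the submodule with $L/M_1=\soc(I/M_1)$, which clashes with the uniserial module $L$ in the statement.
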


\vspace{-1.5pt}

\noindent{\it Proof.} Observe that $I=M_1+M_2$ and $\soc I=M_1\cap M_2$, where $M_1$ and $M_2$ are uniserial with $\ell(M_2)=2$ and $2\le \ell(M_2)\le 3.$ In view of Lemma \ref{ses_sum_itsec}(1), we have
$\ell(I)=\ell(M_1)+\ell(M_2)-\ell(M_1\cap M_2)=\ell(M_2)+1.$ 

Now, consider the biserial projective module $DI$ in $\mmod A^\circ.$ By Proposition \ref{duality_radi_soci}(1) and Lemma \ref{dual_perp}, $\rad(DI)=(\soc I)^\perp=(M_1\cap M_2)^\perp=M_2^\perp \oplus M_1^\perp,$ where $M_2^\perp\cong D(I/M_2),$ which is simple; $M_1^\perp\cong D(I/M_1),$ which is uniserial of length one or two. Put $S:=\soc(M_2/\soc I)$. Since $I/M_1\cong M_2/\soc I$, we see from Proposition \ref{duality_radi_soci}(2) that $\ntop(M_1^\perp) \cong \ntop(D(I/M_1))\cong D(\soc(I/M_1))\cong DS.$ Thus \vspace{1pt} $I_{\ntop(M_1^\perp)}\cong I_{DS}= D(P_S),$ which is biserial by the hypothesis. THus we obtain from Proposition \ref{4str_ass_smr} the following almost split sequences\hp:

\vspace{2pt}

$\hspace{-10pt} (1') \hspace{5pt}  \xymatrixcolsep{20pt}\xymatrix{0\ar[r] & N'\ar[r] & 
\soc^2(I_{DS})\oplus DI/M_1^\perp \ar[r] & \ntop(DI) \ar[r] & 0;}$ 

\vspace{-2.5pt}

$\hspace{-10pt}(2') \hspace{5pt} \xymatrixcolsep{20pt} \xymatrix{0\ar[r] & DS \ar[r] & \ntop^2\hspace{-.5pt}(DI) \oplus L'\ar[r] & N'\ar[r] & 0;}$

\vspace{-2.5pt}

$\hspace{-10pt}(3') \hspace{5pt} \xymatrixcolsep{18pt}\xymatrix{0\ar[r] & L' \ar[r] & N' \ar[r] & (DI)/M_1^\perp \ar[r] & 0;}$

\vspace{-2.5pt}

$\hspace{-10pt}(4') \hspace{5pt} \xymatrixcolsep{18pt}\xymatrix{0\ar[r] & \ntop^2(DI)  \ar[r] & N' \oplus (DI)/(M_2^\perp+\rad M_1^\perp)\ar[r] & \soc^2(I_{DS})\ar[r] & 0,}$

\noindent where $N'$ is an N-shaped module, and $L'$ is uniserial of length two. 
Write $L=DL'$, which is uniserial of length two, and $N=DN',$ which is N-shaped; see (\ref{N_mod_dual}). 

Now by Proposition \ref{duality_radi_soci}, $\soc^2(I_{DS})=\soc^2(D(P_S))\cong D(\ntop^2(P_{\soc(M_2/\soc I)}))$ and  $\ntop^2(DI) \cong D(\soc^2I).$ Moreover by Lemma \ref{dual_perp}(1), $DI/M_1^\perp\cong DM_1.$ Thus, the duals of the sequences (1'), (2') and (3') are almost split sequences as stated in Statements (1), (2) and (3) of the proposition. 

Finally since $M_1\cap M_2=\soc M_2,$ we have an isomorphism $\varphi: I/M_1\to M_2/\soc M_2,$ sending $(m_1+m_2)+M_1$ to $m_2+\soc M_2.$ And since $\soc(M_2/\soc M_2)=\soc^2M_2/\soc M_2,$ we see that \vp $\soc(I/M_1)=(\soc^2\hspace{-1.2pt}M_2+M_1)/M_1.$ 
Thus, $\rad(M_1^\perp)=(\soc^2\hspace{-1.2pt}M_2+M_1)^\perp$ by Lemma \ref{rad_perp},. Since $M_2 \cap (\soc^2\hspace{-1.2pt}M_2+M_1)=\soc^2\hspace{-1pt}M_2+M_1\cap M_2=\soc^2\hspace{-1.2pt}M_2,$ we deduce from Lemma \ref{dual_perp} \vp that $M_2^\perp+\rad(M_1^\perp) 
= (M_2 \cap (\soc^2\hspace{-1.2pt}M_2+M_1))^\perp = (\soc^2\hspace{-1.2pt}M_2)^\perp,$ 
and hence, $(DI)/(M_2^\perp+\rad(M_1^\perp)) = (DI)/(\soc^2\hspace{-1.2pt}M_2)^\perp \cong D(\soc^2\hspace{-1.2pt}M_2).$ \vp Furthermore, $\soc^2(I_{DS}) \cong D(\ntop^2(P_S))=D(\ntop^2(P_{\soc(M_2/\soc I)}))$ and $\ntop^2(DI) \cong D(\soc^2I).$ Thus applying the duality $D$ to (4') yields an almost split sequence as stated in Statement (4) of the proposition. The proof of the proposition is completed.

\subsection{\sc Representation bound} The main objective of this subsection is to show that quadri-biserial algebras have a representation bound of at most four. Our strategy is to reduce the problem to quadri-string algebras with radical cubed zero, and show that the class of indecomposable modules of length at most four is closed under irreducible maps in this case. To this end, we apply previous results on almost split sequences to analyze the minimal right almost split morphisms for modules of length up to four case-by-case.

\vspace{-1pt}

\begin{Lemma}\label{4sa_rasm_l1}

Let $A$ be a quadri-string algebra with radical cubed zero. Consider a nonzero minimal right almost split morphism $f: K\to S$ in $\mmod A,$ where $S$ is simple. Then $K$ is a direct sum of at most two indecomposable modules of length at most four.

\end{Lemma}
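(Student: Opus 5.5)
The plan is to split according to whether $S$ is projective, and in the non-projective case to read $K$ off from the almost split sequence ending at $S$, using the sequences already computed for quadri-string algebras with radical cubed zero.

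\emph{Case 1: $S$ is projective.} Then $f$ is the inclusion $\rad S\to S=0$, contradicting $f\ne 0$. So $S$ is not projective.

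\emph{Case 2: $S\cong Ae/Je$ with $Ae$ non-simple.} Then $f$ is the middle-to-end map of the almost split sequence $0\to D{\rm Tr}S\to K\to S\to 0$, and we compute ${\rm Tr}S$ from the minimal presentation $\oplus_i Ae_i\to Ae\to S\to 0$. By Statement 3 in the proof of Proposition \ref{4ba-4sa} (applied to $A$ itself, which already has radical cubed zero), $Ae$ is uniserial or astride biserial. Hence $\ntop(Je)$ has one or two simple summands.

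\emph{Subcase 2a: $Ae$ is uniserial.} Then $Je=Au$ and ${\rm Tr}S\cong e_1A/uA$.
\begin{itemize}
\item If $e_1A$ is uniserial, then $uA=e_1J$, so $D{\rm Tr}S$ is simple. The sequence is $0\to T\to K\to S\to 0$ with $\ell(K)=2$, and $K$ is indecomposable uniserial or a sum of two simples. Indeed, by Theorem \ref{ass_N_alg} or directly, $K\cong Ae/J^2e$.
\item If $e_1A$ is biserial, we use the dual of the earlier computation. Writing $e_1J=uA\oplus vA$ or $uA+vA$, we get ${\rm Tr}S\cong e_1A/uA$, which has length at most three. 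Then $\ell(K)=\ell(D{\rm Tr}S)+1\le 4$.
\end{itemize}
We then verify that $K$ has at most two indecomposable summands, each of length at most four. Since $Ae$ is uniserial, ${\rm Tr}S$ is local, and we apply Theorem \ref{ass_uni_soci} or Theorem \ref{ass_st_mod_1} to identify the sequence.

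\emph{Subcase 2b: $Ae$ is astride biserial.} Then $Je=Au_1\oplus Au_2$ or $Je=S_1\oplus M$.
\begin{itemize}
\item If some $I_{\ntop(Au_i)}$ is biserial, Proposition \ref{4str_ass_smr}(1) gives $K\cong\soc^2(I_{\ntop M})\oplus P/M$. This is a sum of two modules of lengths three and two.
\item Otherwise both tops of $Je$ have uniserial injective envelopes, i.e.\ $e_iA$ is uniserial for $i=1,2$. The computation in the proof of Proposition \ref{4str_ass_smr} still gives $(u_1,u_2)J=e_1J^2\oplus e_2J^2$, with Lemma \ref{sbsa} supplying the needed vanishing. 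Hence we obtain the almost split sequence $(\dagger)$ with middle term $e_1A/e_1J^2\oplus e_2A/e_2J^2$. Dualizing gives $K\cong\soc^2 I_1\oplus\soc^2 I_2$, two uniserial modules of length at most two.
\end{itemize}

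The main obstacle is Subcase 2a with $e_1A$ biserial, and the bookkeeping in general: we must check that $D{\rm Tr}S$ never exceeds length three, and that $K$ splits into at most two pieces. For this we rely on $\ell(K)=\ell(D{\rm Tr}S)+1$ together with the facts that ${\rm Tr}S$ is a quotient of an indecomposable projective of length at most four (Definition \ref{4str_alg_def}(1)), and that $J^3=0$ forces Loewy length at most three. Theorem \ref{l4-4bsa} then classifies the indecomposable summands of length four.
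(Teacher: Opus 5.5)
Your case division is the paper's, and the subcases ``$Ae$ and $e_1A$ uniserial'' and ``$Ae$ astride biserial with some $I_{\ntop(Au_i)}$ biserial'' are handled exactly as there. Two steps, however, are not justified as written.

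\textbf{Subcase 2b with $e_1A$ and $e_2A$ both uniserial.} You take the vanishing of the cross terms from Lemma~\ref{sbsa}, but that lemma assumes $Ae_1\not\cong Ae_2$. In Proposition~\ref{4str_ass_smr} this is automatic, because one of $e_1A,e_2A$ is biserial and the other uniserial. In your subcase it is not automatic.

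When $\ell(M)=2$ it still holds. If both tops were $\cong Ae_1/Je_1$, uniseriality of $e_1J$ would give $u_1A=u_2A$. Hence $u_1=u_2a$ with $a$ a unit of $eAe$. Right multiplication by $a$ would then map $Au_2$ isomorphically onto $Au_1$, which is impossible since their lengths differ.

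When $Je=S_1\oplus S_2$ with $S_1\cong S_2$, however, Lemma~\ref{sbsa} is unavailable, and this case does occur over non-split artin algebras. Take the $\mathbb{R}$-species $e'\to e\to 1$ with $D_1=D_{e'}=\mathbb{R}$, $D_e=\mathbb{C}$, $e_1Je={}_{\mathbb R}\mathbb{C}_{\mathbb C}$, $eJe'={}_{\mathbb C}\mathbb{C}_{\mathbb R}$, and product $(m,n)\mapsto\operatorname{Re}(mn)\in e_1J^2e'=\mathbb{R}$. This is a quadri-string algebra with $J^3=0$, $Je\cong T\oplus T$, and $e_1A$ uniserial of length three. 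Here $u_1=1$, $u_2=i$, $v=1+i$ give $u_1v\neq0\neq u_2v$, so the conclusion of the lemma fails.

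The identity $(u_1,u_2)J=e_1J^2\oplus e_2J^2$ is still true in this situation, but it needs a separate argument, for instance one using that $Ae'$ is biserial. The simplest repair is the paper's route. When $M$ is simple and $I_{S_1},I_M$ are uniserial, Proposition~\ref{ass_p_astride} gives $K\cong P/S_1\oplus P/M$ with no condition on the tops. For $\ell(M)=2$ you may equally quote Proposition~\ref{ass_p4_1}(2).

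\textbf{Subcase 2a with $e_1A$ biserial.} You only prove $\ell(K)=\ell(e_1A/uA)+1\le 4$. The identification via Theorem~\ref{ass_uni_soci} or Theorem~\ref{ass_st_mod_1} is never carried out. The paper does it by first showing that $S$ is a direct summand of $\soc(D(e_1A)/\soc D(e_1A))$, then splitting according to where $S$ sits, which leads to Theorem~\ref{ass_st_mod_1} or Proposition~\ref{ass_i4_1}(1).

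A total length of at most four does not by itself bound the number of summands. The missing observation is short. Each indecomposable summand $K_i$ of $K$ has an irreducible component $K_i\to S$. This map cannot be a monomorphism, since it would then be an isomorphism; so it is a proper epimorphism and $\ell(K_i)\ge2$. Hence $K$ has at most two summands.

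With that sentence added, your length count is a valid and cheaper alternative to the paper's argument. The price is that it does not identify $K$; the paper shows $K$ is $D(e_1A)$ or $\soc^2 D(e_1A)$, and in particular indecomposable.
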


\vspace{-1.5pt}

\noindent{\it Proof.} By the hypothesis, $S$ admits a nonsimple projective cover $P.$ We start with the case where $P$ is biserial. Since $A$ is a string algebra, $\rad P=S_1\oplus M$, where $S_1$ is simple and $M$ is uniserial of length one or two. First, suppose that $I_{\ntop M}$ is biserial. By Proposition \ref{4str_ass_smr}(1), $K\cong \soc^2(I_{\ntop M})\oplus P/M,$ where  $P/M$ is uniserial of length two and $\soc^2(I_{\ntop M})$ is colocal of length three; see (\ref{bsm_top2}). %; see (\ref{astride_bsm}). 

Next, suppose that $I_{\ntop M}$ is uniserial. If $\ell(M)=2,$ then $K\cong P/\soc P \oplus P/\soc M$ by Proposition \ref{ass_p4_1}(2), where $P/\soc P$ and $P/M$ are uniserial of length two. Otherwise, $M$ is simple and $I_M$ is uniserial. Then $\soc P=\rad P,$ and $P/\soc P=S$. If $I_{S_1}$ is also uniserial then, by Proposition \ref{ass_p_astride}, $K\cong P/S_1\oplus P/M,$ where $P/S_1$ and $P/M$ are uniserial of length two. If $I_{S_1}$ is biserial then, by Proposition \ref{4str_ass_smr}(1), $K\cong \soc^2(I_{S_1}) \oplus P/S_1,$ where  $P/S_1$ is uniserial of length two, and $\soc^2(I_{S_1})$ is colocal of length three; see (\ref{bsm_top2}). %; see (\ref{astride_bsm}). 
This establishes the lemma in case $P$ is biserial.

Let us consider the second case where $P$ is uniserial. Write $\ntop (\rad P)=S_2.$ First, suppose that $I_{S_2}$ is uniserial. Set $U\!\!:\hp\hp =\ntop^2P,$ which is uniserial of length two. Then $\soc \hp U=\rad \hp U=S_2$ and $U/\soc U=\ntop \,U\cong \ntop P=S.$  Note that $P_{\ntop \, U}\cong P$ and $I_{\soc \, U}\cong I_{S_2}$, both are uniserial. And by Theorem \ref{ass_N_alg}, $K\cong U.$ 

Next, suppose that $I_{S_2}$ is biserial, which is co-astride biserial. By Proposition \ref{astride_bsm} and Definition \ref{4str_alg_def}(1), $I_{S_2}/S_2=S_3\oplus L,$ where $S_3$ is simple and $L$ is uniserial of length one or two. Since $S_2=\ntop (\rad P_S)$, it is well known that $S$ is a direct summand of $\soc(I_{S_2}/S_2)$; see \cite[(III.1.15)]{ARS} and also \cite[(1.1)]{LiY}. Thus, $S\cong S_3$ or $S\cong \soc L.$ Assume that $S\cong S_3$. Then, $I_{S_2}/S_2\cong S\oplus L.$ Since $P_S=P,$ which is uniserial, we deduce from Proposition \ref{ass_st_mod_1} that $K\cong I_{S_2},$ which is indecomposable of length at most four.

Finally assume that $S \cong \soc L.$ If $L$ is simple, then $I_{S_2}/S_2=S_3\oplus S;$ as in the previous case, $K\cong I_{S_2}.$ Suppose that $\ell(L)=2$. Since $P_{\soc L}=P,$ which is uniserial, we deduce from Proposition \ref{ass_i4_1}(1) that $K\cong \soc^2(I_{S_2}),$ which is indecomposable of length three; see (\ref{bsa_ml3}). The proof of the lemma is completed.

%\smallskip

\vspace{2pt}

\begin{Lemma}\label{4sa_rasm_l2}

Let $A$ be a quadri-string algebra with radical cubed zero. Consider a minimal right almost split morphism $f: K\to M$ in $\mmod A,$ where $M$ is of length two. Then $K$ is a direct sum of at most two indecomposable modules of length at most four.

\end{Lemma}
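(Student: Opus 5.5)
We follow the pattern of Lemma \ref{4sa_rasm_l1}: identify $M$ and the almost split sequence ending at $M$ (if $M$ is projective, $K=\rad M$) and read off $K$. Since $\ell(M)=2$, either $M=S\oplus S'$ is semisimple, in which case $K\cong K_S\oplus S'\oplus K_{S'}\oplus S$, which is not of the stated form, so the intended statement must concern indecomposable $M$; we therefore assume $M$ is uniserial of length two with $\ntop M=S$ and $\soc M=T$. Write $P=P_S$ and $I=I_T$. By Proposition \ref{uni_pi}, $M\cong\ntop^2P$ and $M\cong \soc^2 I$. If $M$ is projective, then $K=\rad M=T$ and we are done. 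Otherwise we split into cases according to whether $P$ and $I$ are uniserial or biserial, using that $A$ is a string algebra with $J^3=0$, so $P$ is uniserial of length at most three or astride biserial with $\rad P=S_1\oplus N$, $\ell(N)\le2$; dually for $I$.

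\emph{Case 1: $P$ and $I$ both uniserial.} Here $\ntop^2P$ and $\soc^2 I$ are uniserial, so Theorem \ref{ass_N_alg} (with $s=2$) gives $K\cong \ntop^3P\oplus T$ up to the obvious identification ($\ntop^3 P$ being $P$ or $\ntop^2 P$ appropriately); in any event the summands have length at most three.

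\emph{Case 2: $P$ biserial.} Then $M\cong P/S_1$ or $M\cong P/N$ with $\ell(N)=1$ (when $\rad P=S_1\oplus N$ with $N$ simple), or $M\cong P/(S_1\oplus\rad N)$ when $\ell(N)=2$. We match each of these with a sequence already computed. For $M=P/N$, use Proposition \ref{4str_ass_smr}(3) when $I_{\ntop N}$ is biserial, which gives $K=N'$, an N-shaped module of length four, and use Proposition \ref{ass_st_mod} when $I_{\ntop N}$ is uniserial, which gives $K\cong P\oplus N/\soc N$. For $M=P/(S_1\oplus\rad N)$, apply Theorem \ref{ass_st_mod} to the summand $S_1$, whose injective envelope is uniserial by Definition \ref{4str_alg_def}(3). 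This gives $K\cong P\oplus \ntop^2$-type modules of length at most four. Alternatively, in the $\ell(N)=2$ case one can use Theorem \ref{ass_uni_topi} with the submodule $N$.

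\emph{Case 3: $P$ uniserial but $I$ biserial.} Dually, $M$ is $I/$-type, namely $M=\soc^2I$ restricted to one branch, or $M_1$, or $\soc(M_2/\soc I)$-related. We use Proposition \ref{ass_14_2}(2),(3), Proposition \ref{ass_i4_1}(1),(2), Theorem \ref{ass_st_mod_1}, and Proposition \ref{ass_endi} to read off the middle term, checking in each case that it consists of at most two indecomposables, each of length at most four.

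The main obstacle is bookkeeping in Cases 2 and 3: making sure every position a length-two uniserial can occupy relative to $P_S$ and $I_T$ is covered by one of the earlier propositions, and that the hypotheses on uniseriality of the relevant $I_{(\cdot)}$ and $P_{(\cdot)}$ hold. Where a case is not directly covered, we would first try Theorem \ref{ass_uni_topi} or Theorem \ref{ass_uni_soci} with $s=2$ or $s=1$, checking the uniseriality condition via Definition \ref{4str_alg_def}(2),(3).
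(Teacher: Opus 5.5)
There is a genuine gap. Your case division is based on $I_{\soc M}$. For a non-projective uniserial $M$ of length two with projective cover $P$, however, the almost split sequence ending at $M$ is controlled by the kernel $L$ of $P\to M$, through the injective envelopes of the composition factors of $L$. In addition, several of the sequences you invoke do not end at $M$ at all.

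In Case 1, Theorem \ref{ass_N_alg} with $s=2$ gives $0\to \rad M\to M\to \ntop M\to 0$, which ends at the simple module $\ntop M$. To reach $M$, observe that $P$ is uniserial of length three (as $J^3=0$ and $M$ is not projective), so $M\cong P/\soc P$. Then split on $I_{\soc P}$:
\begin{itemize}
\item if $I_{\soc P}$ is uniserial, Theorem \ref{ass_st_mod} gives $K\cong P\oplus \rad P/\soc P$;
\item if $I_{\soc P}$ is biserial, Theorem \ref{ass_st_mod_1} gives $K\cong I_{\soc P}\oplus \rad M$, which has a summand of length four.
\end{itemize}
The second situation does occur inside your Case 1. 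Take $A=kQ$ with $Q\colon s\to t\to t'\leftarrow x$. Definition \ref{4str_alg_def} is checked directly: all indecomposable projective left modules are uniserial, and the only non-uniserial right one is $D(I_{t'})$. The module $M=P_s/\soc P_s$ has $P_s$ and $I_t=I_{\soc M}\cong M$ uniserial, yet $K\cong I_{t'}\oplus S_t$ with $\ell(I_{t'})=4$. So your claim that the summands have length at most three is false. Your Case 3 (uniserial $P$, biserial $I_{\soc M}$) is an artefact of the wrong split. As written it is only a list of propositions, most of which end at a simple module, at $\soc^2 I$, or at $I$, not at a uniserial module of length two.

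Case 2 has similar problems.
\begin{itemize}
\item Missing case. If $\rad P=S_1\oplus N$ with $\ell(N)=2$, then $P/S_1$ has length three. The two possibilities are $M\cong P/\soc P$, and $M\cong P/L$ with $L$ uniserial of length two. In the latter case $\rad P=S_1\oplus L$ by Lemma \ref{dec_mod_l3}(2), so you may take $N=L$; but your list allows $P/N$ only for $\ell(N)=1$.
\item $M\cong P/\soc P$. Theorem \ref{ass_st_mod} applied to $S_1$ yields the sequence ending at $P/S_1$, not at $M$. The correct tool is Proposition \ref{ass_p_astride}, whose hypotheses hold by Definition \ref{4str_alg_def}(3); it gives $K\cong P/S_1\oplus P/\soc N$.
\item $M\cong P/N$ with $\ell(N)=2$ and $I_{\ntop N}$ uniserial. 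Theorem \ref{ass_st_mod} again ends at $P/\soc N$, so $K\cong P\oplus N/\soc N$ is wrong there. You need Proposition \ref{ass_p4_1}(1), or equivalently Theorem \ref{ass_uni_topi} with $s=1$ and the submodule $N$, giving $K\cong \ntop^2 P$. When $I_{\ntop N}$ is biserial, Proposition \ref{4str_ass_smr}(3) applies.
\end{itemize}
Your treatment of projective $M$ and of the subcase $\ell(\rad P)=2$ is fine.

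The repair is to start from $0\to L\to P\to M\to 0$, split first on whether $P$ is uniserial or biserial, and then on the shape of $L$. This is how the paper proceeds.
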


\vspace{-1.5pt}

\noindent{\it Proof.} Set $S_0:=\ntop M$ and $S_1:=\rad M,$ both are simple. 
Write $P=P_{S_0}.$ Then $2\le \ell(P)\le 4$. If $M$ is projective, then $K\cong \rad P,$ which is simple. \vspace{-1pt} Otherwise, we have a canonical short exact sequence
$\xymatrixcolsep{20pt}\xymatrix{\hspace{-2pt} 0\ar[r] & L \ar[r] & P\ar[r] & M\ar[r] & 0,}$ \vspace{-1pt} where $L$ is a submodule of $P$ of length one or two such that $M\cong P/L.$

Consider the first case where $P$ is uniserial. Since $J^3=0$ by the hypothesis, $\ell(P)=3$ and $\ell(L)=1$. Then $L=\soc P,$ and consequently $P/\soc P \cong M$ and $\rad P/\soc P\cong \rad M=S_1.$ 
Suppose that $I_{\soc P}$ is uniserial. Since $\rad P$ is indecomposable and $P/\soc(\rad P)=P/\soc P\cong M,$ see (\ref{soc_rad}), we deduce from Proposition \ref{ass_st_mod} that $K\cong P\oplus \rad P/\soc(\rad P)$, where $\rad P/\soc(\rad P) = \rad P/\soc P \cong S_1.$ 

Suppose next that $I_{\soc P}$ is biserial. Put $I:=I_{\soc P} \not\cong P.$ We may assume that $P$ is a proper submodule of $I$ with $\soc P=\soc I$. Since $A$ is a quadri-string algebra, $I/\soc I=S\oplus U,$ where $S$ is simple and $U$ is uniserial of length two. Since $P/\soc I$ is uniserial of length two, by Lemma \ref{dec_mod_l3}(1), $I/\soc I=S\oplus (P/\soc I)$. Note that $P_{\ntop(P/\soc I)}=P,$ which is uniserial. Since $P/\soc I=P/\soc P\cong M,$ by
Proposition \ref{ass_st_mod_1}, $K\cong I\oplus \rad(P/\soc I),$ where $ \rad(P/\soc I)=P/\soc P\cong S_1.$

Consider now the second case where $P$ is biserial. Then, $\rad P=T\oplus V$, where $T$ is simple and $V$ is uniserial of length one or two. 
%In particular, $\soc P=T\oplus \soc\, V.$ 
Note that $P/T$ is uniserial of length two or three, and $\ntop^2\hspace{-1pt}P$ is astride biserial of length three; see (\ref{bsm_top2}).

Suppose that $L$ is simple. Then $L=\soc L=\ntop L,$ and $M\cong P/L=P/\soc L.$ Moreover, $\ell(P)=3$ and $\ell(\rad P)=2$. So, we may assume that $\rad P=T\oplus L$. If $I_L$ is uniserial, then $K\cong P$ by Theorem \ref{ass_st_mod}. If $I_L$ is biserial, then $K$ is an N-shaped module by Proposition \ref{4str_ass_smr}(3). 

Suppose that $\ell(L)=2.$ Then $\ell(P)=4$ and $\ell(V)=2.$ %So, $\soc V=\rad^2\hspace{-1pt}P$.
Thus by Definition \ref{4str_alg_def}(3), $I_T$ and $I_{\soc V}$ are uniserial. If $L$ is decomposable, then $L=\soc P,$ and hence, $M\cong P/L= P/\soc P.$ Now by Proposition \ref{ass_p_astride}, $K\cong P/\soc\, T \oplus P/\soc L,$ where $P/\soc\, T=P/T,$ and $P/\soc L=P/\soc P,$ which is uniserial of length two.

Finally assume that $L$ is indecomposable, which is uniserial of length two. By Lemma \ref{dec_mod_l3}(1), $\rad P=T\oplus L$. By Definition \ref{4str_alg_def}(3), $I_T$ and $I_{\soc L}$ are uniserial. Recall that $M\cong P/L.$
If $I_{\ntop L}$ is uniserial then, by Proposition \ref{ass_p4_1}(1), $K\cong \ntop^2\hspace{-1pt} P.$ If $I_{\ntop L}$ is biserial, then $K$ is an N-shaped module by Proposition \ref{4str_ass_smr}(3). The proof of the lemma is completed.

%\smallskip

\vspace{2.5pt}

\begin{Lemma}\label{4sa_rasm_l3}

Let $A$ be a quadri-string algebra with radical cubed zero. Consider a minimal right almost split morphism $f: K\to M$ in $\mmod A,$ where $M$ is of length three. Then $K$ is a direct sum of at most two indecomposable modules of length at most four.

\end{Lemma}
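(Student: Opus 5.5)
By Proposition \ref{CL_3}, an indecomposable $M$ of length three is uniserial, astride biserial, or co-astride biserial. I would treat these three cases separately, in each case identifying an almost split sequence already computed in Subsections 3.4 and 4.4 that ends at $M$, and then reading off $K$. Since $J^3=0$, every module has Loewy length at most three. Moreover $A$ is a string algebra, so every biserial projective is astride biserial and every biserial injective is co-astride biserial.

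\emph{Uniserial case.} Put $P=P_{\ntop M}$ and $I=I_{\soc M}$. If $M$ is projective, then $K\cong\rad M$, which is uniserial of length two. Suppose $M$ is not projective. If $P$ is uniserial, then $P\cong M$ because $\ell(P)\le 3$ by the Loewy length bound, which is a contradiction. Hence $P$ is biserial with $\rad P=T\oplus V$, where $V$ is uniserial of length two, and $M\cong P/T$. If $I$ is also uniserial, Theorem \ref{ass_N_alg} does not apply directly, because $\ntop^3 P$ is not uniserial. Instead I would use Proposition \ref{ass_p4_1}(2) or Proposition \ref{ass_p_astride} to place $P/T$ as a middle term. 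Then, by the dual argument at $M$, I would consider $I_{\soc M}$. If $I_{\soc M}=M$, the module $M$ is injective, and $K$ is found from the sequence starting at $M$'s predecessor. If $I_{\soc M}$ is biserial, then $M=M_2$ in the notation of Proposition \ref{ass_endi} or Proposition \ref{ass_i4_1}, and the almost split sequence ending at $M_2/\soc I$ would be used after shifting. In practice I expect the cleanest route to be Theorem \ref{ass_st_mod_1}. With $I=I_{\soc(\rad M)}$ viewed as co-astride, $M$ appears as $M/\soc I$, and the theorem gives $K\cong(\rad M+N)\oplus I\oplus\ldots$. Alternatively I would use Theorem \ref{ass_st_mod} applied to $P$ with $M=P/T$, which gives $K\cong P\oplus V/\soc V$, whose summands have lengths at most four and one.

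\emph{Astride biserial case.} By Proposition \ref{bsa_ml3}, $M\cong\ntop^2P$ with $P$ biserial. If $\ell(P)=3$, then $M=P$ is projective and $K=\rad P$, which is a sum of two simples. If $\ell(P)=4$, write $\rad P=S_1\oplus M'$ with $M'$ of length two. Then I would split according to whether $I_{\ntop M'}$ is uniserial or biserial. Proposition \ref{ass_p4_1}(\emph{via} $(**)$ in its proof) gives $K\cong P\oplus\ntop M'$. Proposition \ref{4str_ass_smr}(2) gives $K\cong\ntop M'\oplus L$ after identifying the end term $N$. This misses the right case, so I would instead extract $K$ from the sequence ending at $\ntop^2P$, namely Theorem \ref{ass_st_mod} with $P/\soc M'$. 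In each case the summands are $P$, simples, or uniserials of length two.

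\emph{Co-astride biserial case.} By Proposition \ref{bsa_ml3}(2), $M\cong\soc^2 I$ with $I$ co-astride injective. If $\ell(I)=3$, then $M=I$, and Proposition \ref{ass_endi} gives $K$ as a sum of two uniserials of length at most two. If $\ell(I)=4$, the sequences ending at $\soc^2 I$ are Proposition \ref{ass_i4_1}(2), with $K\cong M_1\oplus\rad M_2$, and Proposition \ref{4str_ass_smr}(4), or dually Proposition \ref{ass_14_2}(2), with $K\cong N\oplus P/(S_1\oplus\rad M)$. The choice depends on whether $P_{\soc(M_2/\soc I)}$ is uniserial or biserial.

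The main obstacle I expect is the uniserial case with mixed projective cover and injective envelope types. There one must carefully choose the right earlier almost split sequence, from Theorem \ref{ass_st_mod}, Theorem \ref{ass_st_mod_1}, or Proposition \ref{4str_ass_smr}, and confirm that its right end is isomorphic to $M$. I would first try to realize $M$ as $P/\soc V$ and apply Theorem \ref{ass_st_mod} or Proposition \ref{4str_ass_smr}(3). The length count in each almost split sequence, $\ell(K)=\ell(M)+\ell(\tau M)\le 3+4$, together with indecomposability of the listed summands, then shows that $K$ has at most two indecomposable summands, each of length at most four.
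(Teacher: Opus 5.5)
Your overall strategy matches the paper's: for each type of $M$ from Proposition~\ref{CL_3}, locate an already computed almost split sequence ending at $M$. (The paper groups your uniserial and astride cases together as ``$M$ local''.) Your astride case, $K\cong P\oplus M'/\soc M'$ via Theorem~\ref{ass_st_mod}, is correct.

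The uniserial case, however, ends with the wrong sequence and omits the key input. You correctly reduce to $M\cong P/T$ with $P$ astride of length four and $\rad P=T\oplus V$. (The simple kernel is not $\soc V$, since $P/\soc V$ is astride, so Lemma~\ref{dec_mod_l3}(1) makes it a complement of $V$.) From there you must invoke Definition~\ref{4str_alg_def}(3), which says $I_T$ is uniserial. Theorem~\ref{ass_st_mod} applied to the summand $T$ (with $s=1$) then yields $0\to T\to P\to P/T\to 0$, so $K\cong P$. Your $K\cong P\oplus V/\soc V$ is the middle term for the summand $V$, i.e.\ of the sequence ending at $P/\soc V=\ntop^2P$, which is the astride module of your second case. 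The other routes you float (Theorem~\ref{ass_st_mod_1}, Propositions~\ref{ass_p4_1}(2) and \ref{ass_p_astride}, a split on $I_{\soc M}$) do not produce a sequence ending at $M$.

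In the co-astride case with $\ell(I)=3$, Proposition~\ref{ass_endi} requires $P_{M_i/\soc I}$ to be uniserial for both $i$. The dual of Definition~\ref{4str_alg_def}(2) guarantees this for only one of them. If, say, $P_{M_2/\soc I}$ is biserial, the relevant sequence is Proposition~\ref{ass_14_2}(4) with $M_2/\soc I$ simple, and it gives $K\cong N\oplus M_2$ with $N$ N-shaped of length four. So your claim that $K$ is a sum of two uniserials of length at most two is false. For example, take the path algebra of $a\to c\leftarrow b\to d$ and let $M$ be the injective envelope of the simple at $c$; the middle term is then the sincere module of length four plus the uniserial $b\to c$.

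For $\ell(I)=4$ you cite Proposition~\ref{ass_14_2}(2), but that sequence ends at $\soc(M_2/\soc I)$. The sequence ending at $\soc^2I$ is Proposition~\ref{ass_14_2}(4), giving $K\cong N\oplus\soc^2M_2$ when $P_{\soc(M_2/\soc I)}$ is biserial. Proposition~\ref{ass_i4_1}(2) covers the uniserial case, as you say.

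Finally, your closing estimate $\ell(\tau M)\le 4$ cannot be used here. It is exactly what Theorem~\ref{rb_4bsa} derives from this lemma, so using it is circular. The number and lengths of the summands of $K$ must come from the explicit sequences alone.
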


\noindent{\it Proof.} We start with the case where $M$ is local module. If $M$ is projective, then $K\cong \rad M,$ which is uniserial of length two or a direct sum of two simple modules. Otherwise, we have a short exact sequence 
$\xymatrixcolsep{20pt}\xymatrix{\hspace{-3pt} 0\ar[r] & S\ar[r] & P \ar[r] & M\ar[r] & 0,}\vspace{-.5pt}$ where $P$ is an indecomposable projective module of length four and $S$ is a simple sub\-module of $\rad P$. Then $\rad P=T\oplus L$, where $T$ is simple and $L$ is uniserial of length two. By Definition \ref{4str_alg_def}(3), $I_{\soc L}$ is uniserial. And by Lemma \ref{dec_mod_l3}(2), $S=\soc L$ or $\rad P=S\oplus L.$ If $S=\soc L,$ then $M\cong P/S= P/\soc L,$ and $K\cong  P\oplus (L/\soc\hp L)$ by Proposition \ref{ass_st_mod}, where $L/\soc \,L$ is simple. If $\rad P=S\oplus L,$ then $I_S$ is uniserial by Definition \ref{4str_alg_def}(3). Since $M\cong P/\soc S$, by Proposition \ref{ass_st_mod}, $K\cong P.$

Now, consider the case where $M$ is not local. By Proposition \ref{CL_3}, $M$ is co-astride biserial of length three. Write $I=I_{\soc M}$. Then $I$ is biserial and $M\cong \soc^2 \hspace{-1.2pt} I;$ see (\ref{bsa_ml3}). Thus, $I/ \soc I= (M_1/\soc I) \oplus (M_2/\soc I),$ where $M_1$ is uniserial of length two and $M_2$ is uniserial of length two or three. Then $I=M_1+M_2$ and $M_1\cap M_2=\soc I.$
%$=\soc M_1=\soc M_2.$ 
If $P_{\hp \soc (M_2/\soc I)}$ is biserial, then
$K\cong N\oplus \soc^2(M_2)$ by Proposition \ref{ass_14_2}(4), where $N$ is N-shaped and $\soc^2(M_2)$ is uniserial of length two. 

Suppose that $P_{\hp \soc(M_2/\soc I)}$ is uniserial. If $\ell(M_2)=3,$ then $K\cong M_1 \oplus \rad  M_2$ by Proposition \ref{ass_i4_1}(2), where $M_1$ and $\rad M_2$ are uniserial of length two. Otherwise, both $M_1/\soc I$ and $M_2/\soc I$ are simple; hence, $I=\soc^2\hspace{-1pt}I=M$ and $P_{\hp M_2/\soc I}$ is uniserial. Moreover, $\soc I=M_1\cap M_2=\rad M_1=\rad M_2.$ Thus, $\rad M_1+M_2=M_2$ and $M_1+\rad M_2=M_1$. If $P_{\hp M_1/\soc I}$ is also uniserial, \vp then $K\cong M_2\oplus M_1$ by Proposition \ref{ass_endi}. 
Otherwise, $K\cong N\oplus M_1$ by Proposition \ref{ass_14_2}(4), where $N$ is N-shaped
and $\soc^2(M_1)=M_1.$ The proof of the lemma is completed.

%\smallskip

\vspace{2pt}

\begin{Lemma}\label{4sa_rasm_l4}

Let $A$ be a quadri-string algebra with radical cubed zero. Consider a minimal right almost split morphism $f: K\to M$ in $\mmod A,$ where $M$ is of length four. Then $K$ is a direct sum of at most two indecomposable modules of length at most three.

\end{Lemma}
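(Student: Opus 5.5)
Since $A$ has radical cubed zero, every indecomposable projective has Loewy length at most three. I would first use Theorem \ref{l4-4bsa} to sort the indecomposable $M$ of length four into four types: N-shaped, astride biserial projective, co-astride biserial injective, or projective-injective (uniserial or lozenge). A uniserial module of length four would need $J^3\ne 0$, so it cannot occur. For each remaining type I would read off the middle term of the almost split sequence ending at $M$ (or $K=\rad M$ when $M$ is projective) from results already proved.

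\emph{Projective cases.} If $M$ is projective, then $K\cong\rad M$.
\begin{itemize}
\item If $M$ is astride biserial, Proposition \ref{loc_mod_l4}(1) gives $\rad M=S\oplus N$ with $S$ simple and $N$ uniserial of length two.
\item If $M$ is a lozenge module, Proposition \ref{lozenge_char} and Proposition \ref{biloc_bis} show that $\rad M$ is co-astride biserial of length three, so it is indecomposable.
\end{itemize}
In both cases $K$ has at most two summands, each of length at most three.

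\emph{N-shaped case.} Write $M=M_1+M_2$ with $M_1$ astride biserial of length three and $M_2$ uniserial of length two. Proposition \ref{ass_Nmod}(1) gives an almost split sequence ending at $M$ with middle term $M_1\oplus M_2$. The summands have lengths three and two, which settles this case.

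\emph{Co-astride biserial injective case.} Here $M=I$ is not projective. Proposition \ref{loc_mod_l4}(2) gives $I/\soc I=(M_1/\soc I)\oplus(M_2/\soc I)$ with $M_1/\soc I$ simple and $M_2/\soc I$ uniserial of length two. So $M_1$ is uniserial of length two and $M_2$ is uniserial of length three. Proposition \ref{ass_endi} would give the middle term $(\rad M_1+M_2)\oplus(M_1+\rad M_2)=M_2\oplus(M_1+\rad M_2)$. Its hypothesis, however, requires $P_{\ntop(M_i/\soc I)}$ to be uniserial in the appropriate range. I therefore split on whether $P_{\soc(M_2/\soc I)}$ is uniserial or biserial.

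\emph{The expected obstacle} is exactly this split: the other cases are formal, while here the hypotheses of Propositions \ref{ass_endi}, \ref{ass_i4_1} and \ref{ass_14_2} must be matched precisely, using the duals of Definition \ref{4str_alg_def}(2),(3) (Remark \ref{4sa_rem}). That dual condition (3) forces $P_{\ntop(M_1/\soc I)}$ and $P_{\ntop (M_2/\soc I)}$ to be uniserial. With $J^3=0$ these projectives have length at most three, so $\ntop^2$ of them is uniserial, which is what Proposition \ref{ass_endi} needs.
\begin{itemize}
\item If $P_{\soc(M_2/\soc I)}$ is uniserial, I would apply Proposition \ref{ass_endi} directly. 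This gives $K\cong M_2\oplus(M_1+\rad M_2)$.
\item If $P_{\soc(M_2/\soc I)}$ is biserial, I would apply Theorem \ref{ass_st_mod_1} separately to each summand and then argue as in the proof of Proposition \ref{ass_endi}, obtaining the same $K$.
\end{itemize}
Here $M_2$ has length three, and $M_1+\rad M_2$ is co-astride biserial of length three because $M_1\cap\rad M_2=\soc I$. So in every branch $K$ is a direct sum of at most two indecomposables of length at most three.
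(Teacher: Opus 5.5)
Your proof is correct and follows the paper's argument: you sort $M$ by Theorem \ref{l4-4bsa}, take $K\cong\rad M$ in the projective case, use Proposition \ref{ass_Nmod}(1) in the N-shaped case, and use Proposition \ref{ass_endi} in the co-astride biserial injective case, getting $K\cong M_2\oplus(M_1+\rad M_2)$ as the paper does. The split on $P_{\soc(M_2/\soc I)}$ is superfluous, because the hypotheses of Proposition \ref{ass_endi} involve only $P_{\ntop(M_i/\soc I)}$, which the dual of Definition \ref{4str_alg_def}(3) already makes uniserial, so that proposition applies in both branches (and a lozenge projective cannot occur over a string algebra, though treating it is harmless).
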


\vspace{-1.5pt}

\noindent{\it Proof.} In case $M$ is local, by Lemma \ref{maxi_local}(1), $M$ is a projective module, which is uniserial or astride-biserial, Thus, $K\cong \rad P$, which is uniserial of length three, or a direct sum of two uniserial modules of length one and two. 

Suppose that $M$ is colocal and not local. By Theorem \ref{l4-4bsa}(4), $M$ is a co-astride injective module with $M/\soc M=(M_1/\soc M)\oplus(M_2/\soc M)$, where $M_1$ and $M_2$ are uniserial of length two and three, respectively. Then  $M=M_1+M_2$ and $M_1\cap M_2=\soc M_1=\soc M_2$. Thus, $\rad M_1+ M_2=\soc M_1+M_2=M_2;$ and by Lemma \ref{socs_sum}, $M_1+\rad M_2=M_1+\soc^2\hspace{-.8pt} M_2=\soc^2\hspace{-1.2pt}M.$ Then by Proposition \ref{ass_endi}, $K\cong M_2\oplus \soc^2\hspace{-1.5pt}M,$ where $\soc^2\hspace{-1.5pt}M$ is co-astride biserial of length three; see (\ref{socs_sum}).

Finally, suppose that $M$ is neither local nor colocal. By Theorem \ref{l4-4bsa}(1), $M$ is N-shaped. Write $M=M_1+M_2$, where $M_1$ is astride  biserial of length three and $M_2$ is uniserial of length two. Then by Proposition \ref{ass_Nmod}, $K\cong M_1 \oplus M_2.$ The proof of the lemma is completed.

%\smallskip

\vspace{4pt}

We are ready to obtain the main result of this subsection.

\begin{Theo}\label{rb_4bsa}

Let $A$ be a quadri-biserial algebra. Then, every indecomposable module in $\mmod A$ is of length at most four.

\end{Theo}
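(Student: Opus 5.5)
We first reduce to the radical-cubed-zero case by Proposition \ref{4ba-4sa}. We may assume $A$ is connected and not simple; the simple case is trivial and the general case splits into blocks. Write ${}_AA=P\oplus L$ with $P$ a maximal injective summand. Every indecomposable projective-injective module in $\mmod A$ is a summand of ${}_AA$, so it has length at most four by Definition \ref{4str_alg_def}(1). Every other indecomposable module is a module over $\bar A=A/\soc P$, which is a quadri-string algebra with radical cubed zero. Since the length of an $\bar A$-module as an $A$-module equals its length over $\bar A$, it suffices to prove the theorem for a connected quadri-string algebra $A$ with $J^3=0$.

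In that setting let $\mathcal C$ be the class of indecomposable modules of length at most four. We show that $\mathcal C$ is closed under irreducible maps in both directions and then invoke connectedness of the Auslander--Reiten quiver. Let $X\in\mathcal C$ and let $f\colon K\to X$ be minimal right almost split. Lemmas \ref{4sa_rasm_l1}, \ref{4sa_rasm_l2}, \ref{4sa_rasm_l3} and \ref{4sa_rasm_l4} show that every indecomposable summand of $K$ lies in $\mathcal C$. Hence every immediate predecessor of $X$ in $\Ga_{\mmod A}$ lies in $\mathcal C$.

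For immediate successors we use duality. Suppose $X\to Y$ is irreducible with $X\in\mathcal C$. Then $DY\to DX$ is irreducible in $\mmod A^\circ$. Since $A^\circ$ is again a quadri-string algebra with radical cubed zero (Definition \ref{4str_alg_def} is left-right symmetric) and $\ell(DX)=\ell(X)\le 4$, the same four lemmas applied over $A^\circ$ give $\ell(Y)=\ell(DY)\le 4$. So $\mathcal C$ is a union of components of $\Ga_{\mmod A}$ closed under neighbours. We also note that $\mathcal C$ contains all simple modules and all indecomposable projectives, so it is nonempty in every component containing a projective.

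Finally we conclude by connectedness. The class $\mathcal C$ is finite, because lengths are bounded and there are finitely many isoclasses of each length in this situation. A standard argument (Auslander's theorem, cf.\ \cite[VI.1.4]{ARS} in the artin setting) shows that for connected $A$, a finite component of $\Ga_{\mmod A}$ closed under predecessors and successors is all of $\Ga_{\mmod A}$. Since $\mathcal C$ is such a component, every indecomposable module lies in $\mathcal C$.

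The main obstacle is justifying the finiteness of the component so that this theorem applies. If that finiteness is awkward, we would instead use a Harada--Sai style argument. Every indecomposable $Z$ admits a nonzero map from some indecomposable projective $P_0$. We factor such a map through a chain of irreducible maps starting at $P_0\in\mathcal C$; this is possible because $\mathcal C$ has bounded lengths, so long radical compositions inside $\mathcal C$ vanish by Harada--Sai. The chain then forces $Z$ to be reached from $P_0$ within $\mathcal C$, hence $Z\in\mathcal C$.
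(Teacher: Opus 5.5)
Your proposal follows the paper's proof essentially step for step: reduction to a quadri-string algebra with radical cubed zero via Proposition \ref{4ba-4sa}, closure of the indecomposables of length at most four under predecessors by Lemmas \ref{4sa_rasm_l1}--\ref{4sa_rasm_l4}, closure under successors by applying the same lemmas over $A^\circ$ and dualizing, and then Auslander's theorem \cite[(VI.1.4)]{ARS}. The one weak point is your finiteness step: the claim that there are finitely many isoclasses of each length is unjustified (it fails for artin algebras in general, e.g.\ the Kronecker algebra over an infinite field), but it is also unnecessary, since \cite[(VI.1.4)]{ARS} only needs a bound on the lengths of the modules in a component---which is exactly how the paper uses it---and your Harada--Sai fallback is essentially the proof of that theorem (and needs no connectedness, since every indecomposable projective already lies in your class).
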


\noindent{\it Proof.} We may assume that $A$ is connected and nonsimple. By Definition \ref{4str_alg_def}(1), the projective-injective modules in $\mmod A$ are of length at most four. In view of by Theorem \ref{l4-4bsa}, we may further assume that $A$ is a quadri-string algebra with radical cubed zero. Denote by $\mathcal F$ the set of modules of length at most four in the Auslander-Reiten quiver $\Ga_{\mmod A}$ of $\mmod A$. In view of Lemmas \ref{4sa_rasm_l1}, \ref{4sa_rasm_l2}, \ref{4sa_rasm_l3} and \ref{4sa_rasm_l4}, we see that $\mathcal F$ is closed under predecessors in $\Ga_{\mmod A}$. Since $A^\circ$ is also a quadri-string algebra with radical cubed zero, the set of modules of length at most four in $\Ga_{\mmod A^\circ}$ is also closed under predecessors. Applying the duality $D$, we see that $\mathcal F$ is closed under successors. Let $\mathcal C$ be a connecetd component of $\Ga_{\mmod A}$ containg a simple module $S$. Since $S\in \mathcal{F}$, we conclude that $\mathcal{C}\subseteq \mathcal{F},$ namely, all modules in $\mathcal C$ are of length at most four. It is then well known that $\Ga_{\mmod A}=\mathcal C;$ see, for example, \cite[(VI.1.4)]{ARS}. That is, every indecompoable module in $\mmod A$ is of length at most four. The proof of the theorem is completed. 

\begin{Remark}

In view of Proposition \ref{CL_3} and Theorems \ref{rb_4bsa} and \ref{l4-4bsa}, we obtain an explicit description of all indecomposable modules over a quadri-biserial algebra. In particular, they are all uniserial, biserial or N-shaped.
    
\end{Remark}

%\smallskip

It is well known that an artin algebra is biserial if every almost split sequence has at most two indecomposable nonprojective direct summands in its middle term; see \cite{AuR}. The converse holds for special biserial algebras given by a quiver with relations; see \cite{BuR, SKW}; as shown below, this converse also holds for quadri-biserial algebras.

\begin{Theo}\label{beta_A} 

Let $A$ be a quadri-biserial algebra. Then, every almost split sequence in $\mmod A$ has at most three indecomposable direct summands in its middle term$\hp ;$ and those with three indecomposable direct summands are of the form$\hp\hp:$ \vspace{-4pt}
$$\xymatrix{0\ar[r] & \rad P \ar[r] & P  \oplus S_1 \oplus S_2  
\ar[r] & P/\soc P \ar[r] & 0,} \vspace{-5pt}$$ where $P$ is lozenge, and $S_1, S_2$ are simple such that $S_1\oplus S_2  \!\cong\! \rad P/\soc P.$

\end{Theo}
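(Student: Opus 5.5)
We split almost split sequences into those whose middle term contains a projective-injective summand and those whose middle term does not. For an indecomposable projective-injective $P$ of length at least two, it is well known that $0\to \rad P\to P\oplus \rad P/\soc P\to P/\soc P\to 0$ is almost split; see \cite[(V.3.3)]{ARS}. By Definition \ref{4str_alg_def}(1) and Lemma \ref{4sa_ninj_prj}, such a $P$ is uniserial or lozenge. In the uniserial case, $\rad P/\soc P$ is zero or uniserial, so the middle term has at most two summands. In the lozenge case, Proposition \ref{lozenge_char} gives $\rad P/\soc P=\ntop(\rad P)\cong S_1\oplus S_2$ with $S_1,S_2$ simple, producing exactly the displayed sequence with three summands. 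Every other almost split sequence has middle term without projective-injective summands, since a projective-injective summand in a middle term forces the sequence to be the one above.

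For the remaining sequences, Proposition \ref{4ba-4sa} says that all indecomposable non-projective-injective modules are modules over $\bar A=A/\soc P$, a quadri-string algebra with radical cubed zero. We also use the standard fact that the almost split sequences in $\mmod A$ not involving projective-injective middle summands are the almost split sequences of $\mmod \bar A$ (again \cite[(4.2)]{AuR}). So it suffices to show that over a quadri-string algebra $B$ with radical cubed zero, every almost split sequence has at most two indecomposable middle summands.

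Over such a $B$, consider an almost split sequence ending at a non-projective indecomposable $Z$. By Theorem \ref{rb_4bsa}, $\ell(Z)\le 4$, so $Z$ falls under exactly one of Lemmas \ref{4sa_rasm_l1}, \ref{4sa_rasm_l2}, \ref{4sa_rasm_l3}, \ref{4sa_rasm_l4}. Each of these lemmas states that the domain $K$ of the minimal right almost split map into $Z$ is a direct sum of at most two indecomposables. When $Z$ is non-projective, that domain is precisely the middle term, so at most two summands occur.

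The main obstacle is the reduction in the second paragraph. It has two parts: showing that a lozenge projective-injective summand genuinely yields three summands rather than possibly more, and checking that almost split sequences over $A$ which avoid projective-injectives coincide with those over $\bar A$. For the second part I would argue that $\mathrm{Tr}D$ computed over $A$ and over $\bar A$ agree on modules annihilated by $\soc P$. I would also verify that the middle terms agree, since any summand annihilated by $\soc P$ remains a summand.
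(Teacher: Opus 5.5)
Your proposal is correct and is essentially the paper's own proof. Sequences with a projective-injective middle summand are settled by \cite[(V.5.5)]{ARS} together with the uniserial/lozenge dichotomy, which already rules out more than three summands; all others are transferred to $\overline{A}$ via Proposition~\ref{4ba-4sa} and bounded by Lemmas~\ref{4sa_rasm_l1}--\ref{4sa_rasm_l4}.

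The reduction you single out as the main obstacle is immediate, and the paper treats it so. A sequence that is almost split in $\mmod A$ and whose terms are all $\overline{A}$-modules is almost split in the full subcategory $\mmod\overline{A}$, because every section or factorization there is an $A$-map between $\overline{A}$-modules and hence $\overline{A}$-linear. Do not try to prove that ${\rm Tr}D$ over $A$ and over $\overline{A}$ agree on all modules annihilated by the ideal, since that is false. For a nonsimple indecomposable projective-injective $Q$ one has ${\rm Tr}D_A(\rad Q)\cong Q/\soc Q$. Yet $Q/\soc Q$ is projective over $\overline{A}$ (Statement 1 in the proof of Proposition~\ref{4ba-4sa}), so it is not an $\overline{A}$-translate of anything.
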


 \vspace{-1.5pt}

\noindent{\it Proof.} We may assume that $A$ is connected and nonsimple. First, let $P$ be a lozenge module in $\mmod A$ with $\rad P/\soc P=S_1 \oplus S_2$, where $S_1, S_2$ are simple. By Lemma \ref{maxi_local}(1), $P$ is projective-injective. It is well known that there exists an almost split sequence in $\mmod A$ as stated in the theorem; see \cite[(V.5.5)]{ARS}. 

\vspace{-.5pt} 

Now, consider an almost split sequence 
$(*) \hspace{5pt} \xymatrixcolsep{20pt}\xymatrix{0\ar[r] & L \ar[r] & M  \ar[r] & N \ar[r] & 0} \vspace{-.5pt} $ in $\mmod A.$ Write $M=M_1 \oplus \cdots \oplus M_s,$ where the $M_i$ are indecomposable. Suppose that one of the $M_i$, say $M_1,$ is projective-injective. By Proposition 5.5 in \cite[Chapter V]{ARS}, $M\cong M_1\oplus (\rad M_1/\soc M_1)$ with $2\le \ell(M)\le 4;$ see (\ref{rb_4bsa}). If $M_1$ is uniserial, then $\rad M_1/\soc M_1$ is zero or indecomposable, and hence, $1\le s\le 2.$ Otherwise,  by Corollary \ref{rb4_lozenge}, $M_1$ is a lozenge module; and consequently, $(*)$ is an almost split sequence as stated in the theorem. 

Next, suppose that none of the $M_i$ with $1\le i\le s$ is projective-injective. Note that  neither $L$ nor $N$ is projective-injective. By Proposition \ref{4ba-4sa}, $(*)$ is an almost split sequence in $\mmod \hspace{3pt}\overline{\hspace{-2.8pt}A},$ where $\hspace{2pt}\overline{\hspace{-2.8pt}A}$ is a quadri-string algebra with radical cubed zero.  In view of Lemmas \ref{4sa_rasm_l1}, \ref{4sa_rasm_l2}, \ref{4sa_rasm_l3} and \ref{4sa_rasm_l4}, we conclude that that $s\le 2.$ The proof of the theorem is completed.

\smallskip

\begin{Remark} 

In view of Theorem \ref{beta_A}, Propositions \ref{ass_p_astride}, \ref{ass_endi}, \ref{4ba-4sa}, and the results in Subsection 4.4, we obtain an explicit description of all almost split sequences over a quadri-biserial algebra.

\end{Remark}

\section{\sc Algebras of representation bound up to four}

The objective of this section is to present a complete classification for representa\-tion-finite artin algebras of representation bound up to four. It is evident that an algebra is of representation bound one if and only if it is semisimple. We shall show that the algebras of representation bound two are Nakayama algebras of Loewy length two. Furthermore, those of representation bound three are wedged-string algebras with radical cubed zero, excluding the Nakayama algebras with radical squared zero. Finally, the algebras of representation bound four are quadri-biserial algebras which are not wedged-string algebras with radical cubed zero.

\subsection{\sc Length of Auslander-Reiten translates} In this subsection, we provide a general method to study the representation bound of artin algebras, by establishing lower bounds for the lengths of the Auslander-Reiten translates of certain modules. Let us start with simple modules.

\begin{Prop}\label{S_ART_length} 

Let $A$ be an artin algebra. Consider $S=Ae/Je,$ where $e$ is a primitive idempotent in $A$.

\begin{enumerate}[$(1)$]

\vspace{-1.2pt}

\item If $Je/J^2e\cong \oplus_{i=1}^s Ae_i/Je_i,$ \vspace{1.3pt} where the $e_i$ are primitive idempotents in $A$, then $\ell(D{\rm Tr}S) \ge (s-1)+\sum_{i=1}^s \ell(e_iJ/e_iJ^2)\ge 2s-1.$

\vspace{1.5pt}

\item If $eJ/eJ^2 \cong \oplus_{i=1}^t e_iA/e_iJ$, \vspace{1.3pt} where the $e_i$ are primitive idempotents in $A$, then $\ell({\rm Tr}D S) \ge (t-1) + \sum_{i=1}^t\ell(Je_i/J^2e_i)\ge 2t-1.$

\end{enumerate}\end{Prop}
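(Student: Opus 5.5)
We prove Statement (1); Statement (2) is its dual, obtained by applying (1) to the simple $A^\circ$-module $eA/eJ$ and using $\ell({\rm Tr}DS)=\ell(D{\rm Tr}(DS))$, since ${\rm Tr}D S\cong D\,D{\rm Tr}(DS)$. The plan is to compute ${\rm Tr}S$ from a minimal projective presentation and bound its length from below; then $\ell(D{\rm Tr}S)=\ell({\rm Tr}S)$. We may assume $S$ is not projective, so $s\ge 1$.

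First choose a top-basis $\{u_1,\dots,u_s\}$ for $Je$ with $u_i\in e_iJe\backslash e_iJ^2e$; this exists by Lemma \ref{find_top-basis}(1), since $\ntop(Je)=Je/J^2e\cong\oplus_i Ae_i/Je_i$. Proposition \ref{tb-projc-ienv} then gives a projective cover $\oplus_{i=1}^s Ae_i\to Je$, $e_i\mapsto u_i$, and hence a minimal projective presentation $\oplus_i Ae_i\xrightarrow{(R_{u_1},\dots,R_{u_s})} Ae\to S\to 0$. Applying $\Hom_A(-,A)$, exactly as in the proofs of Proposition \ref{ass_p4_1} and Proposition \ref{4str_ass_smr}, yields ${\rm Tr}S\cong (e_1A\oplus\cdots\oplus e_sA)/(u_1,\dots,u_s)A$.

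Next we bound the length of the submodule $U:=(u_1,\dots,u_s)A$ of $F:=\oplus_i e_iA$ from above. Since $u_i\in e_iJ$, we have $U\subseteq \oplus_i e_iJ=\rad F$. The key estimate is that $U$ is local, being generated by the single element $(u_1,\dots,u_s)=(u_1,\dots,u_s)e$; hence $U/\rad U$ is simple and $\rad U=(u_1,\dots,u_s)J\subseteq\oplus_i e_iJ^2$. Therefore $\ell(U)=1+\ell(\rad U)\le 1+\ell(\oplus_i e_iJ^2)$.

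Finally we combine these estimates:
$$\ell({\rm Tr}S)=\ell(F)-\ell(U)\ge \sum_{i=1}^s\bigl(1+\ell(e_iJ/e_iJ^2)+\ell(e_iJ^2)\bigr)-1-\sum_{i=1}^s\ell(e_iJ^2)=(s-1)+\sum_{i=1}^s\ell(e_iJ/e_iJ^2).$$
The second inequality follows because $u_i\in e_iJ\backslash e_iJ^2$ forces $e_iJ/e_iJ^2\neq 0$, so each summand is at least $1$. The only point needing care is the local/radical containment $\rad U\subseteq \oplus_i e_iJ^2$; it is immediate from $\rad U=UJ$ and $U\subseteq\oplus_i e_iJ$.
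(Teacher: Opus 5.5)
Your proposal is correct and follows essentially the same route as the paper: a top-basis $\{u_1,\ldots,u_s\}$ of $Je$, the resulting minimal presentation giving ${\rm Tr}S\cong(\oplus_i e_iA)/(u_1,\ldots,u_s)A$, and the count using that $(u_1,\ldots,u_s)A$ is local with radical $(u_1,\ldots,u_s)J\subseteq\oplus_i e_iJ^2$. The paper phrases the last step as passing to the quotient by $(u_1,\ldots,u_s)J$ and comparing with $\oplus_i e_iA/e_iJ^2$, which is the same estimate; it states the final bound $\ge 2s-1$ without comment, whereas you justify it by $e_iJ/e_iJ^2\neq 0$, since $u_i$ is a top-element of $Je$.
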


\noindent{\it Proof.} (1) Assume that $Je/J^2e\cong \oplus_{i=1}^s Ae_i/Je_i$, \vspace{.5pt} where the $e_i$ are primitive idempotents in $A$. By Lemma \ref{find_top-basis}, $Je$ admits a top-basis 
$\{u_1, \ldots, u_s\}$ with $u_i\in e_i J e.$ 
%In particular, $\ell(e_iJ/e_iJ^2)\ge 1$, for $i=1, \ldots, s$.
By Proposition \ref{tb-projc-ienv}, we have a minimal projective presentation
\vspace{-4.5pt}
$$\xymatrixcolsep{18pt}\xymatrix{Ae_1\oplus \cdots \oplus Ae_s \ar[rr]^--{(R_{u_1}, \ldots, R_{u_s})} && Ae \ar[r] & S \ar[r]  & 0,}
\vspace{-4pt}$$
where $R_{u_i}$ is the right multiplication by $u_i,$ and a minimal projective presentation 
\vspace{-3pt}
$$\xymatrixcolsep{19pt}\xymatrix{e A \ar[rr]^--{(L_{u_1}, \ldots, L_{u_s})^T\hspace{-5pt}} && e_1A\oplus \cdots\oplus e_sA \ar[r] &  {\rm Tr}S\ar[r]  & 0,} \vspace{-1pt}$$ where $L_{u_i}$ is the left multiplication by $u_i$. This yields \vspace{-2pt}
$${\rm Tr}S  \cong  \left((e_1A\oplus \cdots\oplus e_sA)/(u_1,\ldots,u_s)J\right)/\left((u_1,\ldots,u_s)A/(u_1,\ldots,u_s)J\right).
\vspace{-2pt}$$
Since $(u_1,\ldots,u_s)\!=\!(u_1,\ldots,u_s)\hp e$, \vspace{1pt} we have $(u_1,\ldots,u_s)A / (u_1,\ldots,u_s)J \!\cong\! eA/eJ$ by Lemma \ref{loc_gen}(1); consequently, $\ell({\rm Tr}S) = \ell\left((e_1A\oplus \cdots\oplus e_sA)/(u_1,\ldots,u_s)J\right)-1.$ Further since $(u_1,\ldots,u_s)J\subseteq e_1J^2\oplus \cdots\oplus e_sJ^2\hspace{-1pt},$
we obtain \vspace{-2pt}
$$\begin{array}{rcl}\ell\left((e_1A\oplus \cdots\oplus e_sA)/(u_1,\ldots,u_s)J\right) \hspace{-5pt} &  \ge \hspace{-5pt} &\ell\left((e_1A\oplus \cdots\oplus e_sA)/(e_1J^2\oplus \cdots\oplus e_sJ^2)\right)\vspace{1.3pt} \\ 
\hspace{-5pt} &= \hspace{-5pt}& \ell(e_1A/e_1^2J) + \cdots + \ell(e_sA/e_sJ^2) \vspace{1.3pt} \\
\hspace{-5pt} &= \hspace{-5pt}& \ell(e_1J/e_1^2J) + \cdots + \ell(e_sJ/e_sJ^2)+s.
\vspace{-3.5pt}
\end{array}$$
Therefore,
$\ell(D{\rm Tr}S)=\ell({\rm Tr}S)\ge (s-1) + \sum_{i=1}^s\ell(e_iJ/e_iJ^2)\ge 2s-1.$ 

\vspace{1,5pt}

(2) Observe that $DS\cong eA/eJ$ in $\mmod A^\circ.$ Assume that $eJ/eJ^2 \cong \oplus_{i=1}^t e_iA/e_iJ$, where the $e_i$ are primitive idempotents in $A$. As argued above, \vp we conclude that ${\rm Tr}DS \!\ge\! (t\!-\!1) \hspace{-1pt} +\! \sum_{i=1}^s\hspace{-1pt}\ell(Je_i/J^2e_i) \!\ge\! 2t\!-\!1$. The proof of the proposition is completed.

\smallskip

Next, under certain conditions, we consider the socle factor of indecomposable projective modules and the radical of indecomposable injective modules.

\begin{Prop}\label{SF_ART_length} 

Let $A$ be an artin algebra. Consider an indecomposable projective module $P$ and an indecomposable injective module $I$ in $\mmod A$. 

\begin{enumerate}[$(1)$]

\vspace{-2pt}

\item Suppose that $\rad P=M_1\oplus \cdots \oplus M_s,$ where $M_i$ is of Loewy length $c_i$ such that $\soc M_i\cong Ae_i/Je_i$ with $e_i$ a primitive idempotent in $A$, for $i=1, \ldots, s$. \vspace{-3.5pt} Then 
$$\ell(\hspace{-.5pt} D{\rm Tr}\hspace{.5pt} (P/\soc P)) \ge \textstyle\sum_{i=1}^s  \ell(e_iA/e_iJ^{c_i+1}) - 1 \ge  (s-1) + \sum_{i=1}^s c_i\ge 2s-1.$$

\vspace{-2.5pt}

\item Suppose that $I/\soc I=N_1\oplus \cdots \oplus N_t,$ where $N_i$ is of Loewy length $c_i$ such that $\ntop N_i\cong Ae_i/Je_i$ with $e_i\in A$ a primitive idempotent, for $i=1, \ldots, t$. Then  \vspace{-4pt}
$$\ell({\rm Tr} D \hspace{.5pt} (\rad I)) \ge \textstyle \sum_{i=1}^t \ell(Ae_i/J^{c_i+1} e_i) - 1 \ge  (t-1) + \sum_{i=1}^tc_i\ge 2t-1.$$

\end{enumerate}\end{Prop}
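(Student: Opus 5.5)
I will prove only Statement (1); Statement (2) then follows by applying (1) to the indecomposable projective module $DI$ in $\mmod A^\circ$. Indeed, Proposition \ref{duality_radi_soci} and Lemma \ref{dual_perp} give $D(\rad I)\cong DI/\soc(DI)$ and $\rad(DI)\cong D(I/\soc I)=\oplus D N_i$, where $DN_i$ has Loewy length $c_i$ and $\soc(DN_i)\cong D(\ntop N_i)\cong e_iA/e_iJ$. Hence ${\rm Tr}D(\rad I)\cong {\rm Tr}(DI/\soc(DI))$, and $\ell$ of this equals $\ell(D{\rm Tr}(DI/\soc DI))$ computed over $A^\circ$. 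The roles of $e_iA$ and $Ae_i$ swap exactly as needed.

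For (1), write $P=Ae$. The plan is to compute ${\rm Tr}(P/\soc P)$ from a minimal projective presentation, as in Proposition \ref{S_ART_length}. Since $\rad P=\oplus M_i$, we have $\soc P=\oplus \soc M_i$. I choose a soc-basis of $\soc P$ adapted to the decomposition, taking soc-elements $w_{ij}\in e_{ij}(\soc M_i)e$, so $\soc M_i=\oplus_j Aw_{ij}$. If $\soc M_i$ is not simple, the hypothesis $\soc M_i\cong Ae_i/Je_i$ forces it to be simple, so each $\soc M_i=Aw_i$ with $w_i\in e_iJe$.

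Assuming $P$ is not simple, we get $\soc P\subseteq \rad P$ by Lemma \ref{soc_rad}, so $Ae\to P/\soc P$ is a projective cover. The kernel $\soc P=\oplus Aw_i$ has top-basis $\{w_1,\dots,w_s\}$. This yields a minimal presentation $\oplus Ae_i\xrightarrow{(R_{w_i})} Ae\to P/\soc P\to 0$, and hence ${\rm Tr}(P/\soc P)\cong (e_1A\oplus\cdots\oplus e_sA)/(w_1,\dots,w_s)A$, where $(w_1,\dots,w_s)A\cong eA/\mathrm{ann}$ is a quotient of $eA$.

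The main step is to bound the submodule $W:=(w_1,\dots,w_s)A$ from above. Here I use the Loewy length of the $M_i$. Since $w_i\in \soc M_i=\rad^{c_i-1}M_i\subseteq J^{c_i}e$ (as $M_i\subseteq Je$), we get $w_i\in e_iJ^{c_i}$, so $w_iJ\subseteq e_iJ^{c_i+1}$. Thus $WJ\subseteq \oplus e_iJ^{c_i+1}$. Moreover, $W/WJ$ is simple (isomorphic to $eA/eJ$ by Lemma \ref{loc_gen}), because $W$ is cyclic, generated by an element of $eW$, and nonzero. Hence
$$\ell({\rm Tr}(P/\soc P))=\ell\Big(\oplus e_iA/WJ\Big)-1\ge \textstyle\sum_i\ell(e_iA/e_iJ^{c_i+1})-1.$$

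For the second inequality, I must show $\ell(e_iA/e_iJ^{c_i+1})\ge c_i+1$. It suffices that $e_iJ^{c_i}\ne 0$, because the radical layers of $e_iA$ are then nonzero up to index $c_i$; this holds since $0\ne w_i\in e_iJ^{c_i}$. The last inequality is just $c_i\ge1$. I expect the only delicate points to be the containment $w_i\in e_iJ^{c_i}$, which uses $\soc M_i\subseteq\rad^{c_i-1}M_i$ for the colocal $M_i$ and $M_i\subseteq Je$, and the degenerate case where $P$ is simple ($s=0$, which is trivial).
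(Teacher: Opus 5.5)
Your proposal is correct and is essentially the paper's proof: a minimal projective presentation of $P/\soc P$ by right multiplications with socle generators $w_i\in e_iJ^{c_i}e$, the transpose as $(\oplus_i e_iA)/(w_1,\dots,w_s)A$, the containment $(w_1,\dots,w_s)J\subseteq \oplus_i e_iJ^{c_i+1}$ together with the simple quotient $(w_1,\dots,w_s)A/(w_1,\dots,w_s)J\cong eA/eJ$, and $0\ne w_i\in e_iJ^{c_i}$ for the second inequality. Statement (2) is obtained exactly as you say, by applying (1) to $DI$ over $A^\circ$. The only slip is cosmetic: the generator $(w_1,\dots,w_s)$ of the right module $W$ lies in $We$, not $eW$.
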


 \vspace{-2pt}
 
\noindent{\it Proof.} (1) We may assume that $P=Ae$ with $e$ a primitive idempotent in $A$. Since $\soc M_i$ is simple, $\soc \hspace{.5pt} M_i=\rad^{c_i-1} \hspace{-.5pt} M_i\subseteq J^{c_i}e$, and hence, $\soc M_i=Au_i,$ for some $u_i \in e_i J^{c_i} e$, for $i=1, \ldots, s$. By Lemma \ref{soc_rad},  $\soc(Ae)=\soc(Je)=\oplus_{i=1}^s Au_i.$ Put $N:=Ae/\soc(Ae)$. Since $\{u_1, \ldots, u_s\}$ is clearly a top-basis for $\soc(Ae)$, we obtain a minimal projective presentation \vspace{-6pt}
$$\xymatrixcolsep{20pt}\xymatrix{Ae_1 \oplus\cdots \oplus Ae_s \ar[rr]^-{(R_{u_1}, \ldots, R_{u_s})} && Ae \ar[r] & N \ar[r] & 0,}\vspace{-2.5pt}$$ where $R_{u_i}$ is the right multiplication by $u_i,$ and a minimal projective presentation \vspace{-6pt}
$$\xymatrixcolsep{20pt}\xymatrix{eA \ar[rr]^-{(L_{u_1}, \ldots, L_{u_s})^T\hspace{-4pt}} && e_1A \oplus\cdots \oplus e_sA \ar[r] & {\rm Tr}N \ar[r]& 0,}\vspace{-3pt}$$ where $L_{u_i}$ is the left multiplication by $u_i$. 
Therefore, \vspace{-2pt}
$${\rm Tr}N \cong ((e_1A \oplus\cdots \oplus e_sA)/(u_1, \ldots, u_s)J)/((u_1, \ldots, u_s)A/(u_1, \ldots, u_s)J).\vspace{-4pt}$$
Since $(u_1,\ldots,u_s)\in Ae$, by Lemma \ref{loc_gen}(1), $(u_1,\ldots,u_s)A / (u_1,\ldots,u_s)J \cong eA/eJ.$ Since $0\ne u_i\in e_iJ^{c_i}$, we have $\ell\ell(e_iA)\ge c_i+1;$ and by Lemma \ref{topi_LL}, we infer that $\ell(e_iA/e_iJ^{c_i+1}) \ge \ell\ell(e_iA/e_iJ^{c_i+1})\ge c_i+1$, for $i=1, \dots, s.$ \vp Now, observing that $(u_1, \ldots, u_s)J\subseteq e_1J^{c_1+1} \oplus \cdots \oplus e_s J^{c_s+1}$ yields \vspace{-2.5pt}
$$\begin{array}{rcl}
\ell(\hspace{-1pt} (e_1A \hspace{-1pt} \oplus \hspace{-1pt}  \cdots \hspace{-1pt} \oplus \hspace{-1pt} e_sA)/(u_1, \ldots, u_s)J) \hspace{-7pt}
&\ge & \hspace{-7pt}
\ell(\hspace{-1pt} (e_1A\hspace{-1pt} \oplus \hspace{-1pt} \cdots \hspace{-1pt} \oplus \hspace{-1pt}  e_sA)/(e_1J^{c_1+1}\hspace{-1pt} \oplus \hspace{-1pt} \cdots \hspace{-1pt} \oplus \hspace{-1pt} e_s J^{c_s+1})) \vspace{2pt} \\
&=& \hspace{-7pt} \ell(e_1A/e_1J^{c_1+1})+\cdots + \ell(e_sA/e_sJ^{c_s+1})  \\
&\ge & \hspace{-7pt} (c_1+1)+\cdots +(c_s+1). \vspace{-4pt} 
\end{array}$$
Thus, \vspace{1pt} $\ell(D{\rm Tr}N)=\ell({\rm Tr}N)\ge \textstyle \sum_{i=1}^s \ell(e_iA/e_iJ^{c_i+1})-1\ge (s-1)+ \sum_{i=1}^s c_i\ge 2s-1.$ 

\vspace{1pt}

(2) Consider the indecomposable projective module $DI$ in $\mmod A^\circ.$
By Proposition \ref{duality_radi_soci}(2), \vspace{.5pt} $\rad(DI)\cong D(I/\soc I)\cong DN_1 \oplus \cdots \oplus DN_t,$ where $DN_i$ is of Loewy length $c_i$ such that $\soc(D N_i)\cong D(\ntop N_i)\cong e_iA/e_iJ,$ for $i=1, \ldots, t$. Since $D(\rad I)\cong DI/\soc(DI),$ as argued above, we obtain \vspace{-2pt}
$$\ell({\rm Tr}D(\rad I)) \hspace{-1pt} = \hspace{-1pt} \ell({\rm Tr}(DI/\soc\hp (DI))) \hspace{-1pt}\ge \hspace{-1pt} \textstyle\sum_{i=1}^t \hspace{-1pt} \ell(Ae_i/J^{c_i+1} e_i) \!-\! 1 \ge (t\!-\!1) \!+\!\sum_{i=1}^tc_i.\vspace{-2pt}$$
The proof of the proposition is completed.

\subsection{\sc Algebras of representation bound two} As shown below, the characterization of artin algebras of representation bound two is straightforward. 

\begin{Prop}\label{rb2}

Let $A$ be an artin algebra. Then $A$ is of representation bound two if and only if $A$ is a Nakayama algebra of Loewy length two. 

\end{Prop}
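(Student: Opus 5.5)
We prove the two implications separately; the sufficiency is essentially classical, and the necessity uses Proposition \ref{S_ART_length} to force the algebra to be Nakayama.

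For the sufficiency, let $A$ be a Nakayama algebra of Loewy length two. Every indecomposable module over a Nakayama algebra is uniserial. Concretely, each indecomposable $M$ is a quotient $\ntop^s\hspace{-1pt}(P_{\ntop M})$ of a uniserial projective by Proposition \ref{uni_pi}(1). Hence $\ell(M)\le \ell\ell(A)=2$. Since $\ell\ell(A)=2$, some indecomposable projective has length two, so the representation bound is exactly two. If one prefers not to quote the classical fact, the same conclusion follows from the paper's methods. Theorem \ref{ass_N_alg} and Theorem \ref{ass_st_mod} give the almost split sequences, and these show that the set of indecomposable modules of length at most two is closed under irreducible maps. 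The connected-component argument used in the proof of Theorem \ref{rb_4bsa} then applies.

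For the necessity, assume every indecomposable module has length at most two and some indecomposable has length two. Then $A$ is not semisimple. Every local module has length at most two, so $J^2=0$ and $\ell\ell(A)=2$. It remains to show that each indecomposable projective $Ae$ and each indecomposable projective $eA$ over $A^\circ$ is uniserial, that is, $\ell(Je)\le 1$ and $\ell(eJ)\le 1$.

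Suppose $Je\cong \oplus_{i=1}^s Ae_i/Je_i$ with $s\ge 2$. Then $Ae$ is indecomposable of length $s+1\ge 3$, which is already a contradiction. For the right modules, suppose $eJ\cong\oplus_{i=1}^t e_iA/e_iJ$ with $t\ge 2$. Apply Proposition \ref{S_ART_length}(2) to $S=Ae/Je$: it gives $\ell({\rm Tr}DS)\ge 2t-1\ge 3$. Because $eJ\ne 0$, the simple $S$ is not injective (using $J^2=0$ and $D(eA)$ having length at least two). Hence ${\rm Tr}DS$ is a nonzero indecomposable module of length at least three, a contradiction. Alternatively, apply the bound on indecomposables directly to $DS$'s injective envelope: $D(eA)$ is an indecomposable injective of length $t+1\ge3$.

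Thus every indecomposable projective and every indecomposable injective has length at most two. Such modules are local, respectively colocal, so they are uniserial. Therefore $A$ is Nakayama of Loewy length two.

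The only delicate point is noting that indecomposable injectives must also be short. This is immediate because they are indecomposable modules, so the length bound applies to them as well. No serious obstacle is expected.
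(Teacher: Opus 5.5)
Your proposal is correct and, once the detours are removed, it is the paper's argument: every indecomposable module, in particular every indecomposable projective and injective, has length at most two and is therefore uniserial, while the converse rests on the classical fact that indecomposable modules over a Nakayama algebra are uniserial. The appeals to Proposition \ref{S_ART_length}(2) and to the Auslander--Reiten component argument are unnecessary, since, as you note yourself, $D(eA)$ is already an indecomposable module of length $t+1$; and Proposition \ref{uni_pi}(1) applies only once $M$ is known to be local, so it restates that classical fact rather than replacing it.
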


\noindent{\it Proof.} By definition, $A$ is of representation bound two if and only if every indecomposable module in $\mmod A$ is of length at most two and at least one of them is of length two, or equivalently, every indecomposable module in $\mmod A$ is uniserial of length at most two and at least one of them is of length two. The latter is clearly equivalent to $A$ being a Nakayama algebra of Loewy length two. The proof of the proposition is completed.

\subsection{\sc Algebras of representation bound three.} To characterize algebras of representation bound three, we reformulate the notion of wedged-string algebras as follows; compare \cite[(3.5)]{LiY}.

\begin{Defn}\label{3sa_def} 

An artin algebra $A$ is called {\it wedged-string} if every indecomposable projective module $P$ in $\mmod A$ or $\mmod A^\circ$ is uniserial or astride biserial with $\rad P=S_1\oplus S_2$, where $S_1$ and $S_2$ are simple such that $I_{S_1}$ and $I_{S_2}$ are uniserial.

\end{Defn}

\begin{Remark} \label{3sa_rem} (1)  In the finite-dimensional setting, a wedged-string algebra is a Tachikawa algebra, as named by Fuller; see \cite[(2.1)]{Ful}.

\noindent (2) In view of Proposition \ref{duality_radi_soci}(3), an artin algebra $A$ is wedged-string if and only if every indecomposable injective module in $\mmod A$ or $\mmod A^\circ$ has the property dual to the one stated in Definition \ref{3sa_def}.

\end{Remark} 

\smallskip

The following easy result says, in particular, that wedged-string algebras with radical cubed zero are of representation bound at most three.

\begin{Prop}\label{3sa_rb}

Let $A$ be a wedged-string algebra with radical cubed zero.

\begin{enumerate}[$(1)$]

\vspace{-1.5pt}

\item Every indecomposable module in $\mmod A$ is of length at most three.

\item Every almost split sequence in $\mmod A$ has at most two indecomposable direct summands in its middle term.
    
\end{enumerate}\end{Prop}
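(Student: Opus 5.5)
The plan is to derive both statements from the machinery of Section 4, by recognizing a wedged-string algebra with radical cubed zero as a very restricted quadri-string algebra with radical cubed zero.

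\textbf{Step 1: $A$ is quadri-string.} Every indecomposable projective $P$ in $\mmod A$ or $\mmod A^\circ$ is uniserial or astride biserial with $\rad P=S_1\oplus S_2$. Since $J^3=0$, uniserial projectives have length at most three. Astride biserial projectives have length exactly three. Hence $A$ is a string algebra satisfying Definition \ref{4str_alg_def}(1). Condition (2) holds because both $I_{S_1}$ and $I_{S_2}$ are uniserial. Condition (3) is vacuous, since no radical has the form $S\oplus M$ with $\ell(M)=2$. So Theorem \ref{rb_4bsa} and Theorem \ref{beta_A} apply; in particular every indecomposable module has length at most four.

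\textbf{Step 2: no indecomposable of length four occurs.} I would use Theorem \ref{l4-4bsa} and rule out each case.
\begin{itemize}
\item Case (2): a biserial projective of length four would need $\rad P=S\oplus N$ with $\ell(N)=2$, which is excluded by the definition.
\item Case (3): by Remark \ref{3sa_rem}(2), the dual argument excludes a co-astride biserial injective of length four.
\item Case (4): a projective-injective module of length four would be uniserial of Loewy length four, contradicting $J^3=0$, or lozenge, hence biserial projective of length four, again excluded.
\item Case (1): this is the main obstacle. By Lemma \ref{N_mod_property-2}(2), an N-shaped module $M=M_1+M_2$ forces $I_{\soc M_2}$ to be biserial. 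Here $M_1$ is astride biserial of length three, so by Lemma \ref{4bs_loc_l3}(1) and Proposition \ref{bsa_ml3}(1) we have $M_1\cong\ntop^2 P_{\ntop M_1}$, and then $\soc M_2\subseteq\rad M_1$ means $\soc M_2$ is a simple summand of $\rad P_{\ntop M_1}$. The wedged-string definition requires the injective envelope of each such summand to be uniserial, a contradiction.
\end{itemize}
Therefore every indecomposable module has length at most three, which proves (1).

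\textbf{Step 3: the middle terms.} By Theorem \ref{beta_A}, a middle term with three indecomposable summands arises only from a lozenge projective-injective module. By Step 2 no lozenge module exists, so every almost split sequence has at most two indecomposable summands in its middle term, which proves (2).

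An alternative route would trace Lemmas \ref{4sa_rasm_l1}--\ref{4sa_rasm_l3}, where the N-shaped modules appear only through Propositions \ref{4str_ass_smr} and \ref{ass_14_2} under the hypothesis of a biserial $I_{\ntop M}$ or $P_{\soc(\cdot)}$; the wedged condition excludes exactly these. I expect Step 2 to suffice, though.
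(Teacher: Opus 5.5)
Your proposal is correct and follows essentially the same route as the paper. You reduce to the quadri-string case and use Theorems~\ref{rb_4bsa} and~\ref{l4-4bsa} to force any length-four indecomposable to be N-shaped. You then get a contradiction because $I_{\soc M_2}$ is biserial via the co-astride quotient $M/S$, but uniserial via the wedged-string condition on $P_{\ntop M_1}$, and you conclude (2) from Theorem~\ref{beta_A} since no lozenge module exists. The differences are cosmetic: the paper obtains $M_1\cong P_{\ntop M_1}$ from Lemma~\ref{maxi_local}(1) where you use Proposition~\ref{bsa_ml3}(1), and you spell out the exclusion of the non-N-shaped cases more explicitly.
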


\vspace{-1.5pt}

\noindent{\it Proof.} Since $J^3=0$, the indecomposable projective or injective modules in $\mmod A$ are of length at most three. In particular, $A$ is a quadri-string algebra. By Theorem \ref{rb_4bsa}, every indecomposable module in $\mmod A$ is of length at most four. Suppose that there exists an indecomposable module $M$ of length four. Then, $M$ is neither projective nor injective. By Theorem \ref{l4-4bsa}, $M$ is N-shaped. Write $M=M_1+M_2,$ where $M_1$ is astride biserial of length three and $M_2$ is uniserial of length two. By Lemma \ref{N_mod_property}, $\rad M_1=\soc M_1=S \oplus \soc M_2,$ where $S$ is simple such that $M/S$ is co-astride biserial with $\soc(M_1/S)\cong \soc M_2.$ By Lemma \ref{4bs_loc_l3}(2), $I_{\soc M_2}$ is biserial. On the other hand, by Lemma \ref{maxi_local}(1), $M_1$ is projective. And by Definition \ref{3sa_def}, $I_{\soc M_2}$ is uniserial, absurd. This establishes Statement (1). In particular, $\mmod A$ has no lozenge module. Thus Statement (2) follows immediately from Theorem \ref{beta_A}. The proof of the proposition is completed.

\smallskip

We are ready to characterize artin algebras of representation bound three.

\begin{Theo}\label{rb3} 

Let $A$ be an artin algebra. Then $A$ is of representation bound three if and only if $A$ is a wedged-string algebra with radical cubed zero, but not a Nakayama algebra with radical squared zero. 

\end{Theo}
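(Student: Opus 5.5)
For sufficiency, I would use Proposition \ref{3sa_rb}(1): a wedged-string algebra with radical cubed zero has all indecomposables of length at most three. It then suffices to exhibit an indecomposable of length three. If $A$ is not Nakayama with $J^2=0$, then either some indecomposable projective has Loewy length three, and is therefore uniserial of length three, or some projective in $\mmod A$ or $\mmod A^\circ$ is astride biserial of length three. Applying $D$ in the latter case yields an indecomposable of length three in $\mmod A$. One must also exclude the possibility that $J^2=0$ and all projectives in $\mmod A$ and $\mmod A^\circ$ are uniserial, since that is exactly the Nakayama case with radical squared zero. Thus the bound is exactly three.

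For necessity, assume every indecomposable has length at most three and some indecomposable has length three. Then $A$ is not Nakayama of Loewy length at most two by Proposition \ref{rb2}. Every indecomposable projective $P$ has length at most three, so $J^3=0$. I then classify $P$ using Proposition \ref{CL_3}. Either $P$ is uniserial, or $P$ is local of length three with $\rad P=S_1\oplus S_2$, that is, astride biserial. The co-astride case is excluded because $P$ is local. A projective of length at most two is uniserial.

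The length-three local non-uniserial case requires $\ell(\ntop(\rad P))\le 2$. To get this, apply Proposition \ref{S_ART_length}(1) to $S=\ntop P$. If $\rad P/\rad^2P$ had $s\ge 3$ summands, then $\ell(D{\rm Tr}S)\ge 2s-1\ge 5$, which is a contradiction; this simultaneously forces length at most three. The same argument applies to $A^\circ$, since the bound is self-dual via $D$.

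The main step is showing that when $\rad P=S_1\oplus S_2$, both $I_{S_1}$ and $I_{S_2}$ are uniserial. I would use Proposition \ref{SF_ART_length}(1) with $M_i=S_i$ and $c_i=1$, which gives
$$\ell(D{\rm Tr}(P/\soc P))\ge \ell(e_1A/e_1J^2)+\ell(e_2A/e_2J^2)-1 .$$
If, say, $e_1A$ were not uniserial, then $\ell(e_1A/e_1J^2)\ge 3$ because $\ell(e_1J/e_1J^2)\ge 2$. Together with $\ell(e_2A/e_2J^2)\ge 2$, this gives $\ell(D{\rm Tr}(P/\soc P))\ge 4$. Note that $P/\soc P$ is simple here and non-projective, so $D{\rm Tr}$ is indecomposable and this is a contradiction. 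Hence $e_iA$ is uniserial, so $I_{S_i}\cong D(e_iA)$ is uniserial.

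The dual statement follows from the same argument applied to $A^\circ$. Therefore $A$ is wedged-string with $J^3=0$, and it is not Nakayama with $J^2=0$ since its representation bound is three, not at most two. The only delicate point is this socle-factor estimate, and Proposition \ref{SF_ART_length} handles it directly.
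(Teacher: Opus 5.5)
Your proof is correct, and its key step differs from the paper's. For sufficiency, both you and the paper rely on Proposition~\ref{3sa_rb}(1). You exhibit a length-three indecomposable directly, whereas the paper invokes Proposition~\ref{rb2} together with the fact that bound one means semisimple. These are equivalent.

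For necessity, the paper proves that $I_{S_1}$ is uniserial by contradiction, working on the injective side. If $I_{S_1}$ is not uniserial, Proposition~\ref{S_ART_length}(2) gives $\ell({\rm Tr}DS_1)\ge 3$, hence $\ell({\rm Tr}DS_1)=3$. Since the inclusion $S_1\to P$ is irreducible, the almost split sequence starting at $S_1$ contains an irreducible map $P\to {\rm Tr}DS_1$ between modules of equal length. Such a map would be an isomorphism, which is absurd.

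You instead start from the top of $P$. Since $\rad P=\soc P$, the module $P/\soc P=\ntop P$ is simple. Proposition~\ref{SF_ART_length}(1), equivalently Proposition~\ref{S_ART_length}(1), then gives $\ell(D{\rm Tr}(\ntop P))\ge 1+\ell(e_1J/e_1J^2)+\ell(e_2J/e_2J^2)$. This is already at least $4$ as soon as one $e_iA$ fails to be uniserial. Your route is purely numerical, handles $S_1$ and $S_2$ at once, and needs no irreducible-map argument. It is exactly the mechanism the paper itself uses in Sublemma~2 of Theorem~\ref{main_rb4}. The paper's route gets by with the crude estimate $2t-1$, at the cost of one extra Auslander--Reiten step.

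Two small remarks. First, using Proposition~\ref{S_ART_length}(1) to bound $\ell(\ntop(\rad P))$ is unnecessary, since $\ell(P)\le 3$ already forces $\ell(\rad P)\le 2$. Second, the implication from $e_1A$ not uniserial to $\ell(e_1J/e_1J^2)\ge 2$ genuinely requires $\ell(e_1A)\le 3$. Without it, a local module of length four with simple $e_1J/e_1J^2$ could still be non-uniserial. This length bound is the representation bound for $A^\circ$, which you correctly note is inherited via $D$, so it is worth stating explicitly at that point.
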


\vspace{-1.5pt}

\noindent{\it Proof.} Since algebras of representation bound one have a Loewy length of one, the sufficiency follows immediately from Propositions \ref{3sa_rb}(1) and \ref{rb2}. Conversely, assume that $A$ is of representation bound three. Let $P$ be an indecomposable projective module in $\mmod A.$ If $P$ is uniserial, then $\ell\ell(P)=\ell(P)\le 3.$ Suppose now that $P$ is not uniserial. Then, $\rad P$ is non-uniserial of length two. Thus, $\rad P=S_1\oplus S_2$, where $S_1, S_2$ are simple. So, $\ell\ell(P)=2$. In particular, $\ell\ell(A)\le 3.$

We claim that $I_{S_1}$ and $I_{S_2}$ are uniserial. Otherwise, we may suppose that $I_{S_1}$ is not uniserial. By a dual argument, we infer that $I_{S_1}/S_1\cong Ae_1/Je_1\oplus Ae_2/Je_2,$ where $e_1, e_2$ are primitive idempotents in $A$. Let $e$ be a primitive idempotent in $A$ such that $S_1\cong Ae/Je.$ \vp Since $eA\cong D(I_{S_1})$, in view of Proposition \ref{duality_radi_soci}(2), $eJ\cong D(I_{S_1}/S_1)\cong e_1A/e_1J\oplus e_2A/e_2J.$ \vp Thus, $eJ/eJ^2\cong e_1A/e_1J\oplus e_2A/e_2J.$ By Proposition \ref{S_ART_length}(2), $\ell({\rm Tr}DS_1)\ge 3,$ and by the assumption, $\ell({\rm Tr}DS_1)=3.$ Now, since the inclusion map $j_1: S_1\to P$ is irreducible, we have an almost split sequence \vspace{-12pt}
$$\xymatrix{0\ar[r]& S_1 \ar[r]^-{(j_1, f_1)^T} & P\oplus L_1 \ar[r]^-{(p_1, g_1)} & {\rm Tr}DS_1 \ar[r] & 0.}\vspace{-3pt}$$ 
Since $\ell(P)=3=\ell({\rm Tr}DS_1)$, we see that $p_1$ is an isomorphism, absurd. This establishes our claim. So $P$ has the properties stated in Definition \ref{3sa_def}. Since $A^\circ$ is also  of representation bound three, the indecomposable projective modules in $\mmod A^\circ$ also have these properties. Thus, $A$ is a wedged-string algebra with radical cubed zero, and  by Proposition \ref{rb2}, it is not a Nakayama algebra with radical squared zero. The proof of the theorem is completed.

\begin{Remark} 

In case $A$ is a finite-dimensional algebra over a field, by applying Tachikawa's results in \cite{Tac1}, one easily obtains Theorem \ref{rb3}.

\end{Remark}

\subsection{\sc Algebras of representation bound four} We conclude the paper with the following characterization of artin algebras of representation bound four. 

\begin{Theo}\label{main_rb4}

Let $A$ be an artin algebra. Then $A$ is of representation bound four if and only if $A$ is a quadri-biserial algebra, but not a wedged-string algebra with radical cubed zero.

\end{Theo}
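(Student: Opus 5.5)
Sufficiency follows from earlier results. If $A$ is quadri-biserial, Theorem \ref{rb_4bsa} bounds every indecomposable length by four. If $A$ had bound at most three, it would be semisimple, or Nakayama of Loewy length two by Proposition \ref{rb2}, or wedged-string with radical cubed zero by Theorem \ref{rb3}. Semisimple algebras and Nakayama algebras with $J^2=0$ are trivially wedged-string with radical cubed zero, since every indecomposable projective is uniserial. So a quadri-biserial algebra that is not wedged-string with $J^3=0$ has bound exactly four.

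For necessity, assume $A$ has bound four. Then $A$ is not wedged-string with $J^3=0$, since such algebras have bound at most three by Proposition \ref{3sa_rb}(1). The substance is to show that $A$ is quadri-biserial. Because $A^\circ$ also has bound four (apply $D$), it suffices to verify the conditions of Definitions \ref{bsa_def} and \ref{4str_alg_def} for the indecomposable projectives $P=Ae$ in $\mmod A$.

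Condition (1) of Definition \ref{4str_alg_def} is immediate from $\ell(P)\le 4$. For biseriality I would take $P$ non-uniserial and let $s=\ell(\ntop(\rad P))\ge 2$. If $s\ge 3$, then $\ell(P)=4$ and $\rad P=S_1\oplus S_2\oplus S_3$ is semisimple, hence equal to $\soc P$. Proposition \ref{SF_ART_length}(1) then gives $\ell(D{\rm Tr}(P/\soc P))\ge (3-1)+3=5$, a contradiction. When $P/\soc P$ is simple and projectives are short, Proposition \ref{S_ART_length}(1) with $s=3$ also yields $\ell\ge 5$. So $s=2$. With $\ell(P)\le 4$, the local module $P$ then has $\ntop(\rad P)$ of length two, and I would check case by case that $P$ is biserial. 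Either $\rad P$ is a direct sum $S\oplus N$ or $S_1\oplus S_2$, giving an astride module (Corollary \ref{astride_bsm}), or $\rad P$ is colocal of length three with two tops, giving a lozenge module by Proposition \ref{lozenge_char}. A subcase to exclude is $\rad P$ of length three with $\ntop$ of length two but neither decomposable nor colocal. This is impossible, because then $\soc(\rad P)$ would have length two, which together with a top of length two exceeds length three.

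Conditions (2) and (3) are the main obstacle, where the Auslander--Reiten translate bounds must be used sharply. For (2), suppose $\ntop(\rad P)=S_1\oplus S_2$ with $I_{S_1},I_{S_2}$ both non-uniserial. Then $e_iJ/e_iJ^2$ has length at least two for $i=1,2$, and Proposition \ref{S_ART_length}(1) applied to $S=\ntop P$ gives $\ell(D{\rm Tr}S)\ge 1+2+2=5$, which is impossible. For (3), suppose $\rad P=S\oplus M$ with $M$ uniserial of length two, so that $\ell(P)=4$. If $I_S$ or $I_{\soc M}$ is not uniserial, I would apply Proposition \ref{SF_ART_length}(1) to $P/\soc P$, with $c_1=1$ and $c_2=1$ for the socle pieces in $\rad P=S\oplus M$ (Loewy lengths $1$ and $2$ respectively). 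This gives $\ell(D{\rm Tr}(P/\soc P))\ge \ell(e_1A/e_1J^{2})+\ell(e_2A/e_2J^{3})-1$. A non-uniserial $I_S$ forces $\ell(e_1A/e_1J^2)\ge 3$, while $\ell(e_2A/e_2J^3)\ge 3$ holds automatically, so the bound reaches $5$. If instead $I_{\soc M}$ is non-uniserial, then $e_2J$ has top of length two and $e_2A/e_2J^3$ contains the element of $e_2J^2$ coming from $M$. So $\ell(e_2A/e_2J^3)\ge 4$, and again the bound is $5$.

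This last estimate is the delicate point; I would verify it via Lemma \ref{topi_LL} and biseriality of $e_2A$. If the count falls short, the fallback is to use Proposition \ref{S_ART_length}(1) on $\ntop P$ instead, or to examine the almost split sequence starting at $S$ or $\soc M$, as in the proof of Theorem \ref{rb3}, and reach a length-five indecomposable module.
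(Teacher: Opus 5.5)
Your sufficiency argument is fine. Your analysis once $s=\ell(\ntop(\rad P))\ge 2$ is also fine, and excluding $s=3$ via Proposition~\ref{S_ART_length}(1) is simpler than the paper's $\tau^{-}$ argument.

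\textbf{The gap.} The biseriality step asserts that every non-uniserial $P$ has $s\ge 2$, and this is false in length four. Take $P=Ae$ local of length four with $\rad P$ local but not uniserial, i.e.\ $\rad P$ astride biserial of length three. Then $\ntop(\rad P)$ is simple and $\rad^2P=\soc P=S_2\oplus S_3$. Such a $P$ is not uniserial. It is not biserial either, because Proposition~\ref{loc_cloc_bsm}(1) forces $\ell(\ntop(\rad P))=2$ for a local biserial module. It really occurs: $P_a$ for the path algebra of the quiver $a\to b$, $b\to c$, $b\to d$ has this shape.

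Nothing in your case analysis uses the bound four to rule this case out. This is exactly where the paper's Sublemma~1 does its work:
\begin{enumerate}
\item[(a)] Since $\rad P$ is indecomposable, the inclusion $\rad P\to P$ is irreducible. This gives an almost split sequence $0\to\rad P\to P\oplus N\to L\to 0$ with $\ell(L)\le 3$.
\item[(b)] The kernel of $P\to L$ is indecomposable.
\item[(c)] The minimal injective copresentation of $D{\rm Tr}L\cong\rad P$ then shows that $\soc(\rad P)$ is simple, or isomorphic to $\ntop(\rad P)$ when $\ell(L)=1$. Hence $\rad P$ is uniserial or colocal.
\end{enumerate}

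\textbf{Propagation.} In your proofs of conditions (2) and (3), you pass from ``$I_{S_i}$ is not uniserial'' to ``$\ell(e_iJ/e_iJ^2)\ge 2$''. That step needs two things: every non-uniserial $e_iA$ is biserial (the biseriality statement applied to $A^\circ$), and Proposition~\ref{loc_cloc_bsm}(1). Without closing the missing case, this inference is unjustified.

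\textbf{A repair within your toolkit.} The statement of Proposition~\ref{SF_ART_length}(1) does not apply, because $\rad P$ is not a direct sum of colocal modules. Its proof, however, does apply. Write $\soc P=Aw_2\oplus Aw_3$ with $w_i\in e_iJ^2e$. Since $J^3e=0$, you have $w_i\notin e_iJ^3$. The same transpose computation gives
\[
\ell(D{\rm Tr}(P/\soc P))\ge \ell(e_2A/e_2J^3)+\ell(e_3A/e_3J^3)-1\ge 5,
\]
which contradicts the bound. Once this case is excluded, your arguments for (2) and (3) coincide with the paper's Sublemmas~2 and~3. One small correction in (3): you should take $c_2=2$, as your formula with $J^3$ already does.
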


\noindent{\it Proof.} By Proposition \ref{rb2}, algebras of representation bound at most two are 
Nakayama algebras of Loewy length at most two, which are wedged-string algebras with radical cubed zero. So, the sufficiency follows from Theorems \ref{rb_4bsa} and \ref{rb3}. Suppose conversely that $A$ is of representation bound four. Then so is $A^\circ$. In particular, the indecomposable projective modules in $\mmod A$ and $\mmod A^\circ$ have length at most four. %We shall split our proof into three sublemmas. 
Let $P$ be an indecomposable projective module $P$ in $\mmod A$ or $\mmod A^\circ.$ 

\vp

{\sc Sublemma 1.} {\it If $P$ is not uniserial, then it is biserial.}

Since $\ell(P)\le 4,$ in view of Proposition \ref{CL_3}, we may suppose that $\ell(P)=4$. Then $\ell(\rad P)=3.$ Assume first that $\rad P$ is indecomposable.  
Since the inclusion map $q: \rad P\to P$ is irreducible, we obtain an almost split sequence \vspace{-6pt}
$$\xymatrixcolsep{30pt}\xymatrix{0 \ar[r] & \rad P \ar[r]^-{(q, f)^T} & P \oplus N \ar[r]^-{(p, g)} & L \ar[r] & 0.}\vspace{-4.5pt}$$ 

Since $\ell(L)\le 4$, we see that $p: P\to L$ is an irreducible epimorphism, and so is $f\!: \! \rad P\to N$. Therefore, $\ell(L)\le 3$ and $\ell(N)\le 2$. Let $K$ be the kernel of $p: P\to L,$ which is indecomposable, see \cite[(V.5.6)]{ARS}, and
contained in $\rad P.$ Setting $T=\ntop K,$ we obtain a minimal projective presentation $\xymatrixcolsep{20pt}\xymatrix{\hspace{-2pt}P_{\hspace{.5pt}T} \ar[r] & P \ar[r] & L \ar[r] & 0;\hspace{-2pt}}\vspace{-3pt}$ and hence, a minimal injective co-presentation
$\xymatrixcolsep{20pt}\xymatrix{\hspace{-2pt}0 \ar[r] & D{\rm Tr}L \ar[r] & I_T \ar[r] & I_{\ntop p},}\hspace{-2pt}\vspace{-0.5pt}$ where $D{\rm Tr}L\cong \rad P.$ In particular, $\soc(\rad P) \cong T.$ If $\ell(L)=1$, then $N=0,$ and hence, $\rad P \cong K.$ This yields $\ntop(\rad P)\cong T\cong \soc(\rad P)$. Since $\ell(\rad P)=3,$ by Proposition \ref{CL_3}, $\rad P$ is uniserial, and so is $P$. If $2\le \ell(L)\le 3$, then $\ell(K)\le 2.$ Thus $K$ is uniserial; consequently $T$ is simple. Therefore, $\rad P$ is colocal, and by Lemma \ref{soc_rad}, so is $P.$ In view of Corollary \ref{rb4_lozenge}, $P$ is uniserial or lozenge. 

Next, assume that $\rad P$ is decomposable. Suppose that $\rad P=S_1 \oplus S_2 \oplus S_3,$ where the $S_i$ are simple. \vp Since the inclusion map $(q_1, q_2, q_3): S_1\oplus S_2 \oplus S_3 \to P$ is irreducible, \vspace{1pt} we have an irreducible map $(p_1, p_2, p_3)^T: P\to \tau^-S_1\oplus \tau^-S_2 \oplus \tau^-S_3.$ Since $\ell(\tau^-S_i)\ge \ell(P)-\ell(S_i)= 3,$ we have $\ell(\tau^-P) \ge \sum_{i=1}^3 \ell(\tau^-S_i)-\ell(P) \ge 5,$ absurd. Thus, $\rad P=S\oplus M,$ where $S$ is simple and $M$ is uniserial of length two. Thus, $P$ is biserial. This establishes Sublemma 1. So, $A$ is a biserial algebra.

\vp\vp

{\sc Sublemma 2.} {\it If $\ntop(\rad P)=S_1 \oplus S_2,$ then $I_{S_1}$ or $I_{S_2}$ is uniserial. }

We may assume that $P=Ae$ and $Je/J^2e \cong Ae_1/Je_1\oplus Ae_2/Je_2,$ where \vspace{.5pt} $e, e_1, e_2$ are primitive idempotents in $A$. Writing $S=Ae/Je,$ by Proposition \ref{S_ART_length}(1), we have $4\ge \ell(D{\rm Tr} S)\ge \ell(e_1J/e_1J^2)+ \ell(e_2J/e_2J^2)+1.$ Thus, $\ell(e_1J/e_1J^2)=1$ or $\ell(e_2J/e_2J^2)=1.$ By Proposition \ref{loc_cloc_bsm}(1), $e_1A$ or $e_2A$ is not biserial. By Sublemma 1, $e_1A$ or $e_2A$ is uniserial. Thus,
$I_{S_1}$ or $I_{S_2}$ is uniserial. This proves Sublemma 2.

\vp \vp

{\sc Sublemma 3.} {\it If $\rad P=S\oplus M$, where $S$ is simple and $M$ is uniserial of length two, then $I_S$ and $I_{\soc M}$ are uniserial.}

We may assume that $P=Ae$ and $Je=S\oplus M,$ with $S\cong Ae_1/Je_1$ and $M$ uniserial of length two such that $\soc M\cong Ae_2/Je_2$, where $e, e_1, e_2$ are primitive idempotents in $A$. Since $\soc M=\rad M=J^2e,$ there exists some $u\in e_2J^2e\backslash e_2J^3e$ such that $\soc M=Au.$ In particular, $e_2J^2/e_2J^3\ne 0.$
Set $N\!:\hp\hp =Ae/\soc(Ae).$ Since $S$ and $M$ are colocal of Loewy leng length one and two, respectively, we deduce from Lemma \ref{SF_ART_length}(1) that \vspace{-1pt}
$$
\begin{array}{rcl}
4 \hspace{1pt} \ge \hspace{1pt} \ell(D{\rm Tr} N) \hspace{-5pt} & \ge  & \hspace{-5pt}\ell(e_1A/e_1J^2)+\ell(e_2A/e_2J^3) -1 \\
%&=& 1+ \ell(e_3J/e_3J^2)+ {\sum}_{j=1}^{2} \ell(e_4J^j/e_4J^{j+1}) \\ 
&=& \hspace{-5pt} 1+ \ell(e_1J/e_1J^2) + \ell(e_2J/e_2J^2) + \ell(e_2J^2/e_2J^3) \\ 
&\ge & \hspace{-5pt} 2+\ell(e_1J/e_1J^2)+\ell(e_2J/e_2J^2).\vspace{-3pt}
\end{array}$$ 

Thus $\ell(e_1J/e_1J^2)=\ell(e_2J/e_2J^2)=1$. As seen above, $I_{S_1}$ and $I_{S_2}$ are uniserial. This establishes Sublemma 3. Therefore, $A$ is a quadri-biserial algebra; by Proposition \ref{3sa_rb}, $A$ is not a wedged-string algebra with radical cubed zero. The proof of the theorem is completed. 

\bigskip

\noindent {\sc Author contribution}: All the authors contributed equally in this work.

\smallskip

\noindent {\sc Data availability}: This manuscript does not report data generation or analysis.

\smallskip

\noindent {\sc Competing interests' statement:} The authors have no competing interests.

\end{document}